\def\ra{\rightarrow}
\def\Z{{\mathbb Z}}
\def\R{{\mathbb R}}
\def\C{{\mathbb C}}
\def\K{\mathbf{k}}
\def\w{\mathcal{W}}
\def\f{\mathcal{F}}
\def\e{\epsilon}
\def\cc{\mathcal{C}}
\def\mod{\mathrm{-mod}}
\def\e{\epsilon}
\def\b{\beta}
\def\r#1{\mathrm{#1}}
\def\c#1{\mathcal{#1}}
\def\mc#1{\mathcal{#1}}
\def\mb#1{\mathbf{#1}}
\def\w{\mathcal{W}}
\def\ainf{A_\infty}
\def\f{\c{F}}
\def\z2{\Z / 2\Z}
\def\lra{\longrightarrow}
\def\ra{\rightarrow}
\def\ob{\mathrm{ob\ }}
\newtheorem{lem}{Lemma}
\newtheorem{prop}{Proposition}
\newtheorem{thm}{Theorem}
\newtheorem{cor}{Corollary}
\newtheorem{defn}{Definition}
\newtheorem{assumption}{Assumption}
\theoremstyle{remark}
\newtheorem{rem}{Remark}
\numberwithin{equation}{section}
\begin{document}

\begin{abstract}
    We construct geometric maps from the cyclic homology groups of the (compact or wrapped) Fukaya category to the corresponding $S^1$-equivariant (Floer/quantum or symplectic) cohomology groups, which are natural with respect to all Gysin and periodicity exact sequences and are isomorphisms whenever the (non-equivariant) open-closed map is.  These {\em cyclic open-closed maps} give (a) constructions of geometric smooth and/or proper Calabi-Yau structures on Fukaya categories (which in the proper case implies the Fukaya category has a cyclic $\ainf$ model in characteristic 0) and (b) a purely symplectic proof of the non-commutative Hodge-de Rham degeneration conjecture for smooth and proper subcategories of Fukaya categories of compact symplectic manifolds.  Further applications of cyclic open-closed maps, to counting curves in mirror symmetry and to comparing topological field theories, are the subject of joint projects with Perutz-Sheridan \cites{GPS1:2015,GPS2:2015} and Cohen \cite{CohenGanatra:2015}.
\end{abstract}

\def\oc{\mc{OC}}
\def\co{\mc{CO}}
\def\b{\mc{B}}
\def\z{ { \mathbb Z}}
\title{Cyclic homology, $S^1$-equivariant Floer cohomology, and Calabi-Yau structures}
\author{Sheel Ganatra}
\thanks{The author was partially supported by the National Science Foundation through a postdoctoral fellowship --- grant number DMS-1204393 --- and agreement number DMS-1128155. Any opinions, findings and conclusions or recommendations expressed in this material are those of the author(s) and do not necessarily reflect the views of the National Science Foundation.}
\maketitle
\section{Introduction}

This paper concerns the compatibility between 
chain level $S^1$ actions arising in two different types of Floer theory on a
symplectic manifold $M$.  The first of these $C_{-*}(S^1)$\footnote{We use a
cohomological grading convention in this paper, so singular chain complexes
are {\em negatively graded}.} 
actions is induced geometrically on the {\em Hamiltonian Floer homology chain
complex} $CF^*(M)$, formally a type of Morse complex for an action functional
on the free loop space, through rotating free loops.  The homological action of
$[S^1]$ is known as the \emph{BV operator} $[\Delta]$, and the $C_{-*}(S^1)$
action can be used to define {\em $S^1$-equivariant Floer homology theories}
--- see e.g., \cite{Seidel:2010fk,
Bourgeois:2012fk}\footnote{Sometimes $S^1$-equivariant Floer theory is instead
defined as Morse theory of an action functional on the $S^1$-Borel
construction of the loop space. For a comparison between these two definitions,
see \cite{Bourgeois:2012fk}.}.
The second $C_{-*}(S^1)$ action lies on the {\em Fukaya category} of $M$, and has
discrete or combinatorial origins, coming from the hierarchy of compatible
cyclic $\Z/k\Z$ actions on cyclically composable chains of morphisms between
Lagrangians.  A (categorical analogue of a) fundamental observation of Connes',
Tsygan, and Loday-Quillen that such a structure, which exists on any category $\cc$,
can be packaged into a $C_{-*}(S^1)$ action on the {\it Hochschild homology
chain complex} $\r{CH}_*(\cc)$ of the category
(see e.g., \cites{Connes:1985aa, Tsygan:1983aa,Loday:1984aa, McCarthy:1994aa, Keller:1999aa})
The associated operation of multiplcation by (a cycle representing) $[S^1]$ is
frequently called the {\em Connes' $B$ operator} $B$, and the corresponding
$S^1$-equivariant homology theories are called {\em cyclic homology groups}. 

A relationship between the Hochschild homology of the Fukaya category $\mc{F}$ and Floer
homology on $M$ is provided by the so-called {\em open-closed string map}
\cite{Abouzaid:2010kx}
\begin{equation}
    \label{openclosedmap}
    \oc: \r{CH}_*(\mathcal{F}) \ra CF^{*+n}(M).
\end{equation}
Our main result is about the compatibility of $\oc$ with $C_{-*}(S^1)$ actions.
Namely, we prove  --- under technical hypotheses detailed below the main
result --- that $\oc$ can be made (coherently homotopically)
$C_{-*}(S^1)$-equivariant: 
\begin{thm}\label{thm:mainresult1}
Suppose $M$, its Fukaya category, and $CF^*(M)$ satisfy the technical
assumptions $(\star)$.  Then the map $\oc$ admits a geometrically defined `{\em
$S^1$-equivariant enhancement}', to an $\ainf$ homomorphism of
$C_{-*}(S^1)$-modules, $\widetilde{\oc} \in
\mathrm{RHom}_{C_{-*}(S^1)}^n(\r{CH}_*(\f),
    \r{CF}^*(M))$.
\end{thm}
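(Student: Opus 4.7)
The plan is to construct the higher components of $\widetilde{\oc}$ geometrically, as operations associated to an enlarged family of moduli spaces of discs with cyclically ordered boundary inputs and one interior output puncture, where the output puncture is equipped with additional $S^1$-valued asymptotic markers whose rotation is coupled to cyclic rotation of the boundary configuration. An $\ainf$ homomorphism of $C_{-*}(S^1)$-modules from $\r{CH}_*(\f)$ to $CF^{*+n}(M)$ consists of a sequence of maps
$$\widetilde{\oc}^{(k)}: \r{CH}_*(\f) \otimes C_{-*}(S^1)^{\otimes k} \lra CF^{*+n}(M), \quad k \geq 0,$$
with $\widetilde{\oc}^{(0)} = \oc$ and the usual compatibility relations with the differential on each side and the right $C_{-*}(S^1)$-action. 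Geometrically, I would produce $\widetilde{\oc}^{(k)}$ from counts of rigid elements in a moduli space $\mc{R}^{eq,k}_d$ of discs with $d$ boundary inputs, one interior output, and $k$ additional $S^1$-parameters that act simultaneously by rotation on the asymptotic marker at the output puncture and by cyclic rotation on the ordered boundary configuration. A natural chain-level realization uses a cellular decomposition of $ES^1$ by an ascending sequence of odd-dimensional spheres, with the $j$-th tensor factor of $C_{-*}(S^1)^{\otimes k}$ paired against a corresponding parameter factor in $\mc{R}^{eq,k}_d$.

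The case $k=0$ recovers the standard open-closed map. For $k=1$ the moduli space is essentially a one-parameter family, whose count produces the chain homotopy witnessing $\oc \circ B \simeq \Delta \circ \oc$; this homotopy arises concretely because the boundary of the parameter interval corresponds to a cyclic shift of the Hochschild input on one end and to the BV rotation of the output marker on the other. For $k \geq 2$ the higher moduli spaces encode higher coherences, and the defining $\ainf$-module map relations for $\widetilde{\oc}$ will follow from a stratum-wise analysis of the codimension-one boundary of the Gromov-type compactification $\overline{\mc{R}^{eq,k}_d}$, whose strata correspond to: (i) Floer-type breaking of boundary inputs producing $\ainf$ operations of the Fukaya category, (ii) breaking at the output cylindrical end producing the Floer differential, (iii) $S^1$-parameters degenerating to produce cap products with the fundamental chain of $S^1$, and (iv) cyclic collisions of boundary marked points reproducing the action of Connes' $B$ operator on Hochschild chains.

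To make this precise one must fix a chain-level model of $C_{-*}(S^1)$ compatible with the geometry — the cellular chains on the filtration of $S^\infty = ES^1$ by spheres $S^{2j-1}$ is a natural choice — and verify that transversality and gluing can be achieved coherently across all $k$ and $d$, most likely by choosing Floer and perturbation data that are strictly equivariant in the $S^1$-parameter directions. The main obstacle is not the analytic input — parameterized transversality and Gromov compactness for these moduli spaces are a direct extension of the standard open-closed setup — but rather the combinatorial bookkeeping: one must show that the codimension-one boundary stratum corresponding to cyclic rotation of boundary marked points matches, at the chain level and with correct signs, Connes' explicit formula $B = (1-t)sN$ on Hochschild chains, and that this identification propagates consistently through all higher coherences. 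The bulk of the proof will therefore consist of setting up the parameterized moduli spaces, constructing coherent perturbation schemes, and systematically matching the resulting operadic structure on the disc side with the algebraic mixed-complex structure on the Hochschild side.
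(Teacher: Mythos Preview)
Your high-level plan is correct: one does build $\widetilde{\oc}$ as a sequence of operations $\oc^k$ coming from parametrized families of open-closed discs, and the $S^1$-homomorphism equations do follow from a codimension-one boundary analysis. But the specific mechanism you propose for producing Connes' $B$ operator is wrong, and this is the heart of the construction.

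In your item (iv) you say that ``cyclic collisions of boundary marked points'' reproduce $B$. They do not: collisions of boundary marked points produce the $\ainf$ operations $\mu^d$, hence the Hochschild differential $b$ (this is your item (i), correctly). The operator $B$ involves cyclic permutation together with insertion of a unit, and neither of these arises from a boundary-point collision. Likewise, your $k=1$ description of an interval whose two endpoints are ``cyclic shift of the Hochschild input'' versus ``BV rotation of the output'' does not match any actual degeneration of a family of framed discs; there is no stratum where the boundary labels jump cyclically. You need a genuinely different idea to see $B$ geometrically.

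The paper's solution has three ingredients you are missing. First, it works with the \emph{non-unital} Hochschild complex $\r{CH}_*^{nu}(\f) = \r{CH}_*(\f) \oplus \r{CH}_*(\f)[1]$, on which $B^{nu}$ is a strict operator involving only cyclic permutation of the chain and passage from the check factor to the hat factor (no actual unit is inserted, only a formal one). Second, the hat factor is realized geometrically by a moduli space $\hat{\mc{R}}_d^1$ of discs carrying an extra \emph{auxiliary} boundary marked point $z_f$ that is forgotten when writing down Floer's equation; this auxiliary point plays the role of the formal unit $e^+$ in $B^{nu} = s^{nu} N$. Third, the $k$ extra parameters are realized not as abstract $S^1$-factors but as $k$ auxiliary \emph{interior} marked points $p_1,\dots,p_k$ on the disc, radially ordered with $|p_1|<\cdots<|p_k|<\tfrac12$, with the asymptotic marker at $z_{out}$ pointing at $p_1$. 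The boundary stratum where $|p_k|\to \tfrac12$ yields a moduli space ${}_{k-1}\mc{R}_d^{S^1}$ in which the outermost point now rotates freely on a fixed circle; a separate ``sector decomposition'' argument (decomposing this $S^1$ into arcs according to which pair of boundary points $p_k$ points between, and using the $\Z/d$-equivariance of the Floer data) identifies the associated operation with $\hat{\oc}^{k-1}\circ B^{nu}$. The other key degeneration, $|p_j|\to 0$, breaks off an angle-decorated cylinder and produces the $\delta_j$ terms of the $C_{-*}(S^1)$-action on $CF^*(M)$; strata where $|p_i|=|p_{i+1}|$ contribute zero by a forgetful-map argument.

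So the missing idea is: $B$ is seen not through any collision, but through free rotation of the asymptotic marker combined with a geometric avatar of the formal unit (an unconstrained boundary point), and the comparison to the algebraic $B^{nu}$ is a nontrivial sector-by-sector sign computation rather than a direct boundary identification.
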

\begin{rem}
    Theorem \ref{thm:mainresult1} implies (but is not implied by) the statement
    (see Theorem \ref{thm:homology}) that $[\oc]$ intertwines homological
    actions of $[S^1]$.
\end{rem}
\begin{rem}
    In the geometric settings considered here $\oc$ does not a priori strictly intertwine
    the $C_{-*}(S^1)$ actions (due to a priori non-equivariant perturbations made to moduli spaces to define operations, and further due to the potential non-triviality of $\Delta$, which --- as $\Delta$ is defined using moduli spaces but $B$ is defined using algebra --- imply that $\oc \circ B$ and $\Delta \circ \oc$ involve moduli spaces of maps from differing domains).
    In particular, the homomorphism
    $\widetilde{\oc}$ involves extra data recording coherently higher
    homotopies between the two $C_{-*}(S^1)$ actions. This explains our use
    of the term `enhancement'.
\end{rem}
\begin{rem}
It can be shown using usual invariance arguments that the enhancement
$\widetilde{\oc}$ we define in this paper is uniquely determined up to homotopy: 
while the geometric chain-level construction requires a number of auxiliary choices (of
perturbation data on moduli spaces), any two sets of such choices produce homotopic
enhancements.
\end{rem}
To explain the consequences of Theorem \ref{thm:mainresult1} to cyclic homology
and equivariant Floer homology, recall that there are a variety of {\em
$S^1$-equivariant homology} chain complexes (and homology groups) that one can
associate functorially to an $\ainf$ $C_{-*}(S^1)$ module $P$.
For instance, denote by 
\begin{equation}\label{orbitfixedpointtate}
    P_{hS^1},\ P^{hS^1},\ P^{Tate}
\end{equation}
the {\em homotopy orbit complex}, {\em homotopy fixed point complex}, and {\em
Tate complex} constructions of $P$, described in \S \ref{sec:equivariantgroups}. When
applied to the Hochschild complex $\r{CH}_*(\cc)$, the constructions
\eqref{orbitfixedpointtate} by definition recover complexes computing
{\em (positive) cyclic homology}, {\em negative cyclic homology}, and {\em
periodic cyclic homology} groups of $\cc$ respectively (see \S
\ref{subsec:cyclic}).
Similarly the group $H^*(CF^*(M)_{hS^1})$ is the {\em $S^1$-equivariant Floer
cohomology} studied (for the symplectic homology Floer chain complex); see e.g.,
\cite{Viterbo:1999fk, Seidel:2010fk,  Bourgeois:2012fk}.
The groups
$H^*(CF^*(M)^{hS^1})$ and $H^*(CF^*(M)^{Tate})$ have also been studied in recent work in Floer theory
\cites{Seidel:2018aa, Zhao:2019aa, Albers:2016aa}.
Functoriality of the constructions \eqref{orbitfixedpointtate} and homotopy
invariance properties of $C_{-*}(S^1)$ modules (see Cor.
\ref{S1homotopyinvariance} and Prop. \ref{functorialitysequences}) immediately
imply the result announced in the abstract: 
\begin{cor}\label{cor:cyclicOC}
    Let $HF^{*, +/-/\infty}_{S^1}(M)$ denote the (cohomology of the) homotopy
    orbit complex, fixed point complex, and Tate complex construction applied
    to $CF^*(M)$, and let $\r{HC}^{+/-/\infty}(\cc)$ denote the corresponding
    positive/negative/periodic cyclic homology groups.
    Then under the hypotheses $(\star)$ of Theorem \ref{thm:mainresult1},
    $\widetilde{\oc}$ induces \emph{cyclic open-closed maps}
    \begin{equation}
        [\widetilde{\oc}^{+/-/\infty}]: \mathrm{HC}^{+/-/\infty}_*(\mathcal{F}) \ra HF^{*+n,+/-/\infty}_{S^1}(M),
    \end{equation}
    naturally compatible with respect to the various periodicity/Gysin exact
    sequences, which are isomorphisms whenever $\oc$ is.\qed
\end{cor}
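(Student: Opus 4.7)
The plan is to deduce Corollary \ref{cor:cyclicOC} formally from Theorem \ref{thm:mainresult1}, together with the cited functoriality and homotopy invariance properties of the three $C_{-*}(S^1)$-module constructions (Corollary \ref{S1homotopyinvariance} and Proposition \ref{functorialitysequences}). Viewing $(-)_{hS^1}$, $(-)^{hS^1}$, and $(-)^{Tate}$ as $\ainf$-functors from $\ainf$ $C_{-*}(S^1)$-modules to chain complexes, I would apply each of them to the $\ainf$-homomorphism $\widetilde{\oc}$ produced by Theorem \ref{thm:mainresult1}. Under the identifications reviewed in \S\ref{subsec:cyclic} and \S\ref{sec:equivariantgroups}, the three resulting source complexes compute $\r{HC}^{+/-/\infty}_*(\f)$ and the three target complexes compute $HF^{*+n,+/-/\infty}_{S^1}(M)$, thereby yielding the desired cohomology-level maps $[\widetilde{\oc}^{+/-/\infty}]$.

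For compatibility with the periodicity and Gysin exact sequences, the key point is that these long exact sequences arise as the cohomology long exact sequences of natural cofiber/fiber triangles among the functors $(-)_{hS^1}$, $(-)^{hS^1}$, $(-)^{Tate}$ on $\ainf$ $C_{-*}(S^1)$-modules. Since $\widetilde{\oc}$ is a single $\ainf$-morphism of such modules, naturality of these cofiber triangles automatically produces a commuting ladder of exact triangles connecting the three cyclic open-closed maps, and passing to cohomology gives the required compatibility of $[\widetilde{\oc}^{+/-/\infty}]$ with all periodicity/Gysin sequences. This is exactly the content of Proposition \ref{functorialitysequences} applied to $\widetilde{\oc}$.

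For the final isomorphism statement, I would observe that the underlying (non-equivariant) chain-level component of $\widetilde{\oc}$ is $\oc$ itself, so whenever $[\oc]$ is an isomorphism, $\widetilde{\oc}$ is a quasi-isomorphism of underlying chain complexes, and hence a quasi-isomorphism of $\ainf$ $C_{-*}(S^1)$-modules by the homotopy invariance Corollary \ref{S1homotopyinvariance}. Since each of $(-)_{hS^1}$, $(-)^{hS^1}$, $(-)^{Tate}$ preserves such quasi-isomorphisms (also part of Corollary \ref{S1homotopyinvariance}), each induced map $[\widetilde{\oc}^{+/-/\infty}]$ is then an isomorphism. There is in fact no substantive obstacle at this stage: the whole corollary is a purely formal consequence of Theorem \ref{thm:mainresult1} once one has in hand an $\ainf$-functorial, homotopy-invariant framework for the constructions \eqref{orbitfixedpointtate}. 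The real work has been deferred to Theorem \ref{thm:mainresult1} and to setting up Corollary \ref{S1homotopyinvariance} and Proposition \ref{functorialitysequences}; verifying that the $\ainf$-module structures on $\r{CH}_*(\f)$ and $CF^*(M)$ interact well enough with these three constructions to make the naturality and preservation-of-quasi-isomorphisms claims rigorous is the one place where care is needed.
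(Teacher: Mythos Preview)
Your proposal is correct and matches the paper's own proof essentially verbatim: the paper's proof of Corollary \ref{cor:cyclicOC} simply states that it is an immediate consequence of Theorem \ref{thm:mainresult1} together with Corollary \ref{S1homotopyinvariance} and Proposition \ref{functorialitysequences}, which is exactly the argument you outline. One small wording correction: a closed morphism of $S^1$-complexes whose $F^0$ component is a homology isomorphism is a quasi-isomorphism \emph{by definition} (see the sentence after \eqref{s1homomorphismeqns}), not by Corollary \ref{S1homotopyinvariance}; that corollary is then what propagates the quasi-isomorphism to the equivariant constructions.
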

The map \eqref{openclosedmap} is frequently an isomorphism, allowing one to
recover in these cases closed string Floer/quantum homology groups from open
string, categorical ones \cite{BEE1published, ganatra1_arxiv,
GPS1:2015, afooo}. In such cases, Theorem \ref{thm:mainresult1} and Corollary
\ref{cor:cyclicOC} allow one to further categorically recover the
$C_{-*}(S^1)$ as well as the associated equivariant homology groups (in terms
of the cyclic homology groups of the Fukaya category).
\begin{rem}
    There are other $S^1$-equivariant homology functors, to which our results apply
    tautologically as well. For instance, consider the contravariant
    functor $P \mapsto (P_{hS^1})^{\vee}$; when applied to $\r{CH}_*(\cc)$ this
    produces the {\em cyclic cohomology} chain complex of $\cc$.
\end{rem}
We have been deliberately vague about which Fukaya category and which
Hamiltonian Floer homology groups Theorem \ref{thm:mainresult1} applies to, as
it applies in several different geometric (compact and non-compact) settings.  To keep this paper a manageable
length, we implement the map $\widetilde{\oc}$ and prove Theorem
\ref{thm:mainresult1} in the technically simplest of such settings --- our technical hypotheses are detailed in $(\star)$ below --- for which the
moduli spaces appearing in the constructions can be shown to be well-behaved by
classical methods.  That being said, we should remark that our methods and
arguments are orthogonal to the usual analytic difficulties faced in
constructing Fukaya categories and open-closed maps in more general contexts,
and we expect they should extend relatively directly to other settings. For instance,
in the setting of relative Fukaya categories of compact projective Calabi-Yau
manifolds (not considered here), an adapted version of our construction will appear in joint work
with Perutz-Sheridan \cite{GPS2:2015}.

\begin{center}
    \noindent {\bf $(\star)$ Assumptions on $M$, $\mc{F}$, and $CF^*(M)$:}
\end{center}
In our main results we make technical assumptions, explained in detail in \S
\ref{subsec:fukaya} for $M$ and its Fukaya category and in \S
\ref{subsubsec:sh}-\ref{subsubsec:relh} for
the corresponding Hamiltonian Floer homology chain complexes, which broadly
encapsulate the following situations: 
\begin{enumerate}
    \item If $M$ is compact and satisfies suitable technical hypotheses such as being monotone or symplectically aspherical (see \S
        \ref{subsec:assumptions}), one could take $\mathcal{F}$ to be the usual Fukaya category (or a summand thereof)
        of those compact Lagrangians also satisfying suitable technical
        hypotheses such as being monotone or not bounding disks with symplectic
        area. In this case $CF^*(M)$, the Hamiltonian
        Floer complex of any (sufficiently generic) Hamiltonian, is
        quasi-isomorphic to the {\em quantum cohomology} ring with its trivial
        $C_{-*}(S^1)$ action.

    \item If $M$ is non-compact and Liouville, one could take $\mathcal{F} = \mathcal{W}$ to
        be the {\em wrapped Fukaya category} and $CF^*(M) = SC^*(M)$ to be the {\em
        symplectic cohomology co-chain complex} with its (typically highly non-trivial)
        $C_{-*}(S^1)$ action.

    \item If $M$ is non-compact and Liouville, one could take $\mathcal{F} \subset \mathcal{W}$ to
        be the {\em Fukaya category of compact exact Lagrangians}. When restricted to $\r{CH}_*(\mc{F})$, the map
        $\oc$ to $SC^*(M)$ of (2) factors through $H^*(M, \partial^{\infty} M)$, the
        {\em relative (or compactly supported) cohomology group} with its
        trivial $C_{-*}(S^1)$ action.  In fact, as reviewed in \S
        \ref{sec:compactopenclosed} $\oc$ further factors through the
        symplectic {\em homology} chain
        complex $SC_*(M) \cong (SC^*(M))^{\vee}[-2n]$. One could take any of these groups ($SC^*(M)$, $H^*(M, \partial^{\infty} M)$, or $SC_*(M)$) to be $CF^*(M)$ here. For the main portion of the paper we use $CF^*(M):= H^*(M, \partial^{\infty} M)$. 
\end{enumerate}
For example, in case (2) above, when the relevant $[\oc]$ map is an
isomorphism, Corollary \ref{cor:cyclicOC} computes various $S^1$-equivariant
symplectic cohomology groups\footnote{and in particular, the usual equivariant symplectic
cohomology $SH^*_{S^1}(M) = H^*(SC^*(M)_{hS^1})$, 
see \cite{Bourgeois:2012fk} but note differing
conventions regarding e.g., homology vs. cohomology} in terms of cyclic
homology groups of the wrapped Fukaya category.
\begin{rem}
For the Fukaya subcategory of a single Lagrangian in a compact symplectic
manifold $M$ over a characteristic zero (Novikov) field containing $\R$, a
variant of the (positive) cyclic open-closed map has also been constructed by
Fukaya-Oh-Ohta-Ono \cite{foooasterisque} (and will be generalized to multiple
Lagrangians in \cite{afooo}). Their construction, which requires the target
group ($H^*(M)$) to have trivial $C_{-*}(S^1)$ action, uses Connes' small
(``coinvariants of cyclic group action bar'') complex for (in characteristic
zero only) positive cyclic homology, along with cyclically symmetric
(necessarily virtual) perturbations of all moduli spaces (building on work of
Fukaya \cite{Fukaya:2010aa} described in Remark \ref{fukayacyclic}), to
directly construct a geometric map bypassing the higher $\ainf$
$C_{-*}(S^1)$-action homotopies constructed here.  It does not seem possible to
generalize the methods of \cite{foooasterisque} to the (possibly non-compact
$M$ with arbitrary coefficients e.g., integral/rational/finite characteristic)
settings considered here, see for instance the discussion in Remark
\ref{rem:cyclic}.  Also, the perspective of $C_{-*}(S^1)$-modules taken here
makes it simpler to talk about (and describe) all cyclic homology theories at
once, as well to study the compatibility of additional structures (e.g., exact
sequences, semi-infinite/noncommutative Hodge structures).
\end{rem}

\begin{rem}\label{partiallywrapped1}
There are other settings in which Fukaya categories are now well-studied, for
instance Fukaya categories of Lefschetz fibrations (and more general LG
models), or more generally partially wrapped Fukaya categories (such as
wrapped Fukaya categories of {\em Liouville sectors}).  We do not
discuss these situations in our paper, but expect suitable versions of Theorem
\ref{thm:mainresult1} to hold in such settings too. We do note however that the
target of the open-closed map from Hochschild homology in such settings is
usually more subtle than in the cases discussed here, e.g., it does not
typically have the structure of a unital ring.  
\end{rem}

\begin{rem} \label{rmk:thmvariations}
    One can consider variations on Theorem \ref{thm:mainresult1}. 
    As a notable example, let $M$ denote a (noncompact) Liouville manifold, and
    $\mathcal{F}$ the Fukaya category of compact exact Lagrangians in $M$. Then there
    is a non-trivial refinement of the map 
    $\r{HH}_*(\mathcal{F}) \ra H^*(M, \partial^{\infty} M)$, which can be viewed as a pairing $\r{HH}_*(\mathcal{F}) \times H^*(M) \to \K$, to a pairing
    \[
        \r{CH}_*(\f) \otimes SC^*(M) \ra \K.
    \]
    (note symplectic cohomology does not satisfy Poincar\'{e} duality, 
    so this is {\em not} equivalent to a map to symplectic cohomology).
    Our methods also imply that this pairing admits an $S^1$-equivariant enhancement,
    with respect to the diagonal $C_{-*}(S^1)$ action on the left and the
    trivial action on the right.
    Passing to adjoints, we obtain cyclic open closed maps from
    $S^1$-equivariant symplectic cohomology to cyclic {\em cohomology} groups
    of $\f$, and from cyclic homology of $\f$ to equivariant symplectic {\em
    homology}.
    See \S \ref{sec:compactopenclosed} for more details.
\end{rem}

Beyond computing equivariant Floer cohomology groups in terms of cyclic
homology theories, we describe in the following subsection two applications of
Theorem \ref{thm:mainresult1} to the structure of Fukaya categories.
\begin{rem}  
    We anticipate additional concrete applications of Theorem
    \ref{thm:mainresult1} and its homological shadow, Theorem
    \ref{thm:homology}.  For instance, one can
    study the compatibility of open-closed maps with \emph{dilations} in the
    sense of \cite{Seidel:2010uq}, which are elements $B$ in 
    $SH^*(M)$ satisfying $[\Delta] B = 1$; the existence of dilations strongly
    constrains intersection properties of embedded Lagrangians
    \cite{Seidel:2014aa}. 
    Theorem \ref{thm:homology},
    or rather the variant discussed in Remark \ref{rmk:thmvariations}, implies 
    {\em if there exists a dilation, e.g., an element $x \in SH^1(M)$ with
    $[\Delta] x = 1$, then on the Fukaya category of compact Lagrangians
    $\mc{F}$, there exists $x' \in (\r{HH}_{n+1}(\mc{F}))^{\vee}$ with $x'
    \circ [B] = [tr]$}, where
    $tr$
    is the geometric weak proper Calabi-Yau structure on the
    Fukaya category (see \S \ref{intro:cystructures}).
\end{rem}

\subsection{Calabi-Yau structures on the Fukaya category}\label{intro:cystructures}
Calabi-Yau structures are a type of cyclically symmetric duality structure on a
dg or $\ainf$ category $\cc$ generalizing the notion of a nowhere vanishing
holomorphic volume form 
on a complex algebraic variety $X$ in the case $\cc = perf(X)$.  As is well understood, there are two (in
some sense dual) types of Calabi-Yau structures on $\ainf$ categories: 
\begin{enumerate}
    \item {\em proper Calabi-Yau structures} \cite{Kontsevich:2009ab} can be associated to {\em
        proper} categories $\cc$ (those which have cohomologically finite-dimensional
        morphism spaces), abstract and refine the notion of integration against
        a nowhere vanishing holomorphic volume form. For $\cc = perf(X)$ with
        $X$ a proper $n$-dimensional variety, 
        the resulting structure in particular induces the Serre duality pairing
        with trivial canonical sheaf $\r{Ext}^*(\mathcal{E}, \mathcal{F})
        \times \r{Ext}^*(\mathcal{F}, \mathcal{E}) \to \K[-n]$,
        Roughly, a proper Calabi-Yau structure on $\cc$ (of dimension $n$) is a
    map $[\widetilde{tr}]: \r{HC}^{+}_*(\cc) \to \K[-n]$ satisfying a non-degeneracy
        condition.

    \item {\em smooth Calabi-Yau structures} \cite{Kontsevich:uq, Kontsevich:CY} can be
        associated to {\em smooth} categories $\cc$ (those with perfect diagonal
        bimodule), and abstract the notion of the nowhere vanishing holomorphic
        volume form itself, along with the induced identification (by
        contraction against the volume form) of polyvectorfields with differential forms.
        Roughly, a smooth Calabi-Yau structure on $\cc$ (of dimension $n$) is a
        map $[\widetilde{cotr}]: \K[n] \to \r{HC}^-_*(\cc)$, or equivalently an element
        $[\widetilde{\sigma}]$ or ``$[vol_{\cc}]$'' in  $\r{HC}^-_{-n}(\cc)$, satisfying a
        non-degeneracy condition. 
\end{enumerate} 
In both cases, the non-degeneracy condition can be phrased purely in terms of
the underlying non-equivariant shadow of the map, e.g. in the first case on the induced map
$[tr]: \r{HH}_{*}(\cc) \to \r{HC}^+(\cc) \stackrel{[\widetilde{tr}]}{\to}
\K[-n]$.  Precise definitions are reviewed in \S \ref{section:cystructures}.
When $\cc$
is simultaneously smooth and proper, it is a folk result that the notions are
equivalent; see \cite{GPS1:2015}*{Prop. 6.10}.

In general, Calabi-Yau structures may not exist and when they do, there may be
a non-trivial space of choices (see \cite{Menichi:2009aa} for an example). 
A Calabi-Yau structure in either form induces non-trivial identifications between
Hochschild invariants of the underlying category $\cc$.\footnote{In the
proper case, there is an induced isomorphism between Hochschild cohomology and
the linear dual of Hochschild homology. In the smooth case, there is an
isomorphism between Hochschild cohomology and homology without taking duals.}
Moreover, categories with Calabi-Yau structures (should) carry induced 2-dimensional
chain level TQFT operations on their Hochschild homology chain complexes,
associated to moduli spaces of Riemann surfaces with marked points
\cite{Costello:2006vn, Kontsevich:2009ab}
(in the smooth case, this has not yet been established, but is ongoing work
\cite{Kontsevich:uq, Kontsevich:CY}); if the category is proper and
non-smooth (respectively smooth non-proper) the resulting TQFT is incomplete in
that every operation must have at least one input (respectively output).  In
the smooth and proper case in particular, Calabi-Yau structures play a central role in the
mirror symmetry-motivated question of recovering
Gromov-Witten invariants from the Fukaya category and to the related question
of categorically recovering Hamiltonian Floer homology with all of its
(possibly higher homotopical) operations.  See \cite{Costello:2006vn,
Costello:2009aa, Kontsevich:2008aa} for work around these questions in the
setting of abstract topological field theories and
\cite{GPS1:2015} for applications of Calabi-Yau structures to recovering
genus-0 Gromov-Witten invariants from the Fukaya category.
\begin{rem}\label{rem:cyclic}
    A closely related to (1), and well studied, notion is that of a {\em cyclic
    $\ainf$ category}: this is an $\ainf$ category $\cc$ equipped with a chain
    level perfect pairing \[\langle -, - \rangle : \hom(X,Y) \times \hom(Y,X) \to
    \K[-n] \]
    such that the induced correlation functions
    \[
       \langle \mu^d(-, -, \ldots, -), - \rangle
    \]
    are strictly (graded) cyclically symmetric, for each $d$
    see e.g.,
    \cite{Costello:2006vn, Fukaya:2010aa, Cho:2012aa}. 
    Although the property of being a cyclic $\ainf$ structure is not a homotopy
    invariant notion (i.e., not preserved under $\ainf$ quasi-equivalences), 
    cyclic $\ainf$ categories and proper Calabi-Yau structures turn out to be
    weakly equivalent {\em in characteristic 0}, in the following sense. 
    Any cyclic $\ainf$ category carries a canonical proper Calabi-Yau
    structure,
    and Kontsevich-Soibelman proved that a proper Calabi-Yau structure on any
    $\ainf$ category $\cc$ determines a (canonical up to quasi-equivalence)
    quasi-isomorphism between $\cc$ and a cyclic $\ainf$ category
    $\widetilde{\cc}$ \cite[Thm.  10.7]{Kontsevich:2009ab}.  
    When $\r{char}(\K)
    \neq 0$, the two notions of proper Calabi-Yau and cyclic $\ainf$ differ in
    general, due to group cohomology obstructions to imposing cyclic symmetry.
    In such instances, it seems that the notion of a proper Calabi-Yau structure is the
    ``correct'' one (as it is a homotopy invariant notion and, by Theorem
    \ref{mainthm3}, the compact Fukaya category always has one).  
\end{rem}
As a first application of Theorem
\ref{thm:mainresult1}, we verify the longstanding expectation that various
compact Fukaya categories possess geometrically defined canonical Calabi-Yau
structures:
\begin{thm}\label{mainthm3} 
The Fukaya category of compact Lagrangians has --- under technical hypotheses
$(\star)$ --- a canonical geometrically defined proper Calabi-Yau structure over
any ground field $\K$ (over which the Fukaya category and $\widetilde{\oc}$ are
defined).
\end{thm}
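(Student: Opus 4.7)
The plan is to define $[\widetilde{tr}]$ as a composition of the cyclic open-closed map (Corollary \ref{cor:cyclicOC}) with a natural projection and fundamental-class integration on the Floer side, and then to reduce the required non-degeneracy to the classical non-degeneracy of the underlying weak proper Calabi-Yau trace $\int_M \circ [\oc]$ on Hochschild homology.

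First, in the compact setting (case (1) of the introduction), $CF^*(M)$ is quasi-isomorphic as a $C_{-*}(S^1)$-module to $QH^*(M)$ with the \emph{trivial} $S^1$-action. For any $C_{-*}(S^1)$-module $P$ with trivial action, the homotopy orbit complex $P_{hS^1}$ splits canonically as $P \otimes_{\K} C_*(BS^1)$, so the augmentation $C_*(BS^1) \to \K$ induces a natural chain map $\mathrm{aug}: P_{hS^1} \to P$. Applied to $CF^*(M)$, this yields $HF^{*+n,+}_{S^1}(M) \to QH^{*+n}(M)$. Composing with fundamental-class integration $\int_M : QH^{*+n}(M) \to \K[-n]$, I set
\[
    [\widetilde{tr}] := \int_M \circ \, \mathrm{aug} \circ [\widetilde{\oc}^+] : \r{HC}^+_*(\f) \to \K[-n].
\]
Theorem \ref{thm:mainresult1} ensures this is well-defined with respect to the appropriate $\ainf$ and $C_{-*}(S^1)$-module structures.

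For non-degeneracy, I pull back along the canonical map $\iota: \r{HH}_*(\f) \to \r{HC}^+_*(\f)$ to obtain the associated weak trace. By construction and naturality of $\widetilde{\oc}$ with respect to the inclusion $\r{HH}_* \hookrightarrow \r{HC}^+_*$, this pullback coincides with $\int_M \circ [\oc] : \r{HH}_*(\f) \to \K[-n]$. The non-degeneracy condition for a proper Calabi-Yau structure (reviewed in \S\ref{section:cystructures}) amounts to the induced Mukai-type pairing on $\r{HH}_*(\f)$ being perfect. In the compact monotone/aspherical setting under consideration, this is the classical non-degeneracy of the Mukai pairing on the compact Fukaya category, a form of the Cardy identity, following from Poincar\'e duality on $M$ combined with the standard compatibility between $[\oc]$ and its closed-to-open counterpart $[\co]$.

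The principal conceptual obstacle, already overcome by Theorem \ref{thm:mainresult1}, is producing the positive cyclic lift of the weak trace: without the coherently $C_{-*}(S^1)$-equivariant enhancement $\widetilde{\oc}$ there is no canonical way to extend $\int_M \circ [\oc]$ from $\r{HH}_*(\f)$ to $\r{HC}^+_*(\f)$. Once this lift is in hand, both the well-definedness and the non-degeneracy reduce cleanly to established facts; the remaining task in a full write-up would be to identify the abstract non-degeneracy condition of \S\ref{section:cystructures} with the Mukai-pairing non-degeneracy on $\r{HH}_*(\f)$ and to verify that $[\widetilde{tr}]$ does not depend (up to homotopy) on the auxiliary choices entering the construction.
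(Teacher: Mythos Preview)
Your overall architecture matches the paper's exactly: define $\widetilde{tr}$ as $\widetilde{\oc}^+$ followed by projection to the $u^0$-component and integration over $M$, then verify non-degeneracy by checking that the induced weak trace $tr = \int_M \circ [\oc]$ on $\r{HH}_*(\f)$ is non-degenerate. The construction step is fine and essentially identical to the paper's \eqref{trcomposition}.

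The gap is in your identification of the non-degeneracy condition. In Definition~\ref{weakproperCY}, non-degeneracy of $tr$ means that for every pair of objects $X,Y$ the pairing
\[
H^*(\hom_\f(X,Y)) \otimes H^{n-*}(\hom_\f(Y,X)) \xrightarrow{[\mu^2]} H^n(\hom_\f(Y,Y)) \xrightarrow{[i]} \r{HH}_n(\f) \xrightarrow{[tr]} \K
\]
is perfect. This is a statement about \emph{morphism spaces}, not about any pairing on $\r{HH}_*(\f)$ itself. What you call ``Mukai-pairing non-degeneracy on $\r{HH}_*(\f)$'' is a much stronger (and generally false without further hypotheses) assertion, and the Cardy identity and the closed-open map $[\co]$ play no role here. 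The paper's Lemma~\ref{poincareduality} establishes the required non-degeneracy directly as Poincar\'e duality in \emph{Lagrangian} Floer cohomology: one homotopes the count defining $tr \circ \mu^2$ to a count of two-punctured discs with an unconstrained interior marked point, which after a further homotopy reduces to counting constant discs pairing each generator $p \in \hom(X,Y)$ with its dual $p^\vee \in \hom(Y,X)$. No appeal to $\co$, Cardy, or non-degeneracy of a pairing on Hochschild homology is needed (or available in this generality). Once you replace your paragraph on non-degeneracy with this elementary Floer-theoretic argument, the proof is complete and coincides with the paper's.
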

In fact, this proper Calabi-Yau structure is easy to describe in terms of the
cyclic open-closed map (c.f., Cor. \ref{cor:cyclicOC}):
it is the composition of the map $\widetilde{\oc}^+: \r{HC}^+_*(\mc{F}) \ra
H^{*+n}(M, \partial M)( ( u ) ) / u H^{*+n} (M,\partial M)[ [ u ] ]$\footnote{Recall that $C^*(M, \partial M)$ has the trivial $C_{-*}(S^1)$ module structure; the homology of the associated homotopy orbit complex is $H^{*+n}(M, \partial M)( ( u ) ) / u H^{*+n} (M,\partial M)[ [ u ] ]$ where $|u| = 2$, as described in \S \ref{section:s1action}.} with the
linear map  to $\K$ 
which sends the top class $PD(pt) \cdot u^0 \in H^{2n}(M, \partial M)$ to $1$,
and all other generators $\alpha \cdot u^{-i}$ to 0. See \S \ref{section:cystructures} for more details.

As a consequence of the discussion in Remark \ref{rem:cyclic}, specifically
\cite[Thm.  10.7]{Kontsevich:2009ab}, we deduce that 
\begin{cor}
    If $\r{char}(\K) = 0$, then any Fukaya category of compact Lagrangians
    satisfying $(\star)$ admits a (canonical up to equivalence) cyclic $\ainf$
    (minimal) model.
\end{cor}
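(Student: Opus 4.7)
The strategy is essentially a two-step citation. First, invoke Theorem \ref{mainthm3} to equip the compact Fukaya category $\mathcal{F}$ with its canonical geometrically defined proper Calabi-Yau structure, i.e., the map $[\widetilde{tr}]: \mathrm{HC}^+_*(\mathcal{F}) \to \K[-n]$ constructed (as described after Theorem \ref{mainthm3}) by composing $\widetilde{\oc}^+$ with evaluation against the top class in $H^{*+n}(M,\partial M)$.

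Second, feed this data into the Kontsevich-Soibelman theorem \cite[Thm.~10.7]{Kontsevich:2009ab} recalled in Remark \ref{rem:cyclic}: when $\r{char}(\K) = 0$, a proper Calabi-Yau structure on any $\ainf$ category $\mathcal{C}$ determines a quasi-equivalence between $\mathcal{C}$ and a cyclic $\ainf$ category $\widetilde{\mathcal{C}}$, canonical up to quasi-equivalence. Applying this to $(\mathcal{F}, [\widetilde{tr}])$ yields a cyclic $\ainf$ category quasi-equivalent to $\mathcal{F}$. Canonicity up to equivalence follows by concatenating the canonicity statements of the two inputs (Theorem \ref{mainthm3} produces $[\widetilde{tr}]$ canonically from the geometric data; Kontsevich-Soibelman then produces $\widetilde{\mathcal{F}}$ canonically up to quasi-equivalence from $[\widetilde{tr}]$).

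To extract the minimal model claim, one either observes that the Kontsevich-Soibelman construction is naturally formulated at the level of minimal models (it produces a cyclic $\ainf$ structure on the cohomology), or applies a cyclic-equivariant version of Kadeishvili's homological perturbation theorem to $\widetilde{\mathcal{F}}$. The latter exists precisely in characteristic 0, since the construction of a cyclically symmetric transferred structure involves averaging over the cyclic symmetry groups $\Z/k\Z$ acting on cyclically composable chains, which requires invertibility of $k$.

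The only point requiring genuine care is the role of the characteristic hypothesis: this enters exactly once, at the Kontsevich-Soibelman step, where (as emphasized in Remark \ref{rem:cyclic}) group-cohomology obstructions to strictifying cyclic symmetry can genuinely occur when $\r{char}(\K) \neq 0$. Thus the expected main (and only substantive) obstacle is bookkeeping: verifying that the canonicity of $[\widetilde{tr}]$ established in the proof of Theorem \ref{mainthm3} matches the input hypothesis of \cite[Thm.~10.7]{Kontsevich:2009ab} well enough that the resulting $\widetilde{\mathcal{F}}$ depends, up to $\ainf$ quasi-equivalence of cyclic $\ainf$ categories, only on the original geometric data defining $\mathcal{F}$.
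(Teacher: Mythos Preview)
Your proposal is correct and takes essentially the same approach as the paper: the paper treats this corollary as an immediate consequence of Theorem~\ref{mainthm3} combined with \cite[Thm.~10.7]{Kontsevich:2009ab} as recalled in Remark~\ref{rem:cyclic}, and does not write out a separate proof. Your elaboration on the minimal-model extraction and the role of the characteristic hypothesis is accurate additional commentary, but the paper itself simply states the corollary as deduced from those two inputs.
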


\begin{rem}\label{fukayacyclic}
    In the case of compact symplectic manifolds and over $\K = $ a Novikov
    field containing $\R$, Fukaya \cite{Fukaya:2010aa} constructed a cyclic
    $\ainf$ model of the Floer cohomology algebra of a single compact
    Lagrangian, which will be extended to multiple objects by
    Abouzaid-Fukaya-Oh-Ohta-Ono \cite{afooo}. 
\end{rem}

\begin{rem}
In order to construct (chain level) 2d-TFTs on the Hochschild chain complexes
of categories, Kontsevich-Soibelman \cite{Kontsevich:2009ab} partly  show (on
the closed sector) that a proper Calabi-Yau structure can be
used instead of the (weakly equivalent in characteristic 0) cyclic $\ainf$
structures considered in \cite{Costello:2006vn}. One might similarly hope that,
for applications of cyclic $\ainf$ structures to disc counting/open
Gromov-Witten invariants developed in \cite{Fukaya:2011aa}, a proper
Calabi-Yau structure is in fact sufficient. See \cite{Cho:2012aa} for related
work.
\end{rem}

Turning to smooth Calabi-Yau structures, in \S \ref{subsec:smoothCY}, we will
establish the following existence result for smooth Calabi-Yau structures, which
applies to wrapped Fukaya categories of non-compact (Liouville) manifolds as
well as to Fukaya categories of compact manifolds:
\begin{thm}\label{thm:smoothCY}
    Under the technical hypotheses $(\star)$, suppose further that our
    symplectic manifold $M$ is {\em non-degenerate} in the sense of
    \cite{ganatra1_arxiv}, meaning that the map
    $[\oc]: \r{HH}_{*-n}(\f) \ra HF^*(M)$ hits the unit $1 \in HF^*(M)$.  Then,
    its (compact or wrapped) Fukaya category $\f$ possesses a canonical,
    geometrically defined {\em strong
    smooth Calabi-Yau structure}.
\end{thm}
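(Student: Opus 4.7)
The plan is to combine the non-degeneracy hypothesis with Theorem \ref{thm:mainresult1}/Corollary \ref{cor:cyclicOC}, and reduce the construction of a strong smooth Calabi-Yau structure to lifting the unit $1 \in HF^0(M)$ to a canonical class in negative cyclic Floer cohomology. Under the hypothesis that $[\oc]$ hits the unit, the main result of \cite{ganatra1_arxiv} upgrades this to the statement that $[\oc]$ is actually an isomorphism (and in fact $\f$ is smooth, with a bimodule-level enhancement of $[\oc]$ being a quasi-isomorphism). Corollary \ref{cor:cyclicOC} then immediately implies that the negative cyclic open-closed map $[\widetilde{\oc}^-] : \r{HC}^-_*(\f) \to HF^{*+n,-}_{S^1}(M)$ is an isomorphism.

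Next, I would produce a canonical \emph{$S^1$-equivariant unit} $\widetilde{1} \in HF^{0,-}_{S^1}(M)$ lifting $1 \in HF^0(M)$. In the homotopy fixed point model, classes are formal power series in a degree-$2$ variable $u$, and $\widetilde{1} = 1 + 0\cdot u + 0\cdot u^2 + \cdots$ is a cycle as soon as all higher $C_{-*}(S^1)$-operations on $1$ vanish. In the compact case the action is trivial, so this is automatic. In the Liouville case, $[\Delta]$ is a graded derivation of a unital product so $[\Delta](1) = [\Delta](1 \cdot 1) = 2 [\Delta](1)$ forces $[\Delta](1) = 0$, and analogous unital/derivation identities handle the higher $S^1$-operations, producing a canonical negative cyclic lift. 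Define the candidate volume class
\[
    [\widetilde{\sigma}] := [\widetilde{\oc}^-]^{-1}(\widetilde{1}) \in \r{HC}^-_{-n}(\f).
\]

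It remains to verify that $[\widetilde{\sigma}]$ is a \emph{strong smooth} Calabi-Yau structure, i.e., that its image $\sigma_0 \in \r{HH}_{-n}(\f)$ under the canonical map $\r{HC}^-_* \to \r{HH}_*$ induces a quasi-isomorphism of $\ainf$ bimodules $\f^! \to \f[n]$ via the cap product $\sigma_0 \cap -$. By construction $[\oc](\sigma_0) = 1$. To establish the bimodule-level non-degeneracy, I would identify $\sigma_0 \cap -$ with a two-pointed (bimodule) enhancement of the open-closed map, as in \cite{ganatra1_arxiv}: geometrically, capping against $\sigma_0$ corresponds to inserting the preimage of the unit into a geometric open-closed operation, whose non-degeneracy at the level of bimodules is exactly the content of the non-degeneracy hypothesis. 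This identification reduces non-degeneracy of $\sigma_0 \cap -$ to the bimodule-level quasi-isomorphism statement for $[\oc]$, which holds under the hypothesis.

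The main obstacle is this last step: upgrading the \emph{existence} of a negative cyclic lift to genuine non-degeneracy at the level of $\ainf$ bimodules. While Theorem \ref{thm:mainresult1} and Corollary \ref{cor:cyclicOC} produce $[\widetilde{\sigma}]$ essentially formally, verifying that cap-product with $\sigma_0$ implements the same bimodule morphism as the two-pointed open-closed map requires a chain-level compatibility between the bimodule structure on the diagonal, the cap-product, and the geometric moduli defining $\oc$. Given such a compatibility (which follows from the bimodule-level open-closed package of \cite{ganatra1_arxiv}), the non-degeneracy is immediate, and the strong smooth Calabi-Yau structure is canonical because both $\widetilde{1}$ and $[\widetilde{\oc}^-]^{-1}$ are.
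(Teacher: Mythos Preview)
Your overall strategy matches the paper's: invoke \cite{ganatra1_arxiv} to get that $[\oc]$ is an isomorphism and that the preimage $[\sigma]$ of $1$ is a weak smooth Calabi-Yau structure, then use Corollary~\ref{cor:cyclicOC} to invert $[\widetilde{\oc}^-]$ and pull back a canonical equivariant lift $\widetilde{1}$ of the unit. The non-degeneracy step is also handled the same way in the paper (via the commutative diagram from \cite{ganatra1_arxiv} showing that cap with $[\sigma]$ is a one-sided inverse to the geometric bimodule map $\mathcal{CY}$), so your sketch there is fine.

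The genuine gap is your construction of $\widetilde{1}$ in the Liouville case. First, $[\Delta]$ is \emph{not} a derivation of the product on $SH^*(M)$; it is the BV operator, a second-order differential operator. The conclusion $[\Delta](1)=0$ happens to be correct (it is part of the unital BV axioms), but your stated reason is wrong. More seriously, the claim that ``analogous unital/derivation identities handle the higher $S^1$-operations'' does not hold: the operators $\delta_k$ for $k\geq 2$ on $SC^*(M)$ carry no known algebraic compatibility with the product that would force $\delta_k(1)$ to vanish (even cohomologically), and knowing only $[\delta_1][1]=0$ does not by itself produce a lift to $H^*(SC^*(M)^{hS^1})$---one must solve an infinite tower of obstruction equations $\delta_k(1) + \delta_{k-1}(x_1) + \cdots = d(x_k)$, and there is no algebraic reason these are unobstructed.

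The paper avoids this entirely by a geometric argument (\S\ref{sec:interior}): for a suitable choice of Floer data the Morse/constant-loop subcomplex $C_{Morse}(H)\hookrightarrow SC^*(M)$ is an $S^1$-\emph{subcomplex} on which all $\delta_k$, $k\geq 1$, \emph{strictly vanish} (Lemma~\ref{lem:constantloops}). This immediately gives a chain map $C^*(M)[[u]]\to SC^*(M)^{hS^1}$, and $\widetilde{1}$ is defined as the image of $1\in H^*(M)$ under this map (equation~\eqref{tilde1}). You should replace your derivation argument with this geometric input.
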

Once more, the cyclic open-closed map gives an efficient description of this structure: it is the unique element $\r{HC}^-_{-n}(\f)$ mapping via $\widetilde{\oc}^-$ to the geometrically canonical lift $\widetilde{1}\in H^*(CF^*(M)^{hS^1})$ of the unit $1 \in CF^*(M)$ described in \S \ref{sec:interior}.\footnote{As shown in \cite{ganatra1_arxiv, GPS1:2015}, if $[\oc]$ hits $1$, then $[\oc]$ is an isomorphism, and hence by Corollary \ref{cor:cyclicOC}, $[\widetilde{\oc}^-]$ is too. Hence one can speak about the unique element.}

\begin{rem}
    In contrast to compact Fukaya categories or wrapped Fukaya categories of
    Liouville manifolds, the Fukaya categories of non-compact Lagrangians
    discussed in Remark \ref{partiallywrapped1} are
    typically not Calabi-Yau in either sense\footnote{One manifestation of this
is the failure of the target of the open-closed map to have a distinguished unit element,
as also discussed in Remark \ref{partiallywrapped1}.}, even if they are smooth or
proper categories; indeed they typically arise as homological mirrors to
perfect/coherent complexes on non-Calabi-Yau varieties. Instead, one might
expect such
categories to admit {\em pre-Calabi-Yau structures} in the sense of
\cite{Kontsevich:uq, Kontsevich:CY,Yeung:2018aa} (e.g., see
\cite{Seidel:2018ab} for a construction of related structures) or {\em relative
Calabi-Yau structures} in the sense of
\cite{brav_dyckerhoff_2019}.  
\end{rem}
The notion of a smooth Calabi-Yau structure, or sCY structure, will be studied
further in forthcoming joint work with R. Cohen \cite{CohenGanatra:2015}, and
used to compare the wrapped Fukaya category of a cotangent bundle and string
topology category of its zero section as {\em categories with sCY structures}
(in order to to deduce a comparison of topological field theories on both
sides).

\subsection{Noncommutative Hodge-de-Rham degeneration for smooth and proper Fukaya categories}
For a $C_{-*}(S^1)$ module $P$, there is a canonical Tor spectral
sequence converging to $H^*(P_{hS^1})$ with first page
$H^*(P) \otimes_{\K} H^*(\K_{hS^1}) \cong H^*(P)
\otimes_{\K} H_*(\mathbb{CP}^\infty)$.  When applied to  the Hochschild complex $P =
\r{CH}_*(\cc)$ of a (dg/$\ainf$) category $\cc$, the resulting spectral sequence, from
(many copies of) $\r{HH}_*(\cc)$ to $\mathrm{HC}^+(\cc)$ is called the {\em
Hochschild-to-cyclic} or {\em noncommutative Hodge-de-Rham (ncHDR) spectral
sequence.} The latter name comes from the fact that when $\cc = perf(X)$ is
perfect complexes on a complex variety $X$, this spectral sequence is equivalent (via
Hochschild-Kostant-Rosenberg (HKR) isomorphisms) to the usual Hodge-to-de-Rham
spectral sequence from Hodge cohomology to de Rham cohomology
\[
    H^*(X, \Omega^*_X) \Rightarrow H^*_{dR}(X),
\]
which degenerates (as we are in characteristic 0) whenever $X$ is smooth and proper.
Motivated by this, Kontsevich formulated the {\em noncommutative Hodge-de-Rham (ncHDR)
degeneration conjecture} \cite{Kontsevich:2009ab, Kontsevich:2008aa}: 
for any smooth and
proper category $\cc$ in characteristic 0, its ncHDR spectral sequence
degenerates. A general proof of this fact for $\Z$-graded categories was recently given by Kaledin, following earlier work establishing it in the
coconnective case \cites{Kaledin:2017aa, Kaledin:2008aa}.

Using the cyclic open-closed map, we can give a purely symplectic proof of the
nc-HdR degeneration property for those smooth and proper $\cc$ arising as
Fukaya categories, including in non-$\Z$-graded cases:
\begin{thm}\label{thm:hdrfuk}
    Let $\mathcal{A} \subset \mathcal{F}(M)$ be a smooth and proper
    subcategory of any Fukaya category of any compact symplectic manifold
    satisfying the technical assumptions $(\star)$, over any field $\K$
    (over which the Fukaya category and the cyclic open-closed map are
    defined). Then, the nc Hodge-de-Rham spectral sequence for $\mathcal{A}$
    degenerates.
\end{thm}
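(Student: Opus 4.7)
The plan is to exploit three ingredients: the cyclic open-closed map of Theorem \ref{thm:mainresult1}, the fact that for compact $X$ the Floer complex $CF^*(X) \simeq QH^*(X)$ carries the trivial $C_{-*}(S^1)$-action (setting (1) of the introduction), and the classical Cardy--Mukai argument showing that the restriction of the open-closed map to a smooth and proper $\mathcal{C}$ is injective. Restricting the construction of Theorem \ref{thm:mainresult1} to $\mathcal{C}$ yields an $\ainf$ $C_{-*}(S^1)$-module homomorphism $\widetilde{\oc}_{\mathcal{C}}: \r{CH}_*(\mathcal{C}) \to CF^*(X)$; applying the positive cyclic functor of Corollary \ref{cor:cyclicOC} descends this to a map $[\widetilde{\oc}_{\mathcal{C}}^+]: \r{HC}^+_*(\mathcal{C}) \to QH^{*+n}(X)[u^{-1}]$ between the abutments of the respective ncHDR spectral sequences.

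The key nonformal input is injectivity of $\oc_{\mathcal{C}}: \r{HH}_*(\mathcal{C}) \to QH^{*+n}(X)$. Since $\mathcal{C}$ is smooth and proper, the Mukai pairing $\langle -, - \rangle_{\mathrm{Muk}}: \r{HH}_*(\mathcal{C}) \otimes \r{HH}_*(\mathcal{C}) \to \K[-n]$ is nondegenerate --- a standard consequence of perfection of the diagonal bimodule (after Shklyarov and Kontsevich--Soibelman). The geometric Cardy relation for open-closed maps, proved by the familiar annulus-degeneration argument on the relevant moduli spaces (after \cite{Abouzaid:2010kx} and its extensions to compact symplectic targets), identifies the Mukai pairing with the pullback under $\oc_{\mathcal{C}}$ of the Poincar\'{e} pairing on $QH^*(X)$; nondegeneracy of the former then forces $\oc_{\mathcal{C}}$ to be injective.

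The degeneration is then formal. The $u$-adic filtration on $\r{CC}^+(\mathcal{C})$ produces the ncHDR spectral sequence for $\mathcal{C}$, with $E_1 \cong \r{HH}_*(\mathcal{C})[u^{-1}]$, $d_1 = B$, converging to $\r{HC}^+_*(\mathcal{C})$; the analogous spectral sequence for the target $QH^*(X)[u^{-1}]$ degenerates trivially, since all differentials vanish when the $C_{-*}(S^1)$-action is trivial. By $S^1$-equivariance, $\widetilde{\oc}^+_{\mathcal{C}}$ respects both filtrations and induces a morphism of spectral sequences whose $E_1$-page map is $\oc_{\mathcal{C}}[u^{-1}]$, injective by the preceding paragraph. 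Compatibility with differentials gives $\oc_{\mathcal{C}}[u^{-1}] \circ d_r^{\mathrm{src}} = d_r^{\mathrm{tgt}} \circ \oc_{\mathcal{C}}[u^{-1}] = 0$; injectivity then forces $d_r^{\mathrm{src}} = 0$ inductively for every $r \geq 1$ (on each successive page the source remains $E_1^{\mathrm{src}}$, the comparison map remains $\oc_{\mathcal{C}}[u^{-1}]$, and the same reasoning recurs). Hence $E_\infty = E_1$, i.e.\ the ncHDR spectral sequence for $\mathcal{C}$ degenerates. The main obstacle is establishing the chain-level Cardy relation in the compact symplectic setting, which requires careful handling of sphere-bubbling contributions but is by now standard; modulo this, the argument is a formal consequence of having an $S^1$-equivariant map into a target with trivial $C_{-*}(S^1)$-action.
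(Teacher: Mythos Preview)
Your proof is correct and shares the same geometric inputs as the paper's argument, but the final deduction of degeneration takes a different route. The paper invokes the abstract criterion (cited from \cite{Dotsenko:2015aa}) that the ncHDR spectral sequence degenerates if and only if the $S^1$-complex is quasi-isomorphic to a trivial one; it then uses the stronger fact (from \cite{GPS1:2015, Gautogen_arxiv}) that for smooth $\mathcal{C}$ the map $[\oc|_{\mathcal{C}}]$ is an \emph{isomorphism onto a direct summand} $S \subset QH^*(X)$, so that $\widetilde{\oc}$ exhibits $\r{CH}_*(\mathcal{C})$ as $S^1$-quasi-isomorphic to $S$ with its trivial action. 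Your argument instead extracts only \emph{injectivity} of $[\oc_{\mathcal{C}}]$ from the Cardy/Mukai pairing, and then runs a direct spectral-sequence comparison: injectivity on $E_1$ together with vanishing of the target differentials forces each $d_r^{\mathrm{src}}=0$ inductively. Your route is more elementary in that it avoids both the abstract trivialization criterion and the summand statement; the paper's route is more conceptual and packages the conclusion as an $S^1$-equivalence. The underlying geometric content (the Cardy relation) is the same either way---indeed the summand statement the paper cites is itself proved via that relation.
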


\begin{proof}
The nc Hodge-de-Rham spectral sequence for $\mc{A}$ degenerates at
page 1 if and only if $P=CH_*(\mathcal{A})$ is isomorphic (in the category of
$C_{-*}(S^1)$ modules) to a trivial
$C_{-*}(S^1)$-module, e.g., if the $C_{-*}(S^1)$ action is trivializable \cite{Dotsenko:2015aa}*{Thm. 2.1}.
For compact symplectic manifolds $M$, recall that $CF^*(M) \cong H^*(M)$
has a canonically trivial(izable) $C_{-*}(S^1)$ action (see Corollary
\ref{trivialconstantloops}; this comes from, e.g.,  the fact that we can choose
a $C^2$ small Hamiltonian to compute the complex,  all of the orbits  of which
are constant loops on which geometric rotation acts trivially. Or more
directly, we can modify the definition of $\widetilde{\oc}$ to give a map
directly to $H^*(M)$ with its trivial $C_{-*}(S^1)$ action, as described in \S
\ref{pseudocycles}). 

By earlier work \cite{GPS1:2015, Gautogen_arxiv}, whenever $\mathcal{A}$ is
smoooth, $\oc|_{\mathcal{A}}$ is an isomorphism from
$\r{HH}_{*-n}(\mathcal{A})$ onto a non-trivial summand $S$ of $HF^*(M) \cong
\r{QH}^*(M)$; the $C_{-*}(S^1)$ action on this summand is trivial too. Theorem
\ref{thm:mainresult1} shows that $\widetilde{\oc}|_{\mathcal{A}}$ induces an
isomorphism in the category of $C_{-*}(S^1)$ modules between
$\r{CH}_{*}(\mathcal{A})$ and $S[n]$ with its trivial action, so we are done.
\end{proof}

\begin{rem}
        Theorem \ref{thm:hdrfuk} holds for a field $\K$ of any
        characteristic over which the Fukaya category and relevant
        structures (satisfy $(\star)$ and) are defined, for any
        grading structure that can be defined on the given Fukaya category
        (for instance, it holds for the $\Z/2$-graded Fukaya category of a
        monotone symplectic manifold over a field of any characteristic).
        In contrast, for an arbitrary smooth and proper $\Z/2$-graded dg
        category in characteristic zero, the noncommutative Hodge-de-Rham
        degeneration is not yet established (though it is
        expected).  And it is not always true in finite characteristic.

        An incomplete explanation for the degeneration property holding for finite
        characteristic smooth and proper Fukaya categories 
        may be that the Fukaya category over a characteristic $p$ field $\K$
        (whenever Lagrangians are monotone or tautologically unobstructed at
        least) may always admit a lift to
        second Witt vectors $W_2(\K)$.\footnote{The author wishes to thank Mohammed
        Abouzaid for discussions regarding this point.}
\end{rem}

As is described in joint work (partly ongoing) with T. Perutz and N. Sheridan
\cite{GPS1:2015, GPS2:2015}, the cyclic open-closed map $\widetilde{\oc}^-$ can
further be shown to be a {\em morphism of semi-infinite Hodge structures}, a
key step (along with the above degeneration property and construction of
Calabi-Yau structure) in recovering Gromov-Witten invariants from the Fukaya
category and enumerative mirror predictions from homological mirror theorems.

\subsection{Outline of Paper}
In \S \ref{section:s1action}, we recall a
convenient model for the category of $\ainf$ modules over $C_{-*}(S^1)$ and
various equivariant homology functors from this category.
In \S\ref{section:open}, we review the (compact and wrapped) Fukaya category
along with $C_{-*}(S^1)$ action on its (and more generally, any cohomologically
unital $\ainf$ category's) {\em non-unital Hochschild chain complex} (a variant on usual cyclic bar complex that has usually appeared in the symplectic literature e.g., in \cite{Abouzaid:2010kx}).
In \S \ref{section:closed}, we recall the construction of the $\ainf$
$C_{-*}(S^1)$ module structure on the (Hamiltonian) Floer chain complex,
following \cite{Bourgeois:2012fk, Seidel:2010fk} (note that our technical setup
is slightly different, though equivalent).
Then we prove our main results in \S \ref{section:openclosed1}.
Some technical and conceptual variations on the construction of
$\widetilde{\oc}$ (including Remark \ref{rmk:thmvariations}) are discussed at
the end of this section, see \S \ref{sec:OCvariants}.  Finally, in \S
\ref{section:cystructures} we apply our results to construct proper and smooth
Calabi-Yau structures, proving Theorems \ref{mainthm3} and \ref{thm:smoothCY}.

\subsection{Conventions}\label{sec:conventions}
We work over a ground field $\K$ of arbitrary characteristic (though we note
that all of our geometric constructions are valid over an arbitrary ring, e.g.,
$\Z$). All chain complexes will be graded {\it cohomologically}, including
singular chains of any space, which hence have negative the homological grading
and are denoted by $C_{-*}(X)$. All gradings are either in $\Z$ or $\Z/2$ (in
the latter case, degrees of maps are implicitly mod 2).

\subsection*{Acknowledgements}
I'd like to thank Paul Seidel for a very helpful conversation and Nick Sheridan
for several helpful discussions about technical aspects of this paper such as
signs.  I would also like to thank Zihong Chen for
catching a typo in the construction of Floer data. Part of this work was revised during a visit at the Institut
Mittag-Leffler in 2015, who I'd like to thank for their hospitality. 
Finally, I'd like to thank an anonymous referee for a number of helpful suggestions, comments, and
corrections which improved the exposition of this article.

\section{Complexes with circle action}\label{section:s1action}
In this section, we review a convenient model for the category of $\ainf$
$C_{-*}(S^1)$ modules, for which the $\ainf$ $C_{-*}(S^1)$ action can be
described by a single hierarchy of maps satisfying equations. 
We also describe various equivariant homology complexes in this language in
terms of simple formulae.  This model appears elsewhere in the literature as
{\em $\infty$-mixed complexes} or {\em $S^1$-complexes} or {\em multicomplexes} (we will sometimes
adopt the second term); see e.g., \cite{Bourgeois:2012fk, Zhao:2019aa, Dotsenko:2015aa} (but
note that the first and third references use homological grading conventions).

\subsection{Definitions}\label{circleactionsubsection}
Let $C_{-*}(S^1)$ denote the dg algebra of chains on the circle with
coefficients in $\K$, graded cohomologically, with multiplication induced by the
Pontryagin product $S^1 \times S^1 \ra S^1$. This algebra is {\it formal}, or
quasi-isomorphic to its homology, an exterior algebra on one generator
$\Lambda$ of degree -1 with no differential. Henceforth, by abuse of notation
we take this exterior algebra as our working model for $C_{-*}(S^1)$
\begin{equation}
    C_{-*}(S^1):= \K[\Lambda]/\Lambda^2, \ \ \ \ |\Lambda| = -1,
\end{equation}
and use the terminology $C_{-*}^{sing}(S^1)$ to refer to usual singular chains
on $S^1$.  
\begin{defn}\label{s1strict}
    A {\em strict $S^1$-complex}, or a {\em chain complex with strict/dg $S^1$
    action}, is a unital differential graded module over
    $\K[\Lambda]/\Lambda^2$. 
\end{defn}
Let $(M,d)$ be a strict $S^1$-complex; by definition $(M,d)$ is a co-chain
complex (recall our conventions for complexes from \S \ref{sec:conventions})
and the unital dg $\K[\Lambda]/\Lambda^2$ module structure is equivalent to
the data of the single additional operation of multiplying by $\Lambda$
\begin{equation}
    \Delta = \Lambda \cdot -: M_* \ra M_{*-1},
\end{equation}
which must square to zero and anti-commute with $d$. In other words,
$(M,d,\Delta)$ is what is known as a {\it mixed complex}, see e.g.,
\cite{Burghelea:1986aa,
Kassel:1987aa, Loday:1992fk}.

We will need to work with the weaker notion of an $\ainf$ action, or rather an
$\ainf$ module structure over $C_{-*}(S^1) = \K[\Lambda]/\Lambda^2$.
Recall that a {\em (left) {\em $\ainf$ module $M$}} \cite{Keller:2006ab, Seidel:2008zr, Seidel:2008cr, ganatra1_arxiv} over the associative
graded algebra $A = \K[\Lambda]/\Lambda^2$ is a graded $\K$-module $M$
equipped with maps
\begin{equation}\label{ainfmodulemaps}
    \mu^{k|1}: A^{\otimes k} \otimes M \ra M,\ \ \ k \geq 0
\end{equation}
of degree $1-k$, satisfying the $\ainf$ module equations described in
\cite{Seidel:2008cr} or \cite{ganatra1_arxiv}*{(2.35)}.
Since $A = \K[\Lambda]/\Lambda^2$ is unital, we can work with modules that are
also {\em strictly unital} (see \cite{Seidel:2008cr}*{(2.6)}); this implies
that all multiplications by a sequence with at least one unit element is
completely specified,\footnote{More precisely $\mu^{1|1}(1,\mathbf{m}) =
\mathbf{m}$ and $\mu^{k|1}(\ldots, 1, \ldots, \mathbf{m})
= 0$ for $k > 1$.} 
and hence
the only non-trivial structure maps to define are the operators 
\begin{equation}\label{bvinfmaps}
        \delta_{k} := \mu^{k|1}_{M}(\underbrace{\Lambda, \ldots, \Lambda}_{k\textrm{ copies}}, -): M \ra M [1-2k],\ k \geq 0.
\end{equation}
The $\ainf$ module equations are equivalent to the following relations for \eqref{bvinfmaps}
for each $s \geq 0$,
\begin{equation}\label{inftycomplexequations}
    \sum_{i=0}^s \delta_i \delta_{s-i} = 0.
\end{equation}
We summarize the discussion so far with the following definition:
\begin{defn} \label{homotopycircleaction}
    An {\em $S^1$-complex}, or a {\em chain complex with a $\ainf$ $S^1$ action}, 
    is a stricly unital (left) $\ainf$ module $M$ over
    $\K[\Lambda]/\Lambda^2$. Equivalently, it is a graded $\K$-module $M$ equipped
    with operations $\{\delta_k: M \to M[1-2k]\}_{k \geq 0}$ satisfying, for
    each $s \geq 0$, the hierarchy of equations \eqref{inftycomplexequations}.
\end{defn}

\begin{rem}\label{spaceS1actionS1complex}
    If $X$ is a topological space with $S^1$ action, then 
    $C_{-*}(X)$ carries a dg $C_{-*}^{sing}(S^1)$ module structure, with module
    action induced by the action $S^1 \times X \to X$. Under the $\ainf$
    equivalence $C_{-*}^{sing}(S^1) \cong \K[\Lambda]/\Lambda^2$, it follows
    that $C_{-*}(X)$ carries an $\ainf$ (not necessarily dg)
    $\K[\Lambda]/\Lambda^2$ module structure, which can further be made strictly unital 
    (by \cite{Lefevre02}*{Thm. 3.3.1.2} or by
    passing to normalized chains). If one wishes, one can then appeal to abstract
    strictification results to produce a dg $\K[\Lambda]/\Lambda^2$ module
    which is quasi-isomorphic as $\ainf$ $\K[\Lambda]/\Lambda^2$ modules to
    $C_{-*}(X)$. More directly, it turns out \cite{CohenGanatra:2015} one can find an equivalent dg
    $\K[\Lambda]/\Lambda^2$ module by taking a suitable quotient of the
    normalized singular chain complex $C_{-*}(X)$ to form {\em unordered
    normalized singular chains} of $X$ (identifying simplices differing by
    permuting vertices and quotienting by those that are degenerate). 
\end{rem}

\begin{rem}
    There are multiple sign conventions for $\ainf$ modules over an $\ainf$
    algebra; the most common two conventions appear in e.g.,
    \cite{Seidel:2008cr}*{(2.6)} and \cite{Seidel:2008zr}*{(1j)} as well as
    many other places.
    These conventions are completely irrelevant for strictly
    unital $A = \K[\Lambda]/\Lambda^2$ modules, as
    the reduced degree of any element in $\bar{A} =
    \mathrm{span}_{\K}(\Lambda)$ is zero; hence the (Koszul) signs in various
    formulae are $+1$ in either convention.
\end{rem}
For $s=0$, \eqref{inftycomplexequations} says simply that the differential $d =
\delta_0$ squares to 0; for $s=1$, \eqref{inftycomplexequations} implies
$\delta:= \delta_1$ anti-commutes with $d$, and for $s=2$, $(\delta)^2 = -
(d\delta_2 + \delta_2 d)$, or that $\delta^2$ is chain-homotopic to zero, but
not strictly zero, as measured by the chain homotopy $\delta_2$.  

$S^1$-complexes, as strictly unital $\ainf$ modules over the
augmented algebra $A = \K[\Lambda]/\Lambda^2$, are the objects of a dg category which we will call 
\begin{equation}
    S^1\mod:= uA\mod 
\end{equation}
(compare \cite{Seidel:2008cr}*{p. 90, 94} for the
definition of this category where it is called $mod(A) = mod(A,\K)$) whose morphisms and compositions we now
recall.  Denote by $\e: A \ra \K$ the augmentation map, and $\bar{A} = \ker \e =
\mathrm{span}_{\K}(\Lambda)$ the
augmentation ideal. 
Let $M$ and $N$ be two strictly unital $\ainf$ $A$-modules. A
{\it unital pre-morphism of degree $k$} from $M$ to $N$ of degree $k$ is a collection of maps
$    F^{d|1}: \overline{A}^{\otimes d} \otimes M \ra N$,  $d \geq 0$,
of degree $k-d$, or equivalently since $\dim_{\K}(\overline{A}) = 1$
in degree -1, a collection of operators
\begin{equation}\label{simplifiedpremorphism}
    \begin{split}
        F &= \{F^d\}_{d \geq 0}\\
        F^d &: = F^{d|1}(\underbrace{\Lambda, \ldots, \Lambda}_{d\textrm{ copies}}, -): M \to N[k-2d].
\end{split}
\end{equation}
If $T (\bar{A}[1]) = \oplus_{d \geq 0} \bar{A}[1]^{\otimes d}$ denotes the tensor
algebra of $\bar{A}[1]$, then $F$ can be alternatively packaged into the data
of a single degree $k$ map $F := \oplus_{d \geq 0} F^d: T \bar{A}[1] \otimes M \to N$.
The space of pre-morphisms of each degree form the graded space of morphisms in
$S^1 \mod$, which we will denote by $\r{Rhom}_{S^1}(-,-)$:
\begin{equation}\label{extcomplex}
    \begin{split}
        \r{Rhom}_{S^1}(M,N) &:= \bigoplus_{k \in \Z} \r{Rhom}_{S^1}^k(M,N):= \bigoplus_{k \in \Z} \hom_{grVect}(T(\overline{A}[1]) \otimes M, N[k]) \\
    &= (\bigoplus_{k \in \Z} \hom_{grVect}( \oplus_{d \geq 0} M[2d], N[k])). 
\end{split}
\end{equation}
There is a differential $\partial$ on \eqref{extcomplex} described in
\cite{Seidel:2008cr}*{p. 90}; in terms of the simplified form of pre-morphisms
\eqref{simplifiedpremorphism}, one has
\begin{equation}
    (\partial F)^s = \sum_{i = 0}^s F^i \circ \delta^M_{s-i} - (-1)^{\deg(F)} \sum_{j=0}^s \delta^N_{s-j}\circ F^j.
\end{equation}
An {\em $\ainf$ $\K[\Lambda]/\Lambda^2$ module homomorphism}, or {\em
$S^1$-complex homomorphism} is a pre-morphism $F = \{F^d\}$ which is closed,
e.g., $\partial F = 0$. In particular, $F$ is an $\ainf$ module homomorphism if
the following equations are satisfied, for each $s$: 
\begin{equation}\label{s1homomorphismeqns}
    \sum_{i = 0}^s F^i \circ \delta^M_{s-i} = (-1)^{\deg(F)} \sum_{j=0}^s \delta^N_{s-j}\circ F^j.
\end{equation}
Note that the $s=0$ equation reads $F^0 \circ \delta_0^M = (-1)^{\deg(F)}
\delta_0^N \circ F^0$, so (if $\partial F = 0$) $F^0$
induces a cohomology level map $[F^0]: H^*(M) \ra H^{*+\deg(F)}(N)$. A module
homomorphism (or closed morphism) $F$ is said to be a {\it quasi-isomorphism}
if $[F^0]$ is an isomorphism on cohomology. A {\em strict} module homomorphism
$F$ is one for which $F^k = 0$ for $k > 0$.

\begin{rem}\label{nonunitalmorphism}
    There is an enlarged notion of a {\em non-unital} pre-morphism (used for
    modules which aren't necessarily strictly unital), which is a collection of
    maps $\{\hat{F}^d: A^{\otimes d} \otimes M \to N\}_d$ instead of
    $\{F^d: \overline{A}^{\otimes d} \otimes M \to N\}_d$. Any pre-morphism $F = \{F^d\}_d$ as we've
    defined extends to a non-unital pre-morphism $\hat{F} = \{\hat{F}^d\}$ by
    declaring $\hat{F}^d(\ldots, 1, \ldots, \mathbf{m}) = 0$. For strictly unital
    modules, the resulting inclusion from the complex of pre-morphisms to the
    complex of non-unital pre-morphisms is a quasi-isomorphism.
\end{rem}

\begin{rem}\label{derivedhom}
    When $M$ and $N$ are $dg$ modules, or strict $S^1$-complexes, $\mathrm{Rhom}_{S^1}(M,N)$ is a 
    {\it reduced bar model} of the chain complex of derived
    $\K[\Lambda]/\Lambda^2$ module homomorphisms, which is one of the reasons
    we've adopted the terminology
    ``$\mathrm{Rhom}$''. In the $\ainf$ setting, we recall that there is no
    sensible ``non-derived'' notion of a $\K[\Lambda]/\Lambda^2$ module map (compare \cite{Seidel:2008cr}).
\end{rem}

The composition in the category $S^1 \mod$
\begin{equation}
    \r{Rhom}_{S^1}(N,P) \otimes \r{Rhom}_{S^1}(M,N) \ra \r{Rhom}_{S^1}(M,P)
\end{equation}
is defined by 
\begin{equation}
    (G \circ F)^s = \sum_{j=0}^s G^{s-j} \circ F^j.
\end{equation}
\begin{rem}\label{extkk}
    If $M$ is any $S^1$-complex, then its endomorphisms
    $\r{Rhom}_{S^1}(M,M)$ equipped with composition, form a dg algebra.  As an
    example, consider $M = \K$, with trivial module structure (determined by
    the augmentation $\e: \K[\Lambda]/\Lambda^2 \ra \K$).  
    It is straightforward to compute that, as a dga
    \begin{equation}\label{kuidentification}
            \r{Rhom}_{S^1}(\K, \K) \cong \K[u], \ |u| = 2.
    \end{equation}
    (in terms of the definition of morphism spaces \eqref{extcomplex}, $u$
    corresponds to the unique morphism $G = \{G^d\}_{d \geq 0}$ of degree $+2$
    with $G^1 = \mathrm{id}$ and $G^s = 0$ for $s \neq 1$).
\end{rem}

In addition to taking the morphism spaces, 
one can define the (derived) {\em
tensor product} of $S^1$-complexes $N$ and $M$: using the isomorphism $A \cong
A^{op}$ coming from commutativity of $A = \K[\Lambda]/\Lambda^2$, first view
$N$ as a {\em right $\ainf$ $A$ module} (see \cite{Seidel:2008cr}*{p. 90, 94}
where the category of right $A$ modules are called $mod(\K, A)$, \cite{Seidel:2008zr}*{(1j)}, \cite{ganatra1_arxiv}*{\S 2})
and then take the usual (necessarily derived) tensor product of $N$ and $M$ over $A$ (see
\cite{Seidel:2008cr}*{p.  91} or \cite{ganatra1_arxiv}*{\S 2.5}).  The
resulting chain complex (which we will by abuse of notation indicate as the
derived tensor product over $S^1$) has underlying graded vector space 
\begin{equation}\label{tensorproduct}
    \begin{split}
        N \otimes_{S^1}^{\mathbb L} M &:= N\otimes_{A}^{\mathbb L} M := \bigoplus_{d \geq 0} N \otimes \overline{A}[1]^{\otimes d} \otimes M\\
        &= \bigoplus_{d\geq 0} (N \otimes_{\K} M)[2d]
    \end{split}
\end{equation}
(the degree $s$ part is $\bigoplus_{d \geq 0} \bigoplus_{s} N_t \otimes M_{s +
2d-t}$). Let us refer to an element $n \otimes m$ of the $d$th summand of this
complex by suggestive notation $n \otimes \underbrace{\Lambda \otimes \cdots
\otimes \Lambda}_{d\textrm{ times}} \otimes m$ as in the first line of
\eqref{tensorproduct}. With this notation, the differential on
\eqref{tensorproduct} acts as
\begin{equation}
    \partial(n \otimes\underbrace{\Lambda \otimes \cdots \otimes
    \Lambda}_{d} \otimes m)  = \sum_{i=0}^d \left( (-1)^{|m|} \delta_i^N  n\otimes \underbrace{\Lambda \otimes \cdots \otimes
    \Lambda}_{d-i} \otimes m +  n\otimes \underbrace{\Lambda \otimes \cdots \otimes
    \Lambda}_{d-i} \otimes\delta_i^M m\right).
\end{equation}
(here our sign convention follows \cite{ganatra1_arxiv}*{\S 2.5} rather than
\cite{Seidel:2008cr}, though the sign difference is minimal).

\begin{rem}
    Analogously to Remark \ref{derivedhom}, if $M$ and $N$ are unital dg modules over 
    $A = \K[\Lambda]/\Lambda^2$, the chain complex described above computes their
    derived tensor product, whose homology is
    $\mathrm{Tor}_{A}(M,N)$. While we have therefore opted
    for the notation $N \otimes_{A}^{\mathbb L} M$ (or rather the abbreviation $N \otimes_{S^1}^{\mathbb L} M$) 
    we note that the (derived) tensor product of $\ainf$ modules is often written in the $\ainf$
    literature without the superscript $\mbox{ }^{\mathbb L}$ as simply $N\otimes_{A} M$ (compare \cite{Seidel:2008cr}*{eq. (2.6)}).
\end{rem}
The pairing \eqref{tensorproduct} is suitably functorial with respect to
morphisms of the $S^1$-complexes involved, meaning that $-\otimes_{S^1} N$ and
$M\otimes_{S^1} -$ both induce dg functors from $S^1\mod$ to chain complexes
(compare \cite{Seidel:2008cr}*{p. 92}).  For instance, if $F = \{F^j\}: M_0 \to
M_1$ is a pre-morphism of $S^1$-complexes, then there are induced maps
\begin{align}
        F_\sharp: N \otimes_{S^1}^{\mathbb L} M_0 &\ra N \otimes_{S^1}^{\mathbb L} M_1\\
\nonumber        n \otimes\underbrace{\Lambda \otimes \cdots \otimes
        \Lambda}_{d} \otimes m  &\mapsto \sum_{j=0}^d n \otimes\underbrace{\Lambda \otimes \cdots \otimes
        \Lambda}_{d-j} \otimes F^j(m);\\
        F_\sharp: M_0 \otimes_{S^1}^{\mathbb L} N &\ra M_1 \otimes_{S^1}^{\mathbb L} N\\
\nonumber        m \otimes\underbrace{\Lambda \otimes \cdots \otimes
\Lambda}_{d} \otimes n  &\mapsto \sum_{j=0}^d (-1)^{\deg(F) \cdot |n|} F^j(m) \otimes\underbrace{\Lambda \otimes \cdots \otimes
        \Lambda}_{d-j} \otimes n;
\end{align}
which are chain maps if $\partial(F) = 0$.

Hom and tensor complexes of $S^1$-complexes, as in any category of $\ainf$
modules, satisfy the following strong homotopy invariance properties:
\begin{prop}[Homotopy invariance]\label{homotopyinvariance}
    If $F: M \ra M'$ is any quasi-isomorphism of $S^1$-complexes (meaning $\partial(F) = 0$ and $[F^0]: H^*(M) \stackrel{\cong}{\to} H^*(M')$ is an isomorphism),
    then composition with $F$ induces quasi-isomorphisms of hom and tensor
    complexes: 
\begin{equation}
    \begin{split}
        F \circ \cdot: \r{Rhom}_{S^1}(M', P) &\stackrel{\sim}{\ra} \r{Rhom}_{S^1}(M,P)\\
        \cdot \circ F: \r{Rhom}_{S^1}(P, M) &\stackrel{\sim}{\ra} \r{Rhom}_{S^1}(P,M')\\
        F_{\sharp}: N \otimes_{S^1}^{\mathbb L} M &\stackrel{\sim}{\ra} N \otimes_{S^1}^{\mathbb L} M'.\\
        F_{\sharp}: M \otimes_{S^1}^{\mathbb L} N &\stackrel{\sim}{\ra} M' \otimes_{S^1}^{\mathbb L} N.
\end{split}
\end{equation}
\end{prop}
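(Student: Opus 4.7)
The plan is to deduce the proposition from the general principle that every quasi-isomorphism of $\ainf$-modules over any $\ainf$-algebra admits a homotopy inverse. Granting the existence of some $G \in \r{Rhom}_{S^1}(M', M)$ with $\partial G = 0$, together with pre-morphisms $H, H'$ satisfying $\partial H = G \circ F - \mathrm{id}_M$ and $\partial H' = F \circ G - \mathrm{id}_{M'}$, composition with $G$ (respectively $G_\sharp$) provides explicit chain homotopy inverses to all four maps displayed in the proposition, since composition and $- \otimes_{S^1}^{\mathbb L} -$ are strictly associative dg bifunctors on $S^1\mod$ as recalled above. Thus the entire proposition reduces to producing this homotopy inverse together with its homotopies.

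To produce $G, H, H'$ I would argue by induction on length. One takes $G^0$ to be a chain-level inverse to the ordinary cochain map $F^0 : (M, \delta_0^M) \to (M', \delta_0^{M'})$ (which exists since $[F^0]$ is an isomorphism), and $H^0, (H')^0$ to be chain-level homotopies trivializing $G^0 F^0 - \mathrm{id}_M$ and $F^0 G^0 - \mathrm{id}_{M'}$. Assuming that $G^0, \ldots, G^{k-1}$ and the corresponding tails of $H, H'$ have been constructed so that the defining relations $\partial G = 0$, $\partial H = G \circ F - \mathrm{id}_M$, $\partial H' = F \circ G - \mathrm{id}_{M'}$ hold modulo length $\geq k$, the level-$k$ relation becomes the assertion that an explicit $\delta_0$-cycle assembled from the previous data is trivializable by a suitable choice of $G^k$ (respectively $H^k, (H')^k$); the cohomology class of each such cycle vanishes as a formal consequence of the induction hypothesis together with $[F^0]$ being an isomorphism. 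This is standard and, in essentially this formulation, appears in \cite{Seidel:2008cr} and \cite{Keller:2006ab}.

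As an alternative, more hands-on route, one can argue directly with the length filtrations. On the tensor side, $F_p := \bigoplus_{d \leq p}(N\otimes M)[2d]$ is an exhaustive, $\partial$-preserving, bounded-below filtration whose associated graded has differential $\delta_0^N \otimes \mathrm{id} \pm \mathrm{id} \otimes \delta_0^M$, so $E_1 \cong \bigoplus_d H^*(N) \otimes H^*(M)[2d]$, and $F_\sharp$ restricts on associated graded to $\mathrm{id} \otimes F^0$, which is an iso by the Künneth theorem; the spectral sequence comparison theorem then concludes. On the Hom side, the analogous decreasing filtration $F^p\r{Rhom}_{S^1}(M,N) = \{G : G^d = 0 \text{ for } d<p\}$ is complete and Hausdorff, with induced map on associated graded given by pre- or postcomposition with $F^0$, an iso on cohomology. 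The main obstacle in this second approach will be verifying convergence of the Hom-side spectral sequence, since $\r{Rhom}_{S^1}^k(M,N)$ is a \emph{product} over $d$ in each total degree $k$ and the filtration is not bounded; this requires a complete-filtration comparison argument in the Eilenberg--Moore spirit. This convergence issue is the one real technical subtlety and is the reason I would prefer to organize the proof around the homotopy-inverse construction rather than the filtration approach.
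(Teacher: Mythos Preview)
Your proposal is correct. The paper itself gives only a one-line indication of proof: ``a standard argument involving filtration of the above hom and tensor complexes by `length' (with respect to the number of $\overline{A}^{\otimes d}$ factors).'' In other words, the paper's route is precisely your \emph{second}, ``alternative'' approach, not your primary one.

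Your primary approach---produce a homotopy inverse $G$ together with homotopies $H, H'$ by an induction on length, then conclude by functoriality of composition and $\otimes_{S^1}^{\mathbb L}$---is a genuinely different organization. It has the advantage you note: once $G, H, H'$ exist, all four statements follow formally with no further convergence bookkeeping. The paper's filtration approach is more direct but, as you correctly flag, on the $\r{Rhom}$ side the length filtration is decreasing and not bounded below (pre-morphisms are products over $d$, as is visible from the $u$-linear packaging $M \to N[[u]]$), so one needs the Eilenberg--Moore comparison theorem for complete Hausdorff filtrations rather than the naive bounded spectral sequence comparison. This is not a genuine obstruction---the filtration is visibly complete and Hausdorff---but the paper does not spell it out, and your caution about it is well-placed.
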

The proof is a standard argument (though we do not know a specific reference): one exhibits acyclicity of the cone of each of the above maps by studying the spectral sequence with respect to the length filtration
(with respect to the number of $\overline{A}^{\otimes d}$ factors in the bar model of the complexes); the first page of the associated spectral sequence is the cone of the map associated to the derived homs/tensor products of the associated homology-level modules by the homology level map $[F^0]$, which is acyclic by hypothesis; hence the second page vanishes and the cone is acyclic (compare analogous arguments in \cite{Seidel:2008zr}*{Lemma 2.12} or \cite{ganatra1_arxiv}*{Prop.  2.2}).

Let $(P, \{\delta_i^P\})$ and $(Q, \{\delta_j^Q\}_j)$ be $S^1$-complexes, and
$f: P \to Q$ a chain map of some degree $\deg(f)$ (with respect to the
$\delta_0^P$ and $\delta_0^Q$ differentials). An {\em $S^1$-equivariant
enhancement of $f$} is a degree $\deg(f)$ homomorphism $\mathbf{F} =
\{\mathbf{F}^i\}_{i \geq 0}$ of $S^1$-complexes (e.g., a closed morphism, so
$\mathbf{F}$ satisfies \eqref{s1homomorphismeqns})
with $[\mathbf{F}^0] = [f]$. 
\begin{rem}
    Note that there are a series of obstructions to the existence of an
    $S^1$-equivariant enhancement of a given chain map $f$; for instance a first
    necessary condition is the vanishing of the cohomology class $[f] \circ [\delta_1^P] - [\delta_1^Q] \circ
    [f]$.
\end{rem}

Finally, we note that, just as the product of $S^1$ spaces $X \times Y$
possesses a diagonal action, the (linear) tensor product of $S^1$-complexes is
again an $S^1$-complex.  
\begin{lem}\label{diagonalaction}
    If 
    $(M,\delta_{eq}^M = \sum_{i=0}^{\infty} \delta_j^M u^j)$ 
    and $(N, \delta_{eq}^N = \sum_{i=0}^{\infty}
    \delta_i^N u^i)$ are
    $S^1$-complexes, then the graded vector space $M \otimes N$ is naturally an
    $S^1$-complex with $\delta_{eq}^{M \otimes N} = \sum_{i=0}^{\infty} \delta_k^{M
    \otimes N} u^k$, where
    \begin{equation}
        \delta_k^{M \otimes N}(\mathbf{m} \otimes \mathbf{n}):= 
            (-1)^{|\mathbf{n}|}\delta_k^M\mathbf{m} \otimes \mathbf{n} + \mathbf{m} \otimes \delta_k^N \mathbf{n} 
    \end{equation}
    We call the resulting $S^1$ action on $M \otimes N$ the {\em diagonal $S^1$-action}.
\end{lem}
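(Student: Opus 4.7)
The plan is to verify Definition \ref{homotopycircleaction}, i.e.\ that the operators $\delta^{M\otimes N}_k$ satisfy the hierarchy $\sum_{i=0}^s \delta^{M\otimes N}_i \delta^{M\otimes N}_{s-i} = 0$ for every $s \geq 0$, by direct expansion and a reindexing argument. The structural point I would emphasize is that the sign $(-1)^{|\mathbf{n}|}$ in the definition does not depend on $k$; this is because each $\delta^M_k$ has degree $1-2k$, which is always odd, so the Koszul sign for moving $\delta^M_k$ across $\mathbf{n}$ is $(-1)^{|\mathbf{n}|\cdot(1-2k)} = (-1)^{|\mathbf{n}|}$ regardless of $k$. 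This uniformity is what allows the diagonal action to be described by a single formula.

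First I would expand $\delta^{M\otimes N}_{s-i}(\mathbf{m}\otimes\mathbf{n})$ using the definition, then apply $\delta^{M\otimes N}_i$ term by term. On the pure-$M$ side one finds a factor of $(-1)^{|\mathbf{n}|}\cdot(-1)^{|\mathbf{n}|} = 1$ and the term $(\delta^M_i\delta^M_{s-i}\mathbf{m})\otimes \mathbf{n}$; summing over $i$ gives $0$ by the $\ainf$ relation \eqref{inftycomplexequations} for $M$. Similarly the pure-$N$ terms give $\mathbf{m}\otimes(\delta^N_i\delta^N_{s-i}\mathbf{n})$, which sums to $0$ by the relation for $N$.

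The remaining contributions are two families of cross terms of the form $(-1)^{|\mathbf{n}|}(\delta^M_{s-i}\mathbf{m})\otimes(\delta^N_i\mathbf{n})$ and $(-1)^{|\delta^N_{s-i}\mathbf{n}|}(\delta^M_i\mathbf{m})\otimes(\delta^N_{s-i}\mathbf{n})$. Because $|\delta^N_{s-i}\mathbf{n}| = |\mathbf{n}| + 1 - 2(s-i)$, the second sign equals $-(-1)^{|\mathbf{n}|}$, and after the substitution $j = s-i$ in the first sum the two families become manifestly equal up to a global sign and cancel in pairs. The expected obstacle, if any, is purely bookkeeping of these Koszul signs; there is no real geometric or homological content beyond the observation above about the parity of $|\delta^M_k|$.

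Finally, I would record that this diagonal operation is strictly unital whenever $M$ and $N$ are (which here is automatic, as strict unitality for $\K[\Lambda]/\Lambda^2$-modules is encoded entirely in the fact that $\mu^{0|1} = \delta_0$ is the differential), and note that the construction is natural in both factors, so that it upgrades to a bifunctor $S^1\mod \times S^1\mod \to S^1\mod$ compatible with the pre-morphisms of \eqref{simplifiedpremorphism}; this naturality, although not explicitly claimed in the statement, is what will be used later when pairing the open and closed sectors diagonally (cf.\ Remark \ref{rmk:thmvariations}).
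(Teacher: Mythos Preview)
Your proof is correct and follows essentially the same approach as the paper's: direct expansion of $\delta_i^{M\otimes N}\delta_{s-i}^{M\otimes N}$ into four terms, with the pure-$M$ and pure-$N$ terms vanishing by \eqref{inftycomplexequations} and the two cross terms cancelling in pairs after reindexing. Your additional remark that the Koszul sign $(-1)^{|\mathbf{n}|}$ is independent of $k$ because $|\delta_k^M| = 1-2k$ is always odd is a helpful clarification that the paper leaves implicit.
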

\begin{proof}
    We compute 
    \begin{equation}
        \delta_j^{M\otimes N} \delta_k^{M \otimes N} (\mathbf{m} \otimes \mathbf{n}) = 
            \delta_j^M \delta_k^M\mathbf{m} \otimes \mathbf{n} + (-1)^{|\mathbf{n}| + 1}\delta_j^M \mathbf{m} \otimes \delta_k^N \mathbf{n}  + (-1)^{|\mathbf{n}|}\delta_k^M\mathbf{m} \otimes \delta_j^N \mathbf{n} + \mathbf{m} \otimes \delta_j^N \delta_k^N \mathbf{n}  \\
    \end{equation}
    Summing 
    over all $j+k = s$,  the middle two terms cancel in pairs
    and the sums of the leftmost terms (respectively rightmost) terms
    respectively vanish because $M$ (respectively $N$) is an $S^1$-complex.
\end{proof}

\begin{defn}\label{trivialaction}
Let $M := (M, d)$ be a chain complex over $\K$. The pullback of $M$ along the
(augmentation) map $\K[\Lambda]/\Lambda^2 \to \K$ is called the
{\em trivial $S^1$-complex}, or {\em chain complex with trivial $S^1$-action} associated to $M$, and denoted $\underline{M}^{triv}$.
Concretely, $\underline{M}^{triv} := (M, \delta_0 = d, \delta_k = 0\textrm{ for }k>0)$. 
\end{defn}

\subsection{Equivariant homology groups}\label{sec:equivariantgroups}
Let $M$ be an $S^1$-complex. Let $\K = \underline{\K}^{triv}$ denote the strict trivial rank-1 $S^1$-complex concentrated in degree 0.
\begin{defn}\label{defhomotopyorbit}
    The {\em homotopy orbit complex}
    of $M$ is the (derived) tensor product of $M$ with $\K$ over
    $C_{-*}(S^1)$:
    \begin{equation} \label{cyclichomologycomplex}
        M_{hS^1}:= \K \otimes^{\mathbb L}_{S^1} M.
    \end{equation}
\end{defn}
The (strict) morphism of $S^1$-complexes $\epsilon: \K[\Lambda]/\Lambda^2 \to \K$ (here $\K[\Lambda]/\Lambda^2$ comes equipped with structure maps $\delta_k = 0$ for $k \neq 1$ and $\delta_1 = \Lambda \cdot -$) induces by functoriality a
 chain map from $M$ to $M_{hS^1}$ called the {\em projection to homotopy
orbits}: 
\begin{equation}\label{projectionhomotopyorbits}
    pr: M \cong \K[\Lambda]/\Lambda^2 \otimes^{\mathbb L}_{S^1} M \to \K \otimes^{\mathbb L}_{S^1} M = M_{hS^1}
\end{equation}
\begin{rem}\label{chainlevelequivarianthomology}
    When $M = C_{-*}(X)$, with $S^1$-complex induced by a topological $S^1$
    action on $X$ as in Remark \ref{spaceS1actionS1complex}, 
    the complex \eqref{cyclichomologycomplex}
    computes the Borel equivariant homology of $X$, by the following reasoning: first, the $\ainf$ equivalence between $\K[\Lambda]/\Lambda^2$ and $C_{-*}^{sing}(S^1)$ induces an equivalence
    \[
        M_{hS^1} \simeq C_{-*}(pt) \otimes^{\mathbb L}_{C_{-*}^{sing}(S^1)} C_{-*}(X).
    \] 
    Next, one observes that $C_{-*}(ES^1) \to C_{-*}(pt)$ is a {\em free
    resolution} of dg $C_{-*}^{sing}(S^1)$ modules (where the
    $C_{-*}^{sing}(S^1)$ actions are induced by the $S^1$ actions
    on $ES^1$ and $pt$ respectively), as $S^1$ acts freely on $ES^1$. Hence, the derived tensor product becomes an ordinary tensor product 
    \[
        C_{-*}(pt) \otimes^{\mathbb L}_{C_{-*}^{sing}(S^1)} C_{-*}(X)   = C_{-*}(ES^1) \otimes_{C_{-*}^{sing} (S^1)} C_{-*}(X)  .
    \]
    Finally, it is a standard fact in algebraic topology (used in the
    construction of Eilenberg-Moore type spectral sequences, e.g.,
    \cite{McCleary:2001fk}*{Thm.  7.27}) that 
    \[
        C_{-*}(ES^1) \otimes_{C_{-*}^{sing} (S^1)} C_{-*}(X)   \simeq C_{-*}(ES^1 \times_{S^1} X) = C_{-*}(X_{hS^1}),
    \]
    which is the usual chain complex computing (Borel) equivariant homology.
    This gives some justification for the usage of the ${
    }_{hS^1}$ notation in Definition \ref{defhomotopyorbit}.
\end{rem}

\begin{defn}
    The {\em homotopy fixed point complex} of $M$ is
    the chain complex of morphisms from $\K$ to $M$ in the category of $S^1$-complexes:
    \begin{equation}\label{negativecyclic}
        M^{hS^1}:= \r{Rhom}_{S^1}(\K,M).
    \end{equation}
\end{defn}
The morphism of modules $\epsilon: \K[\Lambda]/\Lambda^2 \to \K$ induces a chain map $M^{hS^1} \to M$ called the {\em inclusion of homotopy fixed points}
\begin{equation}\label{inclusionhomotopyfixedpoints}
    \iota: M^{hS^1} = \r{Rhom}_{S^1}( \K, M) \to \r{Rhom}_{S^1}( \K[\Lambda]/\Lambda^2, M) \cong M.
\end{equation}

\begin{rem}
    To motivate the usage ``homotopy fixed points,'' note that in the
    topological category, the usual fixed points of a $G$ action can be
    described as $Maps_G(pt, X)$.  When $M = C_{-*}(X)$ for $X$ an $S^1$-space,
    there is a canonical map $C_{-*}(X^{hS^1}) \to (C_{-*}(X))^{hS^1}$. However
    (in contrast to the case of homotopy orbits discussed in Remark
    \ref{chainlevelequivarianthomology}), this map need not be an equivalence.
\end{rem}
Composition in the category $S^1 \mod$ induces a natural action of 
\begin{equation}
    \r{Rhom}_{S^1}(\K,\K) = \K[u]\ \ (|u| = 2) = H^*(BS^1)
\end{equation}
on the homotopy fixed point 
complex. There is a third important equivariant homology complex, called the
{\em periodic cyclic}, or {\em Tate} complex of $M$, defined as the
localization of $M^{hS^1}$ away from $u = 0$; 
\begin{equation}
    M^{Tate} := M^{hS^1} \otimes_{\K[u]} \K[u,u^{-1}].
\end{equation} 
The Tate construction sits in an exact sequence between the homotopy orbits and
fixed points.

\begin{rem}[Gysin sequences]\label{remark:gysin}
    It is straightforward from the viewpoint of $\ainf$ $C_{-*}(S^1)$ modules
    to explain the appearance of various Gysin and periodicity sequences. Take
    for instance the {\em Gysin exact triangle} 
    \[
        M \stackrel{pr}{\to} M_{hS^1} \to M_{hS^1}[2] \stackrel{[1]}{\to} 
    \]
    This is a manifestation of a canonical exact triangle of objects in
    $S^1\mod$: 
    \[
        \K[\Lambda]/\Lambda^2 \stackrel{\epsilon}{\to} \K \stackrel{u}{\ra} \K[2] \stackrel{[1]}\ra 
    \]
    (recall in Remark \ref{extkk} it was shown $\r{Rhom}_{S^1}(\K, \K) \cong \K[u]$),
    pushed forward by the functor $\r{Rhom}_{S^1}(\cdot, M)$.
    The other exact sequences arise similarly.
\end{rem}

As a special case of the general homotopy-invariance properties of $\ainf$
modules stated in Proposition \ref{homotopyinvariance}, we have:
\begin{cor}\label{S1homotopyinvariance}
If $F: M \ra N$ is a homomorphism of $S^1$-complexes (meaning a closed morphism), it induces
chain maps between equivariant theories
\begin{align}
    \label{inducedcyclic}F^{hS^1}: M^{hS^1} &\to N^{hS^1}\\ 
    F_{hS^1}: M_{hS^1} &\to N_{hS^1}\\
    \label{inducedtate}F^{Tate}: M^{Tate} &\to N^{Tate}
\end{align}
If $F$ is a quasi-isomorphism of $S^1$-complexes (meaning simply $[F^0]$ is a
homology isomorphism), then \eqref{inducedcyclic}-\eqref{inducedtate} are
quasi-isomorphisms of chain
complexes.  \qed
\end{cor}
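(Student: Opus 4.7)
The plan is to deduce Corollary \ref{S1homotopyinvariance} as a direct corollary of Proposition \ref{homotopyinvariance}, specialized to the constant $S^1$-complex $N = \K$, together with a short additional argument for the Tate piece.

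First, I would unwind the definitions: by construction $M_{hS^1} = \K \otimes^{\mathbb L}_{S^1} M$ and $M^{hS^1} = \r{Rhom}_{S^1}(\K, M)$. Thus the induced maps
\begin{equation*}
F_\sharp : \K \otimes^{\mathbb L}_{S^1} M \lra \K \otimes^{\mathbb L}_{S^1} N, \qquad F \circ - : \r{Rhom}_{S^1}(\K, M) \lra \r{Rhom}_{S^1}(\K, N)
\end{equation*}
make sense and are chain maps by the dg-functoriality of $-\otimes_{S^1}^{\mathbb L} -$ and $\r{Rhom}_{S^1}(-,-)$ discussed just before Proposition \ref{homotopyinvariance}. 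The quasi-isomorphism claim for $F_{hS^1}$ and $F^{hS^1}$ is then literally the content of Proposition \ref{homotopyinvariance}, applied with the second (respectively first) entry equal to the fixed $S^1$-complex $\K$. No further argument is needed.

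For the Tate complex, I would first observe that the $\K[u]$-module structure on $M^{hS^1}$ is induced by composition with the dga $\r{Rhom}_{S^1}(\K,\K) \cong \K[u]$ from Remark \ref{extkk}. Since $F \circ -$ commutes (strictly) with pre-composition by endomorphisms of $\K$, the chain map $F^{hS^1}$ is $\K[u]$-linear. Therefore there is a well-defined induced map
\begin{equation*}
F^{Tate} := F^{hS^1} \otimes_{\K[u]} \mathrm{id}_{\K[u,u^{-1}]} : M^{hS^1} \otimes_{\K[u]} \K[u,u^{-1}] \lra N^{hS^1} \otimes_{\K[u]} \K[u,u^{-1}].
\end{equation*}
Now, $\K[u,u^{-1}]$ is a flat $\K[u]$-module (it is a localization), so the functor $-\otimes_{\K[u]}\K[u,u^{-1}]$ is exact and in particular preserves quasi-isomorphisms of $\K[u]$-modules. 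Combined with the previously established fact that $F^{hS^1}$ is a quasi-isomorphism whenever $F$ is, we conclude $F^{Tate}$ is also a quasi-isomorphism.

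There is essentially no obstacle here --- the entire argument is a formal specialization of the general homotopy invariance properties of $\ainf$-modules over $\K[\Lambda]/\Lambda^2$, plus the elementary fact that localization of $\K[u]$-modules is exact. The only mild subtlety worth spelling out is the $\K[u]$-linearity of $F^{hS^1}$, which is needed to make sense of $F^{Tate}$ as a map between the localized complexes; this follows immediately from associativity of composition in the dg category $S^1\mod$.
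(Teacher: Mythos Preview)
Your proof is correct and takes essentially the same approach as the paper, which simply states the corollary as a special case of Proposition~\ref{homotopyinvariance} and appends a \qed. Your added explanation for the Tate case (via $\K[u]$-linearity of $F^{hS^1}$ and flatness of the localization $\K[u,u^{-1}]$) fills in a detail the paper leaves implicit but is entirely in line with its intent.
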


Functoriality further tautologically implies that
\begin{prop}\label{functorialitysequences}
    If $F: M \ra N$ is a homomorphism of $S^1$-complexes, then the various
    induced maps \eqref{inducedcyclic} - \eqref{inducedtate} intertwine all of
    the long exact sequences for (equivariant homology groups of) $M$ with
    those for $N$.  \qed
\end{prop}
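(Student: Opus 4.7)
The plan is to derive the proposition as a formal consequence of the description (sketched in Remark \ref{remark:gysin}) of each Gysin/periodicity long exact sequence as arising by applying an appropriate functor on $S^1\mod$ to a fixed exact triangle of $S^1$-complexes. Concretely, recall that the triangle
\[
    \K[\Lambda]/\Lambda^2 \stackrel{\epsilon}{\ra} \K \stackrel{u}{\ra} \K[2] \stackrel{[1]}{\ra}
\]
in $S^1\mod$ yields the Gysin sequence for $M_{hS^1}$ (resp.\ $M^{hS^1}$) upon application of the functor $-\otimes^{\mathbb L}_{S^1} M$ (resp.\ $\r{Rhom}_{S^1}(-,M)$), and similarly for the periodicity and Tate sequences. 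In each case, the connecting map of the long exact sequence is induced by the boundary map of the underlying triangle.

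First, I would spell out, for each of the functors $\r{Rhom}_{S^1}(-,M)$, $-\otimes^{\mathbb L}_{S^1} M$, and the Tate-localized variant, the fact that they are dg functors on $S^1\mod$ (already recorded in the discussion after \eqref{tensorproduct} and in Remark \ref{derivedhom}). Hence given a homomorphism of $S^1$-complexes $F: M \ra N$, post-composition (resp.\ tensoring) with $F$ defines a chain map of triangles
\[
    \bigl(\K[\Lambda]/\Lambda^2 \otimes_{S^1}^{\mathbb L} M \to \K \otimes_{S^1}^{\mathbb L} M \to (\K \otimes_{S^1}^{\mathbb L} M)[2]\bigr) \xrightarrow{F_\sharp} \bigl(\K[\Lambda]/\Lambda^2 \otimes_{S^1}^{\mathbb L} N \to \cdots\bigr),
\]
and analogously for $\r{Rhom}_{S^1}(-,M) \to \r{Rhom}_{S^1}(-,N)$ and for the Tate construction. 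By Corollary \ref{S1homotopyinvariance}, each vertical map in these diagrams is a cohomology-level map between the associated equivariant homology groups, and each square commutes strictly at the chain level because the horizontal maps are obtained by applying the functor to \emph{fixed} morphisms (namely $\epsilon$ and $u$) that do not depend on $F$.

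Second, I would invoke the standard fact that a chain map of exact triangles in a dg/triangulated category induces, upon taking cohomology, a map of long exact sequences commuting with all connecting maps. The desired intertwining of the Gysin and periodicity sequences is precisely this statement applied to the three triangles above. Since the periodicity and Tate sequences arise from the same underlying triangle simply by applying different ($u$-localized) functors, the same argument handles them uniformly.

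The main (and only) obstacle is really bookkeeping: making sure that the connecting morphisms in the Gysin/periodicity/Tate sequences used in Corollary \ref{cor:cyclicOC} really are the ones arising from the exact triangle of Remark \ref{remark:gysin}, rather than some other natural but a priori different choice. Once this identification is in place (which can be checked by inspecting the explicit models in \S \ref{sec:equivariantgroups}), the proposition reduces entirely to dg functoriality of $\r{Rhom}_{S^1}$ and $\otimes^{\mathbb L}_{S^1}$, together with their compatibility with the localization defining the Tate complex; no further computation is required. This is presumably why the statement is marked \qed directly after its declaration.
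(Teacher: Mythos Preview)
Your proposal is correct and takes essentially the same approach as the paper, which simply declares the proposition as a tautological consequence of functoriality and marks it \qed\ without further argument. Your expansion of why this is tautological---via dg functoriality of $\r{Rhom}_{S^1}$ and $\otimes^{\mathbb L}_{S^1}$ applied to the fixed exact triangle of Remark~\ref{remark:gysin}---is exactly the content the paper leaves implicit.
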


\subsection{$u$-linear models for $S^1$-complexes}\label{sec:ulinear}

It is convenient to package the data described in the previous two sections
into ``$u$-linear generating functions'', in the following way: Let $u$ be a
formal variable of degree $+2$. Let us use the abuse of notation
\[
    M[ [u ] ] := M \widehat{\otimes}_{\K} \K[u]
\]
for the $u$-adically completed tensor product in the category of graded vector
spaces; in other words $M [ [ u ] ] := \oplus_k M[ [ u] ]_k$, where $M[ [ u]
]_k = \{\sum_{i=0}^{\infty} m_i u^i\ |\ m_i \in M_{k-2i} \}$.
Then, we frequently write an $S^1$-complex $(M, \{\delta_k\}_{k \geq 0})$ as a
$\K$-module $M$ equipped with a map
\begin{equation}
    \delta_{eq}^{(M)} = \sum_{i=0}^{\infty} \delta_i^M u^i:  M \to M[ [ u] ]
\end{equation}
of total degree 1, satisfying $\delta_{eq}^2 = 0$ (where we are implicitly
conflating $\delta_{eq}$ with its $u$-linear extension to a map $M[ [ u ] ] \to
M [ [ u ] ]$ in order to $u$-linearly compose and obtain a map $M \to M[ [ u] ]$).

Pre-morphisms from $M$ to $N$ of degree $k$ can similarly be recast as maps
$F_{eq} = \sum_{i=0}^{\infty} F_i u^i: M \to N[ [ u] ] $ of pure degree $k$ (so
each $F_i$ has degree $k-2i$). The differential on pre-morphisms can be
described $u$-linearly as 
\begin{equation}
    \partial(F_{eq}) = F_{eq} \circ \delta_{eq}^M - \delta_{eq}^N \circ F_{eq},
\end{equation}
and composition is simply the $u$-linear composition $G_{eq} \circ F_{eq}$
(again, one implicitly $u$-linearly extends $G_{eq}$ and then $u$-linearly
composes); explicitly $(\sum_{i \geq 0} G_i u^i) \circ (\sum_{j \geq 0} F_j
u^j) = \sum_{k \geq 0} (\sum_{i+j=k} G^i \circ F^j) u^k$.

With respect to this packaging, the formulae for various equivariant homology
chain complexes can be given the following more readable form:
\begin{align}
    \label{htopyorbitsU} M_{hS^1} = (M ( ( u )) / u M [ [ u ] ],\  \delta_{eq})\\
M^{hS^1} = (M [ [ u ] ],\   \delta_{eq})\\
M^{Tate} = (M ( (u ) ), \delta_{eq})
\end{align}
where again, we use the abuse of notation $M( ( u) ) = M[ [ u] ] \otimes_{\K
[u]} \K [u, u^{-1}]$ (on the other hand, note that \eqref{htopyorbitsU} is {\em
not} completed). As before, any homomorphism (that is, closed morphism) of
$S^1$-complexes $F_{eq} = \sum_{i=0}^{\infty} F^i u^i $ induces a
$\K[u]$-linear chain map between homotopy-fixed point complexes by $u$-linearly
extended composition, and hence, by tensoring over $\K[u]$ with $\K ( ( u )) /
u \K [ [ u ] ]$ or $\K( ( u ))$, chain maps between homotopy orbit and Tate
complex constructions. With respect to these explicit complexes, the projection to homotopy orbits
\eqref{projectionhomotopyorbits} and inclusion of fixed points
\eqref{inclusionhomotopyfixedpoints} chain maps have simple explicit descriptions as follows:
\begin{equation} \label{projectionhomotopyorbits_explicit}
    \begin{split}
        pr: M &\to M_{hS^1} \\
        \alpha &\mapsto \alpha \cdot u^0;
    \end{split}
\end{equation}

\begin{equation} \label{inclusionhomotopyfixedpoints_explicit}
    \begin{split}
        \iota: M^{hS^1} &\to M \\
        \sum_{i=0}^\infty \alpha_i u^i &\mapsto \alpha_0.
    \end{split}
\end{equation}

\begin{rem}
    This $u$-linear lossless packaging of the data describing an $S^1$-complex
    is a manifestation of {\em Koszul duality}; in the case of $A =
    \K[\Lambda]/\Lambda^2$, it posits that there is a fully faithful embedding,
    $\r{Rhom}(\K, -) = (-)^{hS^1}$ from $A$-modules into $B:=
    \r{Rhom}_{A}(\K,\K) = \K[u]$ modules.
\end{rem}
From the $u$-linear point of view, it is easier to observe that the exact
triangle of $\K[u]$ modules $\K[[u]] \to \K((u)) \to \K ( ( u ) ) / \K [ [ u]
] = u^{-1} (\K ( ( u ) ) / u \K [ [ u] ])$ induces a (functorial in $M$) exact triangle between equivariant homology
chain complexes $M^{hS^1} \to M^{Tate} \to M_{hS^1}[2] \stackrel{[1]}{\to}$.

\section{Circle action on the open sector}\label{section:open}

\subsection{The usual and non-unital Hochschild chain complex}
\label{subsec:hochschild}
Recall that an $\ainf$ category over $\K$, $\cc$ is specified by the following
data: 
\begin{itemize}
\item a collection of objects $\ob \mc{C}$

\item for each pair of objects $X,X'$, a graded vector space over $\K$, $\hom_\cc (X,X')$

\item for any set of $d+1$ objects $X_0, \ldots, X_d$, higher multilinear (over $\K$) composition maps
    \begin{equation}\label{compositionmaps}
        \mu^d: \hom_\cc (X_{d-1},X_d) \times \cdots \times \hom_\cc (X_{0},X_1) \ra \hom_\cc (X_0,X_d)
    \end{equation}
    (sometimes equivalently viewed as a map from the tensor product)
    of degree $2-d$, satisfying the (quadratic) $\ainf$ relations, for each $k>0$:
    \begin{equation} 
    \label{ainfeq} \sum_{i,l} (-1)^{\maltese_i} \mu^{k-l+1}_\cc(x_k, \ldots, x_{i+l+1}, \mu^{l}_\cc(x_{i+l}, \ldots, x_{i+1}), x_i, \ldots, x_1) = 0.
\end{equation}
with sign
\begin{equation}\label{ainfsign}
    \maltese_i := ||x_1|| + \cdots + ||x_i||.
\end{equation}
where $|x|$ denotes degree and $||x||:= |x| - 1$ denotes reduced degree.
\end{itemize}
The first two equations of \eqref{ainfeq} imply that
$\mu^1$ is a differential, and the cohomology level maps $[\mu^2]$ are a
genuine composition for the (non-unital) category $H^*(\cc)$ with the same
objects and morphisms
\begin{equation}\label{cohomologymorphisms}
    \mathrm{Hom}_{H^*(\cc)}(X,Y) := H^*(\hom_{\cc}(X,Y), \mu^1).
\end{equation}
We say that $\cc$ is {\em cohomologically unital} if there exist
cohomology-level identity morphisms $[e_X] \in \r{Hom}_{H^*(\cc)}(X,X)$ for
each object $X$, making $H^*(\cc)$ into a genuine category. We say that $\cc$
is {\em strictly unital} 
if there exist elements ${e}_X^+ \in \hom_\cc(X,X)$, for every object $X$, satisfying
\begin{equation}\label{eq:strictunits}
    \begin{split}
       \mu^1(e_X^+) &= 0 \\
       (-1)^{|y|}\mu^2(e_{X_1}^+,y) &= y = \mu^2(y,e_{X_0}^+)\textrm{ for any }y \in \hom_{\cc}(X_0, X_1) \\
       \mu^d(\cdots, e_X^+, \cdots) &= 0 \textrm{ for } d > 2.
    \end{split}
\end{equation}
(we call such elements the chain-level, or strict, identity elements).

The {\it Hochschild chain complex}, or {\it cyclic bar complex} of an $\ainf$ category $\cc$
is the direct sum of all cyclically composable sequences of morphism spaces in
$\cc$:
\begin{equation}\label{cyclichochschild}
    \r{CH}_*(\cc):= \bigoplus_{k\geq 0, X_{i_0}, \ldots, X_{i_k} \in \ob
\cc} \hom_\cc(X_{i_k},X_{i_0}) \otimes \hom_\cc(X_{i_k-1},X_{i_k}) \otimes \cdots
\otimes \hom_\cc(X_{i_0},X_{i_1}),
\end{equation} 
The (cyclic bar) differential $b$ acts on Hochschild chains by summing over ways to
cyclically collapse elements by any of the $\ainf$ structure maps:
\begin{equation}\label{cyclicbardifferential}
    \begin{split}
        b (\mathbf{x}_d \otimes &x_{d-1} \otimes \cdots \otimes x_1) =\\
        &\sum (-1)^{\#_{k}^d}\mu^{d-i}(x_{k}, \ldots, x_1, \mathbf{x}_d, x_{d-1}, \ldots, x_{k+i+1}) \otimes x_{k+i} \otimes \cdots \otimes  x_{k+1} \\
        &+ \sum (-1)^{\maltese_{1}^{s}}\mathbf{x}_d \otimes \cdots \otimes \mu^j(x_{s+j+1}, \ldots, x_{s+1}) \otimes x_{s} \otimes \cdots \otimes x_1.
\end{split}
\end{equation}
with signs
\begin{align}
    \maltese_{i}^{k} &:= \sum_{j=i}^k ||x_i||\\
    \#_k^d &:= \maltese_1^k \cdot ( 1 + \maltese_{k+1}^d) + \maltese_{k+1}^{d-1} + 1
\end{align}
In this complex, Hochschild chains are (cohomologically) graded as follows:
\begin{equation} \label{hochschildchaingrading}
    \deg(\mathbf{x}_d \otimes x_{d-1} \otimes \cdots \otimes x_1) := \deg(\mathbf{x}_d) + \sum_{i=1}^{d-1} \deg(x_i) - d + 1 = |\mathbf{x}_d| + \sum_{i=1}^{d-1} ||x_i||.
\end{equation}
\begin{rem}
    Frequently the notation $\r{CH}_*(\cc,\cc)$ is used for
    \eqref{cyclichochschild} to emphasize that Hochschild homology is taken
    here with {\em diagonal coefficients}, rather than coefficients in another
    bimodule.  
\end{rem}

If $\cc$ is a strictly unital $\ainf$ category, then the chain complex
\eqref{cyclichochschild} carries a strict $S^1$ action $B: \r{CH}_*(\cc) \to
\r{CH}_{*-1}(\cc)$, involving summing over ways to cyclically permute chains
and insert identity morphisms; see Remark \ref{strictlyunitalB} below. 
However, there is a quasi-isomorphic non-unital Hochschild complex of
$\cc$, which always carries a strict $S^1$-action (even if $\cc$ is not strictly
unital), which we will now describe. 

As a graded vector space, the {\em non-unital Hochschild complex} consists of
two copies of the cyclic bar complex, the second copy shifted down in
grading by 1: 
\begin{equation}\label{nonunitalcomplex}
    \r{CH}_*^{nu}(\cc):= \r{CH}_*(\cc) \oplus \r{CH}_*(\cc)[1]
\end{equation}
With respect to the decomposition \eqref{nonunitalcomplex}, we sometimes refer
to elements as $\sigma := (\check{\alpha},\hat{\beta})$
with the notation $\check{\alpha}$ or $\hat{\beta}$ indicating that
a given element $\alpha$ and $\beta$ belong to the left or right factor
respectively. Similarly, we refer to the left and right factors as the {\it
check factor} and the {\it hat factor} respectively.

Let $b'$ denote a version of the differential \eqref{cyclicbardifferential}
omitting the ``wrap-around terms" (often simply called the {\it bar
differential}):
\begin{equation}
    \begin{split}
        b' (x_d \otimes &x_{d-1} \otimes \cdots \otimes x_1) =\\
        & \sum (-1)^{\maltese_{1}^{s}}x_d \otimes \cdots  \cdots \otimes x_{s+j+1} \otimes
        \mu^j(x_{s+j}, \ldots,  x_{s+1}) \otimes x_{s} \otimes \cdots \otimes x_1\\
      & + \sum (-1)^{\maltese_1^{d-j}} \mu^j(x_d, x_{d-1}, \ldots, x_{d-j+1})\otimes x_{d-j} \otimes \cdots \otimes x_1.
    \end{split}
\end{equation}
For an element $\hat{\beta} = x_d \otimes \cdots \otimes x_1$ in the hat
(right) factor of \eqref{nonunitalcomplex}, define an element $d_{\wedge \vee}
(\hat{\beta})$ in the check (left) factor of \eqref{nonunitalcomplex} as
follows:
\begin{equation}
    \begin{split}
    d_{\wedge \vee} (\hat{\beta}) &:= (-1)^{\maltese_2^d + ||x_1|| \cdot \maltese_2^d + 1} x_1 \otimes x_d \otimes \cdots \otimes x_2 + (-1)^{\maltese_1^{d-1}} x_d \otimes \cdots \otimes x_1.
\end{split}
\end{equation}
In this language, the differential on the non-unital Hochschild complex can be written as:
\begin{equation}
    b^{nu}: (\check{\alpha},\hat{\beta}) \mapsto (b(\check{\alpha}) +  d_{\wedge \vee} (\hat{\beta}), b'(\hat{\beta}))
\end{equation}
or equivalently can be expressed via the matrix
\begin{equation}
    b^{nu} = \left( \begin{array}{cc} b & d_{\wedge \vee} \\ 0 & b' \end{array} \right) .
\end{equation}
The left factor $\r{CH}_*(\cc) \hookrightarrow \r{CH}_*^{nu}(\cc)$ is by
definition a subcomplex. Moreoever, since the quotient complex is the standard
$\ainf$ bar complex with differential $b'$, which is acyclic for cohomologically
unital $\cc$ (by a standard length filtration spectral sequence argument,
compare \cite{Seidel:2008zr}*{Lemma 2.12} or \cite{ganatra1_arxiv}*{Prop.
2.2}), it follows that: 
\begin{lem}\label{inclusionquasi} The inclusion map
    $\iota: \r{CH}_*(\cc) \hookrightarrow \r{CH}_*^{nu}(\cc)$ 
    is a quasi-isomorphism (when $\cc$ is cohomologically unital).\qed
\end{lem}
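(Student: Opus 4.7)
The plan is to realize $\iota$ as the inclusion of a subcomplex of $\r{CH}_*^{nu}(\cc)$ with acyclic cokernel. From the matrix form of $b^{nu}$, the check-factor $\r{CH}_*(\cc)$ is preserved by $b^{nu}$ (acting there as $b$), and the induced differential on the quotient hat-factor $\r{CH}_*(\cc)[1]$ is exactly the bar differential $b'$ (without wrap-around). This yields a short exact sequence of complexes
\[
0 \longrightarrow (\r{CH}_*(\cc), b) \xrightarrow{\iota} \r{CH}_*^{nu}(\cc) \longrightarrow (\r{CH}_*(\cc)[1], b') \longrightarrow 0,
\]
whose long exact sequence in cohomology reduces the claim to showing that $(\r{CH}_*(\cc), b')$ is acyclic whenever $\cc$ is cohomologically unital.

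To prove this acyclicity I would filter $\r{CH}_*(\cc)$ by tensor length: let $F^p$ consist of sums of chains with at most $p$ tensor factors. Since $\mu^1$ preserves length and every $\mu^j$ with $j \geq 2$ strictly decreases it, $b'$ respects $F^\bullet$; the filtration is exhaustive and bounded below (by $F^0 = 0$), so the associated spectral sequence converges. The $E_0$-differential is $\sum_i \mu^1$ acting slotwise, so $E_1$ identifies with the reduced bar complex of the ordinary graded category $H^*(\cc)$ equipped with the differential $d_1$ built from the cohomological composition $[\mu^2]$.

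Since $\cc$ is cohomologically unital, $H^*(\cc)$ is an honest unital graded category with chosen identity morphisms $[e_X]$, and the reduced bar complex of any such category is acyclic via the standard ``insert a unit at the left'' contracting homotopy
\[
h(\mathbf{x}_d \otimes x_{d-1} \otimes \cdots \otimes x_1) := [e_{X_{i_0}}] \otimes \mathbf{x}_d \otimes x_{d-1} \otimes \cdots \otimes x_1,
\]
where $X_{i_0}$ is the target of $\mathbf{x}_d$ (so that $[e_{X_{i_0}}]$ is composable with it). The unit axiom $[\mu^2]([e_{X_{i_0}}], \mathbf{x}_d) = \pm \mathbf{x}_d$ ensures that in $d_1 h + h d_1$ the leftmost composition term produces the identity while all remaining terms cancel in pairs with the corresponding summands of $h d_1$. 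Hence $E_1 = 0$, the spectral sequence collapses, and $(\r{CH}_*(\cc), b')$ is acyclic, completing the proof.

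The only non-trivial obstacle worth attention is the sign bookkeeping in verifying $d_1 h + h d_1 = \pm \id$; convergence of the spectral sequence is automatic because $F^\bullet$ is exhaustive with $F^0 = 0$, and identifying the $E_1$-page with a bar complex of $H^*(\cc)$ is routine once one unpacks the Koszul signs.
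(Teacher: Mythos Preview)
Your proposal is correct and follows exactly the approach the paper indicates: the paper observes that $\r{CH}_*(\cc)$ is a subcomplex with quotient the bar complex $(\r{CH}_*(\cc)[1], b')$, and asserts this quotient is acyclic for cohomologically unital $\cc$ ``by a standard length filtration spectral sequence argument,'' citing \cite{Seidel:2008zr}*{Lemma 2.12} and \cite{ganatra1_arxiv}*{Prop.~2.2}. You have simply spelled out that standard argument in detail.
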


\begin{rem} \label{rem:unitinsertion}
    The non-unital Hochschild complex of $\cc$ can be conceptually explained in
    terms of cyclic bar complexes as follows (compare \cite{Loday:1992fk}*{\S
    1.4.1} and \cite{Sheridan:2015ab}*{\S 3.5}): first, {\em augment} the category $\cc$
    by adjoining strict units; meaning consider
    the $\ainf$ category $\cc^+$ with $\ob\ \cc^+ = \ob
    \cc$,
\begin{equation}
    \hom_{\cc^+}(X,Y) = \begin{cases} \hom_{\cc}(X,Y) & X \neq Y \\
                        \hom_{\cc}(X,X) \oplus \K \langle e^+_X \rangle & X = Y
                \end{cases}
    \end{equation}
whose $\ainf$ structure maps completely determined by the fact that $\cc$
is an $\ainf$ subcategory, 
and the elements
$e^+_X$ elements act as strict units in the sense of \eqref{eq:strictunits}.
Next, consider the {\em normalized (or reduced) Hochschild complex} of the
strictly unital category $\cc^+$, $\r{CH}_*^{red}(\cc^+)$, by definition the
quotient of $\r{CH}_*(\cc^+)$ by the acyclic subcomplex consisting of $e^+$
terms in any position but the first. 
Now, take the further quotient of $\r{CH}_*^{red}(\cc^+)$ by the subcomplex of
length 1 Hochschild chains of the form $e^{+}_X$ for some $X$.
The resulting complex, denoted $\widetilde{\r{CH}_*}(\cc^+)$, 
can be identified as a chain complex with
$\r{CH}_*^{nu}(\cc)$ via the map
\begin{equation}\label{eq:fbijection}
    \begin{split}
        f: \widetilde{\r{CH}_*}(\cc^+) &\stackrel{\cong}{\to} \r{CH}_*^{nu}(\cc)\\
    f(y_k \otimes \cdots \otimes y_1) &= \begin{cases}
        (0, y_{k-1} \otimes \cdots \otimes y_1) & y_k = e^+_X \textrm{ for some }X \\
        (y_k \otimes \cdots \otimes y_1, 0) & \textrm{otherwise}
    \end{cases}
\end{split}
\end{equation}
In particular, the differential in $\r{CH}^{nu}(\cc)$ on a Hochschild chain
$\hat{\beta}$ in the right factor (of the decomposition \eqref{nonunitalcomplex}) agrees with the (usual cyclic bar) Hochschild differential applied to $e^+_X
\otimes \beta$ under the correspondence $f$.
\end{rem}

\subsection{Circle action on the Hochschild complex}\label{subsec:cyclic}
The $S^1$ action on the non-unital (or usual) Hochschild complex is built out
of several intermediate operations.
First, let $t: \r{CH}_*(\cc) \to \r{CH}_*(\cc)$ denote the (signed) cyclic
permutation operator on the cyclic bar complex generating the $\Z/k\Z$ cyclic
action on the length $k$ expressions
\begin{equation}\label{toperator}
    t: x_k \otimes \cdots \otimes x_1 \mapsto (-1)^{||x_1|| \cdot \maltese_2^k + ||x_1|| + ||x_k||} x_1 \otimes x_k \otimes \cdots \otimes x_{2},
\end{equation}
(this is not a chain map). 
Let $N$ denote the {\it norm} of this operation; that is the sum of all powers
of $t$ (this depends on $k$, the length of a given Hochschild chain):
\begin{equation}\label{Noperator}
    \begin{split}
        N: x_k \otimes \cdots \otimes x_1 = \sigma \mapsto  (1 + t + t^2 + \cdots + t^{k-1}) \sigma;
\end{split}
\end{equation}
Let $s^{nu}: \r{CH}^{nu}_*(\cc) \to \r{CH}^{nu}_{*-1}(\cc)$ be the linear map
which sends check chains to the corresponding hat chains, and hat chains to
zero:
\begin{equation}\label{snu}
    s^{nu}(x_d \otimes  \cdots \otimes x_1, y_t \otimes \cdots \otimes y_1) := (-1)^{\maltese_1^d + ||x_d|| + 1}(0,  x_d \otimes \cdots \otimes x_1 ).
\end{equation}
(again, not a chain map), and
finally define
\begin{equation}
    \label{eq:Bnudef}
    \begin{split}
        B^{nu}:  \r{CH}^{nu}_*(\cc) &\to \r{CH}^{nu}_{*-1}(\cc)\\
        B^{nu}(x_k \otimes \cdots \otimes x_1,y_l \otimes \cdots \otimes y_1) &:= \sum_{i=1}^k (-1)^{\maltese_1^i \maltese_{i+1}^k + ||x_k|| + \maltese_1^k + 1}(0,  x_i \otimes \cdots \otimes x_1 \otimes x_k \otimes \cdots \otimes x_{i+1}))\\
        &= s^{nu}  (N(x_k \otimes \cdots \otimes x_1), y_l \otimes \cdots \otimes y_1)\\
        &= \sum_{i=0}^{k-1} s^{nu}  (t^{i}(x_k \otimes \cdots \otimes x_1), y_l \otimes \cdots \otimes y_1)
\end{split}
\end{equation}

\begin{lem}
    $(B^{nu})^2 = 0$ and $b^{nu} B^{nu} + B^{nu}b^{nu} = 0$. That is,
    $\r{CH}_*^{nu}(\cc)$ is a strict $S^1$-complex, with the action of
    $\Lambda = [S^1]$ given by $B^{nu}$.\qed
\end{lem}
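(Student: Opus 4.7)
The plan is to reduce both identities to standard facts by first writing $B^{nu}$ and $b^{nu}$ as $2\times 2$ matrices with respect to the decomposition $\r{CH}^{nu}_*(\cc) = \r{CH}_*(\cc) \oplus \r{CH}_*(\cc)[1]$ (``check'' $\oplus$ ``hat''). Inspecting the definition \eqref{eq:Bnudef} shows that $B^{nu}$ depends only on the check entry of its input and outputs only in the hat summand; equivalently
\[
B^{nu} = \begin{pmatrix} 0 & 0 \\ sN & 0 \end{pmatrix}, \qquad b^{nu} = \begin{pmatrix} b & d_{\wedge\vee} \\ 0 & b' \end{pmatrix},
\]
where $s$ denotes the (signed) identification of the check factor with the hat factor coming from $s^{nu}$, and $N$ acts on each factor by \eqref{Noperator}. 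The identity $(B^{nu})^2 = 0$ is then immediate: since $\mathrm{im}(B^{nu})$ lies in the hat factor, and $B^{nu}$ vanishes on the hat factor, the composition is zero.

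For the anti-commutation $b^{nu}B^{nu} + B^{nu}b^{nu} = 0$, multiplying the matrices yields off-diagonal entries $d_{\wedge\vee}\,sN$ (upper left) and $sN\,d_{\wedge\vee}$ (lower right), and diagonal-to-hat entry $b'\,sN + sN\,b$. Absorbing $s$ as a bookkeeping identification, these vanish iff the following three identities hold on the cyclic bar complex (with their appropriate signs):
\begin{enumerate}[(i)]
\item $d_{\wedge\vee}\,N = 0$,
\item $N\,d_{\wedge\vee} = 0$,
\item $b'N + Nb = 0$.
\end{enumerate}

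The first two identities follow once one recognizes $d_{\wedge\vee}$ as the operator $\pm(1 - t)$, viewed as a map between the two copies of the cyclic bar complex sitting inside $\r{CH}^{nu}_*(\cc)$. On length-$k$ chains $N = 1 + t + \cdots + t^{k-1}$, and $t^k$ equals the identity with the sign conventions in \eqref{toperator}; hence $(1-t)N = N(1-t) = 1 - t^k = 0$. The third identity is Connes' classical identity $b'N = -Nb$ (or $+Nb$, depending on conventions) adapted to the $\ainf$ setting; one proves it by expanding both sides as sums of terms indexed by powers of $t$ and the position where a structure map $\mu^j$ is applied, then observing that each ``wrap-around'' term of $b$ in \eqref{cyclicbardifferential} is the image under a suitable $t^i$ of a non-wrap-around term in $b'$.

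The principal obstacle is sign bookkeeping. Each of (i)--(iii) is a textbook fact for the associative cyclic bar complex (cf.\ \cite{Loday:1992fk}), but here $t$, $N$, $s^{nu}$, $b$, $b'$, $d_{\wedge\vee}$ all carry $\ainf$ Koszul-type signs depending on the reduced degrees $\maltese_i^j$, and the point is to verify that the signs appearing in the definitions \eqref{toperator}, \eqref{Noperator}, \eqref{snu}, \eqref{eq:Bnudef}, and in $d_{\wedge\vee}$ have been chosen compatibly so that the three identities hold on the nose rather than merely up to a global sign. In practice one confirms this by a direct computation on a generic homogeneous Hochschild chain $x_k \otimes \cdots \otimes x_1$, tracking how $\maltese$-exponents transform under cyclic permutation; alternatively one can invoke Remark \ref{rem:unitinsertion} and transport the well-known strict $S^1$-structure on the (strictly unital) normalized Hochschild complex of $\cc^+$ across the bijection $f$ of \eqref{eq:fbijection}, provided the sign conventions agree.
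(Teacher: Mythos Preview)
Your argument is correct. The paper does not actually give a proof of this lemma: the statement ends with a \texttt{\textbackslash qed} and no proof environment, treating the result as standard. The implicit justification in the paper is precisely the alternative you mention at the end of your proposal: Remarks~\ref{rem:unitinsertion} and~\ref{strictlyunitalB} explain that under the bijection $f$ of \eqref{eq:fbijection}, the non-unital complex $\r{CH}^{nu}_*(\cc)$ is identified with the quotient $\widetilde{\r{CH}}_*(\cc^+)$ of the reduced Hochschild complex of the augmented category $\cc^+$, with $B^{nu}$ corresponding to $\widetilde{B}^{red} = sN$ and $s^{nu}$ to $s$; the identities $B^2=0$ and $bB+Bb=0$ for strictly unital categories are then classical (Loday \cite{Loday:1992fk}).

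Your direct matrix computation is a genuinely independent and self-contained route: it avoids passing through $\cc^+$ and instead reduces everything to the three bar-complex identities $(1-t)N = N(1-t) = 0$ and $b'N + Nb = 0$ (in their $\ainf$-signed form). This has the advantage of making the argument internal to the non-unital complex and transparent about exactly which algebraic identities are being used, at the cost of the sign verification you flag. Both approaches are valid; the transport argument is shorter once Remark~\ref{rem:unitinsertion} is in place, while yours is more explicit.
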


Let $b_{eq} = b^{nu} + u B^{nu}$ be the strict $S^1$-complex structure on the
non-unital Hochschild complex $\r{CH}_*^{nu}(\cc)$, $u$-linearly packaged
as in \S \ref{sec:ulinear}. Using this, we can define cyclic homology groups:
\begin{defn}
    The {\em (positive) cyclic} chain complex, the {\em negative cyclic} chain
    complex, and the {\em periodic cyclic} chain complexes of $\cc$ are the
    {\em homotopy orbit complex},
    {\em homotopy fixed point complex}, and {\em Tate constructions} of the
    $S^1$-complex
    $(\r{CH}_*^{nu}(\cc), b_{eq})$ respectively. That is:
    \begin{align}
        \r{CC}^+_*(\cc) &:= (\r{CH}_*^{nu}(\cc))_{hS^1} = (\r{CH}_*^{nu}(\cc) \otimes_\K \K ( ( u )) / u \K [ [ u ] ], b_{eq}) \\
        \r{CC}^-_*(\cc) &:= (\r{CH}_*^{nu}(\cc))^{hS^1} = (\r{CH}_*^{nu}(\cc) \widehat{\otimes}_\K \K[ [ u ] ], b_{eq}) \\
        \r{CC}^\infty_*(\cc) &:= (\r{CH}_*^{nu}(\cc))^{Tate} = (\r{CH}_*^{nu}(\cc) \widehat{\otimes}_\K \K(( u ) ), b_{eq})
    \end{align}
    with grading induced by setting $|u| = +2$, and where (as in \S
    \ref{sec:ulinear}), $\widehat{\otimes}$ refers to the $u$-adically
    completed tensor product in the category of graded vector spaces. The
    cohomologies of these complexes, denoted $\r{HC}^{+/-/\infty}_*(\cc)$, are
    called the {\em (positive), negative,} and {\em periodic cyclic homologies}
    of $\cc$ respectively.
\end{defn}

The $C_{-*}(S^1)$ module structure on $\r{CH}_*^{nu}(\cc)$ is suitably functorial in the following
sense: Let $\mathbf{F}: \cc \ra \cc'$ be an $\ainf$ functor, 
There is an induced chain map on non-unital Hochschild
complexes
\begin{equation}\label{inducedmapnonunital}
    \begin{split}
        \mathbf{F}_{\sharp}^{nu}: \r{CH}_*^{nu}(\cc) &\ra \r{CH}_*^{nu}(\cc',\cc')\\
    (x,y) &\mapsto (\mathbf{F}_{\sharp}(x), \mathbf{F}_{\sharp}'(y))
\end{split}
\end{equation}
where 
\begin{align}
    \mathbf{F}_{\sharp}' (x_k \otimes \cdots \otimes x_0) &:= \sum_{i_1, \ldots, i_s} \mathbf{F}^{i_1}(x_k\cdots) \otimes \cdots \otimes \mathbf{F}^{i_s}(\cdots x_0)\\
    \mathbf{F}_{\sharp} (x_k \otimes \cdots \otimes x_0) &:= \sum_{i_1, \ldots, i_s, j} \mathbf{F}^{j + 1 + i_1}(x_j, \ldots, x_0, x_k, \ldots, x_{k-i_1+1}) \otimes \mathbf{F}^{i_2}(\ldots)\otimes \cdots \otimes \mathbf{F}^{i_s}(x_{j+i_s}\ldots, x_{j+1}).
\end{align}
which is an isomorphism on homology if $\mathbf{F}$ is a quasi-isomorphism
(indeed, even a Morita equivalence). This functoriality preserves $S^1$ structures:
\begin{prop}\label{functorialityB}
    $\mathbf{F}_{\sharp}^{nu}$ gives a strict morphism of strict $S^1$-complexes, 
    meaning $\mathbf{F}_{\sharp}^{nu} \circ b^{nu} = b^{nu} \circ \mathbf{F}_{\sharp}^{nu}$ and $\mathbf{F}_{\sharp}^{nu} \circ B^{nu} = B^{nu} \circ \mathbf{F}_{\sharp}^{nu}$.  In other words, the pre-morphism of $\ainf$ $\K[\Lambda]/\Lambda^2$ modules defined as 
    \begin{equation}
        \mathbf{F}_{*}^{d}(\underbrace{\Lambda, \ldots, \Lambda}_d, \sigma) := \begin{cases} \mathbf{F}_{\sharp}^{nu} (\sigma) & d =0 \\
            0 & d \geq 1
        \end{cases}
    \end{equation}
    is closed, i.e., an $\ainf$ module homomorphism.
\end{prop}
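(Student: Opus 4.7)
The plan is to verify the two commutation relations $\mathbf{F}^{nu}_\sharp \circ b^{nu} = b^{nu} \circ \mathbf{F}^{nu}_\sharp$ and $\mathbf{F}^{nu}_\sharp \circ B^{nu} = B^{nu} \circ \mathbf{F}^{nu}_\sharp$ separately, exploiting the matrix decomposition of the non-unital complex into check and hat factors. For the commutation with $b^{nu}$, write
\[
b^{nu} = \begin{pmatrix} b & d_{\wedge\vee} \\ 0 & b' \end{pmatrix}, \qquad \mathbf{F}^{nu}_\sharp = \begin{pmatrix} \mathbf{F}_\sharp & 0 \\ 0 & \mathbf{F}'_\sharp \end{pmatrix}.
\]
The block identity $\mathbf{F}_\sharp \circ b = b \circ \mathbf{F}_\sharp$ is the standard statement that an $\ainf$ functor induces a chain map on the cyclic bar (Hochschild) complex, a direct consequence of the $\ainf$ functor equations — I would invoke this fact as established. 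The analogous identity $\mathbf{F}'_\sharp \circ b' = b' \circ \mathbf{F}'_\sharp$ is the same statement applied to the ordinary bar complex (without wrap-around terms). The off-diagonal block requires verifying $\mathbf{F}_\sharp \circ d_{\wedge\vee} = d_{\wedge\vee} \circ \mathbf{F}'_\sharp$; here I would expand both sides and observe that the two summands in $d_{\wedge\vee}$ (a single cyclic rotation versus a signed identity) interpolate precisely between the wrap-around groupings of $\mathbf{F}_\sharp$ and the non-wrapping groupings of $\mathbf{F}'_\sharp$, with signs tracked via the $\maltese$ conventions.

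For the commutation with $B^{nu}$, the hat-to-anything components are trivial since $B^{nu}$ annihilates hat chains and produces only hat chains. The essential identity on a check chain $\check{\alpha} = x_k \otimes \cdots \otimes x_1$ is
\[
\mathbf{F}'_\sharp \bigl( B^{nu}(\check{\alpha}) \bigr) \;=\; B^{nu} \bigl( \mathbf{F}_\sharp(\check{\alpha}) \bigr).
\]
Expanding via $B^{nu} = s^{nu} \circ N$, the left-hand side becomes a sum indexed by (cyclic rotation of the input, then non-wrapping grouping under $\mathbf{F}$), while the right-hand side is a sum indexed by (wrap-allowed grouping under $\mathbf{F}$, then cyclic rotation of the output). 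The combinatorial bijection I would set up marks each summand by the location of the outermost $\mathbf{F}^{i_1}$ grouping: cyclically rotating the input to put the wrap point in a fixed position corresponds exactly to cyclically rotating the output of the wrap-allowed grouping, giving a length-preserving bijection between the two collections of terms.

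The main obstacle is sign verification in this last step: the operators $t$, $s^{nu}$, and the difference between $\mathbf{F}_\sharp$ and $\mathbf{F}'_\sharp$ all contribute Koszul signs built from reduced degrees, which must cancel coherently across the bijection. A cleaner conceptual route I would mention as an alternative is to invoke Remark \ref{rem:unitinsertion}: the non-unital complex is identified via $f$ with a quotient of the reduced Hochschild complex of the unital augmentation $\cc^+$, and on that reduced complex Connes' $B$ is the classical $s \circ N$ in terms of unit-insertion. The $\ainf$ functor $\mathbf{F}$ extends canonically to a strictly unit-preserving $\mathbf{F}^+: \cc^+ \to (\cc')^+$, and the induced map on reduced Hochschild chains strictly commutes with cyclic permutation $t$ and with unit insertion $s$, hence with $B$; the proposition then follows by transporting through $f$. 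Either way, the final sentence of the proposition --- the interpretation of $\mathbf{F}_*$ as a closed $\ainf$ $\K[\Lambda]/\Lambda^2$ module pre-morphism concentrated in arity zero --- is immediate once the two commutation relations are established, since all higher components $\mathbf{F}^d_*$ with $d \geq 1$ are declared zero and the pre-morphism differential $\partial \mathbf{F}_*$ reduces to exactly the two commutators verified above.
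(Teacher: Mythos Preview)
Your proposal is correct and follows essentially the same approach as the paper: the paper's proof simply declares the commutation with $b^{nu}$ ``well known'' and reduces the commutation with $B^{nu}$ to the identity $\mathbf{F}'_\sharp \circ s^{nu} N = s^{nu} N \circ \mathbf{F}_\sharp$, leaving the termwise matching as an exercise --- precisely the bijection you describe. Your alternative route via the strictly unital extension $\mathbf{F}^+:\cc^+ \to (\cc')^+$ is exactly the content of the remark the paper places immediately after the proposition.
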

\begin{proof}[Sketch]
    It is well known that $\mathbf{F}_{\sharp}^{nu}$ is a chain map, so it
    suffices to verify that $\mathbf{F}_{\sharp}^{nu} \circ B^{nu}  = B^{nu}
    \circ \mathbf{F}_{\sharp}^{nu}$, or in terms of \eqref{inducedmapnonunital}
    \begin{equation}
        \mathbf{F}_{\sharp}' \circ s^{nu} N = s^{nu} N \circ \mathbf{F}_{\sharp}.
    \end{equation}
   We leave this an exercise, noting that applying either side to  
   a Hochschild chain $x_k \otimes \cdots \otimes x_1$, the sums match
   identically.  
\end{proof}

\begin{rem}\label{strictlyunitalB}
    If $\cc$ is strictly unital, one can also define an operator $B: \r{CH}_*(\cc)
    \to \r{CH}_{*-1}(\cc)$ on the usual cyclic bar complex by 
    \[
        B = (1-t)s N
    \]
    where up to a sign $s$ denotes the operation of inserting, at the beginning
    of a chain, the unique strict unit $e_X^+$ preserving cyclic composability:
    \begin{equation}\label{degeneracyoperator}
        \begin{split}
            s: x_k \otimes \cdots \otimes x_1 \mapsto (-1)^{||x_k|| + \maltese_1^k + 1} e_{X_{i_k}}^+ \otimes x_k \otimes \cdots \otimes x_1,\\
            \textrm{where }x_k \in \hom_\cc(X_{i_k},X_{i_0}).
    \end{split}
    \end{equation}
    It can be shown that $B^2 = 0$ and $Bb + bB = 0$, $\r{CH}_*(\cc)$ is a
    strict $S^1$-complex; moreoever that the quasi-isomorphism $\r{CH}_*(\cc)
    \cong \r{CH}_*^{nu}(\cc)$ is one of $S^1$-complexes.  In fact, $B$ descends
    to the {\em reduced Hochschild complex} $\r{CH}_*^{red}(\cc)$ described in
    Remark \ref{rem:unitinsertion} where it takes the simpler form
    \[
        B^{red} = sN
    \]
    (as applying $t s N$ results in a Hochschild chain with a strict unit not
    in the first position, which becomes zero in $\r{CH}_*^{red}(\cc)$). 
    If $\cc$ was not necessarily strictly unital, following Remark
    \ref{rem:unitinsertion} one can still consider the quotient of the reduced
    Hochschild complex of the augmented category $\cc^+$, which we called
    $\widetilde{\r{CH}}_*(\cc^+)$. The discussion here equips this complex with
    an $S^1$ action $\widetilde{B}^{red}$. Under the bijection $f$
    \eqref{eq:fbijection}, $\widetilde{B}^{red}$ is sent to $B^{nu}$ and $s$ is
    sent to $s^{nu}$.
\end{rem}

\begin{rem}
    Continuing Remark \ref{rem:unitinsertion}, suppose we have constructed $\r{CH}^{nu}_*(\cc)$
    as 
    $\widetilde{\r{CH}}_*(\cc):= \r{CH}^{red}_*(\cc^+)/ \oplus_X \K \langle e_X^+ \rangle$, 
    the quotient of the reduced Hochschild complex of
     the augmented category $\cc^+$. Given any $\mb{F}$ as above, extend
     $\mb{F}$ to an augmented
    functor $\mb{F}^+$ by mandating that
    \begin{equation}
        \begin{split}
        (\mb{F}^+)^1(e^+_X) &= e^+_{\mb{F}X}\\
        ({\mb{F}^+})^d(\ldots, e^+_X, \ldots )  &= 0.
    \end{split}
    \end{equation}
    It is easy to see that $(\mb{F}^+)_*: \r{CH}_*(\cc^+) \to \r{CH}_*((\cc')^+)$ descends to a map $\widetilde{\mb{F}}: \widetilde{\r{CH}}_*(\cc^+) \to \widetilde{\r{CH}}_*((\cc')^+)$. Under the bijection \eqref{eq:fbijection}, this precisely corresponds to 
     $\mb{F}_{\sharp}^{nu}$ described above. 
    In particular, the fact that
    strictly unital functors induce strict
    $S^1$-morphisms between (usual) Hochschild complexes immediately implies Proposition \ref{functorialityB}.
\end{rem}

\begin{rem}\label{rem:otheroptionsNUcomplex}
    There are options besides the non-unital Hochschild complex for seeing the
    $C_{-*}(S^1)$ action on a Hochschild complex of the
    Fukaya category.  For instance one could:
\begin{enumerate}
    \item perform a strictly unital replacement (via homological algebra as in
        \cite{Seidel:2008zr}*{\S 2} \cite{Lefevre02}*{Thm. 3.2.1.1}), and work
        with the Hochschild complex of the replacement. However, this doesn't
        retain a relationship between the $\ainf$ operations and geometric
        structure, and hence is difficult to use with open-closed maps. 

    \item Geometrically construct a strictly unital structure on the Fukaya
        category via constructing {\em homotopy units} \cite{Fukaya:2009qf},
        which roughly involves building a series of geometric higher homotopies between the operation of $\ainf$ multiplying by a specified geometrically defined cohomological unit, and the operation of $\ainf$ multiplying by a strict unit (which is algebraically defined, but may also be geometrically characterized in terms of forgetful maps).
        From this one defines a strictly unital $\ainf$ category
        $\mc{F}^{hu}$ with $\hom_{\mc{F}^{hu}}(X,X) = \hom_{\mc{F}}(X,X) \oplus
        \K \langle  e^+_X, f_X\rangle$ and $\hom_{\mc{F}^{hu}}(X,Y) = \hom_{F}(X,Y)$ for $X \neq Y$, extending the $\ainf$ structure on $\mc{F}$,
        such that each $e^+_X$ a strict unit and $\mu^1(f_X) = e^+_X - e_X$ for $e_X$ a
        chosen a cohomological unit. The geometric higher homotopies alluded to
        above give operations used to define e.g., $\mu^k$  of a sequence of
        elements containing one or more $f_X$ terms. 

        Remark \ref{strictlyunitalB} then equips the usual Hochschild complex
        $\r{CH}_*(\mc{F}^{hu},\mc{F}^{hu})$ with a strict $S^1$ action. Using this
        one can construct a cyclic open-closed map
        with source $\r{CH}_*(\mc{F}^{hu}, \mc{F}^{hu})$, in a manner completely
        analogous to the construction of $\mc{F}^{hu}$ and the cyclic open-closed map here. This option is
        equivalent to the one we have chosen (and has some benefits), but
        requires additional technicalities/moduli spaces beyond the route taken here (both in constructing and
        defining the category $\mc{F}^{hu}$, and then in defining further ``higher
        homotopies'' between inserting a cohomological unit asymptotic and imposing a strict unit ---
        i.e. forgettable --- constraint into the cyclic open-closed map in various places, which
        give operations that correspond to applying the cyclic open-closed
        map to a Hochschild chain with one or more $f_X$ terms). 
        
        A construction of homotopy units was introduced in the
        work of Fukaya-Oh-Ohta-Ono \cite{Fukaya:2009qf}*{Ch. 7, \S 31}.  See
        \cite{ganatra1_arxiv} for an implementation in the (possibly wrapped) exact
        (or otherwise tautologically unobstructed), multiple Lagrangians
        setting. 
\end{enumerate}
\end{rem}

\subsection{The Fukaya category}\label{subsec:fukaya}
The goal of this subsection is to review (under simplifying technical
hypotheses) the definition of the Fukaya category of a symplectic manifold. The
outcome, a (homologically unital but not necessarily strictly unital) $\ainf$
category, will in particular carry a circle action on its non-unital
Hochschild complex.

In \S \ref{subsec:assumptions} below, we detail a set of simplifying
assumptions imposed on all of the moduli spaces of Floer curves considered in
this paper (mostly pertaining to transversality and compactness), and recall
examples of the variety of geometric situations in which they are satisfied. 
Such assumptions are in particular satisfied in the
technically simplest cases in which (compact or wrapped) Fukaya categories can
be defined, namely exact (Liouville) and monotone or aspherical symplectic
manifolds. In \S \ref{subsec:admissiblefukaya} we'll quickly review the construction of
the Fukaya category under such hypotheses. The initial thread of discussion will focus on compact Lagrangians, but 
immediately extends to wrapped Fukaya categories of Liouville manifolds as described in a series of Remarks (here we are using the 
framework of quadratic Hamiltonians as defined in \cite{Abouzaid:2010kx} for wrapped Fukaya categories, whose construction is nearly as simple as that of compact Fukaya categories and requires only a few minor modifications).

\subsubsection{Geometric setup and assumptions about moduli spaces of Floer trajectories}\label{subsec:assumptions}

To simplify technicalities, the main assumption we make
about moduli spaces in this paper is:
\begin{assumption}[Main assumption about moduli spaces]\label{mainassumption}
    All (semi-stably compactified) moduli spaces of Floer trajectories considered in this paper of virtual dimension $\leq 1$ are --- for generic choices of complex structure and Hamiltonian (``perturbation data'') --- compact transversally cut out manifolds with boundary of dimension equal to virtual dimension. Moreoever, the union of any such moduli space with fixed ``input'' asymptotic conditions over all possible ``output'' asymptotic conditions remains compact, and in particular is empty for all but finitely many possible output conditions (vacuous when there are only finitely many possible outputs).
\end{assumption}
Let $M = (M^{2n}, \omega)$ denote our target symplectic manifold and fix a collection of (always properly embedded) Lagrangian submanifolds $\{L_i\}$ in $M$ which we wish to be the objects of our Fukaya category. We will call any $M$, $\{L_i\}$, and choices of Floer perturbation data used to define moduli spaces for which Assumption \ref{mainassumption} holds {\em admissible}. We will say $M$ and/or $M, \{L_i\}$ are admissible if they possess an ample supply of Floer data for which Assumption \ref{mainassumption} holds for the moduli spaces considered below involving these targets. Examples of admissible $M$ include:
\begin{itemize}
    \item any  Liouville manifold  (in particular noncompact), which is to say that $\omega$ is exact with a fixed choice of primitive $\lambda$, such that flowing out by the Liouville vector field $Z$ (defined by $\iota_Z \omega = \lambda$) induces a diffeomorphism 
        \begin{equation}\label{cylend}
            M \backslash \mathring{\bar{M}} \cong \partial \bar{M} \times [0, \infty) 
            \end{equation}
            for some codimension-0 manifold-with-boundary $\bar{M}$ (called a Liouville domain whose completion is $M$);  or

        \item any compact symplectic manifold which is either monotone ($[\omega] = 2 \tau c_1(M)$ for some constant $\tau > 0$) or symplectically aspherical ($\omega(\pi_2(M)) = 0$).
\end{itemize}
If $M$ is Liouville, we henceforth fix a cylindrical end \eqref{cylend}, and use $r$ to refer to the corresponding $[0,\infty)$ coordinate.
Examples of (properly embedded) admissible Lagrangian submanifolds $L \subset M$ in admissible $M$ include
\begin{itemize}
    \item in Liouville $M$, one can take any exact $L$ (i.e., with $\lambda_{L} = df$), equipped with fixed choice of primitive which vanishes outside a compact set (which implies, as in \eqref{cylend} that $L$ is modeled on the cone of a Legendrian near infinity);

    \item in (compact) monotone $M$, one can take monotone $L$ (in the sense that $\omega(-)= \rho \mu_L(-): H_2(M,L) \to \R$ for some constant $\rho>0$, where $\omega$ is symplectic area and $\mu_L$ is the Maslov class). 

    \item  in (compact) symplectically aspherical $M$ one can take $L$ to be tautologically unobstructed (i.e., $L$ bounds no $J$-holomorphic discs for some $J$, which holds for all $J$ if $\omega(\pi_2(M,L)) = 0$).
\end{itemize}
The conditions above on $M$ and $L$ serve to rule out ``bad'' (unstable) breakings (such as $J$-holomorphic sphere bubbles in $M$ or $J$-holomorphic disc bubbles in $M$ with boundary on $L$) from arising in the limit of a sequence of curves in the moduli spaces considered, which could obstruct compactness and/or simultaneously complicate transversality arguments.
\begin{rem}[More general examples of admissible $M$ and $L$]
More generally, one could impose that the possible non-compactness of $M$ and (if $M$ is non-compact) $L$ must be of the
geometrically tame variety and that $M$/$L$  have no/bound no $J$-holomorphic
spheres/discs, or if they do that such spheres/discs can either be shown (using classical methods) either  
not to arise in the compactifications of virtual 1-dimensional moduli spaces,
or to arise but only contribute cancelling contributions to the resulting
algebraic formulae.  
\end{rem}
For non-compact $M$ and $\{L_i\}$, on any given moduli space of trajectories considered, further (non-generic) assumptions on the profile of Floer perturbation data near $\infty$ are required to ensure Assumption \ref{mainassumption} holds (to preclude sequences of curves escaping to $\infty$ so that usual Gromov compactness techniques apply, and also to obtain the second finiteness statement of Assumption \ref{mainassumption}, which is trivial in the compact case due to there being a finite list of outputs). We will say a few words about this in Remarks \ref{admissibleH}, \ref{liouvilleFloerdatum}, \ref{wrappedFloerdatum} and \ref{wrappedainf}; 
the verification of Assumption \ref{mainassumption} for the $\ainf$ structure maps (by citing established works) appears in Lemma \ref{smoothcompactcor}. 
The verification of Assumption \ref{mainassumption} for other moduli spaces considered in the paper is identical and hence omitted. However, we will in various places point out the restrictions are needed on Floer data in non-compact cases to preclude curves escaping to infinity and obtain finiteness along the lines of Lemma \ref{smoothcompactcor}.

\subsubsection{Admissible Fukaya categories}\label{subsec:admissiblefukaya}

For an admissible $M$ we will review the definition of the Fukaya category associated to an
an admissible collection of Lagrangians in $M$, which we will term an {\em admissible Fukaya category}. Examples of admissible Fukaya categories (in light of the examples given above) include
\begin{enumerate}
    \item in a compact aspherical $M$, the Fukaya category of tautologically unobstructed Lagrangians,

    \item in a monotone $M$, the Fukaya category of monotone Lagrangians,

    \item in Liouville $M$, the Fukaya category of compact exact Lagrangians, and

    \item in Liouville $M$, the wrapped Fukaya category of exact (cylindrical at infinity) Lagrangians.
\end{enumerate}

Fix first an underlying ground field $\K$ and grading structure ($\Z$ or $\Z/2$
here but see Remark \ref{othergradings}) we wish to use when defining the
category.  If $2c_1(M) = 0$ and we wish to define a $\Z$-graded category, we
begin by
equipping $M$ with a {\em grading structure}, which is a trivialization of the
square of the canonical bundle $(\Lambda_{\C}^n T^*M)^{\otimes 2}$. 
Next one equips the Lagrangian
submanifolds under consideration with some extra structure depending on the ground
field $\K$ and the grading structure. Concretely,
we say an {\em admissible Lagrangian brane} consists of a properly embedded admissible
Lagrangian submanifold $L \subset M$
which is equipped with the following 
extra two optional pieces of data 
(which are only required if one wants to
work with $\mathrm{char\ }\K \neq 2$ or with $\Z$ gradings respectively, the
latter in particular is always excluded in the monotone case):
    \begin{align}
        &\textrm{an orientation and Spin structure; and}\\
        &\textrm{a grading in the sense of \cite{Seidel:2000uq} (with respect to the fixed grading structure on $M$).}
    \end{align}
(these choices of extra data require $L$ to be Spin and satisfy $2c_1(M,L)
= 0$, where $c_1(M,L) \in H^2(M,L)$ is the relative first Chern class respectively).
\begin{rem}\label{othergradings}
    There are other possible grading structures on $M$ and $L$ one can use to equip the Fukaya category with suitable gradings (under geometric hypotheses), for instance $\Z/2k$ gradings, homology class gradings or hybrids thereof (c.f., \cite{Seidel:2000uq, Sheridan:2015aa}). We suppress discussion of these, but --- seeing as such matters are largely orthogonal to our arguments --- note that our results apply in such contexts as well.
\end{rem}
Henceforth, by abuse of notation all Lagrangians are implicitly admissible Lagrangian branes.  Denote by $\ob
\f$ a finite collection of such (admissible) Lagrangian( brane)s.  Choose a (potentially time-dependent)
Hamiltonian $H = H_t: M \to \R$ satisfying
the following genericity condition:
\begin{assumption}\label{nondegenerateH}
      All time-1 chords of $X_{H_t}$ between any pair of Lagrangians in $\ob \f$
    are non-degenerate.  
\end{assumption}
\begin{rem}
    It is straightforward to adapt all of our constructions to larger
    collections of Lagrangians, by for instance, choosing a different
    Hamiltonian $H_{L_0, L_1}$ for each pair of Lagrangians $L_0, L_1$ (as well as a different $H$ for closed orbits), and
    by choosing Floer perturbation data depending on corresponding sequences of
    objects,
    see e.g., \cite{Seidel:2008zr}*{\S 9j}.
    We have opted for using a single $H_t$ simply to keep the notation
    simpler.
\end{rem}

\begin{rem}[Admissible Hamiltonians in the Liouville case]\label{admissibleH}
    When $M$ is Liouville, we need to impose further restrictions on the profile of $H$ near $\infty$ in order to satisfy Assumption \ref{mainassumption}. If $\ob \f$ consists solely of compact exact Lagrangians, it suffices to impose that $H$ is compactly supported or more generally
of the form $f(r)$ near infinity for some function
with non-negative first and second derivatives. If $\ob \f$ contains any non-compact Lagrangians, we will impose following \cite{Abouzaid:2010kx} that $H$ satisfies the
    following {\em quadratic at $\infty$} condition:
        $H = r^2$ on the cylindrical end \eqref{cylend} (outside a compact subset).
\end{rem}

For any pair of Lagrangians $L_0, L_1 \in \ob \f$ the set of time 1
Hamiltonian flow lines of $H$ from $L_0$ to $L_1$, $\chi(L_0, L_1)$, can be thought of as the
critical points of an action functional 
on the {\it path space} from $L_0$ to $L_1$, $\mathcal{P}_{L_0, L_1}$ (this
functional is a priori multi-vaued, but it is certainly $\R$-valued in the presence of primitives $\lambda$ for
$\omega$ and $f_i$ for $\lambda|_{L_i}$).
Given a choice of grading structure on $M$ and grading for each $L_i$ above,
elements of $\chi(L_0, L_1)$ can be graded by the {\em Maslov index}
\begin{equation}
\deg: \chi(L_0, L_1) \ra \Z 
\end{equation}
(in the absence of grading structures this is always well defined mod 2,
provided our Lagrangians are oriented, which is automatic if they are Spin). As
a graded $\K$-module, the  morphism space in the Fukaya category between $L_0$ and $L_1$, also
known as the ({\em wrapped} if $M$ is Liouville) {\em Floer homology cochain
complex of $L_0$ and $L_1$ with respect to $H$}, has one (free) generator for
each element of $\chi(L_0,L_1)$; concretely
\begin{equation}
    \hom_{\f}^i(L_0, L_1) = CF^*(L_0, L_1, H_t, J_t):= \bigoplus_{x\in \chi(L_0,L_1), deg(x)=i} |o_x|_{\K},
\end{equation}
where the {\it orientation line} $o_x$ is the real vector space associated to
$x$ by index theory
(see \cite{Seidel:2008zr}*{\S 11h}; a priori $o_x$ depends on a choice of
trivialization of $x^*TM$ compatible with the grading structure. However, there
is a unique such choice in the presence of a $\Z$-grading, and in the
$\Z/2$-graded case any two choices made induce canonically isomorphic
orientation lines)
and for any one dimensional real vector space $V$ and any ring $\K$, the {\it $\K$-normalization} 
\begin{equation}
    \label{normalization}|V|_{\K}
\end{equation} 
is the $\K$-module generated by the two possible orientations on $V$, with the
relationship that their sum vanishes (if one does not want to worry about
signs, note that $|V|_{\Z/2} \cong \Z/2$ canonically).  

The $\ainf$ structure maps arise as counts of parameterized families of
(suitably coherently perturbed) solutions to Floer's equation with source a disc with $d$ inputs and one
output.  We will quickly summarize the definition and relevant choices required, referring the
reader to standard references for more details (the basic reference we follow is
\cite{Seidel:2008zr} for Fukaya categories of compact exact Lagrangians in
Liouville manifolds; see also \cite{Sheridan:2016} for the mostly
straightforward generalization to the monotone case. In the main body of exposition, we
focus on the (slightly simpler) case of compact (admissible) Lagrangians in compact (admissible) symplectic manifolds; we
detail the additional data and variations required for Fukaya
categories of exact Lagrangians in Liouville manifolds (which are simpler if
one is working only with compact exact Lagrangians) in Remarks
\ref{admissibleH}, \ref{liouvilleFloerdatum},
\ref{wrappedFloerdatum} and \ref{wrappedainf}. 

For $d \geq 2$, we use the notation
$\overline{\mc{R}}^d$
for the (Deligne-Mumford compactified) moduli space of discs with $d+1$ marked
points modulo reparametrization, with one point $z_0^-$ marked as negative and
the remainder $z_1^+, \ldots, z_d^+$ (labeled counterclockwise from $z_0^-$)
marked as positive.  
Orient the open (interior) locus $\mc{R}^d$ as in
\cite{Seidel:2008zr}*{\S 12g} and \cite{Abouzaid:2010kx}.
$\overline{\mc{R}}^d$ can be given the structure of a manifold-with-corners,
and its higher strata are trees of stable discs with a total of $d$ exterior
positive marked points and 1 exterior negative marked point. 
Denote the positive and negative semi-infinite strips by
\begin{align}
\label{strip1} Z_+ &:= [0,\infty) \times [0,1]\\
\label{strip2} Z_- &:= (-\infty,0] \times [0,1]
\end{align}
One first equips the spaces $\overline{\mc{R}}^d$ for each $d$ with a {\it
consistent collection of strip-like ends} $\mathfrak{S}$: that is, for each
component $S$ of $\overline{\mc{R}}^d$, a collection of maps $\e^{\pm}_k: Z_\pm \ra
S$ all with disjoint image in $S$, chosen so that positive/negative strips map
to neighborhoods of positively/negatively-labeled boundary marked points
respectively, smoothly varying with respect to the manifold-with-corners
structures and compatible with choices made on boundary and corner strata, which
are products of lower dimensional copies of $\overline{\mc{R}}^k$'s.

In order to associate transversely cut out moduli spaces of such maps, 
one studies a parametric family of solutions to Floer's equation depending on a choice of ``Floer (or perturbation) data'' over the parameter space. Concretely, 
a {\em Floer datum} for a family of domains (in this case $\overline{\mc{R}}^d$) is a choice of, for each domain $S$
in the parametric family, of 
\begin{itemize}
\item an $S$-(or domain-)dependent almost complex
structure $J_S$ and Hamiltonian $H_S$,  
\item a one-form $\alpha$ on $S$;
    \end{itemize}
which depends (smoothly) on the particular domain
in $\overline{\mc{R}}^d$ (and the position in that domain), and are {\em
compatible with strip-like ends}, meaning $\alpha$ pulls back to $dt$ and
$(H_S, J_S)$ pull back to a fixed choice $(H_t, J_t)$ in coordinates
\eqref{strip1}-\eqref{strip2}.
One inductively chooses a {\em Floer datum for the $\ainf$ structure}, which is
a choice of Floer data for the collection of domains $\{\overline{\mc{R}}^d\}_{d \geq 2}$
which is {\em consistent}, meaning that the Floer data chosen on a given
family of domains $\overline{\mc{R}}^d$ agree smoothly along the boundary and corners (which are products of lower dimensional $\overline{\mc{R}}^k$'s)
with previous choices of Floer data made. Such consistent choices exist essentially
because spaces of Floer data are contractible.

\begin{rem}[Floer data for compact exact Lagrangians in Liouville manifolds]\label{liouvilleFloerdatum}
    If $M$ is Liouville and we are studying the Fukaya category of compact
    exact Lagrangains, there is an additional requirement imposed on any Floer
    datum one uses; namely one requires that $J_S$ be of {\em contact type} in a
    neighborhood of infinity in the sense of \cite{Seidel:2008zr}*{(7.3)}, and
    $H_S$ be either $0$ or of the form $f(r)$ near infinity for some function
    with non-negative first and second derivatives. The more restrictive types
    of Floer data chosen for wrapped Fukaya categories in Remark
    \ref{wrappedFloerdatum} of course suffice as well. 
\end{rem}

\begin{rem}[Floer data for wrapped Fukaya categories]\label{wrappedFloerdatum}
    Following \cite{Abouzaid:2010kx}, we recall the additional information and constraints appearing in Floer data for
    wrapped Floer theory (with quadratic Hamiltonians). If $M$ is a Liouville manifold let $\psi^{\rho}: M \to M$ denote the time $\log(\rho)$ (outward) Liouville flow.
    One fixes for each $S$ in addition to $(H_S, J_S, \alpha_S)$, a 
    collection of constants $w_k \in \R_{>0}$ for each end called {\em weights} (so $w_k$ is the weight associated to the $k$th end), and a
    map $\rho_S: \partial S \to \R_{>0}$, called the {\em time-shifting map}. These should all satisfy:
    \begin{enumerate}
        \item \label{wrappedtimeshiftingweight}
            $\rho_S$ should be constant and equal to the weight $w_k$ on the $k$th strip-like end;
        \item \label{wrappedoneform} The 1-form $\alpha_S$ should be {\em sub-closed} meaning $d\alpha_S \leq 0$, equal to $w_k dt$ in the local coordinates on each strip-like end, and 0 when restricted to $\partial S$. By Stokes' theorem, this condition implies the sum of weights over all negative ends is greater than or equal to the sum of weights over all positive ends, and there should therefore be at least one negative end always (in this case there is one); 
        \item \label{wrappedhamiltonian} The Hamiltonian should be quadratic at infinity and pull back to $\frac{H \circ \psi^{w_k}}{w_k^2}$ in coordinates on each end (note that such a Hamiltonian is quadratic if $H$ is by an elementary computation \cite{Abouzaid:2010kx}*{Lem. 3.1});
        \item \label{wrappedalmostcomplexstructure} The almost complex structure should be of contact-type at infinity and equal to $(\psi^{w_k})^*J_t$ in coordinates on each end.
    \end{enumerate}
    There is a {\em rescaling action} by $(\R_{>0}, \cdot)$ on the space of such surface dependent data, which 
    sends $(\rho_S, \{w_k\}, \alpha_S, H_S, J_S) \mapsto (\lambda \rho_S, \{\lambda w_k\}, \lambda \alpha_S, \frac{H_S \circ \psi^{\lambda}}{\lambda^2},(\psi^{\lambda})^*J_S)$ for $\lambda \in \R_{>0}$. 
    Using this action, one also relaxes the consistency
    requirement imposed: The Floer datum on $\overline{\mathcal{R}}^d$ must agree smoothly, on a boundary or corner stratum, with {\em some rescaling} of the previously made choice (compare \cite{Abouzaid:2010kx}*{Def 4.1}).
\end{rem}

Given our choices of Floer data, we can define the moduli spaces appearing in the $\ainf$ operations. First
for any pair of objects $L_0, L_1$, and any pair of chords $x_0, x_1 \in
\chi(L_0, L_1)$, define 
$ \widetilde{\mc{R}}^{1}(x_0; x_1)$
to be the moduli space of maps $u: \R_s \times [0,1]_t \to M$ with boundary condition and asymptotics $u(s,0) \in L_0$, $u(s,1) \in L_1$, $\lim_{s \to +\infty} u(s,t) = x_1$ and $\lim_{s \to -\infty} u(s,t) = x_0$
satisfying
Floer's equation for $(H_t, J_t)$:
\begin{equation}\label{coordfreefloer}
(du - X \otimes dt)^{0,1} = 0
\end{equation}
where $X$ is the Hamiltonian vector field associated to $H_t$ and $(0,1)$ is taken with respect to $J_t$.
The translation action on $\R_s$ descends to a map on this moduli space (as the equation satisfied is $s$-independent), and we define the moduli space of (unparametrized) Floer strips to be
\begin{equation}
\label{semistablemodulispace}
\mc{R}^1(x_0; x_1) := \widetilde{\mc{R}}^1(x_0; x_1) / \R
\end{equation}
(with the added convention that whenever we are in a component of
$\widetilde{\mc{R}}^1(x_0; x_1)$ with expected dimension 0, this quotient
is replaced by the empty set).
Now for $d\geq 2$ let $L_0, \ldots, L_d$ be objects of $\f$ and fix any sequence of chords
$\vec{x} = \{x_k \in \chi(L_{k-1},L_k)\}$ as well as another chord $x_0 \in
\chi(L_0,L_d)$. 
We write $\mc{R}^d(x_0; \vec{x})$ for the space of maps 
\[u: S \ra M \] 
with source an arbitrary element $S \in \mc{R}^d$, satisfying boundary
conditions and asymptotics
\begin{equation}\label{conditionsasymptotic}
    \begin{cases}
        u(z) \in  L_k & \mathrm{if\ }z\in \partial S\textrm{ lies between }z^k\textrm{ and }z^{k+1} \\
        \lim_{s\ra \pm \infty} u \circ \e^k(s,\cdot) = x_k
    \end{cases}
\end{equation}
(where the limit above is taken as $s \to +\infty$ if the $k$th end is positive and $-\infty$ if it is negative) and differential equation 
\begin{equation}
    (du - X_S \otimes \alpha_S)^{0,1} = 0
\end{equation}
where $X_S$ is the Hamiltonian vector field associated to $H_S$ and where $0,1$
is taken with respect to the complex structure $J_S$ (for the choice of
consistent Floer datum we have fixed). 

The consistency condition imposed on Floer data 
over the abstract moduli spaces $\overline{\mc{R}}^d$, 
along with the compatibility with strip-like ends, implies that the (Gromov-type)
compactification of the space of maps $\overline{\mc{R}}^d(x_0; \vec{x})$ can be formed by
adding the images of the natural inclusions of products of lower-dimensional such moduli spaces:
\begin{equation}\label{boundaryainfinitymoduli}
  \overline{\mc{R}}^{d_2}(y; \vec{x}_2) \times  \overline{\mc{R}}^{d_1}(x_0; \vec{x}_1)  \ra \overline{\mc{R}}^d(x_0; \vec{x})
\end{equation}
where $y$ agrees with one of the elements of $\vec{x}_1$ and $\vec{x}$ is
obtained by removing $y$ from $\vec{x}_1$ and replacing it with the sequence
$\vec{x}_2$. Here, we let $d_1$ range from 1 to $d$, with $d_2 = d-d_1 + 1$,
with the stipulation that $d_1=1$ or $d_2 = 1$ is the semistable case
\eqref{semistablemodulispace}.

\begin{rem}[Operations for wrapped Fukaya categories]\label{wrappedainf}
In the setting of the wrapped Fukaya category (continuing Remark
\ref{wrappedFloerdatum}), one needs to incorporate the map $\rho_S$ into
the Lagrangian boundary conditions and asymptotics specified in Floer's equation; namely
instead of \eqref{conditionsasymptotic} we require the moving boundary condition $u(z) \in
(\psi^{\rho_S(z)})^* L_k$ if $z\in \partial S$ lies between $z^k$  and
$z^{k+1}$
where $(\psi^\rho)^* L_i$ denotes the pullback by $\psi^{\rho}$ (or application of $(\psi^{\rho})^{-1} = \psi^{\frac{1}{\rho}}$).  
We similarly impose that on the $k$th end, $\lim_{s\ra \pm \infty} u \circ
\e^k(s,\cdot) = (\psi^{\rho_S(z) := w_k})^* x_k$. The point is that Liouville
flow for time $\log(\rho)$ defines a canonical identification between Floer
complexes
\begin{equation}\label{rescaling}
    CF^*(L_0,L_1;H,J_t) \simeq 
    CF^*\left((\psi^\rho)^* L_0, (\psi^\rho)^* L_1; 
    \frac{H}{\rho} \circ \psi^\rho, (\psi^\rho)^* J_t\right).
\end{equation}
The right hand object is equivalently the (wrapped) Floer complex for $((\psi^\rho)^* L_0,
(\psi^\rho)^* L_1)$ for a strip with one-form $\rho dt$ using Hamiltonian $\frac{H}{\rho^2}
\circ \psi^\rho$ and $(\psi^\rho)^* J_t$. Up to Liouville flow, the Floer equation
and boundary conditions satisfied on the $k$th strip-like end therefore
coincides with the usual Floer equation for $(H_t, J_t)$ between $L_{k-1}$ and
$L_k$. In light of this condition and the weakened consistency requirement for
Floer data described in Remark \ref{wrappedFloerdatum}, one can again deduce
\eqref{boundaryainfinitymoduli}, that lower-dimensional strata of the Gromov
bordification of the space of maps can be identified (now possibly using a non-trivial
Liouville rescaling) with products of previously defined moduli spaces.
\end{rem}

In the graded setting, every connected component of the moduli space $\overline{\mc{R}}^d(x_0;\vec{x})$ has expected (or virtual) dimension $\deg (x_0) + d - 2 - \sum_{1\leq k \leq d} \deg(x_k)$; more generally, this moduli space consists of components of varying expected dimension (a number which can be computed using index theory in terms of the underlying homotopy class of $u$) all of whose mod 2 reductions are $\deg (x_0) + d - 2 - \sum_{1\leq k \leq d} \deg(x_k)$.
The following Lemma is the prototypical method of verifying Assumption \ref{mainassumption} for the various moduli spaces considered throughout the paper:
\begin{lem}\label{smoothcompactcor}
    Assumption \ref{mainassumption} holds for 
    the moduli spaces
    $\overline{\mc{R}}^d(x_0;\vec{x})$ for admissible $M, \{L_i\}$ and generic choices of a Floer datum for the $\ainf$ structure (satisfying the constraints detailed in Remarks \ref{admissibleH}-\ref{wrappedFloerdatum} in the Liouville case).
    Namely: components of these moduli spaces of virtual dimension $\leq 1$ are (for generic choices) compact
    manifolds-with-boundary of the given expected dimension.
    Moreoever, given a fixed $\vec{x}$ these moduli spaces are empty for all but
    finitely many $x_0$ (automatic if there are only finitely many possible $x_0$ to begin with e.g., if all of the Lagrangians being considered are
    compact).  
\end{lem}
\begin{proof}
    If $M$ is compact (and admissible), these assertions (the last of which is automatic) follow from
    standard Gromov compactness and transversality methods 
    as in \cite{Seidel:2008zr}*{(9k), (11h), Prop. 11.13}. 
    In the case $M$ and possibly also its Lagrangians are non-compact, there is an additional concern that solutions could escape to infinity in the target. To address this, one can e.g., appeal to
    the {\it integrated maximum principle} (compare \cite{Abouzaid:2010ly}*{Lemma 7.2} or \cite{Abouzaid:2010kx}*{\S B}), which implies
that elements of $\mc{R}(x_0; \vec{x})$ have image contained in a compact
subset of $M$ dependent on $x_0$ and $\vec{x}$, from where one can again appeal to standard Gromov compactness techniques. (this is strongly dependent on
the form of $H$, $J$, and $\alpha$ chosen for our Floer data as in Remarks \ref{admissibleH}, \ref{liouvilleFloerdatum}, \ref{wrappedFloerdatum}). The same result
can be used to show that solutions do not exist for $x_0$ of sufficiently negative {\it action}
compared to $\vec{x}$ (with our conventions, action is bounded
above and there are finitely many $x_0$ with action above any fixed level), 
verifying the last assertion.
\end{proof}
Choose a generic Floer datum for the $\ainf$ structure satisfying Lemma \ref{smoothcompactcor} and let
let $u \in \overline{\mc{R}}^d(x_0; \vec{x})$ be a rigid curve, meaning for us an element of the virtual dimension 0 component (which has dimension 0 in this case).
By \cite{Seidel:2008zr}*{(11h), (12b),(12d)}, given the fixed orientation of $\mc{R}^d$ (in the case $d \geq 2$ that is; for $d=1$ one instead needs to ``orient the operation of quotienting by $\R$'' as in \cite{Seidel:2008zr}*{(12f)}), any such element $u
\in \overline{\mc{R}}^d(x_0; \vec{x})$ determines an
isomorphism of orientation lines
\begin{equation}\label{ainforientation}
    \mc{R}^d_{u}: o_{x_d} \otimes \cdots \otimes o_{x_1} \lra o_{x_0}.
\end{equation}
Now for any 1-dimensional vector spaces $V_1, \ldots, V_k, W$, an isomorphism $f: V_k \otimes \cdots \otimes V_1 \to W$ induces a canonical map between $\K$-normalizations $|V_1|_{\K} \otimes \cdots \otimes |V_k|_{\K} \cong |V_1 \otimes \cdots \otimes V_k|_{\K} \to |W|_{\K}$, which by abuse of notation, to simplify notation, we also call $f$ (rather than $|f|_{\K}$). Using this,
define the $d$th $\ainf$ operation, for $d \geq 1$
\begin{equation}
    \mu^d: \hom^*_{\mc{F}}(L_{d-1}, L_d) \otimes \cdots \otimes \hom^*_{\mc{F}}(L_0,L_1) \lra \hom^*_{\mc{F}}(L_0,L_d)
\end{equation}
as a sum
\begin{equation}
    \mu^d([x_d], \ldots, [x_1]) := 
    \sum_{u \in \overline{\mc{R}}^d(x_0; \vec{x}) \textrm{ rigid}} (-1)^{\bigstar_d}\mc{R}^d_{u} ([x_d], \ldots, [x_1])
\end{equation}
where $[x_i] \in |o_{x_i}|_{\K}$ is an arbitrary element, $\mc{R}^d_u$ is the map (on $\K$-normalizations induced by) \eqref{ainforientation}, and the sign is given by
\begin{equation}
    \bigstar_d = \sum_{i=1}^d i \cdot \deg (x_i)
\end{equation}
(note that this sum is finite by Corollary \ref{smoothcompactcor}).
An analysis of the codimension 1 boundary of 1-dimensional moduli spaces along
with their induced orientations establishes that the maps $\mu^d$ satisfy the
$\ainf$ relations (see \cite{Seidel:2008zr}*{Prop. 12.3}).

We record here two abuses of notation which will systematically appear in definitions and usage of operations such as $\mu^d$. First, as
above, we will frequently use the same symbol for a multilinear map $F: V_1 \times \cdots \times V_k \to W$ and its corresponding linear map $F: V_1 \otimes \cdots \otimes V_k \to W$. Second, we will frequently use $x_i$ to refer to the arbitrary element $[x_i] \in |o_{x_i}|_{\K}$ to simplify expressions, e.g., above we might write $\mu^d(x_d, \ldots, x_1)$ in place of $\mu^d([x_d], \ldots, [x_1])$.

\section{Circle action on the closed sector}\label{section:closed}
\subsection{Floer cohomology and symplectic cohomology}\label{shsection}
Let $M$ be admissible as in \S 
\ref{subsec:assumptions}.
Given a (potentially time-dependent)
Hamiltonian $H: M \ra \R$, \emph{Hamiltonian Floer cohomology} when it is
defined is formally the Morse cohomology of the $H$-perturbed action functional $\mathcal{A}_H: \mc{L} M \ra \R$
on the free loop space $\mc{L} M$ of $M$. If $\omega$ is
exact and comes with a fixed primitive $\lambda$, this functional can be
written as:
\[
    x \mapsto -\int_x \lambda + \int_{0}^1 H_t(x(t))dt
\]
In general,
$\mathcal{A}_H$ may be multi-valued, but $d \mathcal{A}_H$ is always
well-defined, leading at least to a Morse-Novikov type theory. 
Recall that the set of {\it critical points} of $\mathcal{A}_{H_t}$
(when $H_t$ is implicit) is precisely the set $\mathcal{O}$ of time-1 orbits of
the associated (time-dependent) Hamiltonian vector field $X_H$, and we assume
$H_t$ is chosen sufficiently generically so that
\begin{assumption}\label{assumption:nondegeneratecriticalpoints}
    The elements of $\mathcal{O}$ are non-degenerate.
\end{assumption}
Optionally, given the data of a grading structure on $M$ in the sense of \S \ref{subsec:admissiblefukaya}
one can define an absolute $\Z$ grading on orbits by
$\deg(y) := n - CZ(y)$,
where $CZ$ is the Conley-Zehnder index of $y$ (and such a grading is always well-defined mod 2).

Fix a (potentially $S^1$-dependent) almost complex structure $J_t$. In the formal
picture, this induces a metric on $\mathcal{L} M$.  A {\it Floer trajectory} is
formally a gradient flowline of $\mathcal{A}_{H_t}$ using the metric
induced by $J_t$; concretely it is a map $u: (-\infty, \infty) \times S^1 \ra
M$ satisfying Floer's equation \eqref{coordfreefloer} (which is formally the gradient flow equation
for $\mathcal{A}_{H_t}$),
and converging exponentially near $\pm \infty$ to a pair of specified orbits $y^{\pm} \in
\mathcal{O}$.
In standard coordinates $s,t$ on the cylinder (i.e., $s \in \R$, $t \in \R/\Z = S^1$) this reads as
\begin{equation}
    \label{floersequation2} \partial_s u = -J_t (\partial_t u
    - X).  
\end{equation}

The space of non-constant Floer trajectories between a fixed $y^+$ and $y^-$
modulo the free $\R$ action given by translation in the $s$ direction is
denoted $\mathcal{M}(y^-; y^+)$. As in Morse theory, one should compactify this
space by allowing \emph{broken trajectories}: 
\begin{equation}\label{brokenhamfloer}
    \overline{\mc{M}}(y^-;y^+) = \coprod \mc{M}(y^k;y^+) \times \mc{M}(y^{k-1}; y^k) \times \cdots \times \mc{M}(y^1;y^2) \times \mc{M}(y^-;y^1) .
\end{equation}
In the graded situation, every component of $\overline{\mc{M}}(y^-; y^+)$ has expected/virtual dimension $\deg(y^-) - \deg(y^+)-1$; in general $\overline{\mc{M}}(y^-; y^+)$ has components of varying virtual dimension (of fixed parity $\deg(y^-) - \deg(y^+) - 1$) depending on the underlying homotopy class of the cylinder. By Assumption \ref{mainassumption} for $\overline{\mc{M}}(y^-; y^+)$, for generic choices of (time-dependent) $J_t$, the virtual dimension $\leq 1$ components of the moduli spaces $\overline{\mathcal{M}}(y^-; y^+)$ are compact manifolds (with boundary) of the given expected dimension; fix such a $J_t$.

Putting this all together, the {\it Floer co-chain complex} for $(H_t, J_t)$
over $\K$ has generators corresponding to orbits of $H_t$:
\begin{equation} 
    CF^i(M):= CF^i(M;H_t,J_t) := \bigoplus_{y\in \mc{O}, deg(y)=i} |o_y|_{\K},
\end{equation}
where the {\it orientation line} $o_y$ is a real vector space associated to
every orbit in $\mc{O}$ via index theory (see e.g., \cite{Abouzaid:2010kx}*{\S
C.6}) --- as before this index-theoretic definition a priori depends on a choice
of trivialization of $y^*TM$ compatible with the grading structure, but any two
choices induce isomorphic lines --- and $|V|_{\K}$  is the $\K$-normalization of
$V$ as in \eqref{normalization}.

The differential $d:
CF^*(M; H_t,J_t) \lra CF^*(M; H_t,J_t)$
counts rigid elements of the compactified moduli spaces \eqref{brokenhamfloer}. To fix sign issues, we recall that 
for a rigid element $u \in \mc{M}(y_0; y_1)$ (meaning $u$ belongs to a component of virtual, hence actual, dimension 0)
there is a natural isomorphism between orientation lines induced by index theory (see
e.g., \cite{Seidel:2008zr}*{(11h), (12b),(12d)}, \cite{Abouzaid:2010kx}*{Lemma
C.4})
\begin{equation}\label{muu}
    \mu_u: o_{y_1} \lra o_{y_0}.
\end{equation}
Then, one defines the differential as
\begin{equation}\label{differentialdefinition}
    d([y_1]) = 
    \sum_{u \in \overline{\mc{M}}(y_0;y_1)\textrm{ rigid}} (-1)^{\deg(y_1)} \mu_u ([y_1]),
\end{equation} 
where $[y_1] \in |o_{y_1}|_{\K}$ is an arbitrary element and $\mu_u$ is the map (on $\K$-normalizations induced by) \eqref{muu}.
One can show $d^2 = 0$ (under the assumptions made), and we call the resulting cohomology group $HF^*(H_t, J_t)$.

If $M$ is compact (and admissible), Assumption \ref{mainassumption} holds for all (suitably generic) $J_t$ (and all $H_t$ whose time-1 orbits are non-degenerate as in Assumption \ref{assumption:nondegeneratecriticalpoints}). If $M$ is non-compact and admissible then further hypotheses are needed on the profile of $(H_t, J_t)$ at $\infty$ to obtain admissibility (in particular to prevent curves from escaping to $\infty$ in $M$ and ensure compactness of $\bigcup_{y^-} \overline{\mc{M}}(y^-;y^+)$; we recall the two most relevant possible hypotheses for our purposes in \S \ref{subsubsec:sh}-\ref{subsubsec:relh}, which can lead to distinct Floer cohomology groups. For simplicity, the discussion in \S \ref{subsubsec:relh} subsumes the case of compact $M$ as well. 

\begin{rem}
    Our (cohomological) grading convention for Floer cohomology follows
    \cite{Seidel:2010fk, Ritter:2013aa, Abouzaid:2010kx, ganatra1_arxiv}.
\end{rem}

\subsubsection{Symplectic cohomology} \label{subsubsec:sh}

{\it Symplectic cohomology} \cite{Cieliebak:1995fk, Cieliebak:1996aa,
Floer:1994uq, Viterbo:1999fk}, the target of the open-closed map for wrapped Fukaya categories, is Hamiltonian Floer cohomology for a
particular class of Hamiltonians on non-compact convex symplectic manifolds.
There are several methods for defining this group.
We define it here by making the following specific choices of target,
Hamiltonian, and almost complex structure:
\begin{itemize}
    \item $M$ is a Liouville manifold {\em equipped with a conical end},
        meaning that it comes equipped with a choice of \eqref{cylend}.
        (this serves primarily as a
        technical device; the resulting invariants are independent of the specific choice).

    \item  The Hamiltonian term $H_t$ is a sum $H+F_t$ of an {\it autonomous
        Hamiltonian} $H: M \ra \R$ which is {\it quadratic at $\infty$}, namely
        \begin{equation} \label{eq:quadratic}
            H|_{M \backslash \bar{M}}(r,y) = r^2,
        \end{equation}
        and a time-dependent perturbation $F_t$ such that on the collar \eqref{cylend} of $M$,
        \begin{equation}\label{s1perturbation}
            \textrm{for any $r_0 \gg 0$, there exists an $R > r_0$ such that $F(t,r,y)$ vanishes in a neighborhood of $R$}.
        \end{equation}
        (for instance, $F_t$ could be supported near non-trivial orbits of $H$,
        where it is modeled on a Morse function on the circle). We denote by
        $\mathcal{H}(M)$ the class of Hamiltonians satisfying
        \eqref{eq:quadratic}.

    \item The almost complex structure should belong to the class $\mc{J}(M)$ 
        of complex structures which are {\it (rescaled) contact type} on the
            cylindrical end \eqref{cylend}, meaning that for some $c > 0$,
            \begin{equation}
                \lambda \circ J = f(r) dr
            \end{equation}
            where $f$ is any function with $f(r) > 0$ and $f'(r) \geq 0$.
\end{itemize}

A well known result (see e.g., \cite{Ritter:2013aa, Abouzaid:2010kx}) asserts
that Assumption \ref{mainassumption} holds for the resulting spaces of broken
Floer trajectories \eqref{brokenhamfloer}. Hence 
if $M$, $H_t$, and $J_t$ are as above, one has a well-defined Floer chain complex
$CF^*(M, H_t, J_t)$, which we refer to as the {\em symplectic co-chain complex} $SC^*(M)$; this will be the Floer chain complex we use when working with wrapped Fukaya categories.
We call the resulting cohomology group {\em symplectic cohomology}
$SH^*(M)$. 

\subsubsection{Relative cohomology} \label{subsubsec:relh} 

We review here the Floer cohomology group that is the target of the open-closed map for
an admissible symplectic manifold $M$ when working with a Fukaya category of
{\em compact} admissible Lagrangian submanifolds in the sense of \S
\ref{subsec:assumptions}.
Fix a (non-degenerate, generic) pair $(H_t, J_t)$
which is arbitrary for compact $M$ and which satisfies the following additional
properties if $M$ is Liouville:
\begin{itemize}
\item $H$ is linear of very small negative slope near infinity:
\begin{equation}
    H_t|_{M \backslash \bar{M}}(r, y) = - \lambda r
\end{equation}
where $r$ is the cylindrical coordinate and $\lambda \ll 1$ is a sufficiently
small number (smaller than the length of any Reeb orbit on $\partial \bar{M}$);
and

\item   $J_t$ is (rescaled) contact type near infinity as before.
\end{itemize}

It is well known that 
Assumption \ref{mainassumption} holds for the moduli spaces \eqref{brokenhamfloer} for generic $(H_t, J_t)$ as above (see e.g., \cite{Ritter:2013aa}), and also that:
\begin{prop}
    For generic $(H_t, J_t)$ as above there is an isomorphism $HF^*(H_t, J_t) \cong H^*(\bar{M}, \partial \bar{M})$ (which equals $H^*(M)$ in case $M$ is compact, using the convention then that $\bar{M} = M$ and $\partial \bar{M} = \emptyset$).\qed 
\end{prop}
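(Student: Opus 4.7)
The plan is to handle each assertion separately, reducing everything to a small-Hamiltonian calculation in the Liouville case (the compact case is then immediate since the linearity condition at infinity is vacuous).

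\smallskip

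\emph{Non-degeneracy (Assumption \ref{assumption:nondegeneratecriticalpoints}).} Pick $\lambda$ strictly smaller than the minimum period $T_{\min}$ of a Reeb orbit on $\partial\bar{M}$. I would first arrange $H_t$ to coincide with $-\lambda r$ outside a compact set and show that, because $X_{H}=-\lambda R$ along the cone (where $R$ is the Reeb vector field), every time-$1$ orbit of $X_H$ is contained in the compact region $\bar{M}$: a non-constant orbit on the cone would be a $-\lambda$-periodic Reeb orbit, and since Reeb periods are positive and bounded below by $T_{\min}>\lambda$, this cannot happen. Non-degeneracy of the finitely many remaining orbits is a standard generic perturbation inside $\bar{M}$.

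\smallskip

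\emph{A priori compactness (Assumption \ref{assumption:c0estimate}).} This is the usual integrated maximum principle (as in \cite{Abouzaid:2010ly}*{Lemma 7.2} or Remark \ref{wrappedFloerdatum} above). Because $J_t$ is of rescaled contact type on the cone and the $1$-form $\alpha=dt$ coupled with the linear Hamiltonian $H=-\lambda r$ gives a supersolution type inequality for $r\circ u$, no Floer cylinder between orbits in $\bar{M}$ can attain a maximum of $r\circ u$ in the conical end. Hence all Floer trajectories remain in $\bar{M}$, and the finiteness of asymptotics from a given $y^+$ follows from the standard action estimate, since the action of an orbit in $\bar{M}$ is bounded.

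\smallskip

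\emph{Identification with $H^*(\bar{M},\partial\bar{M})$.} The core of the argument is to choose $H_t$ in a particularly favorable form: take $H$ to be autonomous, $C^2$-small, and Morse on the interior of $\bar{M}$, interpolating smoothly to $-\lambda r$ on the cone, and add a $t$-dependent perturbation only near each orbit to achieve non-degeneracy. Then:
\begin{enumerate}
    \item By the non-degeneracy step, all time-$1$ orbits are constants at critical points $p$ of $H$ inside $\bar{M}$, with Conley--Zehnder index related to the Morse index by $CZ(p)=n-\mathrm{ind}_{\text{Morse}}(p)$, so $\deg(p)=\mathrm{ind}_{\text{Morse}}(p)$.
    \item Applying the standard Floer--Hofer--Salamon small-Hamiltonian theorem (using that the $C^2$-size can be made arbitrarily small while preserving the behavior at infinity by rescaling $H$ inside a large compact set), every rigid Floer cylinder between constant orbits is a reparametrized negative gradient trajectory of $H$ with respect to the metric $\omega(\cdot,J\cdot)$. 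Thus $CF^*(H_t,J_t)$ is identified with the Morse cochain complex of $H$.
    \item Because $\partial_r H=-\lambda<0$ on the cone, the negative gradient vector field $-\nabla H$ points outward along $\partial\bar{M}$. The Morse cohomology of such a function on $\bar{M}$ is computed by the pair $(\bar{M},\partial\bar{M})$: one can either argue directly via a Conley-index/deformation retract of the unstable set, or replace $H$ by a Morse function on the compact manifold-with-boundary $\bar{M}$ that agrees with $H$ in the interior and has no boundary critical points, whose Morse cohomology computes $H^*(\bar{M},\partial\bar{M})$.
\end{enumerate}

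\smallskip

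\emph{Main obstacle.} The delicate step is (2)--(3): one must arrange the $C^2$-small regime on the interior Morse function \emph{without} altering the slope $-\lambda$ at infinity (which is fixed by the compactness step), and then verify that the outward-pointing gradient at $\partial\bar{M}$ correctly produces the relative cohomology rather than absolute cohomology. The cleanest way to handle both issues simultaneously is a continuation/invariance argument: one shows $HF^*(H_t,J_t)$ is independent of the choice of interior Morse data by a monotone continuation (no wall crossings occur as long as $\lambda$ stays below $T_{\min}$), and then evaluates it on the specific family where $H$ is a small multiple of a fixed Morse function, reducing to the classical statement that Morse cohomology of a function whose negative gradient exits through the boundary of a compact domain computes the relative cohomology of that pair.
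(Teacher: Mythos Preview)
Your proposal is correct and follows essentially the same route the paper indicates: the paper does not prove this proposition (it is marked \qed\ as well-known) but immediately afterward sketches exactly the approach you take---choose $H_t$ to be a $C^2$-small Morse function, invoke Floer's argument that all Floer trajectories are Morse trajectories, and observe that the gradient behavior of $H$ near $\partial\bar{M}$ forces the Morse complex to compute the relative cohomology $H^*(\bar{M},\partial\bar{M})$. The paper also notes a second route via the PSS morphism, which you do not pursue, but your argument matches the first option in substance.
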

The isomorphism can be realized in one of two ways:
\begin{itemize}
    \item  choose $H_t$ as above to be a $C^2$ small (time-independent) Morse function, in which case a
        well-known argument of Floer \cite{Floer:1989aa} equates $HF^*(H_t,
        J_t)$ with the Morse complex of $H$ by showing that all Floer
        trajectories must in fact be Morse trajectories of $H|_{\bar{M}}$
        (which in turn, as $H$ is inward pointing near $\bar{M}$, compute the
        relative cohomology) 

    \item Construct a geometric {\em PSS morphism} \cite{Piunikhin:1996aa} $PSS:
        H^*(\bar{M}, \partial \bar{M}) \cong H_{2n-*}(M) \to HF^*(H_t, J_t)$.
\end{itemize}

\subsection{The cohomological BV operator}\label{naivesquare}
The first order BV operator is a Floer analogue of a natural operator that
exists on the Morse cohomology of any manifold with a smooth $S^1$ action. Like
the case of ordinary Morse theory, this operator exists even when the
Hamiltonian and complex structure (c.f. Morse function and metric) are not
$S^1$-equivariant.

For $p \in S^1$, consider the following collection of cylindrical ends on $\R
\times S^1$: 
\begin{equation}
\begin{split}
        \e^{+}_p: (s,t) &\mapsto (s+1,t+p),\ \ \ s \geq 0\\
        \e^{-}_p: (s,t) &\mapsto (s-1,t),\ \ \ s \leq 0
\end{split}
\end{equation}

Pick $K: S^1 \times (\R \times S^1) \times M \ra \R$ dependent on $p$, satisfying
\begin{equation}
    (\e^{\pm}_p)^* K(p, s, \cdot,\cdot) = H(t,m)
\end{equation}
meaning that
\begin{equation}
    K_p(s,t,m) = \begin{cases} 
        H(t+p,m) & s \geq 1\\
        H(t,m) & s \leq -1,
    \end{cases}
\end{equation}
so in the range $-1 \leq s \leq 1$, $K_p(s,t,m)$ interpolates between
$H_{t+p}(m)$ and $H_t(m)$ (and outside of this interval is independent of $s$).

Similarly, pick a family of almost complex structures $J: S^1 \times (\R \times
S^1) \times M \ra \R$ satisfying
\begin{align}
    (\e^{\pm}_p)^* J(p, s, t,m) &= J(t,m)
\end{align}

Now, for $x^+, x^- \in \mc{O}$, define
\begin{equation}
    \mc{M}_{1}(x^-;x^+)
\end{equation}
to be the following {\it parametrized moduli space} of Floer cylinders
\begin{equation}
    \left\{ (p,u)\ |\ p \in S^1, u: S \ra M,\ \begin{cases}   
            \lim_{s \ra \pm \infty} (\e^{\pm}_p)^* u(s,\cdot) = x^{\pm} \\
            (du - X_K \otimes dt)^{0,1} = 0.
        \end{cases}\right\}
\end{equation}

There is a natural bordification by adding broken Floer cylinders to either end
\begin{equation} \label{compactificationBV}
    \overline{\mc{M}}_{1}(x^-;x^+) =  \coprod \mc{M}(a_0; x^+) \times \cdots \times \mc{M}(a_{k};a_{k-1}) \times \mc{M}_{1}(b_1; a_k) \times \mc{M}(b_2; b_1) \times \cdots \times \mc{M}(x^-;b_{l})
\end{equation}

\begin{rem}[Choices of $K$ and $J$ when $M$ is non-compact]\label{noncompactBVdata}
    When $M$ is non-compact and Liouville, further constraints on the profile of $K$ and $J$ are required near $\infty$ (beyond genericity) in order to satisfy Assumption \ref{mainassumption}.
    In the case of symplectic cohomology described in \S
    \ref{subsubsec:sh}, it suffices to choose
    $K$ carefully as follows. Given that $H_t(M) = H + F_t$ is a sum
    of an autonomous term and a time-dependent term that is zero at infinitely
    many levels tending towards infinity, we can ensure that 
    \begin{equation}\label{quadratichamiltonianlevels}
        \textrm{at infinity many levels tending towards infinity, }K_p(s,t,m)\textrm{ is equal to } r^2,
    \end{equation}
    and in particular is autonomous. In the setting of \S \ref{subsubsec:relh} (when $M$ is non-compact), we can similarly ensure a version of \eqref{quadratichamiltonianlevels} with $r^2$ replaced by $-\lambda r$ (in this case we could also more simply ensure that $K_p(s,t,m)= -\lambda r$ outside a compact set). In either case, one can take $J$ to be (rescaled) contact type on the cylindrical end.
    As usual, the verification of Assumption \ref{mainassumption} for the moduli spaces \eqref{compactificationBV} on Liouville $M$ follows by combining the results
    \cite{Abouzaid:2010kx}*{\S B} or  \cite{Abouzaid:2010ly}*{Lemma 7.2} (which prevent curves escaping to $\infty$ and ensure $\mc{M}_1(x^+, x^-)$ is empty for all but finitely many $x^-$ given the constraints near $\infty$ fixed in this remark) with classical transversality and compactness arguments.
\end{rem}

As before, $\overline{\mc{M}}_{1}(x^-; x^+)$ contains components of varying expected dimension depending on the underlying homotopy class $\beta$ of a map. Due to the fact that we are studying 1-parametric families of domains and not quotienting by $\R$, the relevant expected dimension is 2 more than the expected dimension of the components of $\overline{\mc{M}}(x^-; x^+)$ underlying the same homotopy class $\beta$. In particular in the graded case, this expected dimension is $\deg(x^+) - \deg(x^-) + 1$ for every component. By Assumption \ref{mainassumption}
for admissible choices of the above data (i.e., generic choices satsifying Remark \ref{noncompactBVdata} in the non-compact case), every component of $\overline{\mc{M}}_{1}(x^-; x^+)$ of virtual dimension $\leq 1$ is a compact manifold-with-boundary of dimension equal to its virtual dimension. (In particular, the boundary of the 1-dimensional components consists of the once-broken trajectories in \eqref{compactificationBV}).
In the usual fashion, counting rigid elements of this compactified moduli space of maps with
the right sign (explained more carefully in the next section) give an
operation $\delta_1: CF^*(M) \to CF^{*-1}(M)$, satisfying 
\[
d \delta_1 + \delta_1 d = 0
\]
(coming from the fact that the codimension 1 boundary of $\overline{\mc{M}}_{1}(x^-; x^+)$ is $\coprod_y
\overline{\mc{M}}(y; x^+) \times \overline{\mc{M}}_{1}(x^-; y) \cup \overline{\mc{M}}_{1}(y; x^+) \times \overline{\mc{M}}(x^-; y)$).
It would be desirable for $\delta_1$ square to zero on the chain level, which would give
$(CF^*(M), \delta_0 = d, \delta_1)$ the structure of a {\em strict $S^1$-complex}, or {\em
mixed complex}. However, the $S^1$ dependence of our Hamiltonian and almost
complex structure prevent this, in a manner we now explain.

Typically one attempts to prove a geometric/Floer-theoretic operation (such as
$\delta_1^2$) is zero by exhibiting that the relevant moduli problem has no
zero-dimensional solutions (due to, say, extra symmetries in the equation), or
otherwise arises as the boundary of a 1-dimensional moduli space.
To that end, we first describe a moduli space parametrized by $S^1 \times S^1$ which looks
like two of the previous parametrized spaces naively superimposed, leading us
to call the associated operation $\delta_2^{naive}$. The extra symmetry
involved in this definition will allow us to easily conclude
\begin{lem} \label{naivelemma}
    $\delta_2^{naive}$ is the zero operation.
\end{lem}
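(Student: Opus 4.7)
The plan is to exhibit a free $S^1$ action on the parametrized moduli space defining $\delta_2^{naive}$, which forces the signed count of rigid elements to vanish identically. This is a standard ``extra symmetry'' vanishing argument, enabled by the naive symmetric nature of the construction.

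First, I would describe the moduli space $\mc{M}_2^{naive}(x^+, x^-)$ concretely. Since it arises from ``naively superimposing'' two copies of the $S^1$-parametrized setup used to define $\delta_1$, it should consist of triples $(p_1, p_2, u)$ with $(p_1, p_2) \in S^1 \times S^1$ and $u: \R \times S^1 \to M$ a solution of Floer's equation for $(p_1, p_2)$-dependent data $(K_{p_1, p_2}, J_{p_1, p_2})$, with cylindrical ends $\e^{\pm}_{p_1, p_2}$ twisted in a symmetric fashion by the two parameters. The point of the ``naive'' construction is that the two rotation parameters enter on equal footing, so that the data depends symmetrically on $(p_1, p_2)$ --- concretely, $(K_{p_1, p_2}, J_{p_1, p_2})$ and the ends $\e^{\pm}_{p_1, p_2}$ should be invariant under the simultaneous shift $(p_1, p_2) \mapsto (p_1 + r, p_2 + r)$ together with a corresponding rotation $u(s,t) \mapsto u(s, t+r)$ of the domain cylinder.

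Second, I would verify that this yields a free $S^1$ action on $\mc{M}_2^{naive}(x^+, x^-)$. The diagonal circle
\[
    r \cdot (p_1, p_2, u(s,t)) := (p_1 + r, p_2 + r, u(s, t+r))
\]
preserves Floer's equation (by the symmetric choice of data), preserves the asymptotic conditions at $s \to \pm\infty$ (since the net twisting, which depends only on $p_1 - p_2$ or analogous combinations invariant under the shift, is unchanged), and acts freely on $\mc{M}_2^{naive}$ because the diagonal $S^1 \subset S^1 \times S^1$ acts freely on the parameter torus.

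The vanishing is then standard: a moduli space equipped with a free $S^1$ action has no isolated points, as every orbit is at least one-dimensional. In particular, for generic perturbation data respecting the symmetry, the virtual dimension zero stratum is empty, so $\delta_2^{naive} = 0$ on the chain level. The only subtle point will be setting up the ``naive superimposition'' precisely enough for the diagonal symmetry to be manifest; but since this is essentially built into what ``naive'' should mean, the symmetry comes for free and genuine transversality is only needed on the $S^1$-quotient moduli space, which does not affect the count of rigid elements.
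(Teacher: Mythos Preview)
Your overall strategy --- exhibit a free $S^1$ action on the parametrized moduli space and conclude that no element can be rigid --- is exactly the paper's, and the conclusion follows just as you describe. However, the specific $S^1$ action you propose is not the one that works, and in fact fails against the construction actually given.

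In the paper's setup the data $(K_{p_1,p_2}, J_{p_1,p_2})$ and the cylindrical ends depend on $(p_1,p_2)$ only through the \emph{sum} $p_1+p_2$: the positive end is twisted by $p_1+p_2$ and the negative end not at all. Your diagonal shift $(p_1,p_2)\mapsto(p_1+r,p_2+r)$ changes this sum, hence changes the equation; the compensating domain rotation $u(s,t)\mapsto u(s,t+r)$ cannot repair this, since matching at the positive end would require a rotation by $2r$ while matching at the negative end requires rotation by $0$. (Your parenthetical guess that the twisting depends on $p_1-p_2$ is the wrong way around.) The action the paper uses is the \emph{anti-diagonal} $(p_1,p_2,u)\mapsto(p_1-r,p_2+r,u)$ with $u$ unchanged: this preserves $p_1+p_2$, hence the entire Floer datum, and acts freely on the parameter torus. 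With that correction your argument goes through verbatim.
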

For $(p_1,p_2) \in S^1 \times S^1$, consider the following collection of
cylindrical ends:
\begin{equation}
\begin{split}
    \e^{+}_{(p_1,p_2)}: (s,t) &\mapsto (s+1,t+p_1+p_2),\ \ \ s \geq 0\\
    \e^{-}_{(p_1,p_2)}: (s,t) &\mapsto (s-1,t),\ \ \ s \leq 0
\end{split}
\end{equation}

Pick $K: (S^1 \times S^1) \times (\R \times S^1) \times M \ra \R$ dependent on $(p_1,p_2)$, satisfying
\begin{equation}
    \e^{\pm}_{(p_1,p_2)} K(p_1,p_2, s, \cdot,\cdot) = H(t,m)
\end{equation}
meaning that
\begin{equation}
    K_{(p_1,p_2)}(s,t,m) = \begin{cases} 
        H(t+p_1+p_2,m) & s \geq 1\\
        H(t,m) & s \leq -1,
    \end{cases}
\end{equation}
so in the range $-1 \leq s \leq 1$, $K_{p_1+p_2}(s,t,m)$ interpolates between
$H_{t+p_1+p_2}(m)$ and $H_t(m)$.

Similarly, pick a family of almost complex structures $J: S^1 \times S^1 \times
(\R \times S^1) \times M \ra \R$ 
\begin{align}
    \e^{\pm}_{(p_1,p_2)} J(p_1,p_2, s, t,m) &= J(t,m),
\end{align}
such that
\begin{equation}
    J\textrm{ only depends on the sum }p_1+p_2.
\end{equation}

Now, for $x^+, x^- \in \mc{O}$, define
\begin{equation}
    \mc{M}_{2}^{naive}(x^-; x^+)
\end{equation}
to be the {\it parametrized moduli space} of Floer cylinders
\begin{equation}
    \left\{ (p_1, p_2,u)\ |\ (p_1,p_2) \in S^1 \times S^1, u: S \ra M,\ \begin{cases}   
            \lim_{s \ra \pm \infty} (\e^{\pm}_{(p_1,p_2)})^* u(s,\cdot) = x^{\pm} \\
            (du - X_K \otimes dt)^{0,1} = 0.
        \end{cases}\right\}
\end{equation}
For generic choices of $K$ and $J$ (again bearing in mind the extra impositions
of Remark \ref{noncompactBVdata} in the non-compact case), this moduli space,
suitably compactified by adding broken trajectories, will be (for components of
virtual dimension $\leq 1$) a manifold with boundary of the correct (expected)
dimension; the dimension agrees mod 2 in the $\Z/2$-graded case and exactly in the graded case with $\deg(x^+) - \deg(x^-)
+ 2$ (the details are similar to the previous section, and will be omitted).
Counts of rigid elements in this moduli space will thus, in the usual fashion
give a map of degree $-2$, which we call $\delta_2^{naive}$.

\begin{proof}[Proof of Lemma \ref{naivelemma}]
    Let $(p_1, p_2, u)$ be an element of $\mc{M}_{2}^{naive}(x^-;x^+)$. Then,
    $(p_1-r, p_2 + r, u)$ is an element too, for any $r \in S^1$, as the
    equation satisfied by the map $u$ only depends on the sum $p_1+p_2$. We
    conclude that elements of $\mc{M}_{2}^{naive}(x^-;x^+)$ are never rigid, and
    thus that the resulting operation $\delta_2^{naive}$ is zero.  
\end{proof}

We would like $\delta_2^{naive}$ to be genuinely equal to $\delta_1^2$, which
would imply that $\delta_1^2 = 0$.  However, this is only true
on the homology level; the lack of $S^1$ invariance of our Hamiltonian and
almost complex structure, and the corresponding family of choices of homotopy
between $\theta^*H_t$ and $H_t$, over varying $\theta \in S^1$, breaks symmetry and
ensures that $\delta_1^2 \neq \delta_2^{naive}$ as geometric chain maps.
However, there is a geometric chain homotopy, $\delta_2$ between $\delta_1^2$
and $\delta_2^{naive}$ 
along with a hierarchy of higher homotopies $\delta_k$ forming the
$S^1$-complex structure on $CF^*(M)$, which we define in the next section (see
in particular Lemma \ref{weaks1actionSHlemma} for the proof of the
$S^1$-complex equations, one of which recovers the chain homotopy between
$\delta_1^2$ and $\delta_2^{naive}=0$).

\subsection{The \texorpdfstring{$\ainf$}{A-infinity} circle action}\label{sec:angledeccylinder}
We turn to a ``coordinate-free'' definition of the relevant parametrized
moduli spaces, which will help us incorporate the construction into open-closed
maps. 
\begin{defn}
    An {\em $r$-point angle-decorated cylinder} consists of a semi-infinite or
    infinite cylinder $C \subseteq (-\infty,\infty) \times S^1$, along with a
    collection of auxiliary points $p_1, \ldots, p_r \in C$, satisfying
    \begin{equation}\label{heightordering}
           (p_1)_s \leq \cdots \leq (p_r)_s,
    \end{equation}
    where $(a)_s$ denotes the $s \in (-\infty, \infty)$ coordinate. The {\em
    heights} associated to this data are the $s$ coordinates
    \begin{equation}
        h_i = (p_i)_s,\ i = 1, \ldots, r
    \end{equation}
    and the {\em angles} associated to $C$ are the $S^1$ coordinates
    \begin{equation}
        \theta_i := (p_i)_t, \ i \in 1, \ldots, r.
    \end{equation}
\end{defn}
The {\em cumulative rotation} of an $r$-point angle-decorated cylinder is the first angle:
\begin{equation}
    \eta := \eta(C, p_1, \ldots, p_r) = \theta_1.
\end{equation}
The {\em $i$th incremental rotation} of an $r$-point angle-decorated cylinder
is the difference between the $i$th and $i-1$st angles:
\begin{equation}
    \kappa^{inc}_i := \theta_i - \theta_{i+1} \textrm{  (where $\theta_{r+1} = 0$)}.
\end{equation}
Inductively each $\theta_i$ can be expressed as a sum of all incremental rotations from $i$ to $r$:
\begin{equation}\label{incrementalcumulative}
    \theta_i = \sum_{j=i}^r \kappa^{inc}_j
\end{equation}
\begin{defn}\label{rpointedmodulispace}
    The {\em moduli space of $r$-point angle-decorated cylinders} 
    \begin{equation}
        \mc{M}_r
    \end{equation}
    is the space of $r$-point angle-decorated infinite cylinders, modulo
    translation.  
\end{defn}
\begin{rem}[Orientation for $\mc{M}_r$]\label{orientationmr}
Note that $C_r$, the space of all $r$-point angle-decorated infinite cylinders (not
modulo translation) has a canonical complex orientation.  Thus, to orient the
quotient space $\mc{M}_r:=C_r/\R$ it is sufficient to give a choice of
trivialization of the action of $\R$ on $C_r$. We choose $\partial_s$ to be the
vector field inducing said trivialization.
\end{rem}

For an element of this moduli space, the angles and relative heights of the
auxiliary points continue to be well-defined, so there is a non-canonical
isomorphism
\begin{equation}
    \mc{M}_r \simeq (S^1)^r \times [0,\infty)^{r-1}
\end{equation}
The moduli space $\mc{M}_r$ thus possesses the structure of an open
manifold-with-corners, with boundary and corner strata given by the various
loci where heights of the auxiliary points $p_i$ are coincident (we allow the
points $p_i$
themselves to coincide; one alternative is to first Deligne-Mumford compactify,
and then collapse all sphere bubbles containing multiple $p_i$'s. That the
result still forms a smooth manifold with corners is a standard local
calculation near any such stratum).  Given an arbitrary representative $C$ of
$\mc{M}_r$ with associated heights $h_1, \ldots, h_r$, we can always find a
translation $\tilde{C}$ satisfying $\tilde{h}_r = -\tilde{h}_1$; we call this
the {\it standard representative} associated to $C$. 

Given a representative $C$ of this moduli space, and a fixed constant $\delta$,
we fix a positive cylindrical end around $+\infty$
\begin{equation}\label{posendangles}
    \begin{split}
     \epsilon^+: [0,\infty) \times S^1 &\ra C\\
     (s,t) &\mapsto (s + h_r + \delta, t)\\
\end{split}
\end{equation}
and a negative cylindrical end around $-\infty$ (note the angular rotation in $t$!):
\begin{equation}\label{negendangles}
    \begin{split}
        \epsilon^-: (-\infty,  0] \times S^1 & \ra C \\
        (s,t) &\mapsto ( s - (h_1 - \delta)), t +  \theta_1).
    \end{split}
\end{equation}
These ends are disjoint from the $p_i$ and vary smoothly with $C$; via thinking
of $C$ as a sphere with two points with asymptotic markers removed, these
cylindrical ends correspond to the positive asymptotic marker having angle 0
and the negative asymptotic marker having angle $\theta_1 = \kappa^{inc}_1 +
\kappa^{inc}_2 + \cdots + \kappa^{inc}_r$.

There is a compactification of $\mc{M}_r$ consisting of {\em broken $r$-point
angle-decorated cylinders}
\begin{equation}
    \overline{\mc{M}}_r = \coprod_{s} \coprod_{j_1, \ldots, j_s; j_i > 0, \sum j_i = r} \mc{M}_{j_1} \times \cdots \times \mc{M}_{j_s}.
\end{equation}
The stratum consisting of $s$-fold broken configurations lies in the
codimension $s$ boundary, with the manifolds-with-corners structure explicitly
defined by local gluing maps using the ends \eqref{posendangles} and
\eqref{negendangles}. The gluing maps, which rotate the bottom
cylinder of the gluing in order to match its top end \eqref{posendangles} with the bottom end
\eqref{negendangles} of the upper cylinder, induce cylindrical ends on the glued cylinders which agree 
with the choices of ends made in \eqref{posendangles}-\eqref{negendangles}. Concretely, for a 1-fold broken configuration of the form $\mc{M}_{r-k} \times \mc{M}_{k}$, the gluing map (for any choice of sufficiently small gluing parameter) has the following effect on angles (denoting coordinates in the second, bottom factor by $\bar{\theta}_j$ ($1 \leq j \leq k$) and in the first, top factor by $\theta_i$ ($1 \leq i \leq r-k$):
\begin{equation}\label{gluingangles}
    \left( (\theta_1, \ldots, \theta_{r-k}) , (\bar{\theta}_1, \ldots, \bar{\theta}_{k}) \right) \mapsto \left( \bar{\theta}_1 + \theta_1, \bar{\theta}_2 + \theta_1, \ldots, \bar{\theta}_{k} + \theta_1, \theta_1, \ldots, \theta_{r-k} \right);
\end{equation}
see Figure \ref{fig:angledecoratedgluing}.
\begin{figure}[h]
    \caption{\label{fig:angledecoratedgluing} The gluing map for an angle-decorated cylinder rotates all of the angles of the bottom cylinder by the first angle of the top cylinder as in \eqref{gluingangles}.}
    \centering
    \includegraphics[scale=0.8]{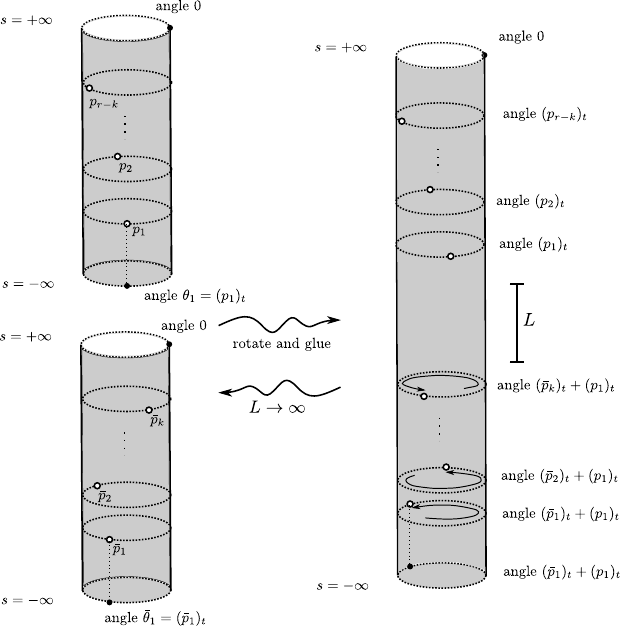}
\end{figure}
More simply, in the glued surface, the list of incremental angles $(\kappa_1^{inc,glued}, \ldots, \kappa_r^{inc,glued})$ is equal to the concatenation of the lists of incremental angles of the original bottom and top surfaces, $(\bar{\kappa}_1^{inc}, \bar{\kappa}_2^{inc}, \ldots, \bar{\kappa}_k^{inc}, \kappa_1^{inc}, \kappa_2^{inc},\ldots, \kappa_{r-k}^{inc})$.

The compactification $\overline{\mc{M}}_r$ thus has codimension-1 boundary
covered by the images of the natural inclusion maps
\begin{align} 
    \label{anglestratum1}\overline{\mc{M}}_{r-k} \times \overline{\mc{M}}_k &\lra \partial \overline{\mc{M}}_{r},\ 0 < k < r\\
    \label{anglestratum2}\overline{\mc{M}}_r^{i,i+1} &\lra \partial \overline{\mc{M}}_{r},\  1 \leq i < r,
\end{align}
where $\overline{\mc{M}}_r^{i,i+1}$
denotes the compactification of the locus where $i$th and $i+1$st heights are
coincident  
\begin{equation}
    \mc{M}_r^{i,i+1} := \{C \in \mc{M}_r\ |\ h_i = h_{i+1} \}.
\end{equation}
With regards to the above stratum, for $r > 1$ there is a projection map which
will be relevant, a version of the forgetful map which remembers only the
first of the angles with coincident heights: 
\begin{equation}
    \begin{split}
    \pi_i: \mc{M}_r^{i,i+1} &\lra \mc{M}_{r-1}\\
    (h_1, \ldots, h_i, h_{i+1}=h_i, h_{i+2}, \ldots, h_r) &\longmapsto (h_1, \ldots, h_i, h_{i+2}, \ldots, h_r)\\
    (\theta_1, \ldots, \theta_i, \theta_{i+1}, \ldots, \theta_r) &\longmapsto (\theta_1, \ldots, \theta_{i-1}, \theta_{i}, \widehat{\theta_{i+1}}, \theta_{i+2}, \ldots, \theta_r).
\end{split}
\end{equation}
$\pi_i$ is compatible with the choice of positive and negative ends
\eqref{posendangles}-\eqref{negendangles} and hence $\pi_i$ extends to
compactifications
\begin{equation} \label{addmapcompactified}
    \pi_i: \overline{\mc{M}}_{r}^{i,i+1} \ra \overline{\mc{M}}_{r-1}.
\end{equation}
We will equip each $r$-point angle-rotated cylinder $\tilde{C} := (C,p_1,
\ldots, p_r)$ with perturbation data for Floer's equation or a {\em Floer
datum} in the sense of the last section, which consists of
\begin{itemize}
    \item     The positive and negative cylindrical ends on $\epsilon^{\pm}: C^{\pm} \ra C$ chosen in
        \eqref{posendangles}-\eqref{negendangles}. 

    \item The one-form on $C$ given by $\alpha = dt$.

    \item A surface-dependent Hamiltonian $H_{\tilde{C}}: C \ra \mc{H}(M)$
        compatible with the positive and negative cylindrical ends, meaning that
         \begin{equation}
             (\epsilon^{\pm})^* H_{\tilde{C}} = H_t,
        \end{equation}
        where $H_t$ was the previously chosen Hamiltonian.

    \item A surface dependent almost complex structure $J_{\tilde{C}}: C \ra \mc{J}_1(M)$ also
        compatible with $\epsilon^{\pm}$, meaning that
        \begin{equation}
            (\epsilon^{\pm})^* J_{\tilde{C}} = J_t
        \end{equation}
        for our previously fixed choice $J_t$.
\end{itemize}
A choice of {\em Floer data for the $S^1$-action}
is an inductive (smoothly varying) choice of Floer data, for each $r$ and each representative
$S = (C,p_1, \ldots, p_r)$ of $\overline{\mc{M}}_r$, satisfying the
following consistency conditions at boundary strata: 
    \begin{align}
    \label{s1consistency} &\textrm{At a boundary stratum \eqref{anglestratum1}, the datum chosen coincides with the product of}\\
        \nonumber &\textrm{Floer data already chosen on lower-dimensional spaces. }\\
        \label{s1forgetful}  &\textrm{At a  boundary stratum \eqref{anglestratum2}, the Floer data coincides with the pull back, via the } \\
           \nonumber &\textrm{forgetful map $\pi_i$ defined in \eqref{addmapcompactified}
               of the Floer data chosen on $ \overline{\mc{M}}_{r-1}$.}
        \end{align}
Inductively, since the space of choices at each level is non-empty and
contractible (and since the consistency conditions are compatible along
overlapping strata), universal and consistent choices of Floer data exist.
From the gluing map, a representative $S$ sufficiently near the boundary strata
\eqref{anglestratum1} inherits cylindrical regions, also known as {\it thin
parts}, which are the surviving images of the cylindrical ends of
lower-dimensional strata. Together with the cylindrical ends of $S$, this
determines a collection of cylindrical regions:
\begin{defn}\label{cylregions} Given a fixed positive constant $\delta$,
    the {\em ($\delta$-spaced) rotated cylindrical regions} for an $r$-point angle-decorated
    cylinder $(C,p_1, \ldots, p_r)$ consist of the following cylindrical ends
    and finite cylinders (where $h_i = (p_i)_s$ and $\theta_i = (p_i)_t$): 
    \begin{itemize}
        \item The {\em top cylinder}
            \begin{equation}
                \begin{split}
                 \epsilon^+: [0,\max(top(C) - (h_r + \delta),0)] \times S^1 &\ra C\\
                 (s,t) &\mapsto (\min(s + h_r + \delta,top(C)), t)\\
            \end{split}
        \end{equation}

    \item The {\em bottom cylinder} 
            \begin{equation}
                \begin{split}
                    \epsilon^-: [\min(bottom(C) - (h_1 - \delta), 0), 0] \times S^1 & \ra C \\
                    \nonumber (s,t) &\mapsto ( \max(s - (h_1 - \delta),bottom(C)), t + \theta_1) \\
                    &= ( \max(s - (h_1 - \delta),bottom(C)), t + \sum_{j=1}^r \kappa_i)
                \end{split}
            \end{equation}
        
    \item For any $1 \leq i \leq r-1$ satisfying $h_{i+1} - h_i > 2 \delta$, the {\em $i$th thin part}
    \begin{equation}
        \begin{split}
            \epsilon_i: [h_{i} + \delta, h_{i+1} - \delta] \times S^1 &\ra C\\
            (s,t) &\mapsto (s, t + \eta_i)
        \end{split}
    \end{equation}
    \end{itemize}
\end{defn}
Note that a given $r$-point angle-decorated cylinder may not have an the $i$th
thin part, for a given $i \in [1,r-1]$, and indeed may not have any thin
parts. 
The consistency conditions at boundary strata can be ensured in particular by requiring that
for any ($\delta$-spaced)
rotated cylindrical region $\epsilon: C' \ra C$ of sufficiently large length (greater than some fixed $L$, say $L=3\delta$) associated to $(C,p_1, \ldots, p_r)$ and $\delta$, we have that
\begin{equation}
\epsilon^*(K_C, J_C) = (K_t, J_t).
\end{equation}
Given the cylindrical regions of Definition \ref{cylregions}, this would imply the following condition on
$(K_C, J_C)$ (assuming $L \gg 3 \delta$): for $z = (s,t) \in C$
\begin{equation}
    \begin{split}\label{rotatecondition}
        (K_{z}, J_{z}) &= (K_t, J_t) \ \textrm{ for }s > h_r + \delta;\\
        (K_z, J_z) &= (\epsilon^-)^*(K_t, J_t) = (K_{t+\theta_1},J_{t+ \theta_1 }) \ \textrm{ for }s < h_1 - \delta;\\
        (K_z,J_z) &= \epsilon_{i}^* (K_z,J_z) = (K_{t+\theta_i}, J_{t + \theta_i}) \textrm{ if } h_{i+1} - h_{i} > 3\delta 
        \textrm{ and } s \in [h_{i}+\delta, h_{i+1} - \delta]. 
    \end{split}
\end{equation}

Given a choice of Floer data for the $S^1$-action and a pair of asymptotics
$(x^+,x^-) \in \mc{O}$ for each $k \geq 1$ there is an associated parametrized
moduli space of Floer cylinders with source an arbitrary element of $S \in
\mathcal{M}_r$ (where the Floer equation is with respect to the Hamiltonian
    $H_S$ and $J_S$, with asymptotics $(x^+,x^-)$:
\begin{equation}
    \mc{M}_{r}(x^-; x^+):=
    \{ S = (C,p_1, \ldots, p_r) \in \mc{M}_r,\ u: C \ra M\ |\ 
        \begin{cases}
        \lim_{s \ra \pm \infty} (\e^{\pm})^* u(s,\cdot) &= x^{\pm} \\
(du - X_{H_S} \otimes dt)^{(0,1)_S} &= 0.
    \end{cases} \}
\end{equation}
The consistency condition imposes that the 
boundary of the Gromov bordification $\overline{\mc{M}}_r(x^-; x^+)$ 
is covered by the images of the natural inclusions
\begin{align}
    \label{goodboundary} \overline{\mc{M}}_{r-k}(y; x^+) \times \overline{\mc{M}}_k( x^-; y) \ra \partial \overline{\mc{M}}_r(x^-; x^+) \\
    \label{badboundary}\overline{\mc{M}}_r^{i,i+1} (x^-; x^+) \ra \partial \overline{\mc{M}}_r(x^-; x^+),
\end{align}
along with the usual semi-stable strip breaking boundaries
\begin{equation}
    \begin{split}
    \label{stripbreaking}
     \overline{\mc{M}}(y; x^+) \times \overline{\mc{M}}_r(x^-; y) &\ra \partial \overline{\mc{M}}_r(x^-; x^+) \\
    \overline{\mc{M}}_r(y; x^+) \times \overline{\mc{M}}(x^-; y)  &\ra \partial \overline{\mc{M}}_r(x^-; x^+) 
\end{split}
\end{equation}

\begin{rem}[Floer data in the Liouville case]\label{LiouvilleS1complex}
    Continuing Remark \ref{noncompactBVdata}, when $M$ is Liouville we impose the following further constraint on Floer data:
        \begin{equation}
            \begin{split}
                &\textrm{$H_{\tilde{C}}$ is equal to $r^2$ or $-\lambda r$ (depending on whether we are in the setting of \S \ref{subsubsec:sh} or \S \ref{subsubsec:relh})} \\
                &\textrm{at infinitely many levels of $r$ tending to $\infty$, and $J_{\tilde{C}}$ is (rescaled) contact type near $\infty$.}
        \end{split}
        \end{equation}
        (in fact, in the setting of \S \ref{subsubsec:relh} we can take $H_{\tilde{C}}$ to be
simply equal to $-\lambda r$ for all $r$ outside of a compact set). 
By e.g., \cite{Abouzaid:2010ly}*{Lemma 7.2} or \cite{Abouzaid:2010kx}*{\S B}, this hypothesis implies that sequences of curves with fixed asymptotics cannot escape to $\infty$ in $M$, and that $\mc{M}_{r}(x^-;x^+)$ given a fixed $x^+$ is non-empty for only finitely many $x^-$, both necessary inputs to verifying Assumption \ref{mainassumption}.
\end{rem}

In the $\Z$-graded case, the virtual dimension of (every component of) $\overline{\mc{M}}_r(x^-; x^+)$ is
\begin{equation}
        \deg (x^+) - \deg(x^-) + (2r -1);
\end{equation}
in the $\Z/2$-graded case every component has virtual dimension of the above parity.
By Assumption \ref{mainassumption}
for a generic fixed choice of Floer data for the $S^1$-action (satisfying Remark
\ref{LiouvilleS1complex} in the Liouville case), the components of virtual
dimension $\leq 1$ of the moduli spaces $\overline{\mc{M}}_r(x^-; x^+)$ are
compact manifolds-with-boundary of the correct (expected) dimension.
As usual, signed counts of rigid elements of this moduli space for varying
$x^+$ and $x^-$ (using induced maps on orientation lines, twisted as in the
differential by $(-1)^{\deg(x_+)}$---see \eqref{differentialdefinition}) give
the matrix coefficients for the overall map
\begin{equation}
    \delta_{r}: CF^*(M) \ra CF^{*-2r + 1}(M).
\end{equation}
In the degenerate case $r=0$ we define $\delta_0$ to be the (already defined) differential:
\begin{equation}
    \delta_0: = d: CF^*(M) \to CF^{*+1}(M).
\end{equation}

\begin{lem} \label{weaks1actionSHlemma}
    For each $r$, 
    \begin{equation}
        \label{circleactioneqn}\sum_{i=0}^r \delta_i \delta_{r-i} = 0.
    \end{equation}
\end{lem}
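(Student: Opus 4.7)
The plan is the standard codimension-one boundary argument. Fix asymptotics $x^{\pm}$ so that the relevant component of $\overline{\mc{M}}_k(x^+; x^-)$ has virtual dimension one; by the compactness/transversality assertion proved above, this component is a compact oriented $1$-manifold with boundary, so the signed count of its codimension-one boundary vanishes. The task is then to identify this signed count, term by term, with the coefficient of $[x^-]$ in $\sum_{i=0}^k \delta_i \delta_{k-i}[x^+]$, at which point summing over $x^-$ (only finitely many contribute, by Assumption \ref{assumption:c0higherbv}) gives \eqref{circleactioneqn}.

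The codimension-one boundary splits into the three types enumerated in \eqref{goodboundary}, \eqref{badboundary}, and \eqref{stripbreaking}. The ``good'' stratum \eqref{goodboundary}, by the consistency condition \eqref{s1consistency}, is a fiber product of previously defined moduli spaces, and its rigid count contributes $\delta_r \circ \delta_{k-r}$ for $0<r<k$ after the orientation comparison of \cite{Seidel:2008zr}*{\S 12}. The strip-breaking stratum \eqref{stripbreaking} similarly contributes $\delta_0 \delta_k + \delta_k \delta_0$ via the definition $\delta_0 = d$. Together these account for all terms in $\sum_{i=0}^k \delta_i \delta_{k-i}$.

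The heart of the argument, and the reason behind the consistency condition \eqref{s1forgetful}, is showing that the ``bad'' strata \eqref{badboundary} contribute nothing. By \eqref{s1forgetful} the Floer datum on $\overline{\mc{M}}_k^{i,i+1}$ is pulled back along $\pi_i$, which remembers the common height $h_i = h_{i+1}$ and all angles except $\theta_{i+1}$. Hence the Floer equation on this stratum is completely independent of $\theta_{i+1}$, and free rotation in $\theta_{i+1}$ defines a free $S^1$-action on $\overline{\mc{M}}_k^{i,i+1}(x^+; x^-)$. Therefore this space is homeomorphic, as a moduli space of maps, to $S^1 \times \overline{\mc{M}}_{k-1}(x^+; x^-)$; in particular it contains no isolated points, so its contribution to the signed boundary count is zero. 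The step I expect to be most error-prone is the sign verification --- checking that the boundary orientations induced by the complex orientation of $\mc{M}_r$ from Remark \ref{orientationmr} combine with the $(-1)^{\deg(x^+)}$ twist in \eqref{differentialdefinition} to produce the $+1$ coefficients required by \eqref{circleactioneqn} --- but this reduces to a routine local determinantal-line calculation along the lines of \cite{Seidel:2008zr}*{\S 12b, 12d}.
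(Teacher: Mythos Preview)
Your proof is correct and follows essentially the same approach as the paper: a codimension-one boundary analysis of the one-dimensional components of $\overline{\mc{M}}_k(x^+;x^-)$, with the key observation that the ``bad'' strata $\overline{\mc{M}}_k^{i,i+1}(x^+;x^-)$ contribute nothing because the Floer data are pulled back along the forgetful map $\pi_i$ (so solutions come in one-dimensional families and are never rigid). The only cosmetic difference is that you phrase this vanishing via a free $S^1$-action in the $\theta_{i+1}$-coordinate, whereas the paper simply invokes the one-dimensionality of the fibers of $\pi_i$; these are the same argument.
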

\begin{proof}
    The counts of rigid elements associated to the boundary of 1-dimensional
    components of $\partial \overline{\mc{M}}_r(x^+; x^-)$, along with a
    description of this codimension 1 boundary
    \eqref{goodboundary}-\eqref{stripbreaking} immediately implies that 
    \begin{equation}
        (\sum_{i=1}^r \delta_i \delta_{r-i}) + (\sum_i \delta_r^{i,i+1}) + (d \delta_r + \delta_r d) = 0,
    \end{equation}
    where $\delta_r^{i,i+1}$ for each $i$ is the operation associated to the
    moduli space of maps \eqref{badboundary}. (Observe that $\delta_2^{1,2}$ is
    precisely the operation $\delta_2^{naive}$ from \S \ref{naivesquare}).
    Note that the consistency condition \eqref{s1forgetful} implies that the Floer
    datum chosen for any element $S \in \mc{M}_r^{i,i+1}$ only depends on
    $\pi_i(S)$, where the forgetful map $\pi_i: \mc{M}_r^{i,i+1} \ra
    \mc{M}_{r-1}$ has 1-dimensional fibers. 
    Hence given an element $(S,u) \in \overline{\mc{M}}_r^{i,i+1} (x^-; x^+)$,
    it follows that $(S',u) \in \overline{\mc{M}}_r^{i,i+1}(x^-; x^+)$ for all $S'
    \in \pi_i^{-1}\pi_i(S)$.
    In other words, elements of $\overline{\mc{M}}_r^{i,i+1} (x^-; x^+)$ are
    never rigid, so the associated operation $\delta_r^{i,i+1}$ is zero.
\end{proof}
By definition we conclude:
\begin{cor}
    The pair $(CF^*(M; H_t, J_t), \{\delta_r\}_{r \geq 0})$ as defined above
forms an $S^1$-complex, in the sense of Definition
\ref{homotopycircleaction}.\qed 
\end{cor}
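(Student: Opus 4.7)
The statement is essentially a tautology: it asserts that the data produced in \S\ref{sec:angledeccylinder} packages exactly into an $S^1$-complex in the sense of Definition \ref{homotopycircleaction}. My plan is to verify the two conditions of that definition against the structure that has already been constructed, with no substantive additional work required.

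First, the degree condition: Definition \ref{homotopycircleaction} demands operations $\delta_k$ of degree $1 - 2k$. By construction, $\delta_k$ counts rigid (virtual dimension zero) elements of $\overline{\mc{M}}_k(x^+;x^-)$, whose virtual dimension is $\deg(x^+) - \deg(x^-) + (2k-1)$. Rigidity thus forces $\deg(x^-) = \deg(x^+) + 2k - 1$, so $\delta_k: CF^*(M) \to CF^{*-2k+1}(M)$ indeed shifts degree by $1-2k$. (In the $\Z/2$-graded setting only the parity of this shift matters, which is likewise correct since $1-2k \equiv 1 \pmod 2$.)

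Second, the quadratic $S^1$-complex relations \eqref{inftycomplexequations}, namely $\sum_{i=0}^s \delta_i\delta_{s-i} = 0$ for every $s \geq 0$, are precisely the content of Lemma \ref{weaks1actionSHlemma} (with the $s=0$ case reducing to $d^2 = 0$, already established in \S\ref{shsection}). Combining these two observations with Definition \ref{homotopycircleaction} yields the corollary immediately. There is no real obstacle to overcome here; all of the geometric difficulty --- producing consistent Floer data on the families $\{\overline{\mc{M}}_k\}_{k \geq 0}$, establishing the a priori $C^0$-estimates of Assumption \ref{assumption:c0higherbv} so that the moduli spaces are compact, and in particular exploiting the forgetful-map consistency \eqref{s1forgetful} so that the ``bad boundary'' operations $\delta_k^{i,i+1}$ vanish (their domains carry a free $\R$-action and hence contain no rigid elements) --- has already been absorbed into the proof of Lemma \ref{weaks1actionSHlemma}.
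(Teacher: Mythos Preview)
Your proposal is correct and matches the paper's approach exactly: the corollary carries a \qed\ in the paper because it is immediate from Definition~\ref{homotopycircleaction} once Lemma~\ref{weaks1actionSHlemma} supplies the relations~\eqref{inftycomplexequations}, and your degree check is the only remaining verification. One small slip in your parenthetical summary of that lemma's proof: the vanishing of the operations $\delta_k^{i,i+1}$ comes from the one-dimensional $S^1$-fibers of the forgetful map $\pi_i$ (parametrized by the forgotten angle $\theta_{i+1}$), not from a free $\R$-action --- but this is a detail of Lemma~\ref{weaks1actionSHlemma}, not of the corollary itself.
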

By using continuation maps parametrized by various $(S^1)^r \times
(0,1]^{r}$ (or equivalently, by spaces of angle-decorated
cylinders that are not quotiented by overall $\R$-translation), one can 
prove that
\begin{prop}\label{canonicalcontinuation}
    Any continuation map $f: CF^*(M, H_1) \to CF^*(M, H_2)$ enhances to a homomorphism $\mathbf{F}$ of $S^1$-complexes (which is in particular a quasi-isomorphism if $f$ is). Moreoever, this homomorphism is canonical up to homotopy, in the sense that any two homomorphisms $\mathbf{F}$ and $\mathbf{F}'$ enhancing $f$ constructed geometrically from parametrized continuation maps differ by an exact pre-morphism of $S^1$-complexes (also constructed geometrically). \qed
\end{prop} 
We omit the proof, which is standard (see e.g., \cite{Zhao:2019aa} 
though note some notational differences).  In particular, the $S^1$-complex defined
on the symplectic co-chain complex
$SC^*(M)$ or the Hamiltonian Floer complex (with small negative slope if $M$ is
non-compact) is an invariant of $M$, up to quasi-isomorphism.

\begin{rem}[Relation to earlier definitions in the literature]\label{earlierdefinitions}
In \cite{Bourgeois:2012fk} three different definitions of $S^1$-equivariant
symplectic cohomology are considered and shown to be equivalent; one of the definitions involves taking the $S^1$-equivariant homology associated to a certain
$S^1$-complex defined on $CF^*(M) = SC^*(M)$ (\cite{Bourgeois:2012fk}*{Prop. 2.19}). After normalizing for differing conventions (e.g., homological versus cohomological conventions for Floer theory, and the fact that their $u^{-1}$ is our $u$), it is direct to see that the $S^1$-complex constructed therein coincides up to equivalence with the one here (and even agrees on the chain level, seeing as the choices of Floer data chosen in that paper constitute a choice of Floer data for the $S^1$-action in our sense; compare e.g., \cite{Bourgeois:2012fk}*{Fig. 1} with \eqref{rotatecondition}).
\end{rem}

\subsection{The circle action on the interior}\label{sec:interior}
From the formal point of view of Floer homology of $M$ as the Morse homology of
an action functional on the free loop space $\mc{L} M$, one would expect
the contributions coming from constant loops to be acted on trivially by the
$C_{-*}(S^1)$ action (which comes from rotation of free loops).  This is indeed
the case, as we now review.

Suppose that the Hamiltonian $H_t$ defining $CF^*(M)$ is
chosen to be $C^2$-small, time-independent, and Morse in the compact region of $\bar{M}$ (which recall
equals $M$ if $M$ is compact). Then, Floer proved that all orbits of $H_t$
inside $\bar{M}$ are (constant orbits at) Morse critical points of $H$, and
all Floer cylinders between such orbits which remain in $\bar{M}$ are in fact
Morse trajectories of $H$ \cite{Floer:1989aa}. 

Let $C_{Morse}(H)$ denote the Morse complex of $H$. In the setting where $H$ is
as in \S \ref{subsubsec:relh} ($M$ can be Liouville or compact), all contributions to $CF^*(M)$ (both orbits and
cylinders) come from $\bar{M}$, so Floer's argument gives an isomorphisms
\begin{equation}
    C_{Morse}(H) \cong CF^*(M)
\end{equation}
In the setting where  $H$ is quadratic at infinity (and $M$ is Liouville) as in
\S \ref{subsubsec:sh}, one can ensure the collection of orbits coming from
$\bar{M}$ is an action-filtered subcomplex (and e.g., the integrated maximum principle will
ensure that all cylinders between such orbits lie in $\bar{M}$). Hence, there
is an inclusion of subcomplexes 
\begin{equation}\label{shsubcomplex}
    C_{Morse}(H) \to SC^*(M),
\end{equation}
which, under smallness constraints on the Floer data for the $S^1$-action gives an {\em $S^1$-subcomplex} \cite{Zhao:2019aa}*{Lemma 5.4} (meaning the operators $\delta_k$ preserve the subcomplex and in fact the action filtration), hence a morphism of $S^1$-complexes.
We will discuss both of the above cases at once: in either case by considering a Hamiltonian which is $C^2$-small on $\bar{M}$ we obtain an inclusion of $S^1$-subcomplexes
\begin{equation}\label{subcomplex}
    C_{Morse}(H) \to CF^*(M)
\end{equation}
with the understanding that in the former case this inclusion is the whole complex. 
\begin{lem}\label{lem:constantloops}
    There exists a choice of Floer data for the $S^1$-action so that
    $C_{Morse}(H)$ becomes a trivial $S^1$-subcomplex; meaning that
    the various operators $\delta_r$, $r \geq 1$, associated to the $C_{-*}(S^1)$
    action strictly vanish on the subcomplex. 
\end{lem}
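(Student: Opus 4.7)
The plan is to choose the Floer data for the $S^1$-action so that on $\bar{M}$ it is independent of the angular coordinate $t$ of the cylindrical domain. With such a choice, every solution whose asymptotics are constant orbits will satisfy a $t$-invariant Floer equation, which by Floer's theorem for $C^2$-small Hamiltonians forces the solution itself to be $t$-independent. Consequently the angles $(\theta_1,\ldots,\theta_k)$ will become free parameters, and the relevant moduli spaces will be positive-dimensional whenever $k\geq 1$, ruling out rigid contributions to $\delta_k$.

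The first step is an inductive construction of the Floer data: for each $k$ and each $S = (C,p_1,\ldots,p_k) \in \mc{M}_k$, I would choose $(H_S, J_S)$ that agree with the fixed time-independent Morse datum $(H,J)$ on all of $\bar{M}$ and with the previously chosen $(H_t, J_t)$ on the strip-like ends. The consistency conditions \eqref{s1consistency}--\eqref{s1forgetful} are respected by this extra restriction, since products and pullbacks via $\pi_i$ of Floer data that are $t$-independent on $\bar{M}$ are again $t$-independent on $\bar{M}$; the space of such choices at each inductive step remains non-empty and contractible, so universal and consistent choices exist. In the symplectic cohomology setting I would simultaneously arrange the smallness hypothesis of \cite{Zhao:2019aa}*{Lemma 5.4} so that the inclusion \eqref{subcomplex} is preserved by every $\delta_k$, i.e., so that each $(S,u) \in \overline{\mc{M}}_k(x^+;x^-)$ with $x^\pm \in \mathrm{Crit}(H)$ has $u(C) \subset \bar{M}$.

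Granted these choices, for any $x^\pm \in \mathrm{Crit}(H)$ every element $(S,u) \in \overline{\mc{M}}_k(x^+;x^-)$ lies entirely in $\bar{M}$, where the Floer equation is $t$-invariant and moreover independent of $S$. Floer's classical argument \cite{Floer:1989aa} then forces $u$ itself to be $t$-independent, so $u(s,t)=\gamma(s)$ for a parametrized Morse gradient flow line $\gamma:\R \to \bar{M}$ from $x^+$ to $x^-$ with respect to $(H,J)$. Since the space of such $\gamma$ does not depend on the angle and height parameters of $S$, one obtains a canonical product decomposition
\[
\mc{M}_k(x^+;x^-) \;\cong\; \mc{M}_k \,\times\, \mc{N}(x^+;x^-),
\]
where $\mc{N}(x^+;x^-)$ is the space of parametrized Morse trajectories. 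As $\dim \mc{M}_k = 2k-1 \geq 1$ for $k\geq 1$, every nonempty component of $\mc{M}_k(x^+;x^-)$ has positive dimension and therefore contains no rigid elements, so $\delta_k$ vanishes on $C_{\r{Morse}}(H)$ for all $k\geq 1$.

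The step I expect to require the most care is the first: verifying that the $t$-independence-on-$\bar{M}$ constraint can be imposed simultaneously with the consistency conditions \eqref{s1consistency}--\eqref{s1forgetful} and (in the $SC^*(M)$ case) with the smallness hypothesis keeping $C_{\r{Morse}}(H)$ invariant under the $\delta_k$. These three constraints cut out non-empty convex (hence contractible) subspaces of Floer data at each level, so the inductive construction exists; once the construction is in place, the symmetry and dimension count are essentially automatic.
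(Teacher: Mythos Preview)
Your proposal is correct and follows essentially the same approach as the paper's proof: choose the Floer data on $\mc{M}_k$ to equal the fixed autonomous pair $(H,J)$ throughout $\bar{M}$, invoke Floer's theorem to identify solutions with Morse trajectories, and then observe that the resulting independence from the parameters $\vec{p}\in\mc{M}_k$ forces every solution to sit in a family of dimension at least $2k-1$, precluding rigidity. The paper's argument is slightly terser and phrases the conclusion as ``$u$ lives in a $(2k-1)$-dimensional family'' rather than writing out the product decomposition, but the content is the same.
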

\begin{proof}
    By the integrated maximum principle, any Floer trajectory with asymptotics along two generators
    in $C_{Morse}(H)$ remains in the interior of $\bar{M}$. We can choose the
    Hamiltonian term of our Floer data on $\mathcal{M}_r$ in this region of $M$ to be autonomous
    (i.e., $t$ and $s$-independent on the cylinder), $C^2$-small and Morse --- in fact equal to $H$;  
    then Floer's theorem \cite{Floer:1989aa} again guarantees that any Floer
    trajectory in $\mathcal{M}_r(x^-; x^+)$ between Morse critical points
    $x^{\pm}$ is in fact a Morse trajectory of $H$.
    It follows that for $x^+, x^-$ critical points of $H$, any
    element $u = (C, \vec{p})$ in the parametrized moduli space of maps $\overline{\mc{M}}_{r}(x^-;x^+)$
    solves an equation that is independent of the choice of parameter $\vec{p}
    \in (S^1)^r \times (0,1]^{r-1}$.  Namely, $u$ lives in a family of
    solutions of dimension at least $2r-1$ (given by varying $\vec{p}$), and
    hence $u$ cannot be rigid. The associated operation $\delta_r$, which counts
    rigid solutions, is therefore zero.  
\end{proof}
By invariance of the $S^1$-complex structure on $CF^*(M)$ (up to homotopically canonical quasi-isomorphism as in Proposition \ref{canonicalcontinuation}), we conclude
\begin{cor}\label{trivialconstantloops}
    For $M$ compact and admissible, or Liouville with $(H,J)$ as in \S \ref{subsubsec:relh}, $CF^*(M)$ is quasi-isomorphic to a trivial $S^1$-complex canonically up to homotopy. 
\end{cor}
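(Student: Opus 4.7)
The plan is to reduce the corollary directly to Lemma \ref{lem:constantloops} together with the invariance of the $S^1$-complex structure on $CF^*(M)$ under continuation.

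First, in both settings named in the corollary, I would choose the Hamiltonian $H$ carefully so that the inclusion \eqref{subcomplex} is in fact an equality $C_{Morse}(H) = CF^*(M)$. In the compact admissible case, take $H$ to be time-independent, Morse, and $C^2$-small globally; then every $1$-periodic orbit of $X_H$ is a constant loop at a critical point of $H$, and Floer's theorem \cite{Floer:1989aa} identifies $CF^*(M)$ with the Morse complex. In the Liouville case with $(H,J)$ of small negative slope as in \S \ref{subsubsec:relh}, one similarly arranges $H$ to be $C^2$-small and Morse on $\bar{M}$ while retaining the linear behavior $H = -\lambda r$ at infinity (with $\lambda$ smaller than the length of every Reeb orbit on $\partial \bar{M}$), which forces all orbits to sit in $\bar{M}$ and, via the integrated maximum principle, all Floer cylinders to remain there too; then again $C_{Morse}(H) = CF^*(M)$.

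Next, I would apply Lemma \ref{lem:constantloops} to select Floer data for the $S^1$-action adapted to this $H$, so that the operators $\delta_k$ with $k \geq 1$ strictly vanish on the subcomplex $C_{Morse}(H)$. Since $C_{Morse}(H)$ coincides with the entire complex $CF^*(M)$ in our two settings, this exhibits $(CF^*(M), \delta_0 = d, \delta_k = 0\text{ for }k \geq 1)$ itself, which by Definition \ref{trivialaction} is the trivial $S^1$-complex $\underline{CF^*(M)}^{triv}$ associated to the underlying cochain complex.

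Finally, for an arbitrary choice of Floer data (not necessarily of the special form above), the two resulting $S^1$-complex structures on $CF^*(M)$ are related by a continuation map, which by the proposition immediately preceding \S 4.4.1 admits an enhancement to a homomorphism of $S^1$-complexes that is a quasi-isomorphism on the underlying chain complexes. By Corollary \ref{S1homotopyinvariance}, this is a quasi-isomorphism in $S^1 \mod$, so the arbitrary $S^1$-complex structure is quasi-isomorphic to the trivial one constructed in the previous step. The only genuinely delicate point is checking that the Floer data on the moduli spaces $\mc{M}_r$ can be chosen simultaneously autonomous (equal to $H$) on $\bar{M}$ and satisfying the compactness-enforcing conditions near infinity; this is handled in the proof of Lemma \ref{lem:constantloops} and requires no new input here.
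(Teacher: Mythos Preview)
Your proposal is correct and follows essentially the same route as the paper: the paper's own proof is the single sentence ``By invariance of the $S^1$-complex structure on $CF^*(M)$ (up to quasi-isomorphism), we conclude,'' and you have simply unpacked this by (i) arranging $C_{Morse}(H) = CF^*(M)$ in the two relevant settings, (ii) invoking Lemma~\ref{lem:constantloops} to make the $S^1$-structure strictly trivial for that choice, and (iii) passing to arbitrary Floer data via the continuation-map proposition. There is nothing to add.
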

\begin{cor}
    For $M$ Liouville with $(H,J)$ as in \S \ref{subsubsec:sh}, the inclusion chain map
    \begin{equation}
        C^*_{Morse}(M) \ra SC^*(M)
    \end{equation}
    lifts (cohomologically) canonically to a chain map
    \begin{equation}
        (C_{Morse}(H) [ [ u ] ], d_{Morse}) = (C_{Morse}(H))^{hS^1} \to (SC^{*}(M))^{hS^1} = (SC^*(M)[ [ u ] ], \delta_{eq}).
    \end{equation}
    inducing a cohomological map
    \begin{equation}\label{constantloopsmap}
        H^*(M) [ [ u ] ] \ra H^*(SC^{*}(M)^{hS1}).
    \end{equation}
\end{cor}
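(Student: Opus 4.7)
The plan is to deduce this corollary as an essentially formal consequence of Lemma \ref{lem:constantloops} together with the functoriality of the homotopy fixed point construction established in Corollary \ref{S1homotopyinvariance}. The preceding lemma has already done the geometric work; what remains is to repackage it in the language of $S^1$-complexes.

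First, I would reinterpret Lemma \ref{lem:constantloops} in the $u$-linear framework of \S \ref{sec:ulinear}. The lemma exhibits Floer data for which every operator $\delta_k$ with $k \geq 1$ vanishes on the action-filtered subcomplex $C_{Morse}(H) \hookrightarrow SC^*(M)$, while $\delta_0$ restricts to $d_{Morse}$. In $u$-linear form, the equivariant differential $\delta_{eq}^{SC} = \sum_{k\geq 0} \delta_k u^k$ restricts on $C_{Morse}(H)$ to the $u$-linear extension of $d_{Morse}$. This is precisely the statement that the inclusion \eqref{shsubcomplex} is a strict morphism of $S^1$-complexes from the trivial $S^1$-complex $\underline{C_{Morse}(H)}^{triv}$ of Definition \ref{trivialaction} into $(SC^*(M), \delta_{eq})$.

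Second, I would apply the functoriality result Corollary \ref{S1homotopyinvariance} (specifically the induced map on homotopy fixed points) to this strict morphism. This immediately produces a chain map
\begin{equation}
(\underline{C_{Morse}(H)}^{triv})^{hS^1} \longrightarrow (SC^*(M))^{hS^1}.
\end{equation}
Using the explicit $u$-linear model \eqref{htopyorbitsU} -- or rather its analogue for the fixed point construction -- the homotopy fixed point complex of any trivial $S^1$-complex $\underline{N}^{triv}$ is just $(N[[u]], d)$ with the $u$-linearly extended differential. Specializing $N = C_{Morse}(H)$ identifies the source with $(C_{Morse}(H)[[u]], d_{Morse})$, matching the domain required in the statement. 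Taking cohomology and using the standard identification $H^*(C_{Morse}(H)) \cong H^*(M)$ (valid because $H$ is taken $C^2$-small and Morse on the compact domain) yields the claimed map \eqref{constantloopsmap}.

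There is no serious obstacle here; the corollary is purely formal once Lemma \ref{lem:constantloops} is in hand. The only point worth checking carefully is the very first step -- that the vanishing of each $\delta_k$ on the subcomplex really is equivalent to the inclusion being a morphism of $S^1$-complexes with trivial action on the source -- but this is immediate from the definition \eqref{inftycomplexequations} of an $S^1$-complex and the $u$-linear packaging of morphisms described in \S \ref{sec:ulinear}.
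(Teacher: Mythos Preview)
Your proposal is correct and matches the paper's intended argument: the corollary is stated without proof precisely because it follows formally from Lemma \ref{lem:constantloops} (which makes the inclusion a strict morphism of $S^1$-complexes from a trivial $S^1$-complex) together with the functoriality of the homotopy fixed point construction, exactly as you outline.
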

\begin{rem}
    Another possibly more direct way of producing the map $H^*(M) [ [ u ] ] \ra H^*(SC^{*}(M)^{hS1})$ is via an $S^1$-equivariant enhancement $\widetilde{PSS}$ of the PSS morphism $PSS: C^*(M) \to SC^*(M)$. We omit a further description here, and simply note that the resulting map can be shown to coincide cohomologically with the map defined above.
\end{rem}
Since the $S^1$-complex structure on $C^*_{Morse}(H)$ is trivial, one can (canonically) split the inclusion of homotopy fixed points map \eqref{inclusionhomotopyfixedpoints_explicit} $H^*(C^*_{Morse}(H)^{hS^1}) \to H^*(C^*_{Morse}(H))$ by the map
\begin{equation}
    H^*(M) \stackrel{[x \mapsto x \cdot 1]}{\ra} H^*(M) [ [ u ] ];
\end{equation}
the associated composition
\begin{equation}
        H^*(M) \stackrel{[x \mapsto x \cdot 1]}{\ra} H^*(M) [ [ u ] ] \ra H^*(SC^{*}(M)^{hS1}) \stackrel{[\iota]}{\ra} SH^*(M)
\end{equation}
coincides with the usual map $H^*(M) \ra SH^*(M)$. In particular, we note that
the homotopy fixed point complex of $SC^*(X)$ possesses a canonical (geometrically defined) cohomological element,
\begin{equation}\label{tilde1}
    \tilde{1} \in H^*(SC^{*}(M)^{hS^1}),
\end{equation}
lifting the usual unit $1 \in SH^*(M)$ (under the map $[\iota]$), defined as
the image of $1$ under \eqref{constantloopsmap}.

\section{Cyclic open-closed maps}\label{section:openclosed1}
\subsection{Open-closed Floer data}\label{floeropenclosed}
Here we review  the sort of Floer
perturbation data that needs to be specified on the domains appearing in
the open-closed map and their cyclic analogues. The main body of our treatment, following \S \ref{subsec:fukaya} consists of a (slightly modified) simplification of the setup from
\cite{Abouzaid:2010kx} tailored to the case of Fukaya categories of compact admissible $M$; in Remarks \ref{liouvilleFloerdatumOC} and \ref{wrappedFloerdatumOC} below we will indicate the modifications we need to make (following \cite{Abouzaid:2010kx} and building on Remarks \ref{admissibleH}, \ref{liouvilleFloerdatum}, \ref{wrappedFloerdatum}, and \ref{wrappedainf} above)  the case of compact Fukaya categories of Liouville manifolds (minor modifications) or wrapped Fukaya categories (slightly more involved modifications). There is one notable deviation from \cite{Abouzaid:2010kx} in that we allow our interior marked point to have a varying asymptotic marker and choose Floer data depending on this choice (as is done in constructions of BV-type operations in Hamiltonian Floer theory involving such asymptotic markers, see e.g., \cite{Seidel:2010uq, Seidel:2014aa}).

Let $S$ be a disc with $d$ boundary punctures
$z_1, \ldots, z_d$ (labeled in counterclockwise order) marked as positive and an interior marked point $p$ removed,
marked as either positive or negative; for the main body of the construction $p$ is negative. 
We also equip the
interior marked point $p$ with an {\it asymptotic marker}, that is a half-line
$\tau_p \in T_p S$ (or equivalently an element of the unit tangent bundle,
defined with respect to some metric). Call any such $S = (S, z_1, \ldots, z_d,
p, \tau_p)$ an {\em open-closed framed disc.}

In addition to the notation for semi-infinite strips \eqref{strip1} -
\eqref{strip2}, we use the following notation to refer to the positive and
negative semi-infinite cylinder: 
\begin{align} 
    A_+ &:= [0,\infty) \times S^1\\
    A_- &:= (-\infty,0] \times S^1
\end{align}
A {\em Floer datum} on a stable open-closed framed disc $S$ consists of the following choices on each component:
\begin{enumerate} 
    \item A collection of {\em strip-like or cylindrical ends} $\mathfrak{S}$
        around each boundary or interior marked point respectively of sign matching the sign of the marked point; strip-like
        ends were defined in \S \ref{subsec:fukaya} and a (positive resp.
        negative) cylindrical end is map 
        \begin{equation*}
                \begin{split}
            \delta^{\pm}_j: A_{\pm} &\ra S
        \end{split}
            \end{equation*}
            (so for the main body of the construction we use a negative cylindrical end around $p$).
            All of the
            strip-like ends around each of the $z_i$ should be positive, and
            all (strip-like or cylindrical) ends should have disjoint image in
            $S$. 
            The cylindrical end around $p$ should further should be  {\it
            compatible with the asymptotic marker}, meaning the points with
            angle zero should asymptotically approach the marker:
            \begin{equation}\label{compatiblewithmarker}
                \lim_{s \ra \pm \infty} \delta^{\pm}(s,0) = \tau_p.
            \end{equation}

        \item A one-form $\alpha_S$ on $S$, an $S$-dependent Hamiltonian function
            $H_S$ on $M$, and an $S$-dependent almost-complex structure $J_S$ on
            $M$, such that on each strip-like end these data pull back to a given fixed $(dt, H_t, J_t)$
            (which we used to define Lagrangian Floer homology chain-complexes) and on the cylindrical end this data pulls back to a given fixed $(dt, H_t^{cyl}, J_t^{cyl})$ which we used to define our Hamiltonian Floer homology chain complex (note: in many cases we could further simplify and choose $(H_t^{cyl}, J_t^{cyl}) = (H_t, J_t)$ given a sufficiently generic choice of $(H_t, J_t)$)

\end{enumerate}

Given a stable open-closed framed disc $S$ equipped with a Floer datum
$F_S$, a collection of Lagrangians $\{L_0, \ldots, L_{d-1}\}$, and asymptotics
$\{x_1, \ldots, x_d; y\}$ with $x_i$ a chord between $L_{i-1}$ and $L_{i \textrm{ mod }d}$, a map $u: S \ra M$ satisfies {\em Floer's
  equation for $F_S$ with boundary and asymptotics $\{L_0, \ldots,
L_{d-1}\}, \{x_1, \ldots, x_d; y\}$} if
\begin{equation}\label{floerequationOC}
        (du - X_S\otimes \alpha_S)^{0,1} = 0\textrm{ using the Floer data given by $F_S$} 
    \end{equation}
(meaning $X_S$ is the Hamiltonian vector field associated to $H_S$, and $0,1$ parts are taken with respect to $J_S$),
and
\begin{equation}\label{floerequationocasymptotics}
    \begin{cases}
        u(z) \in L_i & \textrm{ if $z \in \partial S$ lies counterclockwise from $z_i$ and clockwise from $z_{i+1\ \mathrm{mod}\ d}$ } \\
        \lim_{s \ra +\infty} u \circ \e^k(s,\cdot) = x_k &\\
        \lim_{s \ra \mp\infty} u \circ \delta(s, \cdot) = y &
    \end{cases}
\end{equation}
(here $\epsilon^k$ denotes the $k$th strip-like end, $\delta$ denotes the cylindrical end, and the sign $\mp$ in the last line is $-$ if $\delta$ is a negative end --- which is the case for the main body of the construction --- and $+$ if $\delta$ is a positive end).

\begin{rem}[Floer data for compact Lagrangians in Liouville manifolds]\label{liouvilleFloerdatumOC}
    If $M$ is Liouville and we are studying the Fukaya category of compact exact Lagrangians, then we take $H_t^{cyl}, J_t^{cyl}$ (the data required to define Floer cohomology) as in \S \ref{subsubsec:relh} and
    we again impose the additional requirements on Floer data described in Remark \ref{liouvilleFloerdatum}, 
    As before the $H_t^{cyl}, J_t^{cyl}$ and more restrictive types of Floer data chosen for wrapped Fukaya categories in Remark
    \ref{wrappedFloerdatumOC} below would also work. The ability to choose $H_t^{cyl}$ and $J_t^{cyl}$ as in \S \ref{subsubsec:relh} is indicative of a more general freedom in the Floer data here, which also will allow us later to define operations in which the interior marked point (and all boundary marked points) are positive (see \S \ref{sec:compactopenclosed}).
\end{rem}

\begin{rem}[Floer data and Floer's equation for wrapped Fukaya categories] \label{wrappedFloerdatumOC}
    Almost exactly as in Remark \ref{wrappedFloerdatum}, and following \cite{Abouzaid:2010kx} in order to associate
    operations between the wrapped Fukaya category and symplectic cohomology,
    one needs to make the following modifications to the notion of Floer data.
    First, one takes 
    $H_t^{cyl}, J_t^{cyl}$ to be the data defining the symplectic co-chain complex as in \S \ref{subsubsec:sh}. 
    Then one equips $S$ with strip-like and cylindrical ends as above. Let $\psi^{\rho}$ as before denote the time $\log(\rho)$ Liouville flow on $M$. The modifications to the Floer data are:
    \begin{itemize}
        \item {\em Extra choices} of weights and time-shifting maps: Exactly as in Remark \ref{wrappedFloerdatum}, one associates  a {\em weight} $w_k \in \R_{>0}$ to each boundary or interior marked point and a time-shifting map $\rho_S: \partial S \to \R_{>0}$ agreeing with $w_k$ near the $k$th strip-like end. 

        \item {\em Modified requirements on one-form}: The 1-form $\alpha_S$ should be subclosed meaning $d \alpha_S \leq 0$, restrict to 0 along $\partial S$, and restrict to $w_k dt$ on each (strip-like or cylindrical) end (as in item \ref{wrappedoneform} of Remark \ref{wrappedFloerdatum}). It follows by Stokes' theorem that the weight at the (output) cylindrical end should be greater than the sum of weights over all (input) strip-like ends. In particular, it is not possible for $\alpha_S$ to be subclosed and restrict to 0 along $\partial S$, conditions necessary to appeal to the integrated maximum principle if the interior marked point were also positive. (This is a reflection of the fact that wrapped Fukaya categories do not admit geometric operations with no outputs.)

        \item {\em Modified requirements on Hamiltonians, as in item \ref{wrappedhamiltonian} of Remark \ref{wrappedFloerdatum}}: The Hamiltonian term should pull back to 
$\frac{H \circ \psi^{w_k}}{w_k^2}$ along any strip-like end and to 
$\frac{H^{cyl} \circ \psi^{w_k}}{w_k^2}$ along the cylindrical end. The Hamiltonian term should also be quadratic at infinitely many levels of \eqref{cylend} tending to infinity (this is a slight weakening of Remark \ref{wrappedFloerdatum} coming from the fact that the Hamiltonian used to define $SC^*(X)$ is not quadratic at every level near infinity due to \eqref{s1perturbation}).

\item {\em Modified requirements on almost complex structures, as in item
    \ref{wrappedalmostcomplexstructure} of Remark \ref{wrappedFloerdatum}} The
    almost-complex structure should be contact-type at infinity and pull back
    to $(\psi^{w_k})^*J_t$ along each strip-like end and
    $(\psi^{w_k})^*J_t^{cyl}$ along the cylindrical end.
\end{itemize}
Exactly as in Remark \ref{wrappedFloerdatum}, there is a rescaling action on the space of such Floer data, and we will relax any consistency requirement imposed on Floer data to allow for an arbitrary rescaling when equating different choices of Floer data. Finally, we note the slight modifications to the boundary and asymptotic conditions of Floer's equation \eqref{floerequationocasymptotics}, following Remark \ref{wrappedainf}: on the boundary component of $\partial S$ lying counterclockwise from $z_i$ and clockwise from $z_{i+1\textrm{ mod }d}$ we impose the moving boundary condition
$u(z) \in
(\psi^{\rho_S(z)})^* L_i$,
on the $k$th strip-like end we impose $\lim_{s\ra + \infty} u \circ \e^k(s,\cdot) = (\psi^{w_k})^* x_k$, and on the cylindrical end, we impose $\lim_{s\ra -\infty} u \circ \delta(s,\cdot) = (\psi^{w})^* y$ where $w$ is the weight associated to the interior puncture $p$.
\end{rem}
Exactly as in the proof of Lemma \ref{smoothcompactcor}, the constraints to Floer data
in the Liouville case made in the above two remarks help ensure Assumption
\ref{mainassumption} holds for associated moduli spaces.

\subsection{Non-unital open-closed maps}\label{sec:nonunital}
We begin by constructing a variant of the open-closed map of
\cite{Abouzaid:2010kx} with source the non-unital Hochschild complex of
\eqref{nonunitalcomplex}, which we call the {\it non-unital open-closed map} (and indicate by $\oc$ or $\oc^{nu}$)
\begin{equation}
    \oc:= \oc^{nu}: \r{CH}^{nu}_{*-n}(\f) \lra CF^*(M).
\end{equation}
This map actually has a straightforward explanation from the perspective of Remark
\ref{rem:unitinsertion}: we define the map $\oc$ from $\widetilde{\r{CH}}_*(\f)$ by counting discs with an arbitrary number of boundary punctured and one interior puncture asymptotic to an orbit (as in \cite{Abouzaid:2010kx} with the proviso that we treat the formal elements $e_L^+$ as ``fundamental class $[L]$ point constraints (i.e., empty constraints)'': we fill back in the relevant boundary puncture and impose no constraints on that marked point.
With respect to the decomposition \eqref{nonunitalcomplex}, we define a pair of maps
\begin{equation}
    \check{\oc} \oplus \hat{\oc}: \r{CH}_*(\f) \oplus \r{CH}_*(\f)[1] \ra CF^*(M)
\end{equation}
giving the left and right component of the non-unital open-closed map
\begin{equation}
    \begin{split}
        \oc: \r{CH}^{nu}_{*-n}(\f) &\lra SC(M),\\
        (x,y) &\longmapsto \check{\oc}(x) + \hat{\oc}(y).
    \end{split}
\end{equation}
Since the left (check) factor is equal to the usual cyclic bar complex for
Hochschild homology, $\check{\oc}$ will be defined exactly as the open-closed map is defined in
\cite{Abouzaid:2010kx} (briefly recalled below), and the new
content is the map $\hat{\oc}$. 
We will define $\hat{\oc}$ below (and recall the definition of $\check{\oc}$)
and prove, extending \cite{Abouzaid:2010kx} that
\begin{lem}\label{ocnuchain}
    $\oc$ is a chain map of degree $n$.
\end{lem}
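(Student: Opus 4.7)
The plan is to split $\oc^{nu} = \check{\oc} \oplus \hat{\oc}$ according to \eqref{nonunitalcomplex} and unpack the chain map condition against the block matrix form of $b^{nu}$. Thus what must be checked is
\begin{equation}
    d \circ \check{\oc} = \check{\oc} \circ b \qquad \text{and} \qquad d \circ \hat{\oc} = \check{\oc} \circ d_{\wedge\vee} + \hat{\oc} \circ b'.
\end{equation}
The first identity is precisely the chain map property of the usual open-closed map, i.e.\ the main result of \cite{Abouzaid:2010kx}. Thus the essential content is to define $\hat{\oc}$ and verify the second identity.

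Geometrically, $\hat{\oc}$ will count rigid maps from discs carrying $d$ positive boundary punctures $z_1,\ldots,z_d$, one negative interior puncture with asymptotic marker, and one additional \emph{free} boundary point $z_0$ constrained to lie in the open arc from $z_d$ to $z_1$ not containing the other punctures, at which no Lagrangian label change or input is imposed. Under the identification of Remark \ref{rem:unitinsertion}, this corresponds to interpreting a hat chain $\hat\beta = x_d \otimes \cdots \otimes x_1$ as the cyclic bar element $e_X^+ \otimes x_d \otimes \cdots \otimes x_1$ in the augmented category $\cc^+$ and realising the formal strict unit $e_X^+$ geometrically as ``no constraint'' at $z_0$. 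The resulting moduli space $\mc{R}^{d,+}_1$ has one real dimension more than Abouzaid's $\mc{R}^d_1$, which accounts both for the degree shift $[1]$ in the hat summand of \eqref{nonunitalcomplex} and for the fact that $\hat{\oc}$ has the same total degree $n$ as $\check{\oc}$. One equips the collection $\{\mc{R}^d_1\} \cup \{\mc{R}^{d,+}_1\}$ with a single consistent universal Floer datum in the sense of \S \ref{floeropenclosed}, compatible with the $\ainf$ Floer data of \S \ref{subsec:fukaya} and with the Floer data defining $CF^*(M)$, and defines $\hat{\oc}$ as the signed count of rigid elements of the resulting parametrized moduli spaces.

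The second identity then follows from the standard codimension-one boundary analysis of the $1$-dimensional components of $\overline{\mc{R}}^{d,+}_1(y;\vec{x})$. Modulo signs, there are three sources of boundary: (a) Floer strip/cylinder breaking at the interior output or at one of the inputs $z_i$ with $i \geq 1$, contributing $d \circ \hat{\oc}$ together with the $\mu^1$-parts of $\hat{\oc}\circ b'$; (b) disc bubbles carrying a consecutive block of inputs $z_i,\ldots,z_{i+k}$ \emph{not involving} $z_0$, corresponding to an $\ainf$ operation $\mu^{k+1}$ applied to a non-wraparound block and contributing the remaining terms of $\hat{\oc}\circ b'$ --- crucially, the presence of $z_0$ obstructs any wraparound bubbling, which is precisely why $b'$ appears and not $b$; (c) the free point $z_0$ colliding with $z_1$ or with $z_d$, producing a two-pointed disc bubble which, since $z_0$ is unconstrained, degenerates to a Floer strip transporting the adjacent input to a new puncture on the main disc, leaving a standard open-closed disc with $d$ inputs now read in cyclic order $x_1 \otimes x_d \otimes \cdots \otimes x_2$ or $x_d \otimes \cdots \otimes x_1$ respectively. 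Summing the two cases of (c) yields exactly $\check{\oc}$ applied to the two terms of $d_{\wedge\vee}(\hat\beta)$.

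The main obstacle is the verification of signs: matching the induced orientations on the above boundary strata against the algebraic signs built into \eqref{cyclicbardifferential}, the formula for $d_{\wedge\vee}$, and the sign conventions governing $\check{\oc}$ and $\hat{\oc}$. This will follow the template of \cite{Seidel:2008zr, Abouzaid:2010kx}, with the additional sign coming from the interval direction parametrising the position of $z_0$ matching the degree shift $[1]$ in \eqref{nonunitalcomplex}; once the bookkeeping is done the chain map identity reduces to the signed algebraic identity ``codimension-one boundary $= 0$'' for the $1$-dimensional components of $\overline{\mc{R}}^{d,+}_1(y;\vec{x})$.
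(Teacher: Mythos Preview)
Your overall strategy is the same as the paper's: split into the check and hat summands, quote Abouzaid for the check part, and build $\hat{\oc}$ from discs with an extra auxiliary boundary point playing the role of $e^+$. Two points deserve attention.

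First, a minor omission: you do not say where the asymptotic marker at the interior puncture points. In the paper's construction of $\hat{\mc{R}}_d^1$ the marker is required to point at the auxiliary boundary point (your $z_0$); without this constraint the domain moduli space is the wrong one and the comparison with $\check{\oc}$ on the collision strata does not go through.

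Second, and more substantively, your codimension-one boundary analysis is incomplete. In (c) you only allow the two-point bubbles where $z_0$ meets $z_1$ or $z_d$, but the Deligne--Mumford boundary of $\overline{\mc{R}}^{d,+}_1$ also contains codimension-one strata where $z_0$ bubbles off together with a whole consecutive string $z_{d-k+1},\ldots,z_d,z_0,z_1,\ldots,z_j$ with $j+k \geq 2$. These strata do not fall under your (b), since the bubble contains $z_0$, and they do not fall under your (c), since the bubble has more than two boundary inputs. You therefore need an argument that the associated operations vanish. The paper handles this by introducing the auxiliary moduli spaces $\mc{R}^{m,f_i}$ of discs with one boundary point marked as ``forgotten'' (Appendix \ref{discsforgotten}), choosing Floer data on them pulled back along the forgetful map $\mc{R}^{m,f_i}\to\mc{R}^{m-1}$, and observing that for $m>2$ no solution over $\mc{R}^{m,f_i}$ can be rigid because the forgetful fiber is one-dimensional; the surviving $m=2$ cases then give exactly the two terms of $d_{\wedge\vee}$. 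This forgetful-map mechanism is the missing ingredient in your (c), and without it the claimed identity does not follow from the boundary count.
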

We note a notational difference from \cite{Abouzaid:2010kx}, which uses
$\oc$ to refer to what we call here $\check{\oc}$; in contrast in this paper we use
$\oc$ exclusively to refer to the (non-unital) open-closed map $\oc = \oc^{nu}:=
\check{\oc} \oplus \hat{\oc}$ with domain the non-unital Hochschild complex.
Of course, the two maps $\oc$ and $\check{\oc}$ are homologically the same.
That is, assuming Lemma \ref{ocnuchain},
\begin{cor}\label{homologylevelnonunitaloc}
    As homology level maps, $[\oc] = [\check{\oc}]$.
\end{cor}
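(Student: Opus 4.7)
The plan is to observe that the corollary follows essentially by construction from the matrix shape of $\oc^{nu}$ together with Lemma \ref{inclusionquasi}, granting Lemma \ref{ocnuchain}.

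First, I would recall the explicit form of the inclusion $\iota:\r{CH}_*(\f)\hookrightarrow \r{CH}_*^{nu}(\f)=\r{CH}_*(\f)\oplus\r{CH}_*(\f)[1]$ as $x\mapsto (x,0)$, which by Lemma \ref{inclusionquasi} is a quasi-isomorphism and so induces an isomorphism $[\iota]:\r{HH}_*(\f)\xrightarrow{\cong}\r{HH}^{nu}_*(\f)$ to be used to identify the two domains.

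Next, I would simply compute on the chain level: by the very definition of $\oc^{nu}=\check{\oc}+\hat{\oc}$, we have
\begin{equation*}
\oc^{nu}\circ\iota(x) \;=\; \oc^{nu}(x,0) \;=\; \check{\oc}(x) + \hat{\oc}(0) \;=\; \check{\oc}(x).
\end{equation*}
The map $\check{\oc}$ is, by construction in \S \ref{sec:nonunital}, defined by precisely the same moduli-space count of open-closed framed discs with boundary marked points carrying the chain $x$ and an interior negative puncture asymptotic to an orbit $y$, that is, it agrees on the nose with the Abouzaid open-closed map $\oc$ of \cite{Abouzaid:2010kx}. Hence $\oc^{nu}\circ\iota=\oc$ as chain maps, so passing to homology and using that $[\iota]$ is an isomorphism, $[\oc^{nu}]=[\oc]\circ[\iota]^{-1}$, which is the sense in which the identity of the corollary holds.

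The only nontrivial content lies in Lemma \ref{ocnuchain}, which the corollary cites; there the main obstruction is to verify that the hat-contribution $\hat{\oc}$ interacts correctly with the off-diagonal piece $d_{\wedge\vee}$ of $b^{nu}$. Once Lemma \ref{ocnuchain} is known, the above computation is purely formal, so I do not anticipate any obstacle in the present corollary beyond bookkeeping the sign conventions in the splitting $\r{CH}_*^{nu}(\f)=\r{CH}_*(\f)\oplus\r{CH}_*(\f)[1]$ to ensure that $\iota$ really is given by $x\mapsto(x,0)$ with no degree shift affecting the identification.
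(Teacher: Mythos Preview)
Your proof is correct and essentially identical to the paper's: both observe that by construction $\oc = \oc^{nu}\circ\iota$ on the chain level (the paper phrases this as ``$\oc$ factors as $\r{CH}_{*-n}(\f)\subset\r{CH}_{*-n}^{nu}(\f)\stackrel{\oc^{nu}}{\lra}CF^*(M)$''), and then invoke Lemma~\ref{inclusionquasi} to conclude. Your version simply unpacks the factorization more explicitly.
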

\begin{proof}
    By construction, the chain level map $\check{\oc}$ constructed in \cite{Abouzaid:2010kx} factors as 
   \begin{equation}
       \r{CH}_{*-n}(\f) \subset \r{CH}_{*-n}^{nu}(\f) \stackrel{\oc}{\lra} CF^*(M).
   \end{equation}
   The first inclusion is a quasi-isomorphism by Lemma \ref{inclusionquasi},
   since $\f$ is known to be cohomologically unital.
\end{proof}
The moduli space controlling the operation $\check{\oc}$, denoted
\begin{equation}\label{opencloseddm}
    \overline{\check{\mc{R}}}_d^{1}
\end{equation}
is the (Deligne-Mumford compactification of the)
abstract moduli space of discs with $d$ boundary positive punctures $z_1,
\ldots, z_d$ labeled in counterclockwise order and 1 interior negative
puncture $z_{out}$, with an asymptotic marker $\tau_{out}$ at $z_{out}$ pointing
towards $z_d$. The space \eqref{opencloseddm} has a manifold with corners
structure, with boundary strata described in \cite{Abouzaid:2010kx}*{\S C.3}
(there, the space is called $\overline{\mc{R}}_d^1$)--in short, codimension one
strata consist of disc bubbles containing any cyclic subsequence of $k$ inputs
attached to an element of $\check{\mc{R}}_{d-k+1}^1$ at the relative position
of this cyclic subsequence.
Orient the top stratum $\check{\mc{R}}_d^1$ by trivializing it, 
sending $[S]$ to the unit disc representative $S$ with $z_d$ and $z_{out}$
fixed at 1 and 0, and taking the orientation induced by the (angular) positions
of the remaining marked points:
\begin{equation}\label{openclosedcheckorientation}
    -dz_1 \wedge \cdots \wedge dz_{d-1}.
\end{equation}
\begin{figure}[h]
    \caption{\label{fig:oc_check}A representative of an element of the moduli space $\check{\mc{R}}^1_{4}$ with special points at 0 (output), $-i$. }
    \centering
    \includegraphics[scale=0.7]{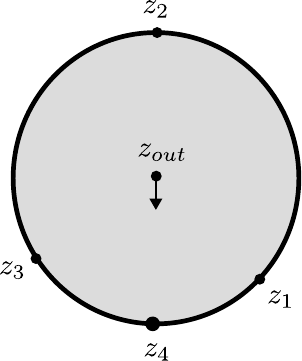}
\end{figure}

The moduli space controlling the new map $\hat{\oc}$ is nearly identical to
$\check{\mc{R}}_d^1$,
but there is additional freedom in the direction of the asymptotic marker at the 
interior puncture $z_{out}$. 
The top (open) stratum is easiest to define: let
\begin{equation}\label{freemarker}
    \mc{R}_d^{1,free}
\end{equation}
be the moduli space of discs with $d$ positive boundary punctures and one
interior negative puncture as in $\check{\mc{R}}_d^1$, but with the asymptotic
marker $\tau_{out}$
pointing anywhere between $z_1$ and $z_d$. 
\begin{rem}\label{manifoldcorners}
There is a delicate point in naively compactifying $\mc{R}_d^{1,free}$:
on any formerly codimension 1 stratum in which $z_1$ and $z_d$ bubble off, the
position of $\tau_{out}$ becomes fixed too, and so the relevant stratum actually
should have codimension 2 (and hence does not contribute to the codimension-1
boundary equation for $\hat{\oc}$. Moreoever, there is no nice corner chart
near this stratum).  For technical convenience, we pass to an alternate, larger
(blown-up) model for the compactification in which these strata have
codimension 1 but consist of degenerate contributions. 
\end{rem}
In light of Remark \ref{manifoldcorners}, we use \eqref{freemarker} as
motivation and instead define 
\begin{equation}\label{hatmodulispace}
    \hat{\mc{R}}_{d}^1
\end{equation}
to be the abstract moduli space of discs with $d+1$ boundary punctures $z_f$,
$z_1$, \ldots, $z_d$ and an interior puncture $z_{out}$ with asymptotic marker $\tau_{out}$
pointing towards the boundary point $z_f$, modulo automorphism. 
We mark $z_f$ as ``auxiliary,'' but otherwise the space is abstractly
isomorphic to $\check{\mc{R}}^1_{d+1}$.
Identifying $\hat{\mc{R}}_d^1$ with the space of unit discs with  $z_{out}$ and
$z_f$ fixed at 1 and 0, the remaining (angular) positions of $z_1, \ldots, z_d$
determine an orientation 
\begin{equation}
    \label{openclosedhatorientation}
    -dz_1 \wedge \cdots \wedge dz_d.
\end{equation}
\begin{figure}[h]
    \caption{A representative of an element of the moduli space $\mc{R}^1_{4,free}$ and the corresponding element of $\hat{\mc{R}}_4^1$.\label{fig:oc_hat} }
    \centering
    \includegraphics[scale=0.7]{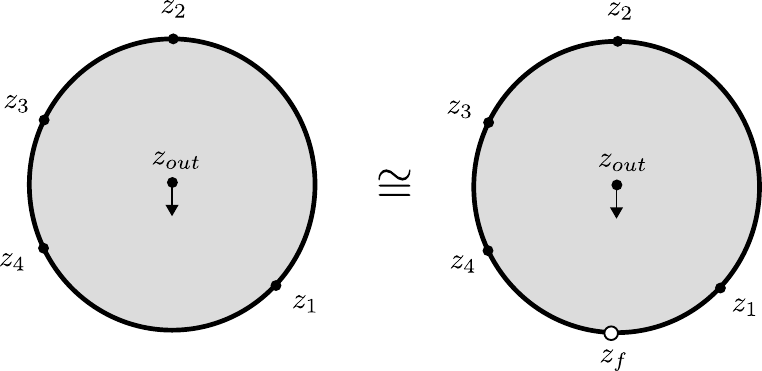}
\end{figure}

The {\it forgetful map}
\begin{equation}
    \pi_f: \hat{\mc{R}}_d^1 \ra \mc{R}_d^{1,free}
\end{equation}
puts back in the point $z_f$ and forgets it. Since the point $z_f$ is
recoverable from the direction of the asymptotic marker at $z_{out}$, 
\begin{lem}
$\pi_f$ is a diffeomorphism.\qed
\end{lem}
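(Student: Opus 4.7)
The plan is to exhibit an explicit two-sided inverse of $\pi_f$ and verify smoothness of both maps. Given a representative $(S, z_{out}, \tau_{out}, z_1, \ldots, z_d)$ of an element of $\mc{R}_d^{1,free}$, normalize $S$ to the closed unit disc with $z_{out} = 0$. Then $\tau_{out}$ is a half-line in $T_0 S \cong \C$, and the radial ray from $0$ in the direction of $\tau_{out}$ meets $\partial S$ in a unique boundary point which I declare to be $z_f$. By the defining condition on $\mc{R}_d^{1,free}$, this $z_f$ lies on the open arc from $z_d$ to $z_1$ taken counterclockwise, so $(S, z_{out}, \tau_{out}, z_f, z_1, \ldots, z_d)$ is a valid element of $\hat{\mc{R}}_d^1$, and this defines the candidate inverse $\pi_f^{-1}$.

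First I would check well-definedness under automorphisms: both $\pi_f$ and the proposed $\pi_f^{-1}$ are $\mathrm{Aut}(D^2)$-equivariant at the level of representatives (the radial-ray-to-boundary construction is manifestly so), and hence descend to the moduli quotients. Next, the compositions $\pi_f \circ \pi_f^{-1}$ and $\pi_f^{-1} \circ \pi_f$ are both the identity: the first returns the original configuration since forgetting a point we just added preserves the remaining data, including $\tau_{out}$; the second recovers $z_f$ from $\tau_{out}$ because in $\hat{\mc{R}}_d^1$ the marker $\tau_{out}$ is defined to point toward $z_f$, so extending it to $\partial S$ picks out $z_f$ itself. Smoothness of $\pi_f$ is standard for a forgetful map between moduli of marked and framed discs; smoothness of $\pi_f^{-1}$ reduces, after the normalization above, to smoothness of the map sending an angle $\theta \in S^1$ to the boundary point $e^{i\theta} \in \partial D^2$.

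I expect no genuine obstacle. The only potential snag is the delicate behavior at the boundary of $\overline{\hat{\mc{R}}}_d^1$ where $z_1$, $z_d$, and $z_f$ can collide, which is precisely what Remark \ref{manifoldcorners} warns about; however the present lemma is restricted to the open (top) strata, and the radial-ray construction described above is smooth and bijective there without any compactification modification. The blow-up issue of Remark \ref{manifoldcorners} becomes relevant only when one tries to extend $\pi_f$ to the compactified spaces, which is a separate matter treated elsewhere.
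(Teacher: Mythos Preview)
Your proposal is correct and takes essentially the same approach as the paper: the paper's entire argument is the single sentence preceding the lemma, ``Since the point $z_f$ is recoverable from the direction of the asymptotic marker at $z_{out}$,'' and you have simply spelled this out in detail by constructing the inverse via the radial ray from $z_{out}$ in the direction $\tau_{out}$. Your additional remarks on well-definedness under automorphisms, smoothness, and the restriction to the open stratum are all accurate elaborations of what the paper leaves implicit.
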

The perspective of the former space \eqref{hatmodulispace} gives us a model for
the compactification 
\begin{equation} 
    \label{freecompactification}
\overline{\mc{R}}_d^{1,free}
\end{equation}
as the ordinary Deligne-Mumford compactification 
\begin{equation}\label{dmcompactificationocforgotten}
    \overline{\hat{\mc{R}}}_{d}^1.
\end{equation}
We call a component $T$ of a representative $S$ of
\eqref{dmcompactificationocforgotten}
the {\it main component} if it contains the interior
marked point, and a {\it secondary component} if its output is attached to
the main component.  
As a manifold with corners, \eqref{dmcompactificationocforgotten} is equal to
the compactification $\overline{\check{\mc{R}}}_{d+1}^1$ except
from the point of view of assigning Floer datum, as we will be forgetting the point $z_f$
instead of fixing asymptotics for it. It is convenient therefore (for the
purpose of indicating choices of Floer data made) to name components of strata
containing $z_f$ differently. At any stratum: 
\begin{itemize}
        \item we treat the main component (containing $z_{out}$ and $k$
            boundary marked points) as belonging to
            $\overline{\hat{\mc{R}}}_{k-1}^1 $ if it contains $z_f$ and
            $\overline{\check{\mc{R}}}_{k}^1$ otherwise; and

        \item 
            If the $i$th boundary marked point of any non-main component was
            $z_f$, we view it as an element of 
            $\mc{R}^{k,f_i}$, the space of discs with 1 output and $k$ input
            marked points removed from the boundary, with the $i$th point
            marked as ``forgotten,'' constructed in Appendix
            \ref{discsforgotten}.

        \item We treat any other non-main component as belonging to $\mc{R}^k$ as
            usual.  
    \end{itemize}
Thus, the codimension-1 boundary of the Deligne-Mumford compactification is
covered by the natural inclusions of the following strata
\begin{align}
    \label{hatocstrata1} \overline{\mc{R}}^m &\times_i \overline{\hat{\mc{R}}}_{d-m+1}^1\ \ \ 1 \leq i < d-m+1\\
    \label{hatocstrata2} \overline{\mc{R}}^{m,f_k}&\times_{d-m+1} \overline{\check{\mc{R}}}^1_{d-m+1} \ \ \ 1 \leq j \leq m,\ 1 \leq k \leq m
\end{align}
where the notation $\times_j$ means that the output of the first component is
identified with the $j$th boundary input of the second. See Figure \ref{fig:oc_hat_degenerations}.
\begin{figure}[h]
    \caption{\label{fig:oc_hat_degenerations} A schematic of the two distinct types of codimension-1 boundary strata of \eqref{dmcompactificationocforgotten} in codimension 1. On the left side, corresponding to \eqref{hatocstrata1}, a disc bubble forms involving any collection of boundary marked points not including $z_f$. On the right side, corresponding to \eqref{hatocstrata2}, a disc bubble forms involving the point $z_f$.}
    \centering
    \includegraphics[scale=0.7]{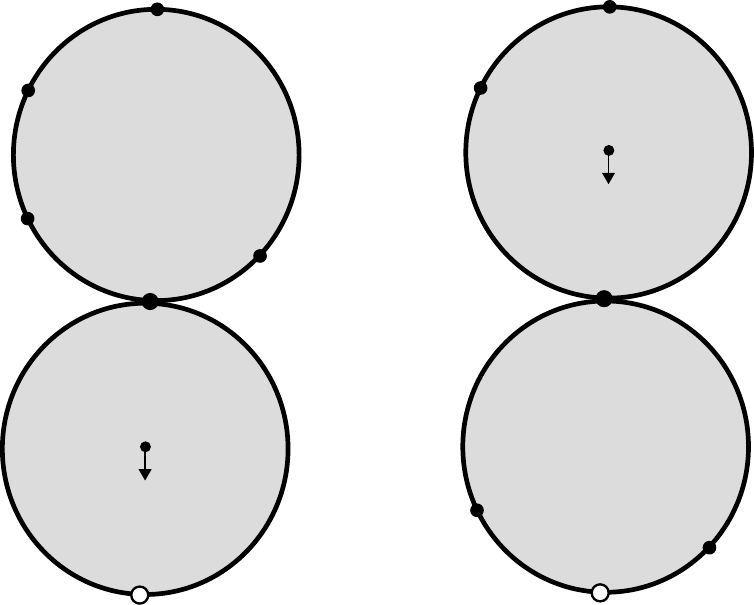}
\end{figure}

The forgetful map $\pi_f$ extends to a map 
$\overline{\pi}_{f}$ from the compactification $\overline{\hat{\mc{R}}}_{d}^1$ (to the space of stable framed open-closed discs with $d$ marked points)
as follows:  $\overline{\pi}_f$ puts the auxiliary point $z_f$
back in, eliminates any component which is not main or secondary which has only
one non-auxiliary marked point $p$, and labels the positive marked point below
this component by $p$. Given a representative $S$ of
$\overline{\hat{\mc{R}}}_d^1$, we call $\overline{\pi}_f(S)$ the {\it
associated reduced surface}. We will study maps from the associated reduced
surfaces $\overline{\pi}_f(S)$, parametrized by $S$. To this end, we define a {\em
Floer datum} on a stable disc $S$ in $\overline{\hat{\mc{R}}}_d^1$ to consist
of a Floer datum for the underlying reduced surface $\overline{\pi}_f(S)$ in
the sense of \S \ref{floeropenclosed}.

First, in Appendix \ref{discsforgotten} we describe an inductive construction
of Floer data for (the underlying reduced surfaces of) the compactified moduli
space of discs with a forgotten point $\overline{\mc{R}}^{d,f_i}$, for every
$d$ and $i$ satisfying: 
\begin{equation}\label{forgottencondition}
        \begin{split}
            &\textrm{For $d>2$, the choice of Floer datum on $\mc{R}^{d,f_i}$
                should be pulled back from the forgetful map $\mc{R}^{d,f_i} \ra \mc{R}^{d-1}$. }\\
                    &\textrm{For $d=2$, the Floer datum on the surface $S$ (with $z_i$ forgotten) should be translation invariant.}
\end{split}
\end{equation}

Next, we choose a \emph{Floer datum
    for the non-unital open-closed map} which is an inductive set of choices 
    $(\mathbf{D}_{\check{\oc}}, \mathbf{D}_{\hat{\oc}})$, for each $d \geq 1$ and every 
    representative $S \in \overline{\check{\mc{R}}}_d^1$, $T \in
    \overline{\hat{\mc{R}}}_d^1$, of a Floer datum for $S$ and (the associated
    reduced surface of) $T$  respectively. As usual, these choices should be
    smoothly varying, and restrict smoothly to previously chosen Floer data on
    boundary strata. Note that for a given $d$ the boundary strata have
    components that are either $\overline{\check{\mc{R}}}_{d'}^1$ or
    $\overline{\hat{\mc{R}}}_{d'}^1$ for $d' < d$, a stratum $\mc{R}^{d'}$
    (over which we have chosen a Floer datum for the $\ainf$ structure), or a
    stratum $\mc{R}^{d,f_i}$ where we have chosen a Floer datum in Appendix
    \ref{discsforgotten} as described above. (As usual for Liouville manifolds 
    we use the notion of Floer data and consistency described in
Remarks \ref{liouvilleFloerdatumOC} or \ref{wrappedFloerdatumOC} in the wrapped case.) Contractibility of the space of choices at
    every stage (and consistency of the compatibility conditions imposed at
    corners) ensures as usual that a Floer datum for the non-unital open-closed map
    exists.

Fixing such a choice, we obtain, for any
$d$-tuple of Lagrangians $L_0, \ldots, L_{d-1}$, and asympotic conditions
$\vec{x} = (x_{d}, \ldots, x_1),\ x_i \in \chi(L_{i-1},L_{i \textrm{ mod }d})$, $y_{out} \in \mc{O}$
a pair of moduli spaces
\begin{align}
    \label{openclosedmaps} &\check{\mc{R}}_d^1(y_{out}; \vec{x})\\
    \label{hatopenclosedmaps}&\hat{\mc{R}}_d^1(y_{out}; \vec{x}),
\end{align}
of parametrized families of solutions to Floer's equation
\begin{align}
    \{(S,u)| S \in \check{\mc{R}}_d^1: u: S \ra M, (du - X\otimes \alpha)^{0,1} = 0\textrm{ using the Floer datum given by $\mathbf{D}_{\check{\oc}}(S)$} \}\\
    \{ (S, u) | S \in \hat{\mc{R}}_d^1, u:\pi_f(S) \ra M |   (du - X \otimes \alpha)^{0,1} = 0 \textrm{ using the Floer datum given by $\mathbf{D}_{\hat{\oc}}(S)$} \}
\end{align}
satisfying asymptotic and boundary conditions (in either case) as in \eqref{floerequationocasymptotics} (with the modification for wrapped Fukaya categories involving Liouville rescalings described in Remark \ref{wrappedFloerdatumOC}).
The expected dimension of every component of \eqref{openclosedmaps} and \eqref{hatopenclosedmaps} respectively, is (in the $\Z$-graded case), or agrees mod 2 with (in the $\Z/2$-graded case):
\begin{align}
    \deg(y_{out}) - n + d -1 - \sum_{k=1}^{d} \deg(x_k);\\
    \deg(y_{out}) - n + d - \sum_{k=1}^{d} \deg(x_k).
\end{align}
As usual there are Gromov-type bordifications
\begin{align}
    \label{checkOCmaps}&\overline{\check{\mc{R}}}_d^1(y_{out}; \vec{x});\\
    \label{hatOCmaps}&\overline{\hat{\mc{R}}}_d^1(y_{out}; \vec{x}),
\end{align}
which allow semi-stable breakings, as well as maps from strata corresponding to
the boundary strata of $\overline{\check{\mc{R}}}_d^1$ and
$\overline{\hat{\mc{R}}}_d^1$.

By Assumption \ref{mainassumption},
for generic choices of Floer datum for the non-unital open-closed map, the components of \eqref{checkOCmaps} and \eqref{hatOCmaps} of virtual dimension $\leq 1$ are compact manifolds-with-boundary of dimension agreeing with virtual dimension.
Fix such a Floer datum. At a rigid element $u$ of each of the above moduli spaces, we obtain,
using the fixed orientations of moduli spaces of domains
\eqref{openclosedcheckorientation}-\eqref{openclosedhatorientation} and
\cite{Abouzaid:2010kx}*{Lemma C.4}, isomorphisms of orientation lines
\begin{align}
    (\check{\mc{R}}_d^1)_u: o_{x_{d}} \otimes \cdots \otimes o_{x_1} \ra o_{y_{out}}\\
    (\hat{\mc{R}}_d^1)_u: o_{x_{d}} \otimes \cdots \otimes o_{x_1} \ra o_{y_{out}}.
\end{align}
These isomorphisms in turn define the $|o_{y_{out}}|_{\K}$ component of the
check and hat components of the non-unital open-closed map with $d$ inputs in
the lines $|o_{x_d}|_{\K}, \ldots, |o_{x_1}|_{\K}$, up to a sign twist:
\begin{equation}\label{checkocdefinition}
    \begin{split}
        \check{\oc}_d &([x_d],  \ldots, [x_1]) := \\
    &
    \sum_{u \in \overline{\check{\mc{R}}}^d_1(y; x_d, \ldots, x_1)\textrm{ rigid}} (-1)^{\check{\star}_d} (\check{\mc{R}}_d^1)_u([x_d], \ldots, [x_1]),\\
    \check{\star}_d &:=\deg(x_d) + \sum_{k=1}^d k \deg(x_k)
\end{split}
\end{equation}

\begin{equation}\label{hatocdefinition}
    \begin{split}
    \hat{\oc}_d([x_{d}], \ldots, [x_1]) &:= 
    \sum_{u \in  \overline{\hat{\mc{R}}}_d^1(y_{out}; \vec{x}) \textrm{ rigid}}  (-1)^{\hat{\star}_d} (\hat{\mc{R}}_d^1)_u([x_d], \ldots, [x_1]),\\
\hat{\star}_d &:= \sum_{i=1}^d i \cdot \deg(x_i).
\end{split}
\end{equation}

By analyzing the boundary of one-dimensional components of the moduli spaces
$\overline{\check{\mc{R}}}_d^1(y_{out}; \vec{x})$, the consistency condition
imposed on Floer data, and a sign analysis, it was proven in
\cite{Abouzaid:2010kx} that 
\begin{lem}[\cite{Abouzaid:2010kx}, Lemma 5.4]
\label{occhain}
$\oc:=\check{\oc}$ is a chain map of degree $n$; that is $(-1)^n d_{CF} \circ \check{\oc} = \check{\oc} \circ b$.\qed
\end{lem}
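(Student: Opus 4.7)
The proof plan is the standard ``boundary of a 1-dimensional moduli space counts to zero'' argument, specialized to the open-closed moduli spaces $\overline{\check{\mc{R}}}_d^1(y_{out}; \vec{x})$. I would fix boundary data $L_0, \ldots, L_{d-1}$ and asymptotics $\vec{x} = (x_d, \ldots, x_1)$, $y_{out}$ for which the virtual dimension of $\overline{\check{\mc{R}}}_d^1(y_{out}; \vec{x})$ equals $1$, and exploit the fact that the signed count of boundary points of a compact oriented $1$-manifold-with-boundary is zero. By the transversality/compactness results already established (Lemma above, plus Assumption \ref{assumption:c0estimateOC}), such a component is a compact oriented $1$-manifold whose boundary is covered by the natural inclusions of strata corresponding to the codimension $1$ faces of $\overline{\check{\mc{R}}}_d^1$, together with the semi-stable strip/cylinder breakings at the punctures.

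Next I would identify these boundary contributions with the terms in $(-1)^n d_{CF} \circ \check{\oc} - \check{\oc} \circ b$. Semi-stable breaking of a Floer cylinder at the interior negative puncture produces the stratum $\overline{\check{\mc{R}}}_d^1(y'; \vec{x}) \times \overline{\mathcal{M}}(y_{out}; y')$, contributing (up to sign) $d_{CF} \circ \check{\oc}_d$; strip breaking at a boundary puncture $z_i$ produces $\overline{\mathcal{R}}^1(x_i; x_i') \times \overline{\check{\mc{R}}}_d^1(y_{out}; \ldots, x_i', \ldots)$, contributing the $\mu^1$-terms of $\check{\oc}_d \circ b$. The remaining codimension $1$ strata of $\overline{\check{\mc{R}}}_d^1$ are disc bubbles containing a cyclic subsequence of inputs (as recalled from \cite{Abouzaid:2010kx}*{\S C.3}), which split as $\overline{\mathcal{R}}^k(y; x_{i+k}, \ldots, x_{i+1}) \times \overline{\check{\mc{R}}}_{d-k+1}^1(y_{out}; \ldots)$: these are precisely the matrix coefficients of the cyclic-permutation ``wrap-around'' and interior $\mu^k$-insertion terms in the cyclic bar differential $b$ of \eqref{cyclicbardifferential}. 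Summing over all faces and asymptotics recovers $\check{\oc} \circ b$ on one side and $d_{CF} \circ \check{\oc}$ on the other.

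The main obstacle, as always in this kind of argument, is the sign verification: one must check that the signs $(-1)^{\check{\star}_d}$ appearing in \eqref{checkocdefinition}, combined with the signs $(-1)^{\#_k^d}$ and $(-1)^{\maltese_1^s}$ in the Hochschild differential \eqref{cyclicbardifferential} and the $(-1)^{\deg(y)}$ in the Floer differential \eqref{differentialdefinition}, fit together so that the contributions from the two types of boundary strata cancel with opposite signs, yielding exactly the relation $(-1)^n d_{CF} \circ \check{\oc} = \check{\oc} \circ b$. The standard recipe is to compute the induced orientation on each boundary face by comparing the chosen orientation of the top stratum $\mathcal{R}_d^1$ \eqref{openclosedcheckorientation} with the product orientations coming from the gluing parameters, using the gluing-sign formulas of \cite{Seidel:2008zr}*{(12b),(12d)} and \cite{Abouzaid:2010kx}*{Lemma C.4}. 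I would carry this out face-by-face, deferring the detailed bookkeeping to an appendix, and noting that the overall $(-1)^n$ on the left-hand side arises from the degree shift by $n$ built into $\check{\oc}$ (which is why $\check{\oc}$ is a chain map of degree $n$ rather than degree $0$). The result is then exactly \cite{Abouzaid:2010kx}*{Lemma 5.4}, so I would in fact simply invoke that reference for the signs.
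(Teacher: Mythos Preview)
Your proposal is correct and matches the paper's treatment: the paper does not give its own proof of this lemma but simply cites \cite{Abouzaid:2010kx}*{Lemma 5.4} with a \qed, and your sketch is precisely the standard codimension-1 boundary argument underlying that reference, with the sign verification ultimately deferred there as well.
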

Similarly, we prove the following, completing the proof of Lemma
\ref{ocnuchain}: 
\begin{lem}\label{ochatchainold}
    The following equation holds:
    \begin{equation}
        (-1)^n d_{CF} \circ \hat{\oc} = \check{\oc} \circ d_{\wedge \vee} +
        \hat{\oc} \circ b'
    \end{equation}
\end{lem}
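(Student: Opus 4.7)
My plan is to prove the identity by analyzing the codimension-$1$ boundary of the $1$-dimensional components of the Gromov bordification $\overline{\hat{\mc{R}}}_d^1(y_{out}; \vec{x})$ and matching each boundary stratum to a term appearing on one side of the claimed equation. From Lemma \ref{smoothcompactcor} style compactness/transversality results, the $1$-dimensional components of $\overline{\hat{\mc{R}}}_d^1(y_{out}; \vec{x})$ are compact $1$-manifolds-with-boundary, and the signed count of boundary points is zero. The boundary decomposes into three kinds of strata: (i) semistable Floer breakings at the interior puncture, contributing $d_{CF}\circ \hat{\oc}$; (ii) products $\overline{\mc{R}}^m \times_i \overline{\hat{\mc{R}}}_{d-m+1}^1$ from \eqref{hatocstrata1}, where a boundary $\ainf$-bubble pinches off some sub-sequence \emph{not} containing $z_f$; and (iii) products $\overline{\mc{R}}^{m,f_k}\times_{d-m+1} \overline{\check{\mc{R}}}_{d-m+1}^1$ from \eqref{hatocstrata2}, where the bubble containing $z_f$ separates.

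The stratum (ii) contributes exactly $\hat{\oc} \circ b'$, since $b'$ is the bar differential which sums over all \emph{non-cyclic} substring insertions of $\mu^m$ — these match the non-wrap-around degenerations that preserve the position of $z_f$ on the main component. This is entirely analogous to the part of Lemma \ref{occhain} corresponding to the non-wrap-around terms in $b$.

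The main work is showing that stratum (iii) contributes precisely $\check{\oc}\circ d_{\wedge\vee}$. The key input is the inductive choice of Floer data for $\overline{\mc{R}}^{m,f_k}$ satisfying \eqref{forgottencondition}. For $m > 2$, the Floer datum on $\mc{R}^{m,f_k}$ is pulled back from the forgetful map $\mc{R}^{m,f_k}\to \mc{R}^{m-1}$, whose fibers are $1$-dimensional (parametrized by the position of $z_f$); hence any Floer solution in such a fibre comes in a $1$-parameter family and cannot be rigid, so these strata contribute nothing. The only surviving contributions come from $m=2$: here $\mc{R}^{2,f_k}$ underlies a strip with translation-invariant Floer datum, so the bubble containing $z_f$ is forced to be a constant solution identifying $z_f$ with a chord $x_i$, leaving a rigid element of $\check{\mc{R}}_d^1$ whose input sequence is obtained by deleting $z_f$ and reading off the remaining inputs \emph{in the check cyclic convention} (i.e.\ starting from the input adjacent to the asymptotic marker). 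Concretely, when $z_f$ collides with $z_d$ the surviving check disc has inputs $x_d,\ldots,x_1$ in that order (the ``identity'' summand of $d_{\wedge\vee}$), while when $z_f$ collides with $z_1$ the new marker direction cyclically shifts by one, giving the check disc with inputs $x_1,x_d,\ldots,x_2$ (the ``wrap-around'' summand of $d_{\wedge\vee}$). These are exactly the two terms in the formula for $d_{\wedge\vee}$.

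Finally, I will reconcile the signs. Combining the orientation \eqref{openclosedhatorientation} for $\hat{\mc{R}}_d^1$, the orientation \eqref{openclosedcheckorientation} for $\check{\mc{R}}_d^1$, the orientations on $\mc{R}^m$ and $\mc{R}^{m,f_k}$ (fixed in Appendix \ref{discsforgotten}), the induced boundary orientations via outward normals at each corner, the index-theoretic orientation isomorphisms $\mc{R}^m_u$ and $(\check{\mc{R}}_d^1)_u$, and the sign twists $\check{\star}_d,\hat{\star}_d$ in \eqref{checkocdefinition}--\eqref{hatocdefinition}, I expect to recover the signs $(-1)^n$ on $d_{CF}\circ\hat{\oc}$, the signs $(-1)^{\maltese_1^s}$ in $b'$, and the signs $(-1)^{\maltese_2^d + ||x_1||\cdot\maltese_2^d + 1}$ and $(-1)^{\maltese_1^{d-1}}$ appearing in $d_{\wedge\vee}$. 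The main obstacle — and the most tedious part — will be this last sign verification, in particular matching the sign of the ``wrap-around'' contribution from the $z_f\to z_1$ collision, where the reordering of the ordered factors in the orientation form \eqref{openclosedhatorientation} combines with the nontrivial change of marker direction (and hence of the trivialization used to orient $\check{\mc{R}}_d^1$) to produce the $||x_1||\cdot\maltese_2^d$ Koszul sign. All other ingredients are direct analogues of the sign computation carried out for $\oc$ in \cite{Abouzaid:2010kx}*{Lemma 5.4}.
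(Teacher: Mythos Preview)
Your proposal is correct and follows essentially the same boundary-stratum analysis as the paper. Two remarks are worth making.

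First, a minor omission: your stratum (i) only lists semistable cylinder breakings at the interior puncture, but there are also semistable strip breakings at the boundary punctures (the $\mu^1$ terms). These are not in \eqref{hatocstrata1} but appear as separate semistable strata; they contribute precisely the $\mu^1$ part of $b'$, so your claim that (ii) alone gives $\hat{\oc}\circ b'$ is slightly off as stated. This is purely a matter of bookkeeping and does not affect the argument.

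Second, and more substantively, the paper handles the sign verification by a shortcut that avoids your direct computation entirely. Writing $\hat{\oc}(x_d\otimes\cdots\otimes x_1)$ formally as $\check{\oc}(e^+\otimes x_d\otimes\cdots\otimes x_1)$, with $e^+$ a placeholder of degree zero at $z_f$ carrying a formal degree-zero orientation line, and similarly writing the operation associated to $\mc{R}^{m,f_k}$ as $\mu^m(\ldots,e^+,\ldots)$, the entire boundary equation becomes literally the $\check{\oc}$ chain-map equation applied to the length-$(d+1)$ input $e^+\otimes x_d\otimes\cdots\otimes x_1$. Since the orientations \eqref{openclosedhatorientation} on $\hat{\mc{R}}_d^1$ and those on $\mc{R}^{d,f_i}$ were chosen to agree with \eqref{openclosedcheckorientation} on $\check{\mc{R}}_{d+1}^1$ and the orientations on $\mc{R}^d$ respectively, every sign---including the wrap-around Koszul sign $||x_1||\cdot\maltese_2^d$ that you flag as the hardest part---is inherited verbatim from the sign verification already carried out for $\check{\oc}$ in \cite{Abouzaid:2010kx}*{Lemma 5.4}. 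No new orientation comparison is needed. Your direct approach would of course work, but the formal-unit trick is considerably less labor and explains conceptually why the signs in $d_{\wedge\vee}$ are what they are.
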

\begin{proof}
The consistency condition imposed on Floer data implies that the boundary of the 1-dimensional
components of $\overline{\check{\mc{R}}}_d^1(y;
\vec{x})$ are covered by the images of the natural inclusions of the rigid
(zero-dimensional) components of the moduli spaces of maps coming from the
boundary strata \eqref{hatocstrata1}, \eqref{hatocstrata2} along with (the rigid components of) semi-stable breakings:
\begin{align}
    \overline{\hat{\mc{R}}}_d^1(y_1; \vec{x}) \times \overline{\mc{M}}(y_{out}; y_1) &\ra \partial 
    \overline{\check{\mc{R}}}_d^1(y_{out}; \vec{x}) \label{semistablehat1}\\ 
    \overline{\mc{R}}^1(x; x_i) \times \overline{\hat{\mc{R}}}_d^1(y_{out}; \tilde{\vec{x}}) &\ra \partial 
    \overline{\check{\mc{R}}}_d^1(y_{out}; \vec{x}),\label{semistablehat2}
\end{align}
where $\tilde{\vec{x}}$ denotes the collection of inputs $\vec{x}$ with $x_i$ replaced with $x$.
Let $\mu^{d,i}$ be the operation associated to the space of discs with $i$th
point marked as forgotten $\mc{R}^{d,f_i}$, which is described in detail in
Appendix \ref{discsforgotten}. $\mu^{d,i}$ takes a composable sequence of $d-1$
inputs, separated into an $i-1$ tuple and a $d-i$ tuple; in line with Remark
\ref{rem:unitinsertion} we will use the
suggestive notation
\begin{equation} \mu^d(x_d, \ldots, x_{i+1}, e^+, x_{i-1},
    \ldots, x_1):= \mu^{d,i}(x_d, \ldots, x_{i+1}; x_{i-1}, \ldots, x_1)\text{\footnotemark}
\end{equation}
\footnotetext{In fact, when the Fukaya category is equipped with homotopy units,
    one can ensure that there is a strict unit element $e^+$ in each self-hom
    space, for which $\mu^k$ with an $e^+$ element admits a geometric description as above.
See e.g., \cite{Fukaya:2009qf} or \cite{ganatra1_arxiv}.}(recall the abuse of notation $x_i:=[x_i]$).
Then, up to sign, by the standard codimension-1 boundary principle for
Floer-theoretic operations, we have shown that
\begin{equation} \label{hatocchainequation}
    \begin{split} 
        0 = &d_{CF} \hat{\oc}(x_d, \ldots, x_1) - \sum_{i,j} (-1)^{\maltese_1^i} \hat{\oc} (x_d,  \ldots,   x_{i+j+1}, \mu^j(x_{i+j}, \ldots, x_{i+1}), x_i, \cdots,  x_1)\\
        &- \sum_{i,j,k} (-1)^{\sharp_j^k} \check{\oc} ( \mu^{j + k + 1}(x_{j}, \ldots, x_1, e^+, x_d, \ldots, x_{d-k+1}),  x_{d-k},  \ldots,  x_{j+1})
\end{split}
\end{equation}
with desired signs
\begin{align}
    \label{desiredsign1}\maltese_m^n &= \sum_{j=m}^n ||x_i||\\
    \label{desiredsign2}\sharp_j^k &=  \maltese_1^j \maltese_{j+1}^{d}  + \maltese_{j+1}^{d} + 1.
\end{align}
However, as shown in Appendix \ref{discsforgotten}, 
\begin{equation}
    \mu^{j + k + 1}(x_{j}, \ldots, x_1, e^+, x_d, \ldots, x_{d-k+1}) = \begin{cases}
        x_1 & j = 1,\ k = 0\\
        (-1)^{|x_d|} x_d & j = 0, k = 1\\
        0 & \textrm{otherwise}
    \end{cases}
\end{equation}
(in this manner, $e^+$, though a formal element, behaves as a strict unit). So if \eqref{hatocchainequation} held, 
it would follow that 
\begin{equation}
        \begin{split}
        \label{lastsumhatoc}
        d_{CF} \circ \hat{\oc}(x_d \otimes \cdots \otimes x_1) &=(-1)^{||x_1|| \maltese_2^d + \maltese_2^d + 1} \check{\oc}(x_1 \otimes x_d \otimes  \cdots \otimes x_2) \\
            &+ (-1)^{|x_d| + \maltese_1^d + 1} \check{\oc}(x_d \otimes \cdots \otimes x_1) +\hat{\oc} \circ b' (x_d \otimes \cdots \otimes x_1)\\
            &=  \check{\oc} ( (-1)^{\maltese_1^d + ||x_d||}(1-t) (x_d \otimes \cdots \otimes x_1)) + \hat{\oc} \circ b'(x_d \otimes \cdots \otimes x_1).\\
            &= \left(\check{\oc} \circ d_{\wedge \vee} + \hat{\oc} \circ b'\right) (x_d \otimes \cdots \otimes x_1).
        \end{split}
    \end{equation} 
So we are done if we establish the signs are exactly
\eqref{desiredsign1}-\eqref{desiredsign2}.

Using the notation
\begin{equation}
    \oc(e^+ \otimes x_d \otimes \cdots \otimes x_1) := \hat{\oc}(x_d \otimes \cdots \otimes x_1),
\end{equation}
where again $e^+$ is simply a formal symbol referring to the position of the
auxiliary (forgotten) input point, we observe that the equation
\eqref{hatocchainequation} is exactly the equation for $\oc$ being a chain map
on inputs of the form $(e^+ \otimes x_d \otimes \cdots \otimes x_1)$ (where we
treat an ``$e^+$'' input as an auxiliary unconstrained point on our domain).
The sign verification therefore follows from that of $\check{\oc}$ being a
chain map (in \cite{Abouzaid:2010kx}*{Lemma 5.4}), for we have used identical
orientations on the abstract moduli space $\hat{\mc{R}}_d^1$ as on
$\check{\mc{R}}_{d+1}^1$ (identifying $z_f$ with $z_{d+1}$), and on $\mc{R}^{d,f_i}$ as on $\mc{R}^d$, and we can
even insert a formal degree zero orientation line $o_{e^+}$ into the procedure
for orienting moduli spaces of open-closed maps (see \cite{Abouzaid:2010kx}*{\S
C.6}), corresponding to the marked point (obtained by filling in) $z_f$.  Note that $o_{e^+}$, being of
degree zero, commutes with everything, and is just used as a placeholder as if
we had an asymptotic condition at $z_f$. 
\end{proof}

\begin{proof}[Proof of Lemma \ref{ocnuchain}]
    Given that $\check{\oc}$ is already known to be a chain map by
    \cite{Abouzaid:2010kx}*{Lemma 5.4}, repeated as Lemma \ref{occhain}
    above, the new part to check is that the
    $(-1)^n d_{CF} \circ \hat{\oc} = \check{\oc} d_{\wedge \vee} + \hat{\oc} \circ b'$.
    This is the content of Lemma \ref{ochatchainold} above.
\end{proof}

\subsection{An auxiliary operation}\label{ocs1sec}
It will be technically convenient to define an auxiliary operation
\begin{equation}\label{ocS1}
    \oc^{S^1}: \r{CH}_{*-n}(\f) \ra CH^{*+1}(M)
\end{equation}
from the left factor of the non-unital Hochschild complex to Floer co-chains,
in which the asymptotic marker $\tau_{out}$ varies freely around the circle. This operation
is more easily comparable to the BV operator on Floer cohomology, and moreover,
we will show that $\oc^{S^1}$ (and $\hat{\oc}$) can be chosen to satisfy the
following crucial identity:
\begin{prop}\label{ocS1Brelation}
    There is an equality of chain level operations: 
    \begin{equation}
        \oc^{S^1} = \hat{\oc} \circ B^{nu}.
    \end{equation}
\end{prop}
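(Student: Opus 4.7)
The operation $\oc^{S^1}$ counts rigid solutions to Floer's equation over the parametric moduli space $\mc{R}^{S^1}_d$ of open-closed framed discs with $d$ cyclically ordered boundary inputs $z_1,\ldots,z_d$ and a negative interior puncture $z_{out}$ equipped with asymptotic marker $\tau_{out}$ ranging freely over the circle of directions in $T_{z_{out}}S$. The essential observation is that this circle is cut into $d$ open arcs by the $d$ directions along which $\tau_{out}$ aims at some $z_i$, yielding a decomposition
\[
\mc{R}^{S^1}_d = \coprod_{i=1}^{d} \mc{R}^{S^1}_{d,i},
\]
where $\mc{R}^{S^1}_{d,i}$ is the locus on which $\tau_{out}$ points into the arc of $\partial S$ strictly between $z_i$ and $z_{i+1 \bmod d}$ (counterclockwise). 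Placing an auxiliary boundary point $z_f$ at the intersection of the ray $\tau_{out}$ with $\partial S$, and then cyclically relabelling $z_{i+1}, z_{i+2}, \ldots, z_d, z_1, \ldots, z_i$ as the ``first through $d$-th'' inputs, gives a canonical diffeomorphism $\phi_i : \mc{R}^{S^1}_{d,i} \xrightarrow{\cong} \hat{\mc{R}}^1_d$.

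The plan is to \emph{choose} the Floer datum for $\oc^{S^1}$ so that on each piece $\mc{R}^{S^1}_{d,i}$ it equals the pullback under $\phi_i$ of the Floer datum already fixed for $\hat{\oc}$; such a choice exists because the gluing condition at the walls where $\tau_{out}$ crosses some $z_i$ reduces to the consistency condition \eqref{forgottencondition} already imposed in Appendix \ref{discsforgotten} (the wall corresponds exactly to the limit in which the auxiliary point $z_f$ in $\hat{\mc{R}}^1_d$ approaches a neighbouring input). With this choice the compactified moduli spaces decompose as
\[
\overline{\mc{R}}^{S^1}_d(y;\vec{x}) = \coprod_{i=1}^d \phi_i^{*}\, \overline{\hat{\mc{R}}}^1_d\bigl(y;\, x_i, x_{i-1},\ldots, x_1, x_d, x_{d-1}, \ldots, x_{i+1}\bigr),
\]
and counting rigid elements on either side gives the pointwise identity
\[
\oc^{S^1}(x_d\otimes\cdots\otimes x_1) = \sum_{i=1}^d \varepsilon_i\,\hat{\oc}\bigl(x_i\otimes\cdots\otimes x_1\otimes x_d\otimes\cdots\otimes x_{i+1}\bigr),
\]
where each $\varepsilon_i \in \{\pm 1\}$ is determined by comparing the orientation of $\mc{R}^{S^1}_{d,i}$ (induced from the canonical orientation of $\mc{R}^{S^1}_d$) with the pullback under $\phi_i$ of the fixed orientation \eqref{openclosedhatorientation} on $\hat{\mc{R}}^1_d$, together with the sign introduced by the cyclic rearrangement of the input orientation lines $o_{x_d}\otimes\cdots\otimes o_{x_1}$.

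It remains to verify that $\varepsilon_i$ equals the sign $(-1)^{\maltese_1^i \maltese_{i+1}^d + ||x_d|| + \maltese_1^d + 1}$ prescribed by $\hat{\oc}\circ B^{nu}$ via \eqref{eq:Bnudef}. Two ingredients contribute: the Koszul sign $(-1)^{\maltese_1^i \maltese_{i+1}^d}$ from reordering $o_{x_d}\otimes\cdots\otimes o_{x_1}$ to match the labelling used by $\phi_i$ (this is controlled by the conventions of \cite{Abouzaid:2010kx}*{\S C.6} under permutation of boundary marked points), and a universal sign independent of $i$ arising from the combination of (a) the free marker orientation on the $\mc{R}^{S^1}_d$ side versus the fixed-marker orientation on $\hat{\mc{R}}^1_d$, (b) the difference between the twists $(-1)^{\check{\star}}$ and $(-1)^{\hat{\star}}$ in \eqref{checkocdefinition}--\eqref{hatocdefinition}, and (c) the prefactor $(-1)^{\maltese_1^k + ||x_k|| + 1}$ appearing in \eqref{snu}. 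The hard part is this sign bookkeeping; once the universal constant is pinned down in the base case $i = d$ (where no reordering of inputs is required, so only (a), (b), (c) contribute), the remaining cases follow by transport under cyclic rotation.
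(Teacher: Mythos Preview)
Your overall architecture---decompose $\mc{R}^{S^1}_d$ into the $d$ sectors where $\tau_{out}$ points between adjacent boundary inputs, identify each sector with $\hat{\mc{R}}^1_d$ via cyclic relabelling, and then compare signs---is exactly the paper's approach (Lemmas~\ref{sectordecomposition} and~\ref{sectorsignorientation}). The sign analysis you sketch, handling the base sector first and then transporting under one cyclic rotation at a time, is likewise the paper's strategy.

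There is, however, a genuine gap in how you arrange for the Floer data to match. You propose to fix the datum on $\hat{\mc{R}}^1_d$ first and then \emph{define} the datum on $\mc{R}^{S^1}_d$ sector-by-sector as the pullback under $\phi_i$, claiming the pieces glue across the walls where $\tau_{out}$ points at some $z_i$ because of~\eqref{forgottencondition}. This does not work as stated: the wall is an \emph{interior} codimension-1 locus of $\mc{R}^{S^1}_d$, whereas under $\phi_i$ (resp.\ $\phi_{i-1}$) its preimage is the \emph{nodal} boundary stratum of $\overline{\hat{\mc{R}}}^1_d$ where $z_f$ bubbles off with the last (resp.\ first) input. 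The main components of those two nodal limits carry the $\check{\oc}$ datum with labelings that differ by a cyclic shift, so there is no reason the two pullbacks extend to the same smooth datum across the wall. Condition~\eqref{forgottencondition} governs the bubble component, not the discrepancy of labelings on the main component.

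The paper resolves this by reversing the order of dependence: one first chooses the Floer datum on $\mc{R}^{S^1}_d$ to be $\kappa$-equivariant (condition~\eqref{S1tauequivariance}), which is unobstructed since the cyclic $\Z_d$ action is free and properly discontinuous (Lemma~\ref{freediscontinuous}), and to agree with the $\check{\oc}$ datum along each wall (condition~\eqref{S1occompatibility}). The datum on $\hat{\mc{R}}^1_d$ is then \emph{determined} by restriction to the $\mc{R}^{1,free}_d$ sector via~\eqref{S1compatibilitycondition}. With this order, the identification of each sector with $\hat{\mc{R}}^1_d$ is compatible with Floer data by construction, and your sector decomposition of $\oc^{S^1}$ goes through. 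The remaining codimension-1 walls contribute nothing to rigid counts after generic perturbation, exactly as you say.
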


To define \eqref{ocS1}, let
\begin{equation}\label{S1modulispace}
    \mc{R}^{S^1}_d
\end{equation}
be the abstract moduli space of discs with $d$ boundary positive punctures $z_1,
\ldots, z_d$ labeled in counterclockwise order and 1 interior negative puncture
$z_{out}$, with an asymptotic marker $\tau_{out}$ at $z_{out}$ (or choice of
real half line in $T_{z_{out}} D$) which is free to vary. 
Equivalently,
\begin{equation}
    \begin{split}
&\textrm{\eqref{S1modulispace} is the space of discs with $z_1, \ldots, z_d$ and
$z_{out}$ as before, 
 and an extra auxiliary }\\
 &\textrm{interior marked point $p_1$ such
    that, for a representative with $(z_{out}, z_1)$ fixed at $(0,-i)$,  }\\
&\textrm{$|p_1| = \frac{1}{2}$, and the asymptotic marker $\tau_{out}$ points
towards $p_1$.}
\end{split}
\end{equation}

By using a representative with fixed $(z_{out}, z_1)$ as above, the argument of $p_1$ produces an abstract identification
\begin{equation} \label{uncompactifieds1}
    \mc{R}^{S^1}_d =  \check{\mc{R}}^1_d \times S^1;
\end{equation}
using this identification, fix an orientation of \eqref{uncompactifieds1} given by negative the product orientation of \eqref{openclosedcheckorientation} with the standard counterclockwise orientation on $S^1$.
The Deligne-Mumford type compactification can thus be thought of as
\begin{equation} \label{compactifications1}
    \overline{\mc{R}_d^{S^1}} =  \overline{\check{\mc{R}}}^1_d \times S^1
\end{equation}
Given an element $S$ of $\mc{R}_d^{S^1}$ and a choice of marked point $z_i$
on the boundary of $S$, we say say that $\tau_{out}$ {\em points at $z_i$}, if,
when $S$ is reparametrized so that $z_1$ fixed at $-i$ and $z_{out}$ fixed at
0, the vector $\tau_{out}$ is tangent to the straight line from $z_{out}$ to
$z_i$. 
Equivalently, for this representative, $z_{out}$, $p_1$, and $z_i$ are
collinear. For each $i$, the locus where $\tau_{out}$ points at $z_i$ forms a
codimension 1 submanifold, denoted
\begin{equation}
    \mc{R}_d^{S^1_i}.
\end{equation}
The notion compactifies well; if $z_i$ is not on the main component of
\eqref{compactifications1} we say $\tau_{out}$ {\em points at $z_i$} if it
points at the root of the bubble tree $z_i$ is on. This compactified locus
$\overline{\mc{R}_d^{S^1_i}}$ can be identified with
$\overline{\mc{R}}^1_d$ via the map
\begin{equation}\label{pointatimap}
    \tau_i: \overline{\mc{R}_d^{S^1_i}} \ra \overline{\mc{R}}^1_d
\end{equation}
which cyclically permutes the labels of the boundary marked points so that
$z_i$ is now labeled $z_d$. 

In a similar fashion, we have an invariant notion of what it means for
$\tau_{out}$ to point {\it between $z_i$ and $z_{i+1}$}; this is a codimension
0 submanifold with corners of \eqref{uncompactifieds1}, denoted
\begin{equation}
\label{pointii1}
\mc{R}_d^{S^1_{i,i+1}}.
\end{equation}
The compactification
has some components that are codimension 1 submanifolds with corners of
\eqref{compactifications1}, when $z_i$ and $z_{i+1}$ both lie on a bubble tree.

Finally, there is a {\em free} $\Z_{d}$ action generated by the map
\begin{equation} \label{labelpermute}
    \kappa: \overline{\mc{R}_d^{S^1}} \ra \overline{\mc{R}_d^{S^1}}
\end{equation}
which cyclically permutes the labels of the boundary marked points; for
concreteness, $\kappa$ changes the label $z_i$ to $z_{i+1}$ for $i < d$, and
$z_d$ to $z_1$. Note that if, on a given $S$, $\tau_{out}$ points between $z_i$
and $z_{i+1}$, then on $\kappa(S)$, $\tau_{out}$ points between $z_{i+1 \rm{\ mod \ }d}$
and $z_{i+2 \rm{\ mod \ }d}$. 
\begin{lem}\label{freediscontinuous}
    The action generated by \eqref{labelpermute} is free and properly discontinuous.
\end{lem}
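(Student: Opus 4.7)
The plan is to observe that $\overline{\mc{R}_d^{S^1}}$ parametrizes discs which are rigidified by the data of the interior marked point $p_{out}$ together with the asymptotic marker $\tau_{out}$, and then deduce freeness by ruling out nontrivial automorphisms realizing the label permutation.

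First I would recall that the group of biholomorphisms of the disc $D$ fixing an interior point $p_{out} \in D$ and a half-line $\tau_{out} \in T_{p_{out}} D$ is trivial: the full group $\mathrm{Aut}(D) = PSU(1,1)$ is three-dimensional, the stabilizer of an interior point is one-dimensional (rotations), and fixing a tangent direction kills the remaining rotational freedom. Consequently, a representative $S \in \mc{R}_d^{S^1}$ carries a genuinely ordered tuple of distinct boundary points $(z_1, \ldots, z_d)$, with no residual automorphism of the underlying domain.

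Next I would use this to conclude freeness on the interior stratum $\mc{R}_d^{S^1}$: for $1 \leq j < d$, a fixed point of $\kappa^j$ would correspond to an automorphism of the marked disc sending $z_i$ to $z_{i+j\,\mathrm{mod}\,d}$ while fixing $(p_{out}, \tau_{out})$. By the previous paragraph the only candidate is the identity, which cannot cyclically permute distinct points, giving a contradiction. To extend to the compactification $\overline{\mc{R}_d^{S^1}}$, I would run the same argument component by component on the stable bubble tree: the main (root) component still carries $(p_{out}, \tau_{out})$ and is rigid, and each secondary disc bubble is stable (having at least three special points among its nodes and boundary inputs), hence automorphism-free; any combinatorial relabeling realizing $\kappa^j$ would require nontrivial automorphisms on the tree, which do not exist.

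Finally, since the cyclic group $\langle \kappa \rangle \cong \Z/d\Z$ is finite and $\overline{\mc{R}_d^{S^1}}$ is a Hausdorff topological space, properly discontinuity is automatic: for any compact $K$, the set $\{g : gK \cap K \neq \emptyset\}$ is a subset of the finite group and hence finite. The only real obstacle is the verification of freeness on the codimension-$\geq 1$ boundary strata of $\overline{\mc{R}_d^{S^1}}$, which reduces to checking stability of all bubble components; this is a standard consequence of the Deligne--Mumford stability conditions built into the compactification.
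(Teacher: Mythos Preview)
Your argument is correct and follows essentially the same approach as the paper: both rigidify the domain using the interior data and then observe that a nontrivial cyclic relabeling of distinct boundary points cannot be realized by the identity automorphism. The only cosmetic difference is that the paper fixes the pair $(p_{out}, p_1) = (0, \tfrac{1}{2})$ (using the auxiliary interior point) rather than $(p_{out}, \tau_{out})$; since $\tau_{out}$ points at $p_1$, these are equivalent rigidifications. Your treatment is in fact more thorough than the paper's sketch, which only addresses the open stratum $\mc{R}_d^{S^1}$ and leaves the extension to the compactification and the (trivial, for finite groups) proper discontinuity implicit.
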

\begin{proof}[Sketch]
    The basic observation arises on the level of uncompactified moduli spaces: since any element
of $\mc{R}_d^{S^1}$ has a unit disk representative with
$(z_{out},p_1)$ fixed at $(0,\frac{1}{2})$, the positions of the remaining points identify $\mc{R}_d^{S^1}$ with the space of tuples $(z_1, \ldots, z_d)$ of disjoint (cylically ordered) points on $S^1$ (without any further quotienting by automorphism). The action of $\kappa$, which cyclically permutes the labels $z_1, \ldots, z_d$ in this identification, 
evidently acts freely and properly discontinuously on this locus. Similarly, an element of a boundary stratum consists of an element of $\mc{R}_k^{S^1}$ for some $k \leq d$ with some collection of stable disc bubble trees attached to some or all of the marked points of $\mc{R}_k^{S^1}$, so that there are $d$ leaf (non-nodal) boundary marked points, along with a counterclockwise-ordered labeling of these marked points by  $z_1, \ldots, z_d$ (note that there is a well-defined cyclic counterclockwise ordering of boundary marked points on any such stable configuration). By using a representative of the main component $\mc{R}_k^{S^1}$ with $(z_{out},p_1)$ fixed at $(0,\frac{1}{2})$, an explicit analysis shows that the action of \eqref{labelpermute} remains free and properly discontinuous (e.g., to see free, note there is a well-defined ``first boundary non-nodal marked point at or counterclockwise from the argument of $p_1$''; the action of \eqref{labelpermute} freely permutes the label of this first boundary marked point hence cannot have a fixed point).
\end{proof}
The quotient of the action of $\kappa$ consists of the space of discs with
$z_{out}$ and $p_1$ as before (meaning $z_{out}$ is a negative interior
puncture, and $p_1$ is an auxiliary interior marked point such that for any
representative with $z_{out}$ fixed at 0, $|p_1|= \frac{1}{2}$), equipped with
{\em $d$} {\em cyclically unordered} or {\em unlabeled} boundary marked points.
Note that on the
open-locus $\mathring{\mc{R}}_d^{S^1}$ where $\tau_{out}$ does not point at a
boundary marked point, one can choose a labeling by setting the boundary point
immediately clockwise of where $\tau_{out}$ points to be $z_d$. This induces a
diffeomorphism
\begin{equation}
    \mathring{\mc{R}}_d^{S^1}/\kappa \cong \mc{R}_d^{1, free}
\end{equation}
Similarly, on the complementary locus where $\tau_{out}$ points at a boundary marked
point, we can similarly choose a labeling by declaring this boundary marked
point to be $z_d$, giving a diffeomorphism (of this locus) with
$\check{\mc{R}}_d^1$.

We now choose Floer perturbation data for the family of moduli spaces $\mc{R}_d^{S^1}$; in
fact, it will be helpful to re-choose Floer data for the moduli spaces
appearing in the non-unital open-closed map to have extra compatibility. To
that end, a {\em BV compatible Floer datum for the non-unital open-closed map}
is an inductive choice $(\mathbf{D}_{\check{\oc}}, \mathbf{D}_{\hat{\oc}},
\mathbf{D}_{S^1})$ of Floer
data where $\mathbf{D}_{\check{\oc}}$ and $\mathbf{D}_{\hat{\oc}}$ is a
universal and consistent choice of Floer data for the non-unital open-closed
map as before, and $\mathbf{D}_{S^1}$ consists of, 
for each $d
\geq 1$
and every representative $S \in  \overline{\mc{R}_d^{S^1}}$, a Floer datum for $S$
varying smoothly over the moduli space. Again, these satisfy the usual consistency condition with respect to previously made choices along lower-dimensional strata. 
Moreover, there are two additional inductive constraints on the Floer data chosen:
\begin{align}
            \label{S1occompatibility}&\textrm{On the codimension-1 loci $\overline{\mc{R}_d^{S^1_i}}$ where
            $\tau_{out}$ points at $z_i$, the Floer datum }\\
            \nonumber &\textrm{ should agree with the pullback by $\tau_i$ of the existing Floer datum}\\
            \nonumber &\textrm{ for the
                (check) open-closed map. }\\
            \label{S1tauequivariance}&\textrm{The Floer datum should be $\kappa$-equivariant, where $\kappa$ is the map
                    \eqref{labelpermute}. } 
        \end{align}
Also, there is a final a posteriori constraint on the Floer data for the
non-unital open-closed map $\mathbf{D}_{\hat{\oc}}$; for $S \in
\overline{\hat{\mc{R}}}_d^1$:
\begin{equation}\label{S1compatibilitycondition}
    \begin{split}
    &\textrm{the Floer datum on the main component $S_0$ of $\overline{\pi}_f(S)$ should coincide with the}\\
    &\textrm{existing datum chosen on $S_0 \in \mc{R}_d^{1,free} \subset \mc{R}_d^{S^1}$}.
    \end{split}
\end{equation}
By an inductive argument as before, a BV compatible Floer datum for the non-unital open-closed map exists.

To explain the way choices are made (which ensures both existence at every stage and that the requirements above are satisfied): we choose the data for $\mc{R}_d^{S^1}$ prior to choosing that of $\overline{\hat{\mc{R}}}_d^1$ and note that the condition \eqref{S1compatibilitycondition} specifies the Floer
datum on $\overline{\hat{\mc{R}}}_d^1$ entirely. In particular,  the conditions \eqref{forgottencondition} required on the latter Floer datum are compatibile with consistency and the condition \eqref{S1occompatibility}.
With regards to choosing the data for $\mc{R}_d^{S^1}$, the equivariance
constraint \eqref{S1tauequivariance}, which is compatible with both
\eqref{S1occompatibility} (which is a $\kappa$-equivariant condition) and with
the consistency condition, is also unproblematic in light of Lemma
\ref{freediscontinuous}:
one can pull back a Floer datum from the quotient of
$\overline{\mc{R}_d^{S^1}}$ by $\kappa$.

Fixing a BV compatible Floer datum for the non-unital open-closed map  we obtain, for any
$d$-tuple of Lagrangians $L_0, \ldots, L_{d-1}$, and asympotics 
$\vec{x} = (x_{d}, \ldots, x_1)$ ($x_i \in \chi(L_{i-1},L_{i \mathrm{\ mod\ }d})$), 
   $y_{out} \in \mc{O}$,
a moduli space 
\begin{equation}
    {\mc{R}}_d^{S^1}(y_{out}; \vec{x}),
\end{equation}
of parametrized families of solutions to Floer's equation (with respect to the Floer data chosen)
\begin{equation} \{ (S, u) | S \in \mc{R}_d^{S^1}, u:\pi_f(S) \ra M |   (du - X \otimes \alpha)^{0,1} = 0 \}
\end{equation}
satisfying asymptotic and boundary conditions as in \eqref{floerequationocasymptotics} (again with the modifications of Remarks \ref{liouvilleFloerdatumOC} or \ref{wrappedFloerdatumOC} for compact or wrapped Fukaya categories of Liouville manifolds).  
Generically the Gromov-Floer compactification
\begin{equation}
    \overline{\mc{R}}_d^{S^1}(y_{out}; \vec{x})
\end{equation}
of the components of virtual dimension $\leq 1$ are compact manifolds-with-boundary of the expected dimension; this dimension coincides (mod 2 or exactly depending on whether we are in a $\Z/2$ or $\Z$-graded setting) with
\begin{equation}
    \deg(y_{out}) - n + d - \sum_{k=1}^{d} \deg(x_k).
\end{equation}
Each rigid $u \in \overline{{\mc{R}}}_d^{S^1}(y_{out}; \vec{x})$
gives by the orientation from \eqref{compactifications1} and
\cite{Abouzaid:2010kx}*{Lemma C.4} an isomorphism of orientation lines
\begin{equation}
    (\mc{R}_d^{S^1})_u: o_{x_{d}} \otimes \cdots \otimes o_{x_1} \ra o_{y_{out}},
\end{equation}
which gives the $|o_{y_{out}}|_{\K}$ component of the $S^1$ open-closed map
with $d$ inputs in the lines $|o_{x_d}|_{\K}, \ldots, |o_{x_1}|_{\K}$, up to a sign twist given below: define
\begin{equation}
    \begin{split}
        \oc^{S^1}([x_{d}], \ldots, [x_1]) &:= 
        \sum_{u \in  \overline{\mc{R}}_d^{S^1}(y_{out}; \vec{x})\textrm{ rigid}}  (-1)^{\clubsuit_d} (\mc{R}_d^{S^1})_u([x_d], \ldots, [x_1]),\\
        \clubsuit_d &= \sum_{i=1}^d (i+1) \cdot \deg(x_i) + \deg(x_d) + d-1.
\end{split}
\end{equation}
The proof of Proposition \ref{ocS1Brelation}, which equates $\oc^{S^1}$ with
$\hat{\oc} \circ B^{nu}$, appears below and is composed of two steps. First, we decompose the moduli space
$\mc{R}_d^{S^1}$
into sectors in which $\tau_{out}$ points between a pair of adjacent boundary
marked points. It will follow that the sum of the corresponding ``sector
operations'' is exactly $\oc^{S^1}$. The sector operations in turn can be
compared to $\hat{\oc}$ via cyclically permuting inputs and an orientation
analysis.

We begin by defining the relevant sector operations: 
For $i\in \Z/(d+1)\Z$, define 
\begin{equation}
    \hat{\mc{R}}^{1}_{d,\tau_i}
\end{equation}
to be the abstract moduli space of discs with $d+1$ boundary punctures $z_1,
\ldots, z_{i}, z_f, z_{i+1}, \ldots, z_d$ arranged in counterclockwise order and
interior puncture $z_{out}$ with asymptotic marker pointing towards the
boundary point $z_f$, which is also marked as ``auxiliary.'' There is a 
bijection
\begin{equation}\label{taui}
    \tau_i: \hat{\mc{R}}^{1}_{d,\tau_i} \simeq \hat{\mc{R}}^1_d
\end{equation}
given by cyclically permuting labels, which induces a model for the
compactification $\overline{\hat{\mc{R}}^{1}_{d,\tau_i}}$. 
However, we will use a different orientation than the one induced
by pullback: on a slice with fixed position of $z_d$ and $z_{out}$, we take the
volume form
\begin{equation}\label{differentorientation}
    dz_1 \wedge \cdots \wedge dz_{d-1} \wedge dz_f.
\end{equation}
By construction, the induced ``forgetful map''
\begin{equation}
    \pi_f^{i}: \hat{\mc{R}}^1_{d,\tau_i} \ra \mc{R}^{S^1_{i,i+1}},
\end{equation}
is an oriented diffeomorphism that extends to a map between compactifications
(note as before that strictly speaking this map does not forget any
information, at least on the open locus).  
\begin{rem}\label{inducedorientationslice}
    In the case $i=0$, note that this orientation 
   agrees with
    the previously
    chosen orientation \eqref{openclosedhatorientation} on $\hat{\mc{R}}^1_d$.
    To see this, note that we previously defined the orientation on $\hat{\mc{R}}^1_d$
    in terms of a different slice of the group action. To compare the forms $dz_1
    \wedge \cdots \wedge dz_{d-1} \wedge dz_f$ (coming from the slice with fixed $z_d$
    and $z_{out}$) and $-dz_1 \wedge \cdots \wedge dz_d$ (coming from the slice with
    fixed $z_f$ and $z_{out}$), note that
    either orientation is induced by the following procedure:
    \begin{itemize}
        \item fix an orientation  on the space of discs as above with fixed
            position of $z_{out}$ (but not $z_f$ or $z_d$): we shall fix the
            canonical orientation $dz_1 \wedge \cdots \wedge dz_d \wedge dz_f$; 

    \item fix a choice of trivalizing vector field for the remaining $S^1$
        action on this space of discs with fixed $z_{out}$: we shall fix $S =
        (-\partial_{z_f} -
    \partial_{z_1} - \cdots - \partial_{z_d})$; and 

    \item fix a convention for contracting orientation forms along slices of the action: to determine the
        orientation on a slice of an $S^1$ action, we will contract the orientation on the original space on the
        right by the trivializing vector field. 
    \end{itemize}
    Moreoever, this data induces an orientation on the quotient by the
    $S^1$ action, and also an oriented isomorphism between the induced orientation
    on any slice and that of the quotient.  It follows that on the quotient,
    the orientation
    $-dz_1 \wedge \cdots \wedge dz_d$ (from the slice where $z_f$ is fixed), and the  orientation 
    $dz_1 \wedge \cdots \wedge dz_{d-1} \wedge dz_f$ (from the slice where $z_d$ is fixed) agree.
    We conclude these two orientations agree.
    The author thanks Nick Sheridan for relevant discussions about orientations of moduli spaces.
\end{rem} 
Choose as a Floer datum for each $\overline{\mc{R}^1_{d,\tau_i}}$ the pulled back
Floer datum from $\overline{\hat{\mc{R}}^1_d}$ via \eqref{taui}; this system of choices automatically is inductively consistent with choices made on lower strata (inheriting this property from the Floer data on the collection of $\overline{\hat{\mc{R}}^1_d}$). 
Using this choice, for any $d$-tuple of Lagrangians $L_0, \ldots, L_{d-1}$, and
asympotic conditions
$        \vec{x} = (x_{d}, \ldots, x_1),\ x_i \in \chi(L_{i-1},L_{i \mathrm{\ mod\ }d})$,
        $y_{out} \in \mc{O}$
we obtain a moduli space 
\begin{equation}
    \mc{R}^1_{d,\tau_i}(y_{out}; \vec{x}) = \hat{\mc{R}}^1_d(y_{out}; (x_{i-1}, \ldots, x_1, x_{d}, \ldots, x_i))
\end{equation}
of parametrized families of solutions to Floer's equation
\begin{equation}
    \{ (S, u) | S \in \hat{\mc{R}}_d^1, u:\pi_f(S) \ra M |   (du - X \otimes \alpha)^{0,1} = 0 \textrm{ using the Floer datum for $\pi_f(S)$} \}
\end{equation}
satisfying asymptotic and boundary conditions as in
\eqref{floerequationocasymptotics} (with the modifications as in Remark \ref{liouvilleFloerdatumOC} or \ref{wrappedFloerdatumOC} in the Liouville case),
and its Gromov-Floer compactification 
\begin{equation}
    \overline{\mc{R}}^1_{d,\tau_i}(y_{out}; \vec{x}) := \overline{\hat{\mc{R}}^1_d}(y_{out}; (x_{i}, \ldots, x_1, x_{d}, \ldots, x_{i+1})),
\end{equation}
whose components of virtual dimension $\leq 1$ (at least) are compact manifolds-with-boundary of the correct dimension (which coincides exactly in the graded case and mod 2 in the $\Z/2$ graded case with $\deg(y_{out}) - n + d - \sum_{j=0}^{d} \deg(x_j)$).

    Each rigid element
$u \in \overline{\mc{R}}_{d,\tau_i}^1(y_{out}; \vec{x})$ gives by
\eqref{differentorientation} and \cite{Abouzaid:2010kx}*{Lemma C.4} an
isomorphism of orientation lines
\begin{equation}\label{isomorphismrotated}
    (\mc{R}_{d,\tau_i}^1)_u: o_{x_{d}} \otimes \cdots \otimes o_{x_1} \ra o_{y_{out}},
\end{equation}
which defines the $|o_{y_{out}}|_{\K}$ component of an operation $\hat{\oc}_{d,\tau_i}$
with $d$ inputs in the lines $|o_{x_d}|_{\K}, \ldots, |o_{x_1}|_{\K}$, up to the following sign twist:
\begin{equation}
    \begin{split}
        \hat{\oc}_{d,\tau_i}([x_{d}], \ldots, [x_1]) &:= 
        \sum_{u \in  \overline{\hat{\mc{R}}}_{d,\tau_i}^1(y_{out}; \vec{x})\textrm{ rigid}}  (-1)^{\clubsuit_d} (\hat{\mc{R}}_{d,\tau_i}^1)_u([x_d], \ldots, [x_1]),\\
        \clubsuit_d &= \sum_{i=1}^d (i+1) \cdot \deg(x_i) + \deg(x_d) + d-1.
\end{split}
\end{equation}

\begin{lem}\label{sectordecomposition}
    As chain level operations,
    \begin{equation}
        \oc^{S^1} = \sum_i \hat{\oc}_{d,\tau_i}
    \end{equation}
\end{lem}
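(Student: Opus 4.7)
The plan is to decompose the parametrized moduli space $\overline{\mc{R}}_d^{S^1}(y_{out};\vec{x})$ into $d$ open sectors according to the position of the asymptotic marker $\tau_{out}$, and then match each sector, together with its Floer data and orientation, to a cyclically relabeled copy of $\overline{\hat{\mc{R}}}^1_d$.

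First I would argue that $\overline{\mc{R}}_d^{S^1}$ is the union of the $d$ top-dimensional sectors $\overline{\mc{R}_d^{S^1_{i,i+1}}}$ (where $\tau_{out}$ points strictly between $z_i$ and $z_{i+1}$), glued along the codimension-1 loci $\overline{\mc{R}_d^{S^1_i}}$ where $\tau_{out}$ points at a marked point. Since the latter have strictly smaller dimension than $\overline{\mc{R}}_d^{S^1}$, they contain no rigid parametrized solutions to Floer's equation, so the signed count defining $\oc^{S^1}$ splits as a sum of signed counts $\oc^{S^1,(i,i+1)}$ over the $d$ top-dimensional sectors.

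Second, I would use the oriented diffeomorphism $\pi_f^i \colon \hat{\mc{R}}^1_{d,\tau_i} \to \mc{R}_d^{S^1_{i,i+1}}$ to identify each sector with $\hat{\mc{R}}^1_{d,\tau_i}$. The two Floer data systems match by construction: the datum on $\mc{R}^1_{d,\tau_i}$ was defined by pullback from $\hat{\mc{R}}^1_d$ via \eqref{taui}, and the datum on $\hat{\mc{R}}^1_d$ was in turn arranged via \eqref{S1compatibilitycondition} to coincide, on the main component, with the existing datum on $\mc{R}_d^{1,free} \subset \mc{R}_d^{S^1}$. Composing these compatibilities yields, for each $i$, a bijection between the rigid elements of $\overline{\mc{R}_d^{S^1_{i,i+1}}}(y_{out};\vec{x})$ and those of $\overline{\hat{\mc{R}}^1_d}(y_{out};(x_i,\ldots,x_1,x_d,\ldots,x_{i+1}))$, which are precisely the elements counted by $\hat{\oc}_{d,\tau_i}$.

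The step requiring the most care, and the one I expect to be the main obstacle, is the orientation comparison: one must verify that the product orientation on $\mc{R}_d^{S^1} \cong S^1 \times \mc{R}^1_d$ used in defining $\oc^{S^1}$ is carried by $\pi_f^i$ to the orientation \eqref{differentorientation} on $\hat{\mc{R}}^1_{d,\tau_i}$ used to define $\hat{\oc}_{d,\tau_i}$. I would handle this exactly as in Remark \ref{inducedorientationslice}: both orientations descend from the canonical orientation on the space of unit-disc representatives with $z_{out}$ pinned, after contracting against a chosen trivializing vector field for the residual $S^1$-rotation. The $S^1$-factor in $S^1 \times \mc{R}^1_d$ and the position of the auxiliary boundary point $z_f$ in the $\hat{\mc{R}}^1_d$-picture are two realizations of this same residual $S^1$, so the comparison reduces to matching trivializing vector fields. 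Since the sign twists $\clubsuit_d$ entering the definitions of $\oc^{S^1}$ and $\hat{\oc}_{d,\tau_i}$ are literally the same, the signed counts agree sector by sector, giving $\oc^{S^1,(i,i+1)} = \hat{\oc}_{d,\tau_i}$, and summing over $i$ yields the claimed identity.
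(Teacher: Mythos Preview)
Your approach matches the paper's, but there is a gap in the Floer-data compatibility argument. The chain you describe --- pullback via $\tau_i$ followed by condition \eqref{S1compatibilitycondition} --- identifies the Floer datum on $\hat{\mc{R}}^1_{d,\tau_i}$ with the datum on $\mc{R}_d^{1,free}$. But $\mc{R}_d^{1,free}$ is by definition only the single sector $\mc{R}_d^{S^1_{d,1}}$, not the sector $\mc{R}_d^{S^1_{i,i+1}}$ that $\pi_f^i$ maps into. Concretely, $\pi_f \circ \tau_i$ and $\kappa^{-i} \circ \pi_f^i$ coincide as maps $\hat{\mc{R}}^1_{d,\tau_i} \to \mc{R}_d^{1,free}$, so your chain only shows the datum on $\hat{\mc{R}}^1_{d,\tau_i}$ agrees with $(\pi_f^i)^*(\kappa^{-i})^*(\text{datum on }\mc{R}_d^{1,free})$, not with $(\pi_f^i)^*(\text{datum on }\mc{R}_d^{S^1_{i,i+1}})$. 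What closes this loop is the $\kappa$-equivariance condition \eqref{S1tauequivariance} on the Floer datum for $\mc{R}_d^{S^1}$, which the paper explicitly invokes at this point and which you have not used. Without it the Floer data on a general sector need not be related to the data you have traced, and the claimed bijection of rigid solutions fails.

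On orientations: the orientation \eqref{differentorientation} on $\hat{\mc{R}}^1_{d,\tau_i}$ is declared precisely so that $\pi_f^i$ is an oriented diffeomorphism onto its sector, so this compatibility is by fiat rather than something to verify. Your appeal to Remark~\ref{inducedorientationslice} is misplaced here: that remark compares two slice orientations on $\hat{\mc{R}}^1_d$ and is used in the subsequent Lemma~\ref{sectorsignorientation}, not in this one.
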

\begin{proof}
    For each $d$, there is an embedding of abstract moduli spaces
    \begin{equation}\label{disjointembedding}
        \coprod_{i} \hat{\mc{R}}^1_{d,\tau_i} \stackrel{\coprod_i \pi_f^{i}}{\lra} \coprod_i \mc{R}^{S^1_{i,i+1}}_d 
        \hookrightarrow \mc{R}^{S^1}_d;
    \end{equation} 
    see Figure \ref{fig:oc_s1_new}.
    \begin{figure}[h]
        \caption{ The diffeomorphism between  $\hat{\mc{R}}^1_{2,\tau_0} \cup  \hat{\mc{R}}^1_{2,\tau_1}$ and the open dense part of $\mc{R}^{S^1}_2$ given by $\mc{R}^{S^1_{0,1}}_2 \cup \mc{R}^{S^1_{1,2}}_2$. The former spaces can in turn be compared to $\hat{\mc{R}}^1_2$ via cyclic permutation of labels. \label{fig:oc_s1_new} }
        \centering
        \includegraphics[scale=0.6]{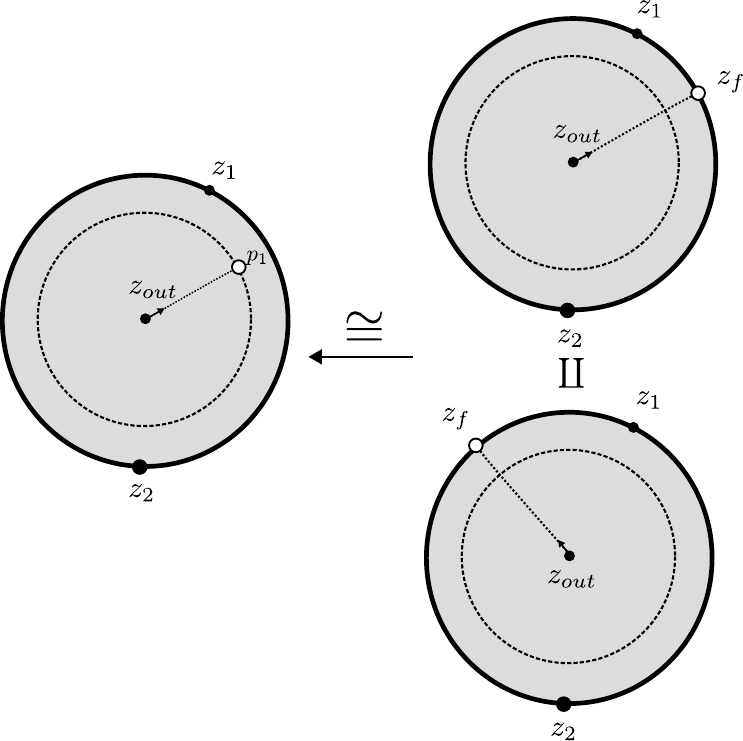}
    \end{figure}

    By construction, this map is compatible with Floer data
    (this uses the fact that the Floer data on $\mc{R}^{S^1_{i,i+1}}$ agrees
    with the data on $\hat{\mc{R}}_d^1$ via the reshuffling map $\kappa^{-i}$
    by \eqref{S1tauequivariance}), and covers all but a codimension 1 locus in
    the target. Since after perturbation zero-dimensional solutions to Floer's
    equation can be chosen to come from the complement of any codimension 1
    locus in the source abstract moduli space, we conclude that the two
    operations in the Lemma, which arise from either side of
    \eqref{disjointembedding}, are identical up to sign. To fix the signs, note
    that \eqref{disjointembedding} is in fact an oriented embedding, and all
    the sign twists defining the operations $\hat{\oc}_{d,\tau_i}$ are chosen
    to be compatible with the sign twist in the operation $\oc^{S^1}$.
\end{proof}
Next, because the Floer data used in the constructions are identical,
$\hat{\oc}_{d,\tau_i}(x_{d} \otimes \cdots \otimes x_1):= \hat{\oc}_{d,\tau_i}(x_{d},  \ldots, x_1)$ (recall the abuse of notation $x_i:=[x_i]$) agrees with
$\hat{\oc}(x_{i} \otimes \cdots \otimes x_1 \otimes x_{d} \otimes \cdots \otimes x_{i+1}):= \hat{\oc}(x_{i}, \ldots, x_1, x_{d}, \ldots, x_{i+1})$ up to a sign difference coming from orientations of abstract
moduli spaces, cyclically reordering inputs, and sign twists. The following
proposition computes the sign difference, and hence completes the proof of
Proposition \ref{ocS1Brelation}: 
\begin{lem}\label{sectorsignorientation}
    There is an equality
    \begin{equation}
        \begin{split}
            \hat{\oc}_{d,\tau_i}(x_d \otimes \cdots \otimes x_1)&= \hat{\oc}^d(s^{nu}( t^i (x_d \otimes \cdots x_1)))
    \end{split}
    \end{equation}
where $s^{nu}$ is the operation \eqref{snu} arising from changing a check term to a hat
term with a sign twist. 
\end{lem}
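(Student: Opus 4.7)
The plan is to prove the identity by combining a geometric identification of moduli problems with a careful sign bookkeeping. Set-theoretically, the relabeling diffeomorphism $\tau_i$ from \eqref{taui} extends to compactifications, and by construction the Floer datum on $\overline{\hat{\mc{R}}^1_{d,\tau_i}}$ was defined to be the $\tau_i$-pullback of the chosen datum on $\overline{\hat{\mc{R}}^1_d}$. Consequently there is a canonical bijection between rigid elements of $\overline{\hat{\mc{R}}^1_{d,\tau_i}}(y_{out}; x_d, \ldots, x_1)$ and rigid elements of $\overline{\hat{\mc{R}}^1_d}(y_{out}; x_i, \ldots, x_1, x_d, \ldots, x_{i+1})$. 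This already yields equality of unsigned counts, so the content of the lemma lies entirely in the reconciliation of signs.

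Next, I will tabulate the signs contributed to each side at a rigid solution $u$. The right-hand side $\hat{\oc}^d(s^{nu}(t^i(\vec{x})))$ contributes: the cumulative Koszul sign from the $i$-fold application of the cyclic rotation \eqref{toperator}, the sign $(-1)^{\maltese_1^d + \|x_i\| + 1}$ from $s^{nu}$ applied to a chain whose leftmost element is $x_i$, the structural isomorphism $(\hat{\mc{R}}^1_d)_{\tau_i(u)}$ on the permuted tensor product of orientation lines, and the twist $(-1)^{\hat{\star}_d(\vec{y})}$ computed for the permuted sequence $\vec{y} = (x_i, \ldots, x_1, x_d, \ldots, x_{i+1})$. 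The left-hand side contributes only the map $(\mc{R}^1_{d,\tau_i})_u$ and the twist $(-1)^{\clubsuit_d(\vec{x})}$.

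The key geometric input for the orientation comparison is Remark \ref{inducedorientationslice}: the slice-based orientations \eqref{differentorientation} and \eqref{openclosedhatorientation} agree for the trivial relabeling $i = 0$, and for general $i$ the discrepancy is controlled by how $\tau_i$ permutes the coordinate one-forms $dz_1 \wedge \cdots \wedge dz_{d-1} \wedge dz_f$ in the fixed-$z_d$ slice. The proof then reduces to verifying, modulo 2, that the sum of (a) the cumulative $t^i$-sign, (b) the $s^{nu}$-sign, (c) the $\tau_i$-orientation sign, and (d) the difference $\hat{\star}_d(\vec{y}) - \clubsuit_d(\vec{x})$ is zero. Each summand has a clean combinatorial expression in terms of $\deg(x_j)$'s and $\|x_j\|$'s, and the relation $\|x\| = |x| - 1$ permits one to align the two conventions.

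The main obstacle is this final sign computation. The statement mixes reduced-degree conventions $\|\cdot\|$ (in $t$, $s^{nu}$, and the $\maltese$ notations) with full-degree conventions $|\cdot|$ (in the twists $\hat{\star}_d, \clubsuit_d$), and the various additive offsets ($+1$, $+d-1$, $+\deg(x_d)$) must conspire exactly. A prudent approach is to verify the identity in a small case (say $d=2$, $i=1$) to calibrate any sign ambiguity before committing to the general computation; one could also insert a formal placeholder orientation line $o_{e^+}$ of degree zero (as in the proof of Lemma \ref{ochatchainold}) to cleanly compare the sign twists for the ``forgotten-point-as-unit'' sequences appearing on both sides.
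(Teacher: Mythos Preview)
Your proposal is correct and follows the same overall strategy as the paper: establish the geometric bijection of rigid solutions via $\tau_i$ (trivial since Floer data were pulled back), then reduce to a pure sign bookkeeping exercise comparing orientation choices, Koszul reordering signs, and the twists $\clubsuit_d$ versus $\hat{\star}_d$.

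The one organizational difference worth noting is that the paper does not attempt the sign computation directly for general $i$ as you outline. Instead it reduces inductively to two elementary checks: the base case $\hat{\oc}_{d,\tau_0} = \hat{\oc}_d \circ s^{nu}$ (where by Remark \ref{inducedorientationslice} the abstract orientations literally coincide, so only the twist difference $\clubsuit_d - \hat{\star}_d$ remains and matches the $s^{nu}$ sign), and the incremental step $\hat{\oc}_{d,\tau_1} = \hat{\oc}_{d,\tau_0} \circ t$ (a single cyclic shift, with three sign contributions: the domain-orientation change of parity $d-1$, the change in twist data, and the Koszul reordering of a single orientation line past the rest). The general case then follows by iterating the incremental step. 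This decomposition keeps each sign computation short and avoids the cumulative $t^i$ sum you would otherwise have to manipulate; your direct approach would work but is more error-prone for exactly the reason you flag in your last paragraph.
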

\begin{proof}
    It is evident that $\hat{\oc}_{d,\tau_i}$ agrees with $\hat{\oc}_d \circ s^{nu} \circ
    t^i$ up to sign, as the Floer data used in the two constructions are
    identical.  By an inductive argument it suffices to verify the following
    equalities of signed operations:
    \begin{align}
        \hat{\oc}_{d,\tau_0} &= \hat{\oc}_d \circ s^{nu}, \label{insertunitsign}\\
        \hat{\oc}_{d, \tau_1} &= \hat{\oc}_{d,\tau_0} \circ t \label{cyclicallyrotatesign};
    \end{align}
    the remaining sign changes are entirely incremental.  For the equality
    \eqref{insertunitsign}, we simply note that the signs appearing in the
    operations $\hat{\oc}_{d,\tau_0}([x_d], \ldots, [x_1])$ and $\hat{\oc}_d ([x_d],
    \ldots, [x_1])$ differ in the following fashions:
    \begin{itemize}
        \item The abstract orientations on the moduli space of domains 
            agree,
            as per Remark \ref{inducedorientationslice}.

        \item The difference in sign twists is given by $\clubsuit_d -
            \hat{\star}_d = \sum_{i=1}^d |x_i| + |x_d| + d-1 = (\sum_{i=1}^d ||x_i||) +
            1 + |x_d| = \maltese_1^d + ||x_d||$.  
    \end{itemize}

    All together, the parity of difference in signs is $\maltese_1^d + ||x_d||$
    which accounts for the sign in the algebraic operation $s^{nu}$ (see
    \eqref{snu}); this verifies \eqref{insertunitsign}.
    
    Next, the sign difference between the two operations in the equality
    \eqref{cyclicallyrotatesign} is a sum of three contributions:
    \begin{itemize}
        \item The two orientations of abstract moduli spaces from (on the slice where $z_f$ and $z_{out}$ are fixed; see Remark \ref{inducedorientationslice}) $-dz_1
            \wedge \cdots \wedge dz_d$ to $dz_2 \wedge \cdots \wedge dz_d
            \wedge dz_1$ differ by a sign change of parity \[d-1.\]

        \item For a given collection of inputs, the change in {\it sign
            twisting data} from $\clubsuit_d = \sum_{i=1}^d (i+1) \cdot |x_i| + |x_d| + d-1$ to $\sum_{i=1}^{d-1} (i+1)
            |x_{i+1}| + (d+1)|x_1| + |x_1| + d-1 = \sum_{i=2}^d i |x_i| + d
            |x_1| + d-1$ ($\clubsuit_d$ for the sequence $(x_2, \ldots, x_d,
            x_1)$) induces a sign change of parity 
            \[
                \sum_{i=2}^d |x_i| + |x_d| + d |x_1| = \sum_{i=1}^d |x_i| + |x_d| + (d-1) |x_1| = \sum_{i=1}^d ||x_i|| + (d-1) || x_1|| + ||x_d|| = \maltese_1^d + (d-1) ||x_1|| + ||x_d||.
            \]

        \item Finally, the re-ordering of determinant lines of the inputs induces a sign
        change of parity 
        \[
            |x_1| \cdot (\sum_{i=2}^d |x_i|) = ||x_1|| \cdot (\sum_{i=2}^d ||x_i|| ) + \sum_{i=2}^d ||x_i|| + (d-1)||x_1|| + (d-1) = ||x_1|| \maltese_2^d + \maltese_1^d + d||x_1|| + (d-1)
        \]
    \end{itemize}
    The cumulative sign parity is congruent mod 2 to
    \[
        ||x_1|| \maltese_2^d + ||x_1|| + ||x_d||,
    \]
    which is precisely the sign appearing in $t$ (see \eqref{toperator}). This
    verifies \eqref{cyclicallyrotatesign}.
\end{proof}

\begin{proof}[Proof of Proposition \ref{ocS1Brelation}]
    Combine Lemmas \ref{sectordecomposition} and \ref{sectorsignorientation} (note the definition of $B^{nu}$ given in \eqref{eq:Bnudef}).
\end{proof}

\subsection{Compatibility of homology-level BV operators}\label{sec:homology}
Before diving into the statement of chain-level equivariance, we prove a
homology-level statement. The below Theorem is insufficient for studying,
say, equivariant homology groups, but may be of independent interest.
\begin{thm}\label{thm:homology}
    The homology level open-closed map $[\oc]$ intertwines the Hochchild and symplectic
    cohomology BV operators, that is 
    \begin{equation}
        [\oc] \circ [B^{nu}] = [\delta_1] \circ [\oc].
    \end{equation}
\end{thm}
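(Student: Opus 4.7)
The plan is to reduce the statement to a chain-homotopy statement between $\oc^{S^1}$ and $\delta_1 \circ \check{\oc}$, and then to build this chain homotopy by an explicit one-parameter family of moduli spaces interpolating between two geometric realizations of ``an $S^1$-family at the interior output''. First, since the inclusion $\iota: \r{CH}_*(\f) \hookrightarrow \r{CH}^{nu}_*(\f)$ is a quasi-isomorphism (Lemma \ref{inclusionquasi}), every class in $\r{HH}_*^{nu}(\f)$ has a representative $\alpha$ in the check factor. For such $\alpha$ one has $\oc^{nu}(\alpha) = \check{\oc}(\alpha) = \oc(\alpha)$ and $B^{nu}(\alpha)$ lies in the hat factor, so $\oc^{nu}(B^{nu}\alpha) = \hat{\oc}(B^{nu}\alpha) = \oc^{S^1}(\alpha)$ by Proposition \ref{ocS1Brelation}. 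Hence it suffices to show that $\oc^{S^1}$ and $\delta_1 \circ \oc$ are chain homotopic as maps $\r{CH}_{*-n}(\f) \to CF^{*+1}(M)$.

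To construct such a chain homotopy, I would introduce for each $d$ a parametrized moduli space $\mc{R}^{\mathrm{int},\ell}_d$, $\ell \in [0,\infty]$, whose points consist of an open-closed framed disc $S$ with $d$ boundary inputs and one negative interior puncture, glued at the interior puncture to a single-parameter Floer cylinder $(C,p)$ of the kind comprising $\mc{M}_1$ in \S \ref{naivesquare}, with neck length $\ell$. The Floer datum is chosen so that at $\ell = \infty$ the configuration Gromov-degenerates into a disc in $\check{\mc{R}}^1_d$ (with asymptotic marker aligned to a fixed reference) attached to an element of $\mc{M}_1$, thus contributing $\delta_1 \circ \oc$; and at $\ell = 0$ the cylinder collapses and the parameter $p \in S^1$ is identified with the angular position of the interior asymptotic marker of the resulting disc, contributing $\oc^{S^1}$ (with the chosen Floer datum agreeing with the one used in \S \ref{ocs1sec} under this identification).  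Signed counts of rigid elements of this parametrized family define a degree-$n$ operation
\[
H: \r{CH}_*(\f) \lra CF^{*+n}(M),
\]
and the standard codimension-$1$ boundary analysis of its $1$-dimensional components yields
\[
\oc^{S^1} - \delta_1 \circ \oc = (-1)^n d_{CF}\circ H + H \circ b,
\]
up to an overall sign depending on orientation conventions.

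The main technical task will be to choose Floer data on $\mc{R}^{\mathrm{int},\ell}_d$ (for all $d$) simultaneously consistent with (i) the BV-compatible Floer datum for $\oc^{S^1}$ at $\ell = 0$, (ii) a fracture into previously fixed Floer data for the check open-closed map $\oc$ and for the $S^1$-cylinder $\mc{M}_1$ at $\ell = \infty$ (in the wrapped setting, allowing rescalings as in Remarks \ref{wrappedFloerdatum}, \ref{wrappedFloerdatumOC}), and (iii) the usual inductive consistency at boundary strata with lower-dimensional choices already made. Given such data, the only codimension-$1$ boundary contributions other than the two endpoints $\ell=0, \infty$ come from (a) disc bubbling at the boundary, giving the $H \circ b$ term (exactly as in Lemma \ref{occhain}), and (b) semi-stable strip/cylinder breaking at the output, giving the $d_{CF} \circ H$ term, while the coincident-height locus in the cylinder factor contributes trivially by the same forgetful-map argument used to prove Lemma \ref{weaks1actionSHlemma}. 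The a priori $C^0$-estimate (Assumption \ref{assumption:c0estimateOC}) applies uniformly in $\ell$ provided the Floer data are chosen within the admissible class described in Remark \ref{wrappedFloerdatumOC}. Once this chain homotopy is established the theorem follows: on cycles coming from $\r{CH}_*(\f)$ we get the desired identity, and since every class in $\r{HH}^{nu}_*(\f)$ is represented by such a cycle, the identity descends to all of $\r{HH}^{nu}_*(\f)$.
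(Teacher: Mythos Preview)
Your proposal is correct and is essentially the paper's own argument: reduce via Lemma~\ref{inclusionquasi} and Proposition~\ref{ocS1Brelation} to showing $\oc^{S^1}$ and $\delta_1 \circ \check{\oc}$ are chain homotopic, then build that homotopy from a one-parameter family of open-closed domains. Your neck-stretching family $\mc{R}^{\mathrm{int},\ell}_d$ is exactly the moduli space the paper denotes ${}_1\check{\mc{R}}^1_{d+1}$ (your gluing parameter $\ell$ corresponds, up to reparametrization, to $-\log|p_1|$ where $p_1$ is the paper's auxiliary interior marked point, with $\ell=0$ matching $|p_1|=\tfrac{1}{2}$ and $\ell=\infty$ matching the real-blow-up stratum $|p_1|\to 0$); one small slip is that the resulting homotopy has degree $n-2$, not $n$.
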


Theorem \ref{thm:homology}
is an immediate consequence of the
following chain-level statement: 
\begin{prop}\label{homologylevelchainhomotopy}
    The following diagram homotopy commutes:
\begin{equation}
    \xymatrix{ \r{CH}_{*-n}(\mc{F}, \mc{F}) \ar[d]^{\check{\oc}} \ar@{^{(}->}[r]^{\iota}_{\sim} & \r{CH}_{*-n}^{nu}(\mc{F}, \mc{F}) \ar[r]^{B^{nu}} & \r{CH}_{*-n-1}^{nu}(\mc{F}, \mc{F}) \ar[d]^{\oc} \\
    CF^*(M) \ar[rr]^{\delta_1} & & CF^{*-1}(M).}
\end{equation}
where $\iota$ is the inclusion onto the left factor, which is a
quasi-isomorphism by Lemma \ref{inclusionquasi}.  More precisely, there exists
an operation $\check{\oc}^1: \r{CH}_{*-n}(\mc{F}, \mc{F}) \ra CF^{*-2}(M)$
satisfying 
\begin{equation}
    (-1)^{n+1} d \check{\oc}^1 + \check{\oc}^1 b = \hat{\oc} B^{nu} \iota - (-1)^n \delta_1 \check{\oc}.
\end{equation}
\end{prop}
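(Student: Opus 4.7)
\emph{Proof proposal.} By Proposition~\ref{ocS1Brelation}, the composite $\hat{\oc} \circ B^{nu} \circ \iota$ equals $\oc^{S^1}$, since $\iota$ includes $\r{CH}_*(\f)$ onto the check factor of $\r{CH}_*^{nu}(\f)$ and $B^{nu}$ sends check chains into hat chains via $s^{nu} \circ N$. The target equation therefore reduces to constructing a chain homotopy
\[
(-1)^{n+1}\, d_{CF} \circ \check{\oc}^1 \;+\; \check{\oc}^1 \circ b \;=\; \oc^{S^1} \;-\; (-1)^n\, \delta_1 \circ \check{\oc}
\]
between $\oc^{S^1}$ and $(-1)^n \delta_1 \circ \check{\oc}$ as maps $\r{CH}_{*-n}(\f) \to CF^*(M)$ (both of degree $n-1$).

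The plan is to build $\check{\oc}^1$ from a one-parameter family $\mc{R}_d^{1,h}$ of abstract moduli spaces that interpolates between the domains defining $\oc^{S^1}$ and $\delta_1 \circ \check{\oc}$. Let $\mc{R}_d^{1,h}$ be the moduli space of pairs $(S, q)$ where $S$ is a disc with $d$ positive boundary punctures $z_1, \ldots, z_d$ (counterclockwise) and an interior negative puncture $p_{out}$ equipped with a negative cylindrical end $\epsilon^- : (-\infty, 0] \times S^1 \to S$ aligned with a fixed reference asymptotic marker pointing at $z_d$, and $q$ is an auxiliary interior marked point in the image of $\epsilon^-$ with cylindrical coordinates $(s_q, t_q) \in (-\infty, 0] \times S^1$. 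Setting $\rho := -s_q$, this space is naturally identified with $\mc{R}_d^{S^1} \times [0, \infty)$, with compactification of dimension $d+1$ having two distinguished facets: at $\rho = 0$, $q$ lies at the top of the cylindrical end and $t_q$ becomes the free asymptotic marker of $\mc{R}_d^{S^1}$; at $\rho = \infty$, the marked point escapes down the cylindrical end, forcing the configuration to break into a disc in $\check{\mc{R}}_d^1$ (whose marker is the fixed reference marker) glued at $p_{out}$ to a $1$-point angle-decorated cylinder in $\mc{M}_1$ of angle $t_q$. Equip $\mc{R}_d^{1,h}$ with a universal and consistent Floer datum extending all previously made choices, subject to: matching the Floer datum of $\mc{R}_d^{S^1}$ at the $\rho = 0$ facet; matching the product of Floer data on $\check{\mc{R}}_d^1$ and $\mc{M}_1$ at the $\rho = \infty$ facet; matching products of $\ainf$-operation data with lower-dimensional $\mc{R}_{d'}^{1,h}$ along disc-bubbling strata; and pulling back from $\mc{R}_d^{S^1}$ along the interior codimension-$1$ loci where $t_q$ points at a boundary puncture $z_i$, compatibly with~\eqref{S1occompatibility}. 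Such a datum exists by the usual inductive contractibility argument.

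Define $\check{\oc}^1 : \r{CH}_{*-n}(\f) \to CF^{*-2}(M)$ by counting the rigid elements of the Gromov-Floer compactifications $\overline{\mc{R}}_d^{1,h}(y;\vec{x})$, weighted by the orientation-line isomorphisms and the same type of sign twists used in~\eqref{checkocdefinition}. The codimension-$1$ boundary of the $1$-dimensional components decomposes into four kinds of contributions: the $\rho = 0$ facet gives $\pm\oc^{S^1}$; the $\rho = \infty$ facet, factoring as $\overline{\check{\mc{R}}}_d^1 \times \overline{\mc{M}}_1$, gives $\pm \delta_1 \circ \check{\oc}$; boundary disc bubbles contribute $\check{\oc}^1 \circ b$; and semi-stable cylinder breakings at the output orbit contribute $d_{CF} \circ \check{\oc}^1$. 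The interior ``$t_q$ points at $z_i$'' loci do not contribute to the codimension-$1$ boundary, since they are interior submanifolds rather than boundary facets, exactly as in~\S\ref{ocs1sec} and in the proof of Lemma~\ref{weaks1actionSHlemma}. The main obstacle is the sign verification: the prefactor $(-1)^n$ on $\delta_1 \circ \check{\oc}$ should arise from reordering the $[0,\infty]$ parameter past the $n$-dimensional gluing correction at the intermediate orbit in the $\rho = \infty$ product, while the opposite orientations on the two endpoints of $[0,\infty]$ produce the minus sign in the equation; the $(-1)^{n+1}$ prefactor on $d_{CF} \circ \check{\oc}^1$ should be dictated by the degree-$n$ chain-map convention for $\check{\oc}$ fixed in Lemma~\ref{occhain}. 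Tracking these against the orientation choices~\eqref{openclosedcheckorientation}, \eqref{compactifications1}, \eqref{differentorientation} and the sign twists $\check{\star}_d$, $\clubsuit_d$ from~\S\ref{sec:nonunital} and~\S\ref{ocs1sec} is the bulk of the work, but is of the same character as the sign analyses in Lemmas~\ref{ochatchainold} and~\ref{sectorsignorientation}.
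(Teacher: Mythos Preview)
Your proposal is correct and follows essentially the same approach as the paper. The paper's homotopy $\check{\oc}^1$ is built from the moduli space ${ }_1\check{\mc{R}}_{d}^1$ of discs with one auxiliary interior point $p_1$ satisfying $0<|p_1|<\tfrac12$ and with the output asymptotic marker pointing at $p_1$; your space $\mc{R}_d^{1,h}$ is the same object described in cylindrical coordinates on the negative end (with $\rho=-s_q$ playing the role of $-\log|p_1|$), and your facets $\rho=0$ and $\rho=\infty$ correspond exactly to the paper's strata $|p_1|=\tfrac12$ (giving $\oc^{S^1}$) and $|p_1|\to 0$ (giving $\check{\mc{R}}_d^1\times\mc{M}_1$, hence $\delta_1\circ\check{\oc}$). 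The only cosmetic difference is that the paper lets the marker track $p_1$ while you fix a reference marker and record the angle via $t_q$; these are equivalent parametrizations, and your remark about the ``$t_q$ points at $z_i$'' loci is unnecessary (they are interior, not boundary, in either description). The reduction $\hat{\oc}\circ B^{nu}\circ\iota=\oc^{S^1}$ via Proposition~\ref{ocS1Brelation} is exactly how the paper closes the argument.
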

\begin{proof}[Proof of Theorem \ref{thm:homology}] 
    Proposition \ref{homologylevelchainhomotopy} immediately implies that
    $[\delta_1] \circ [\check{\oc}] = [\oc]\circ [B^{nu}] \circ [\iota]$ where
    $\iota: \r{CH}_{*-n}(\mc{F}, \mc{F}) \ra \r{CH}_{*-n}^{nu}(\mc{F}, \mc{F})$
    is the inclusion of chain complexes. But by Lemma \ref{inclusionquasi},
    $[\iota]$ is an isomorphism and by Corollary \ref{homologylevelnonunitaloc}
    $[\check{\oc}] = [\oc]$.
\end{proof}

To define $\check{\oc}^1$, consider
\begin{equation} 
    \label{check1equivariant}
    { }_1 \check{\mc{R}}_{d}^1,
\end{equation}
the moduli space of discs with with $d$ positive boundary marked points $z_1,
\ldots, z_d$ labeled in counterclockwise order, 1 interior negative puncture
$z_{out}$ equipped with an asymptotic marker, and $1$ additional interior
marked point $p_1$ (without an asymptotic marker), marked as {\it auxiliary}.
Also, with respect to the unit disc representative of any element of this moduli space fixing
$z_d$ at $1$ and $z_{out}$ at 0 on the unit disc, $p_1$ should lie {\it inside
the circle of radius $\frac{1}{2}$}:
\begin{equation}\label{ordered1} 0 < |p_1| < \frac{1}{2}.
\end{equation} 
Using the above representative, one can talk about the {\it
angle}, or {\it argument} of $p_1$
\begin{equation} 
    \theta_1:= \arg(p_1).  
\end{equation} 
We require that with respect to the above representative,
\begin{equation}\label{pointing1condition} \textrm{the asymptotic marker on
        $z_{out}$ points in the direction $\theta_1$. } 
\end{equation} 
For every representative $S \in { }_1 \check{\mc{R}}_{d}^1$,
\begin{equation}\label{check1negcylindricalend} 
    \begin{split} &\textrm{
        fix a negative cylindrical end around $z_{out}$ not containing
     $p_1$, compatible with the }\\ &\textrm{direction of the
        asymptotic marker, or {\it equivalently compatible with the
        angle $\theta_1$}.} \end{split} 
\end{equation}
We orient \eqref{check1equivariant} as follows: pick, on a slice of the
automorphism action which fixes the position of $z_d$ at 1 and $z_{out}$ at 0,
the volume form
\begin{equation}
\label{check1orientation} -r_1 dz_1\wedge dz_2 \wedge \cdots
    \wedge dz_{d-1} \wedge  dr_1 \wedge d\theta_1
\end{equation}
The compactification of \eqref{check1equivariant} is a real blow-up of the
ordinary Deligne-Mumford compactification, in the sense of \cite{Kimura:1995fk}
(see \cite{Seidel:2010uq} for a first discussion in the context of Floer
theory), reviewed in Appendix \ref{modulispaces} (note this is the case $k=1$
of the more general description therein).
The result of this discussion is that the codimension 1 boundary of the
compactified check moduli space ${ }_1\overline{ \check{\mc{R}}}_{d}^1$  is
covered by the images of the natural inclusions of the following strata:
\begin{align} 
&\label{check1stratum1} \overline{\mc{R}}^s \times { }_1 \overline{\check{\mc{R}}}_{d-s + 1}^1 \\ 
&\label{check1stratum2} \overline{ \check{\mc{R}}}_{d}^1 \times \overline{\mc{M}}_{1}\\
&\label{check1stratum3} \overline{\check{\mc{R}}}_{d}^{S^1}
\end{align} 
The stratum \eqref{check1stratum3} describes the locus which $|p_1| =
\frac{1}{2}$, which is exactly the locus we defined to be the auxiliary
moduli space $\mc{R}_{d}^{S^1}$ inducing the operation $\oc^{S^1}$.
The
strata \eqref{check1stratum1}-\eqref{check1stratum2} have manifold with corners
structure given by standard local gluing maps using fixed choices of strip-like
ends near the boundary. For \eqref{check1stratum1} this is standard, and for
\eqref{check1stratum2}, the local gluing map uses the cylindrical ends
\eqref{check1negcylindricalend} and \eqref{posendangles} (in other words, one
rotates the $1$-pointed angle cylinder by an amount commensurate to the angle
of the marked point $z_d$ on the disk before gluing; see \S \ref{modulispaces} particularly \eqref{gluinganglesopenclosed}). See Figure \ref{fig:oc1_check_degenerations} for a schematic of \eqref{check1equivariant} and two out of the three types of strata \eqref{check1stratum2}-\eqref{check1stratum3}.
\begin{figure}[h]
    \caption{\label{fig:oc1_check_degenerations}
    A schematic of an element of \eqref{check1equivariant} on the left and a schematic of two of its three types of degenerations on the right, \eqref{check1stratum3} (above) and \eqref{check1stratum2} (below). The remaining type of degeneration \eqref{check1stratum1}, omitted from the figure, occurs when some boundary marked points coalesce into a disc bubble.}
    \centering
    \includegraphics[scale=0.7]{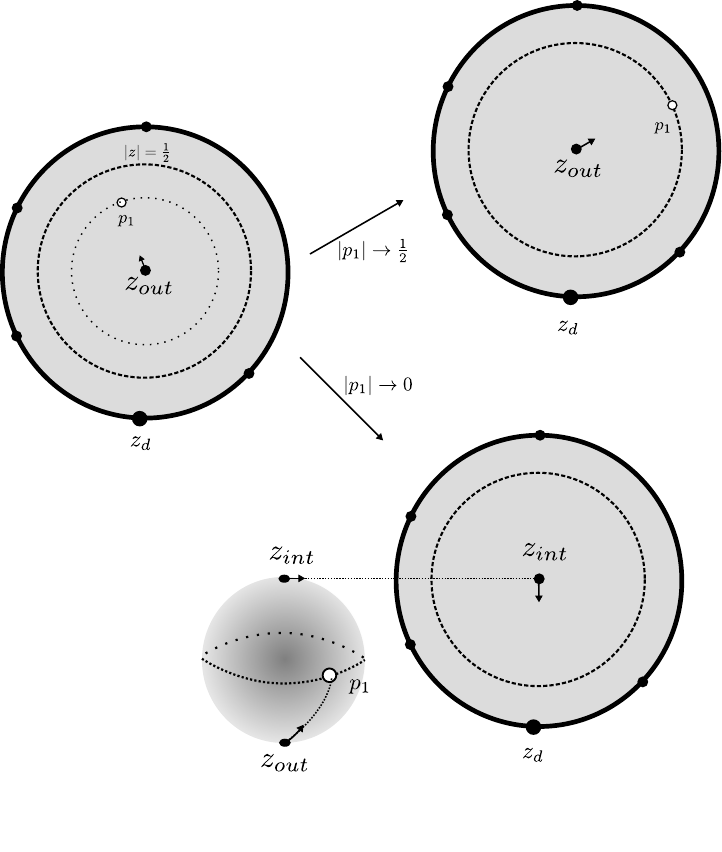}
\end{figure}

   We will as usual fix a {\em Floer datum for the BV homotopy}, meaning an inductive choice, for every $d \geq
    1$, of Floer data for every representative $S \in  { }_1
    \overline{\check{\mc{R}}}_{d}^1$ varying smoothly in $S$,
    which on boundary strata is smoothly equivalent to a product of Floer data
    inductively chosen on lower dimensional moduli spaces.
    Such a system of choices exist again by a contractibility argument, and for any such choice, 
one obtains for any $d$-tuple of Lagrangians $L_0, \ldots,
L_{d-1}$, and asympotic
conditions 
\begin{equation}
    \begin{split} \vec{x} &= (x_{d},
        \ldots, x_1),\ x_i \in
        \chi(L_{i-1},L_{i \mathrm{\ mod\ }d})\\
        y_{out} &\in \mc{O} \end{split}
    \end{equation} 
a compactified moduli space
\begin{equation}
        \label{BVcompactifiedspace1}{ }_1 \overline{\check{\mc{R}}}_{d}^1(y_{out},
        \vec{x})
\end{equation} 
of maps into $M$ with source an arbitrary element $S$ of the moduli space
\eqref{check1equivariant}, satisfying Floer's equation using the Floer datum
chosen for the given $S$ as in \eqref{floerequationOC} with asymptotics and
boundary conditions as in \eqref{floerequationocasymptotics} (with the usual
    modifications in the Liouville case detailed in Remarks \ref{liouvilleFloerdatumOC} and
\ref{wrappedFloerdatumOC}). The virtual dimension of every component of ${ }_1
\overline{\check{\mc{R}}}_{d}^1(y_{out}, \vec{x}) $ coincides (mod 2 or
exactly depending on whether we are in a $\Z/2$ or $\Z$-graded setting)
with
\begin{equation} 
    \label{dimBVcompactifiedspace1} \deg(y_{out}) -
    n + d + 1  - \sum_{i=1}^d \deg(x_i)
 \end{equation} 
 By Assumption \ref{mainassumption}, for generic choices of Floer data, the Gromov-Floer
    compactification of the components of virtual dimension $\leq 1$ of \eqref{BVcompactifiedspace1} are compact manifolds-with-boundary of expected dimension.
 For rigid elements $u$ of the moduli spaces \eqref{BVcompactifiedspace1}, 
the orientations
    \eqref{check1orientation}, 
    and \cite{Abouzaid:2010kx}*{Lemma C.4} induce isomorphisms of
    orientation lines 
    \begin{equation} \label{BVorientationline1} ({ }_1
        \check{\mc{R}}_{d}^1)_u: o_{x_d} \otimes \cdots \otimes o_{x_1}
        \ra o_y\\ 
     \end{equation} 
     As usual ``counting rigid elements $u$,'' i.e., summing application of
     these isomorphisms over all $u$, defines the $|o_{y_{out}}|_\K$ component
     of an operation $\check{\oc}^1$, up to a sign twist which we specify below:
            \begin{equation} 
                \check{\oc}^{1}([x_{d}], \ldots, [x_1]) :=
                \sum_{u \in  { }_k \overline{\check{\mc{R}}}_d^{1}(y_{out};
                \vec{x})\textrm{ rigid}}  (-1)^{\check{\star}_d} ({
                    }_k\check{\mc{R}}_d^{1})_u([x_d], \ldots, [x_1]); \\
            \end{equation} 
where the sign is given by
\begin{equation} 
    \check{\star}_d = \deg(x_d) + \sum_{i} i \cdot \deg(x_i).
\end{equation}
A codimension 1 analysis of the moduli spaces \eqref{BVcompactifiedspace1}
 reveals: 
 \begin{prop} \label{BVcompactifiedequation1} 
     The following equation is satisfied:
    \begin{equation} 
        (-1)^n\delta_1 \check{\oc} + (-1)^n d \check{\oc}^{1}  =
        \oc^{S^1} + \check{\oc}^{1} b 
    \end{equation} 
\end{prop}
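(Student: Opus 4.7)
The plan is to run the standard codimension-1 boundary analysis for the one-dimensional components of the moduli space ${}_1\overline{\check{\mc{R}}}_{d+1}^1(y_{out}; \vec{x})$, with virtual dimension computed by \eqref{dimBVcompactifiedspace1}. Since by Proposition \ref{BVhomotopyeqn} the relevant components are compact oriented 1-manifolds with boundary, the signed count of their boundary points vanishes. Identifying this boundary with the zero-dimensional moduli spaces that compute each of the four operations in the statement will then give the desired equation, after a sign check.

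First I would enumerate the contributions to $\partial {}_1\overline{\check{\mc{R}}}_{d+1}^1(y_{out}; \vec{x})$. The three boundary types \eqref{check1stratum1}--\eqref{check1stratum3} of the abstract moduli space pair with choices of asymptotics to yield:
\begin{enumerate}
\item rigid products $\overline{\mc{R}}^s(y_j; x_{k+s}, \ldots, x_{k+1}) \times {}_1 \overline{\check{\mc{R}}}_{d-s+2}^1(y_{out}; x_d, \ldots, x_{k+s+1}, y_j, x_k, \ldots, x_1)$ and their cyclic analogues (where the $\ainf$ disc is attached in one of the boundary input positions of ${}_1\check{\mc{R}}$), giving exactly the matrix coefficients of $\check{\oc}^1 \circ b$ by the same sign analysis as in \cite{Abouzaid:2010kx}*{Lemma 5.4} (the Floer datum on this stratum is the product Floer datum by the consistency condition);
\item rigid products $\overline{\check{\mc{R}}}_d^1(y_1; \vec{x}) \times \overline{\mc{M}}_1(y_{out}; y_1)$, contributing $(-1)^{n} \delta_1 \circ \check{\oc}$ (the cylindrical end \eqref{check1negcylindricalend} and the gluing convention guarantee that the Floer equation on the bottom cylindrical bubble is precisely that defining $\delta_1$ in \S \ref{sec:angledeccylinder});
\item the whole stratum ${}_1\overline{\check{\mc{R}}}_{d+1}^1 \big|_{|p_1| = 1/2} = \overline{\check{\mc{R}}}_{d+1}^{S^1}$, which by the very definition of the $S^1$-moduli space and the chosen Floer data contributes $\oc^{S^1}$;
\item the usual semi-stable Floer-strip breakings at the output interior puncture, contributing $(-1)^n d \circ \check{\oc}^1$, and at boundary inputs, which are absorbed into the term $\check{\oc}^1 \circ b$ through the convention for $\mc{R}^1$.
\end{enumerate}

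Next I would check the signs. All orientations are fixed by \eqref{check1orientation} together with the standing conventions from \cite{Abouzaid:2010kx}*{\S C.6}; the sign twist $\check{\star}_d$ is identical to the one appearing in the definition \eqref{checkocdefinition} of $\check{\oc}$, so the comparisons in (1) and (4) reduce verbatim to the sign checks done in the proof of Lemma \ref{occhain} for $\check{\oc}$ being a chain map, up to an overall shift of $(-1)^n$ from the differential acting on $CF^*(M)$ with our grading conventions. For (2), the orientation of the boundary stratum, obtained by contracting \eqref{check1orientation} with the outward normal corresponding to the gluing parameter for the cylindrical bubble, matches the product orientation on $\overline{\check{\mc{R}}}_d^1 \times \overline{\mc{M}}_1$ (cf.\ Remark \ref{orientationmr}), and thus produces precisely $(-1)^n \delta_1 \check{\oc}$. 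For (3), the boundary of \eqref{check1orientation} at $r_1 = 1/2$ carries the complex orientation chosen on $\mc{R}_{d+1}^{S^1} = S^1 \times \mc{R}_{d+1}^1$ and the sign twist $\clubsuit_d$ used to define $\oc^{S^1}$ is designed to match $\check{\star}_d$ modulo this boundary factor. Summing the contributions gives the identity stated.

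The main obstacle is the sign bookkeeping for item (3): at $r_1 = 1/2$ one passes from a two-dimensional orientation form involving $dr_1 \wedge d\theta_1$ to the one-dimensional $d\theta_1$, and one must verify that the induced boundary orientation on $\overline{\check{\mc{R}}}_{d+1}^{S^1}$ agrees with the orientation used to define $\oc^{S^1}$ through the identification $\mc{R}_d^{S^1} \simeq S^1 \times \mc{R}_d^1$ of \eqref{compactifications1}. This is a finite computation identical in spirit to the one carried out in Remark \ref{inducedorientationslice} and Lemma \ref{sectorsignorientation}: one compares the two slices of the automorphism action (fixing $z_d,z_{out}$ versus fixing $z_0,z_{out}$), contracts by the trivializing vector field of the $S^1$-direction, and checks that the sign twists $\check{\star}_d$ and $\clubsuit_d$ absorb the resulting discrepancy. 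Once this orientation match is established, the identity follows mechanically.
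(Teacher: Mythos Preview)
Your proposal is correct and follows essentially the same route as the paper: both analyze the codimension-1 boundary of the one-dimensional components of ${}_1\overline{\check{\mc{R}}}_{d+1}^1(y_{out};\vec{x})$, identify the strata \eqref{check1stratum1}--\eqref{check1stratum3} together with semi-stable breakings, and match them to the four terms of the identity. The one methodological difference is in the sign verification: the paper makes this systematic by first writing an ``untwisted'' relation among the operations $g_{\mathrm{ut}}$ (with an auxiliary sign $\mathfrak{f}_d$) and then multiplying through by $(-1)^{\check{\star}_d + \maltese_d - d + 1}$ to recover the twisted identity, whereas you argue by direct comparison of boundary orientations and sign twists on each stratum. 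Both are valid; the paper's untwisting device packages the three sign contributions (boundary-orientation comparison, Koszul reordering, and twist conversion) into a single bookkeeping step, and also notes that the forgetful map ${}_1\check{\mc{R}}_d^1 \to \check{\mc{R}}_d^1$ has complex-oriented fibers so that the ``$\check{\oc}^1\circ b$'' sign analysis literally reduces to the existing ``$\oc\circ b$'' one---a point you invoke implicitly when appealing to Lemma~\ref{occhain}.
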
 \begin{proof} The boundary of the 1-dimensional components of
        \eqref{BVcompactifiedspace1} are covered by the rigid components of the
        following types of strata: 
        \begin{itemize} 
            \item spaces of maps with domain lying on the
                    codimension 1 boundary of the moduli space, i.e., in
                    \eqref{check1stratum1}-\eqref{check1stratum3}

 \item semi-stable breakings, namely those of the form 
     \begin{align} 
         &{ }_1\overline{\check{\mc{R}}}_d^1(y_1; \vec{x}) \times \overline{\mc{M}}(y_{out}; y_1) \\ 
         &\overline{\mc{R}}^1(x; x_i) \times { }_1 \overline{\check{\mc{R}}}_d^1(y_{out}; \tilde{\vec{x}}) 
     \end{align}
where $\tilde{\vec{x}}$ denotes the collection of inputs $\vec{x}$ with $x_i$ replaced with $x$.
 \end{itemize}
All together, this implies, up to signs, that 
\begin{equation}
    \label{initialcheck1equation} (-1)^n\delta_1 \check{\oc} + (-1)^n d \check{\oc}^1 =
    \oc^{S^1} + \check{\oc}^{1} b. 
\end{equation} 
\eqref{initialcheck1equation} is of
course a shorthand for saying, for any $d$ and any tuple of $d$ cyclically composable
morphisms $x_d, \ldots, x_1$, that 
\begin{equation} \begin{split} 
        (-1)^n\sum_{i =0}^1 \delta_i &\check{\oc}^{k-i}_d(x_d, \ldots, x_1) = \oc^{S^1}_d(x_d,
    \ldots, x_1) \\
    &+\sum_{i,s}(-1)^{\maltese_1^s} \check{\oc}^1_{d-i+1} (x_d, \ldots,
    x_{s+i+1}, \mu^i(x_{s+i}, \ldots, x_{s+1}), x_s, \ldots, x_1)\\
    &+\sum_{i,j} (-1)^{\sharp_j^i} \check{\oc}^1_{d-i-j} (\mu^{i + j + 1}(x_i, \ldots,
    x_1, x_d, \ldots, x_{d-j}), x_{d-j-1}, \ldots, x_{i+1})  \end{split}
\end{equation}
(recall the abuse of notation $x_i:=[x_i]$). Thus, it suffices to verify that the signs coming from the codimension 1
boundary are exactly those appearing in \eqref{initialcheck1equation}. (in
particular, that the terms in, for instance, $\check{\oc}^1 b$ appear with the
right sign). 

Let us  recall broadly how the signs are
computed.  For any operator $g$ defined above such as $\oc$, $\oc^{S^1}$, $\mu$,
$d$, $\delta_1$ etc., we let $g_{\mathrm{ut}}$ denote the {\em untwisted}
version of the same operator, e.g., the operator whose matrix coefficients come
from the induced isomorphism on orientation lines, without any sign twists by
the degree of the inputs. So for instance $\mu^d(x_d, \ldots, x_1) =
(-1)^{\sum_{i=1}^d i \deg(x_i)} \mu^d_{\mathrm{ut}}(x_d, \ldots, x_1)$ and so on. The
methods described in \cite{Seidel:2008zr}*{Prop. 12.3} and elaborated upon in
\cite{Abouzaid:2010kx}*{\S C.3, Lemma 5.3} and \cite{ganatra1_arxiv}*{\S B},
when applied to the boundary of the 1-dimensional component of the moduli space
of maps, $\overline{\check{\mc{R}}}_{d}^1(y_{out}, \vec{x}))$, 
imply the following signed equality:
\begin{equation}\label{untwistedequation}
    \begin{split}
        0 &= d_{\mathrm{ut}} \check{\oc}^1_{\mathrm{ut}} (x_d, \ldots, x_1) + (\delta_1)_{\mathrm{ut}} \check{\oc}_{\mathrm{ut}}(x_d ,  \ldots,  x_1) \\
    &  - \oc^{S^1}_{\mathrm{ut}} (x_d, \ldots, x_1) + (-1)^{\mathfrak{f}_d} \check{\oc}^{1} b (x_d, \ldots, x_1)
\end{split}
\end{equation}
where
\begin{equation}
    \mathfrak{f}_d := \sum_i (i+1) \deg(x_i) + \deg(x_d) = \check{\star}_d + \maltese_d - d.
\end{equation}
is an auxiliary sign.

To explain this equation \eqref{untwistedequation}, we note first that the
signs appearing in all terms but the last are simply induced by the boundary
orientation on the moduli space of domains. The sign appearing in the first
term also follows from a standard boundary orientation analysis for Floer
cylinders, which we omit (but see e.g., \cite{Seidel:2008zr}*{(12.19-12.20)}
for a version close in spirit). The signs for the first two terms are also
exactly as in Lemma \ref{weaks1actionSHlemma}.  Finally, in the last term, the sign
$(-1)^{\mathfrak{f}_d} \check{\oc}^{1} b (x_d \otimes \cdots \otimes x_1)$
(compare \cite{Seidel:2008zr}*{(12.25)} \cite{ganatra1_arxiv}*{(B.59)}) appears
as a cumulative sum of 
\begin{itemize}
     \item the sign twists which turn the untwisted operations
         $\check{\oc}^1_{\mathrm{ut}}$ and $\mu^s_{ut}$ into the usual
         operations $\check{\oc}^1$ and $\mu^s$;

     \item the Koszul sign appearing in the Hochschild differential $b$; and

    \item the boundary orientation sign appearing
        in the relevant (untwisted) term of $\check{\oc}^1 b$, for instance  $\check{\oc}^1_{\mathrm{ut}}(x_d, \ldots, x_{n+m+1},
        \mu^m_{ut}(x_{n+m}, \ldots, x_{n+1}), x_n, \ldots, x_1)$, which itself
        is as a sum of two different contributions: 
        \begin{enumerate}[(a)]
            \item the comparison
        between the boundary (of the chosen) orientation and the product (of
        the chosen orientation) on the moduli of {\em domains} and 
        \item Koszul
        reordering signs, which measure the signed failure of the method of
        orienting the moduli of maps (in terms of orientations of the domain
        and orientation lines of inputs and outputs) to be compatible with
        passing to boundary strata. 
    \end{enumerate}
    See \cite{Seidel:2008zr}*{(12d)} for more
    details in the case of the $\ainf$ structure, and
    \cite{Abouzaid:2010kx}*{\S C}, \cite{ganatra1_arxiv}*{\S C} for the
    case of these computations for the open-closed map. 
    We note in particular that the forgetful map $F_1: { }_1 \check{\mc{R}}_d^1
    \ra \check{\mc{R}}_d^1$, which forgets the point $p_1$ (and changes the
    direction of the asymptotic marker to point at $z_d$) has {\em complex
    oriented fibers} (in which just the marked point $p_1$ varies). So the
    boundary analysis  of these ``$\check{\oc^1} \circ b$'' strata appearing
    here is identical to the analysis strata appearing in
    \cite{Abouzaid:2010kx} and \cite{ganatra1_arxiv} for the
    ``$\oc\circ b$'' strata, which is why we have not repeated it here.
\end{itemize}

Multiplying all terms of \eqref{untwistedequation} by $(-1)^{\check{\star}_d + \maltese_d -d + 1}$, and noting that,
for instance, $\maltese_d -d + 1 + n -2 = \deg(\check{\oc}^1(x_d \otimes \cdots \otimes
x_1))$, so 
\begin{equation}
    \begin{split}
        (-1)^{\check{\star}_d + \maltese_d - d + 1} (\delta_1)_{ut}\check{\oc}^1_{\mathrm{ut}}(x_d, \ldots, x_1) &= (-1)^{\deg(\check{\oc}^1(x_d, \ldots, x_1)) - n} (\delta_1)_{ut} (-1)^{\check{\star}_d} \check{\oc}^1_{\mathrm{ut}}(x_d, \ldots, x_1)\\ 
    &= \delta_1 \check{\oc}^1(x_d, \ldots, x_1),
\end{split}
\end{equation}
(and similarly for the $d \circ \oc^1$ term), it follows that 
\begin{equation}
    \begin{split}
        0 &= (-1)^{n}\delta_1 \check{\oc}(x_d,  \ldots, x_1) + (-1)^n d \check{\oc}^1 (x_d, \ldots, x_1)\\
        &  - \check{\oc}^{1} b (x_d, \ldots, x_1) - (-1)^{\check{\star}_d + \maltese_d - d + 1}\oc^{S^1}_{\mathrm{ut}} (x_d, \ldots, x_1),
\end{split}
\end{equation}
but $\check{\star}_d + \maltese_d - d + 1 = \clubsuit_d$, and hence the last term above
is $-\oc^{S^1}(x_d, \ldots, x_1)$ as desired.

\end{proof}

\begin{proof}[Proof of Proposition \ref{homologylevelchainhomotopy}]
    The ``sector decomposition'' performed in Proposition \ref{ocS1Brelation}
    which compares $\oc^{S^1}$ to $\hat{\oc} \circ B^{nu} \circ \iota$, along
    with Proposition \ref{BVcompactifiedequation1}, immediately implies the result.
\end{proof}

\subsection{The main construction}\label{sec:equivariance}
We now turn to the definition of the (closed) morphism of
$S^1$-complexes, and the proof of Theorem
\ref{thm:mainresult1} and Corollary \ref{cor:cyclicOC}. The required data takes the form 
\begin{equation} 
    \widetilde{\oc} = \bigoplus_{k\geq 0} 
    \overline{\K[\Lambda]/\Lambda^2}^{\otimes k}
    \otimes \r{CH}_*^{nu}(\f) \ra CF^*(M)[n]
\end{equation} which is equivalent, as recalled in \S
\ref{circleactionsubsection} to defining the collection of maps $\widetilde{\oc} = \{\oc^k\}_{k \geq 0}$, or $u$-linearly (see \S \ref{sec:ulinear}) $\widetilde{\oc} = \sum_{k=0}^\infty \oc^k u^k$, where 
\begin{equation} 
    \oc^k = (\check{\oc}^{k} + \hat{\oc}^{k}):= \widetilde{\oc}^{k|1}(\Lambda, \ldots, \Lambda, -):
    \r{CH}_*^{nu}(\f) \ra CF^{*+n-2k}(M).  
\end{equation} 
(recall from \S \ref{circleactionsubsection} that $\K[\Lambda]/\Lambda^2$ is
our small model for $C_{-*}(S^1)$ and $S^1$-complexes are by definition
strictly unital $\ainf$ modules over $\K[\Lambda]/\Lambda^2$).  
By definition, the case $k=0$ is already covered: 
\begin{equation} 
    \oc^0 = (\check{\oc}^0 \oplus \hat{\oc}^0) =  (\check{\oc} \oplus \hat{\oc}) = \oc
\end{equation}
To handle the general case ($k \geq 0$), we will associate operations to, for
each $d$, compactifications of three moduli spaces of domains, in the following
order: 
\begin{align} 
    \label{checkequivariant} &{ }_k \check{\mc{R}}_{d}^1\\ 
    \label{S1equivariant}   &{ }_k \mc{R}_{d}^{S^1}\\ 
    \label{hatequivariant} &{ }_k \hat{\mc{R}}_d^1;
\end{align} 
The moduli space \eqref{S1equivariant} will induce an auxiliary operation
useful for the proof, whereas \eqref{checkequivariant} and
\eqref{hatequivariant} will lead to the desired operations. For $k = 0$, these
moduli spaces are simply $\check{\mc{R}}_{d}^1$, $\mc{R}_{d}^{S^1}$, and
$\hat{\mc{R}}_d^1$ as defined earlier, and the $k=1$ case of \eqref{checkequivariant} was defined in \eqref{check1equivariant}. 
Inductively, we will construct, and
study operations from \eqref{checkequivariant} and \eqref{S1equivariant}
simultaneously, and then finally construct \eqref{hatequivariant}. 
Using these moduli spaces, we will construct the maps $\check{\oc}^{k}$,
$\hat{\oc}^k$ and an auxiliary operation $\oc^{S^1, k}$
(which we compare to $\hat{\oc}^{k-1} \circ B^{nu}$ in Proposition
\ref{ocks1Brelation} below), and then prove that:
\begin{prop} 
    \label{cyclicOCequations}
    The following equations hold, for each $k \geq 0$: 
    \begin{align} 
        (-1)^n \sum_{i \geq 0}^k \delta_i \check{\oc}^{k-i} &= \hat{\oc}^{k-1} B^{nu} + \check{\oc}^{k} b\\
        (-1)^n \sum_{i \geq 0}^k \delta_i \hat{\oc}^{k-i} &=  \hat{\oc}^{k} b' + \check{\oc}^k (1 - t).
    \end{align} 
    All at once, denoting by $\oc^k =(\check{\oc}^{k} + \hat{\oc}^{k})$ and
    $\widetilde{\oc} = \sum_{i=0}^\infty  \oc^i u^i$, $\delta_{eq} = \sum_{j=0}^\infty \delta_j^{CF}u^j$, and $b_{eq} = b^{nu} + u B^{nu}$ as in \S
    \ref{sec:ulinear}, we have that
    \begin{equation}
        (-1)^n \delta_{eq} \circ \widetilde{\oc} = \widetilde{\oc} \circ b_{eq}.
    \end{equation}
\end{prop}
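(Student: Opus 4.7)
The plan is to construct, for each $d, k \geq 1$, enhanced moduli spaces
\[
{}_k\check{\mc{R}}^1_{d+1},\qquad {}_k\mc{R}^{S^1}_d,\qquad {}_k\hat{\mc{R}}^1_d
\]
by adding to the corresponding $k=0$ moduli spaces of Sections~\ref{sec:nonunital} and~\ref{ocs1sec} an ordered tuple of $k$ interior auxiliary marked points $(p_1,\ldots,p_k)$ lying in a small disc around $z_{out}$, with $0 < |p_1| \leq \cdots \leq |p_k| \leq \tfrac{1}{2}$ in the standard unit-disc representative, and with the asymptotic marker at $z_{out}$ aligned with the direction of $p_1$ (free in the $S^1$ family). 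The fiber of the forgetful map that drops $(p_1,\ldots,p_k)$ is canonically a $k$-point angle-decorated configuration in the sense of Definition~\ref{rpointedmodulispace}; I would orient the fiber using Remark~\ref{orientationmr} and take the product orientation on the total space. Inductively in $k$ and stratum dimension, I would choose Floer data consistent with the $\ainf$ data of Section~\ref{subsec:fukaya}, the $S^1$-action data of Section~\ref{sec:angledeccylinder} near the output cylindrical end, and previously-chosen lower-order open-closed data, together with the pullback condition along the sub-loci where two adjacent auxiliary points acquire coincident radii, in the spirit of condition~\eqref{s1forgetful}. Contractibility of the space of choices ensures such data exist, and the pullback condition makes the corresponding degenerate contributions to the boundary equation vanish by the rigidity/fiber-dimension argument of Lemma~\ref{weaks1actionSHlemma}.

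The operations $\check{\oc}^k$, $\oc^{S^1,k}$, and $\hat{\oc}^k$ are then defined by the usual signed counts of rigid solutions. The codimension-$1$ boundary of the $1$-dimensional components of ${}_k\check{\mc{R}}^1_{d+1}(y_{out};\vec{x})$ is covered by: (a) $\ainf$ bubbling on the outer disc boundary, yielding $\check{\oc}^k \circ b$; (b) Floer breaking at $z_{out}$, where the broken cylinder carries $i$ of the $p_j$'s and is itself a rigid element of some $\overline{\mc{M}}_i$, yielding $\sum_i \delta_i \check{\oc}^{k-i}$; and (c) the wall $|p_k| = \tfrac{1}{2}$, yielding the auxiliary operation $\oc^{S^1,k}$. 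The identity $\oc^{S^1,k} = \hat{\oc}^{k-1} \circ B^{nu}$ then generalizes Proposition~\ref{ocS1Brelation}, proved by the sector-decomposition argument of Lemmas~\ref{sectordecomposition} and~\ref{sectorsignorientation}: one decomposes ${}_k\mc{R}^{S^1}_d$ into the $d$ open sectors where $\tau_{out}$ points between two adjacent boundary punctures and identifies each sector with ${}_{k-1}\hat{\mc{R}}^1_d$ via cyclic relabelling, combined with forcing the outermost marked point $p_k$ onto the wall. This establishes the check equation. The hat equation follows analogously from the codimension-$1$ boundary of ${}_k\hat{\mc{R}}^1_d$, with the cyclic term $\check{\oc}^k(1-t)$ (equivalently $\check{\oc}^k \circ d_{\wedge\vee}$) arising from the strata where the auxiliary ``forgotten'' boundary point $z_f$ collides with $z_1$ or $z_d$, exactly as in Lemma~\ref{ochatchainold}. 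Summing the two families over $k$ with the $u$-grading repackages them into the single $S^1$-module equation $(-1)^n \delta_{eq} \circ \widetilde{\oc} = \widetilde{\oc} \circ b_{eq}$.

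The main obstacle will be the cumulative sign verification. Geometrically the construction is a direct amalgamation of the $S^1$-complex construction from Section~\ref{sec:angledeccylinder} with the non-unital open-closed construction from Sections~\ref{sec:nonunital}--\ref{sec:homology}, so the boundary-strata analysis is essentially formal. However, tracking the Koszul reordering signs along the ``$\check{\oc}^k \circ b$'' and ``$\hat{\oc}^k \circ b'$'' strata, combined with the signs from the sector-decomposition step at each~$k$ and the sign twists built into the definitions of $\check{\oc}^k$ and $\hat{\oc}^k$, requires precisely matching against the algebraic signs in the $S^1$-homomorphism equations~\eqref{s1homomorphismeqns}. Following the template of Proposition~\ref{BVcompactifiedequation1}, I would first write the boundary equation with untwisted operations and then multiply through by the appropriate overall sign to recover the twisted equation in the statement, as was done there for $k=1$.
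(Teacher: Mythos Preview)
Your proposal is correct and follows essentially the same route as the paper: the paper constructs exactly the moduli spaces ${}_k\check{\mc{R}}^1_{d+1}$, ${}_k\mc{R}^{S^1}_d$, ${}_k\hat{\mc{R}}^1_d$ with radially ordered auxiliary interior points, analyzes their codimension-$1$ boundaries (disc bubbling, cylinder breaking carrying some of the $p_i$, the wall $|p_k|=\tfrac{1}{2}$, and the coincident-radius loci which vanish by the forgetful-fiber argument), and then proves the sector-decomposition identity $\oc^{S^1,j}=\hat{\oc}^{j}\circ B^{nu}$ exactly as in Proposition~\ref{ocS1Brelation}. Your indexing of the auxiliary $S^1$ operation is shifted by one relative to the paper (the wall in ${}_k\check{\mc{R}}^1_{d+1}$ yields what the paper calls $\oc^{S^1,k-1}$, and the general identity is $\oc^{S^1,j}=\hat{\oc}^{j}\circ B^{nu}$), but this is a harmless convention difference and your final equations agree with the paper's.
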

This will also directly imply our main Theorems, as spelled out at the bottom
of this subsection.  

The space \eqref{checkequivariant} is the moduli space of discs
with with $d$ positive boundary marked points $z_1, \ldots, z_d$ labeled in
counterclockwise
order, 1 interior negative puncture $z_{out}$ equipped with an asymptotic
marker, and $k$ additional interior marked points $p_1, \ldots, p_k$ (without
asymptotic markers), marked as {\it auxiliary}.  Also, on the unit disc
representative of any element of this moduli space which fixes $z_d$ at $1$ and
$z_{out}$ at 0, the $p_i$ should be {\it strictly radially ordered} with norms
in $(0,\frac{1}{2})$; that is,
\begin{equation}
    \label{ordered} 0 < |p_1| < \cdots < |p_k| < \frac{1}{2}.
\end{equation} 
Using the above representative, one can talk about the {\it
angle}, or {\it argument} of each auxiliary interior marked point,
\begin{equation} 
    \theta_i:= \arg(p_i).  
\end{equation} 
We require that with respect to the above representative,
\begin{equation}
    \label{pointingcondition} \textrm{the asymptotic marker on
        $z_{out}$ points in the direction $\theta_1$ (or towards $z_d$ if
    $k=0$).} 
\end{equation} 
(equivalently one could define $\theta_{k+1} = 0$, so that $\theta_1$ is always
defined). See Figure \ref{fig:ock_check} for a depiction.
\begin{figure}[h]
    \caption{\label{fig:ock_check}A representative of an element of the moduli space ${ }_3\check{\mc{R}}^1_{5}$.}
    \centering
    \includegraphics[scale=0.7]{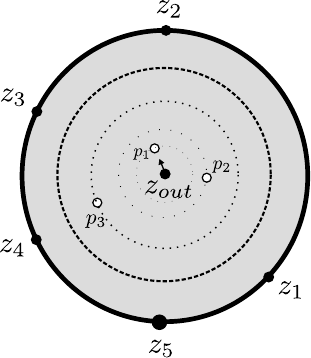}
\end{figure}
For every representative $S \in { }_k \check{\mc{R}}_{d}^1$,
\begin{equation}\label{checknegcylindricalend} 
    \begin{split} 
        &\textrm{ fix a negative cylindrical end around $z_{out}$ not containing any $p_i$, compatible with the }\\ 
        &\textrm{direction of the asymptotic marker, or {\it equivalently compatible with the angle $\theta_1$}.} 
    \end{split} 
\end{equation}
The second moduli space \eqref{S1equivariant} is the moduli space of discs with
with $d$ positive boundary marked points $z_1, \ldots, z_{d}$ labeled in
counterclockwise order, 1 interior negative puncture $z_{out}$ equipped with an
asymptotic marker, and $k+1$ additional interior marked points $p_1, \ldots,
p_k, p_{k+1}$ (without asymptotic markers), marked as {\it auxiliary}.
With respect to the unit disc representative of any element this moduli space fixing $z_d$ at
$1$ and $z_{out}$ at 0, the $p_i$ should again be {\it strictly
radially ordered}, this time with norms lying in $(0, \frac{1}{2}]$ and with {\em $p_{k+1}$ lying on the circle of radius
$\frac{1}{2}$}:
\begin{equation}\label{orderedS1} 0 < |p_1| < \cdots < |p_k| < |p_{k+1}| =
    \frac{1}{2}.  \end{equation} 
The asymptotic marker on $z_{out}$ for this representative again satisfies condition \eqref{pointingcondition}. 
Abstractly we have that $ { }_k \mc{R}_{d}^{S^1} \cong \times { }_k \check{\mc{R}}_{d}^1 \times S^1$, where
the $S^1$ parameter is given by the position of $p_{k+1}$. See \ref{fig:checks1} for a depiction of ${ }_{k-1} \check{\mc{R}}_{d}^{S^1}$.
\begin{figure}[h]
    \caption{\label{fig:checks1}A representative of an element of the moduli space ${ }_{k-1} \check{\mc{R}}_{d}^{S^1}$, which also arises as the boundary stratum  \eqref{checkstratum3} of ${ }_k\overline{ \check{\mc{R}}}_{d}^1$.}
    \centering
    \includegraphics[scale=1.0]{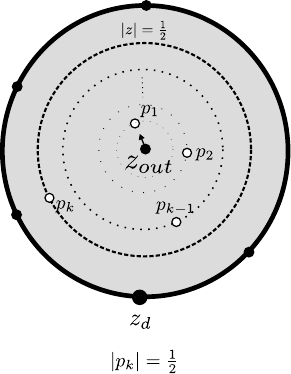}
\end{figure}

The compactification of \eqref{checkequivariant} is a real blow-up of the
ordinary Deligne-Mumford compactification, in the sense of \cite{Kimura:1995fk}
(see \cite{Seidel:2010uq} for a first discussion in the context of Floer
theory), reviewed in more detail in Appendix \ref{modulispaces}.
The result of the discussion there is that the codimension 1 boundary of the
compactified check moduli space ${ }_k\overline{ \check{\mc{R}}}_{d}^1$  is
covered by the images of the natural inclusions of the following strata:
\begin{align} 
    &\label{checkstratum1} \overline{\mc{R}^s} \times { }_k
    \overline{\check{\mc{R}}}_{d-s + 1}^1 \\ 
    &\label{checkstratum2} { }_s\overline{ \check{\mc{R}}}_{d}^1 \times \overline{\mc{M}}_{k-s}\\
    &\label{checkstratum3}{ }_{k-1} \overline{\check{\mc{R}}}_{d}^{S^1}\\
    &\label{checkstratum4}{ }_k^{i,i+1} \overline{\check{\mc{R}}}_{d}^1
\end{align} 
The strata \eqref{checkstratum3}-\eqref{checkstratum4}, in which
$|p_k| = \frac{1}{2}$ (see Figure \ref{fig:checks1}) and $|p_i| = |p_{i+1}|$ (see Figure \ref{fig:checkii1}) respectively, describe the
boundary loci of the ordering condition \eqref{ordered} and hence come equipped
with a natural manifold with corners structure.  
\begin{figure}[h]
    \caption{\label{fig:checkii1} A representative of an element of the stratum \eqref{checkstratum4}. }
    \centering
    \includegraphics[scale=1.0]{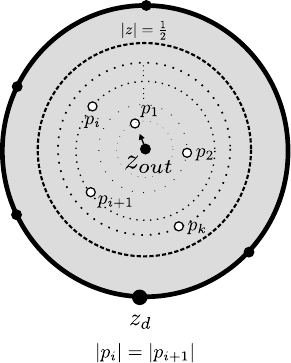}
\end{figure}
The strata
\eqref{checkstratum1}-\eqref{checkstratum2} have manifold with corners
structure given by standard local gluing maps using fixed choices of strip-like
ends near the boundary. For \eqref{checkstratum1} (depicted in Figure \ref{fig:checkainfbubble}) this is standard, and for
\eqref{checkstratum2} (depicted in Figure \ref{fig:checkksv}), the local gluing map uses the cylindrical ends
\eqref{checknegcylindricalend} and \eqref{posendangles} (in other words, one
rotates the $(k-s)$-pointed angle cylinder by an amount commensurate to the angle
of the first marked point $p_{k-s+1}$ on the disk before gluing) as also described in \S \ref{modulispaces}.
\begin{figure}[h]
    \caption{\label{fig:checkainfbubble} A representative of an element of the boundary stratum \eqref{checkstratum1} in which a disc bubble forms (such disc bubble is allowed to include the ``first'' point --- $z_d$ by our convention --- but need not, and does not in the Figure). }
    \centering
    \includegraphics[scale=1.0]{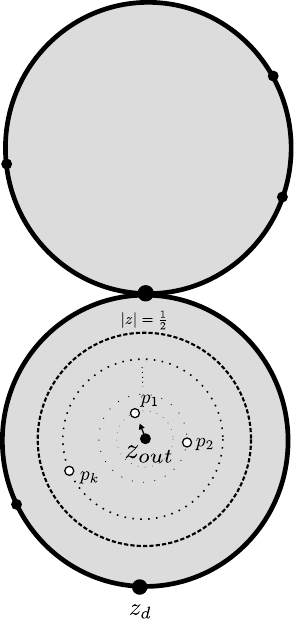}
\end{figure}
\begin{figure}[h]
    \caption{\label{fig:checkksv} A representative of an element of the boundary stratum \eqref{checkstratum2}. }
    \centering
    \includegraphics[scale=1.0]{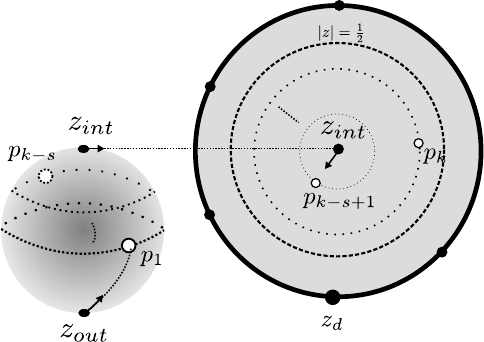}
\end{figure}

Associated to the stratum \eqref{checkstratum4} where $p_i$ and $p_{i+1}$ have
coincident magnitudes, there is a forgetful map 
\begin{equation}
    \label{picheck} \check{\pi}_i: { }_k^{i,i+1}
    \overline{\check{\mc{R}}}_{d}^1 \ra { }_{k-1}
    \overline{\check{\mc{R}}}_{d}^1 
\end{equation} 
which simply forgets the point $p_{i+1}$. Since the norm of $p_{i+1}$ and $p_i$
agree on this locus, this amounts to forgetting the argument of $p_{i+1}$ (in
particular, the fibers of $\check{\pi}_i$ are one-dimensional).

The compactification of the $S^1$-moduli space \eqref{S1equivariant}
can be modeled abstractly by
${ }_k\overline{ \check{\mc{R}}}_{d}^1 \times S^1$. However, it
is again preferable to give an explicit description of the boundary strata, which are
covered in codimension 1 by the following strata: 
\begin{align}
    &\label{s1stratum1} \overline{\mc{R}^s} \times { }_k \overline{\mc{R}}_{d-s + 1}^{S^1} \\ 
    &\label{s1stratum2} { }_s\overline{ \mc{R}}_{d}^{S^1} \times \overline{\mc{M}}_{k-s}\\ 
    &\label{s1stratum4}{ }_k^{i,i+1} \overline{\mc{R}}_{d}^{S^1}.  
\end{align} 
Here,
\eqref{s1stratum1} and \eqref{s1stratum2} are just versions of the
degenerations \eqref{checkstratum1} and \eqref{checkstratum2}, in which a
collection of boundary points bubbles off, or a collection of auxiliary
points convergest to $z_{out}$ and bubbles off (the fact that the latter
occurs in codimension 1 is part of the ``real blow-up phenomenon'' already
discussed).  The stratum \eqref{s1stratum4}, is the locus where $|p_i| =
|p_{i+1}|$, for $i \leq k$
(so when $i=k$, $|p_k| =  |p_{k+1}| = \frac{1}{2}$).

As in \eqref{picheck}, on the stratum 
\eqref{s1stratum4} where
$p_i$ and $p_{i+1}$ have coincident magnitudes, define the map
\begin{equation}\label{piS1} \pi_i^{S^1}: { }_k^{i,i+1}
    \overline{\mc{R}}_{d}^{S^1} \ra { }_{k-1} \overline{\mc{R}}_{d}^{S^1}
\end{equation} to be the one forgetting the point $p_{i+1}$ (so as before, this
map has one-dimensional fibers).

For an element $S \in { }_{k} \overline{\mc{R}}_{d}^{S^1}$, we say that
$p_{k+1}$ {\it points at a boundary point $z_i$} if, for any unit disc
representative of $S$ with $z_{out}$ at the origin, the ray from $z_{out}$ to
$p_{k+1}$ intersects $z_i$. The locus where $p_{k+1}$ points at $z_i$ is
denoted 
\begin{equation} 
    { }_k\overline{\mc{R}}_{d}^{S^1_i}. 
\end{equation}
Similarly, we say that $p_{k+1}$ {\it points between $z_i$ and $z_{i+1}$}
(modulo $d$, so this includes the case of pointing between $z_d$ and $z_1$) if for such a
representative, the ray from $z_{out}$ to $p_{k+1}$ intersects the portion
of $\partial S$ between $z_i$ and $z_{i+1}$. The locus where $p_{k+1}$
points between $z_i$ and $z_{i+1}$ is denoted 
\begin{equation} 
    { }_k\overline{\mc{R}}_{d}^{S^1_{i,i+1}}.  
\end{equation}

As before in \eqref{labelpermute}, there is a free and properly discontinuous $\Z_{d}$-action
\begin{equation} \label{labelpermuteequivariant} \kappa: {
    }_k\overline{(\mc{R}^1_d)^{S^1}} \ra { }_k\overline{(\mc{R}^1_d)^{S^1}}
\end{equation} which cyclically permutes the labels of the boundary marked
points; as before, $\kappa$ changes the label $z_i$ to $z_{i+1}$ for $i < d$, and
$z_d$ to $z_1$ (compare Lemma \ref{freediscontinuous}).

Finally, we come to the third moduli space \eqref{hatequivariant}, the moduli
space of discs with with $d+1$ positive boundary marked points $z_1,
\ldots, z_{d}, z_f$ labeled in counterclockwise order, 1 interior negative
puncture $z_{out}$ equipped with an asymptotic marker, and $k$ additional
interior marked points $p_1, \ldots, p_k$ (without an asymptotic marker),
marked as {\it auxiliary}, staisfying a {\it strict radial ordering} condition
as before: for any representative element with $z_f$ fixed at 1 and $z_{out}$
at 0, we require \eqref{ordered} to hold,
as well as condition \eqref{pointingcondition}. The boundary marked point $z_f$
is also marked as auxiliary, but apart from this designation we see that (identifying $z_f$ with $z_{d+1}$) ${ }_k\hat{\mc{R}}_d^1
\cong { }_k\check{\mc{R}}_{d+1}^1$. See Figure \ref{fig:ock_hat}.
\begin{figure}[h]
    \caption{\label{fig:ock_hat}A representative of an element of the moduli space ${ }_4\hat{\mc{R}}^1_{4}$.}
    \centering
    \includegraphics[scale=1.0]{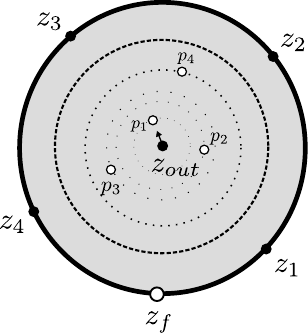}
\end{figure}

In codimension 1, the compactification $ { }_k \overline{\hat{\mc{R}}}_{d}^1$
has boundary covered by inclusions of the following strata: 
\begin{align} 
&\label{hatstratum1} \overline{\mc{R}}^s \times { }_k \overline{\hat{\mc{R}}}_{d-s + 1}^1 \\ 
&\label{hatstratum2} \overline{\mc{R}}^{m,f_k}\times_{d-m+1} { }_k\overline{\check{\mc{R}}}^1_{d-m+1} \ \ \ 1 \leq k \leq m\\
&\label{hatstratum3} { }_s\overline{ \hat{\mc{R}}}_{d}^1 \times \overline{\mc{M}}_{k-s}\\ 
&\label{hatstratum4}{ }_{k-1} \overline{\hat{\mc{R}}}_{d}^{S^1}\\ 
&\label{hatstratum5}{ }_k^{i,i+1} \overline{\hat{\mc{R}}}_{d}^1 
\end{align} 
Once more, on strata \eqref{hatstratum5} where $p_i$ and $p_{i+1}$ have
coincident magnitudes (depicted in Figure \ref{fig:hatii1}), define the map \begin{equation}\label{pihat}
\hat{\pi}_i: { }_k^{i,i+1} \overline{\hat{\mc{R}}}_{d}^1 \ra { }_{k-1}
\overline{\hat{\mc{R}}}_{d}^1.  \end{equation} to be the one forgetting the
point $p_{i+1}$ (so again, this map has one-dimensional fibers).  
\begin{figure}[h]
    \caption{\label{fig:hatii1} A representative of an element of the stratum \eqref{hatstratum5}. }
    \centering
    \includegraphics[scale=1.0]{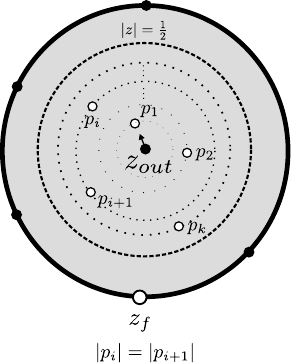}
\end{figure}
On the
stratum \eqref{hatstratum4}, which is the locus where $|p_{k}| =
\frac{1}{2}$ (depicted in Figure \ref{fig:hats1}), there is also a map of interest
\begin{equation}
    \label{hatboundaryforget} 
    \hat{\pi}_{boundary}: { }_{k-1}\overline{\hat{\mc{R}}}_{d}^{S^1} \ra { }_{k-1} \overline{\mc{R}}_d^{S^1}
\end{equation} 
which forgets the position of the auxiliary boundary point $z_f$.
\begin{figure}[h]
    \caption{\label{fig:hats1}A representative of an element of the boundary stratum  \eqref{hatstratum4}.}
    \centering
    \includegraphics[scale=1.0]{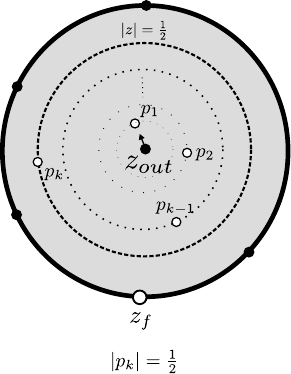}
\end{figure}
The stratum \eqref{hatstratum3}, depicted in Figure \ref{fig:hatksv}, is the locus where some subcollection of interior auxiliary points $p_1, \ldots, p_{k-s}$ tend to zero and split off an angle-decorated cylinder (in the manner again described in \S \ref{modulispaces} for \eqref{checkstratum2}).
\begin{figure}[h]
    \caption{\label{fig:hatksv} A representative of an element of the boundary stratum \eqref{hatstratum3}. }
    \centering
    \includegraphics[scale=1.0]{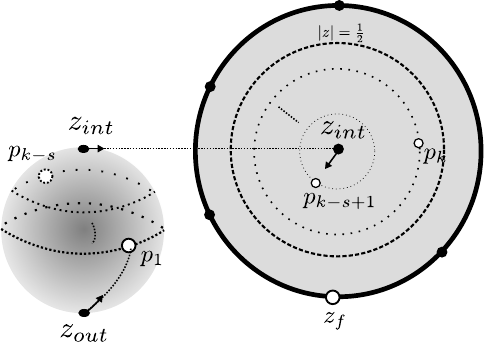}
\end{figure}
The strata \eqref{hatstratum1} and \eqref{hatstratum2} (depicted in Figures \ref{fig:hatainfbubble} and \ref{fig:hatainfbubble2} respectively) are the loci where a disc bubble forms involving some boundary marked points (not including or including $z_f$ respectively).
\begin{figure}[h]
    \caption{\label{fig:hatainfbubble} A representative of an element of the boundary stratum \eqref{hatstratum1} in which a disc bubble forms not including the auxiliary point $z_f$. }
    \centering
    \includegraphics[scale=1.0]{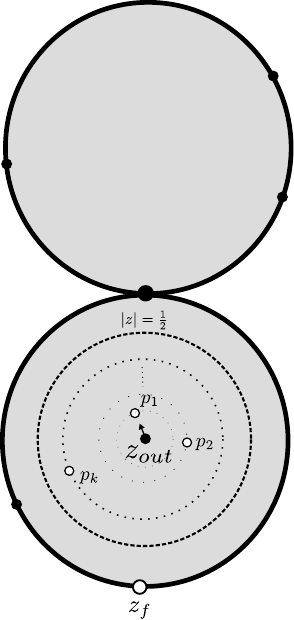}
\end{figure}
\begin{figure}[h]
    \caption{\label{fig:hatainfbubble2} A representative of an element of the boundary stratum \eqref{hatstratum2} in which a disc bubble forms including the auxiliary point $z_f$. }
    \centering
    \includegraphics[scale=1.0]{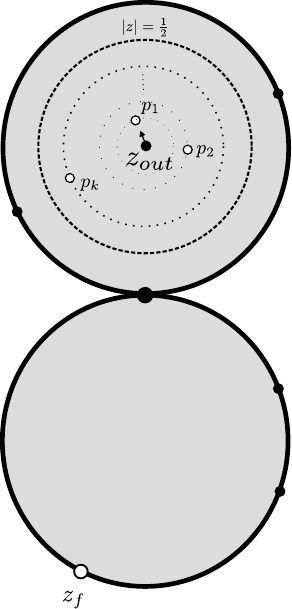}
\end{figure}

Denote by ${ }_k{\mc{R}}_d^{1,free}: = { }_k {\mc{R}}_d^{S^1_{d,1}}$ to be the
sector of the moduli space ${ }_k\mc{R}^{S^1}_d$ where $p_{k+1}$ points between
$z_d$ and $z_1$.
The {\it auxiliary-rescaling map} 
\begin{equation}\label{equivariantforgetful}
    \pi_f: { }_k \hat{\mc{R}}_d^1 \ra { }_k{\mc{R}}_d^{1,free},
\end{equation}
(our replacement of the ``forgetful map'') can be described as follows: given a representative $S$ in $ { }_k
\hat{\mc{R}}_d^1$ with $z_{out}$ fixed at the origin, there is a unique
point $p$ with $|p|= \frac{1}{2}$ between $z_{out}$ and $z_f$. $\pi_f(S)$
is the element of ${ }_k\mc{R}^{S^1}_d$ with $p_{k+1}$ equal to this point
$p$ and with $z_f$ deleted. Of course, $z_f$ is not actually forgotten,
because it is determined by the position of $p_{k+1}$. In particular
\eqref{equivariantforgetful} is a diffeomorphism. We extend this map to a map $\overline{\pi}_f$
from the compactification ${ }_k \overline{\hat{\mc{R}}}_d^1$ as in \S
\ref{sec:nonunital}, by putting the auxiliary point $z_f$ back in, eliminating
any component which is not main or secondary which has only one (non-auxiliary)
boundary marked point $q$, and by labeling the positive marked point below this
component by $q$.

We orient the moduli spaces \eqref{checkequivariant}-\eqref{hatequivariant} as
follows: picking, on a slice of the automorphism action which fixes the
position of $z_d$ at 1 and $z_{out}$ at 0, the volume forms 
\begin{align}
\label{checkorientation} &-r_1\cdots r_k dz_1\wedge dz_2 \wedge \cdots
    \wedge dz_{d-1} \wedge  dr_1 \wedge d\theta_1 \wedge \cdots \wedge dr_{k}
    \wedge d\theta_k \\ 
\label{s1orientation} & r_1 \cdots r_k dz_1 \wedge dz_2
    \wedge \cdots \wedge dz_{d-1} \wedge d\theta_{k+1} \wedge  dr_1 \wedge
    d\theta_1 \wedge \cdots \wedge dr_k \wedge d\theta_k  \\
\label{hatorientation} &r_1 \cdots r_k dz_1 \wedge dz_2 \wedge \cdots
    \wedge dz_{d-1} \wedge dz_f \wedge dr_1 \wedge d\theta_1 \wedge \cdots
    \wedge dr_k \wedge d\theta_k.  
\end{align} 
Above, $(r_i,\theta_i)$ denote the polar coordinate positions of the point
$p_i$ (we could equivalently use Cartesian coordinates $(x_i, y_i)$ and
substitute $dx_i \wedge dy_i$ for every instance of $r_i dr_i \wedge
d\theta_i$, but polar coordinates are straightforwardly compatible with the
boundary stratum where $|p_k| = \frac{1}{2}$).

    A {\em Floer datum} on a stable disc $S$ in ${ }_k
    \overline{\check{\mc{R}}}_{d}^1$ or a stable disc $S$ in ${
    }_k\overline{\mc{R}}_{d}^{S^1}$ is simply a Floer datum for $S$ in the
    sense of 
    \S \ref{floeropenclosed}. 
    A {\em Floer datum} on a stable
    disc $S \in { }_k \overline{\hat{\mc{R}}}_d^1$ is a Floer datum for
    $\overline{\pi}_f(S)$.  

Again we will make a system of choices of Floer data for the above moduli spaces. A {\em Floer datum for the cyclic open-closed map} is an inductive sequence of choices, for every $k \geq 0$ and $d \geq 1$, of Floer data for every representative 
$S_0 \in  { }_k
\overline{\check{\mc{R}}}_{d}^1$, $S_1 \in {
}_k\overline{\mc{R}}_{d}^{S^1}$, and $S_2 \in { }_k
\overline{\hat{\mc{R}}}_d^1$, varying smoothly in $S_0$, $S_1$ and $S_2$, 
which satisfies the usual consistency condition: the choice of Floer datum on any boundary stratum should agree with the previously inductively chosen datum along any boundary stratum for which (is possibly a product of moduli spaces for) we have already inductively picked data.
Moreover, this choice should satisfy a series of additional requirements: 
First, for $S_0 \in  { }_k \overline{\check{\mc{R}}}_{d}^1$, 
    \begin{align}
        \label{checkcompat1}  &\textrm{At a boundary stratum of the form
        \eqref{checkstratum4}, the Floer datum for $S_0$ is} \\
        \nonumber &\textrm{equivalent to the one pulled back from ${
        }_{k-1} \overline{\check{\mc{R}}}_{d}^1$ via the forgetful map
        $\check{\pi}_i$}.  
    \end{align} 
    Next, for $S_1 \in { }_k\overline{\mc{R}}_{d}^{S^1}$, 
    \begin{align} 
        \label{s1compat1} &\textrm{On the codimension-1 loci 
        ${ }_k\overline{\mc{R}}_{d}^{S^1_i}$, 
            where $p_{k+1}$ points at $z_i$, the Floer datum }\\ 
            \nonumber &\textrm{should agree with the pullback by $\tau_i$ of the existing Floer datum for the}\\ 
            \nonumber &\textrm{open-closed map. }\\ 
        \label{s1compat2} &\textrm{The Floer datum should be
            $\kappa$-equivariant, where $\kappa$ is the map
            \eqref{labelpermuteequivariant}. } \\ 
        &\textrm{At a boundary stratum of the form 
        \eqref{s1stratum4}, the Floer datum for $S_1$ is} \\ 
        \nonumber &\textrm{conformally equivalent to the one pulled back from
            ${ }_{k-1} \overline{\mc{R}}_{d}^{S^1}$ via the forgetful map
            ${\pi}_i^{S^1}$ }.  
    \end{align}
        Finally, for $S_2 \in  { }_k \overline{\hat{\mc{R}}}_d^1$, 
        \begin{align}
        \label{hatcompat1} &\textrm{The choice of Floer
            datum on strata containing $\mc{R}^{d,f_i}$
        components should be}\\ 
        &\nonumber \textrm{constant along fibers of the forgetful map
            $\mc{R}^{d,f_i} \ra \mc{R}^{d-1}$. }\\
        \label{hatcompat2} &\textrm{The Floer datum on
            the main component $(S_2)_0$ of $\overline{\pi}_f(S_2)$
            should coincide with the}\\ 
            \nonumber &\textrm{Floer datum chosen on $(S_2)_0 \in { }_k
            \mc{R}_d^{1,free} \subset { }_k \mc{R}_d^{S^1}$}.\\
            \label{hatcompat3} &\textrm{At a boundary stratum
                of the form \eqref{hatstratum4}, the Floer
            datum on the main component} \\ 
            \nonumber &\textrm{of $S_2$ is conformally equivalent to the
                one pulled back from ${ }_{k} \overline{\mc{R}}_{d}^{S^1}$ via the forgetful} \\ 
            \nonumber &\textrm{map $\hat{\pi}_{boundary}$ }.\\
        \label{hatcompat4} &\textrm{At a boundary stratum of the form
            \eqref{hatstratum5}, the Floer datum for $S_2$ is conformally} \\
            \nonumber &\textrm{equivalent to the one pulled back from ${
            }_{k-1} \overline{\hat{\mc{R}}}_{d}^{1}$ via the forgetful map
            $\hat{\pi}_i$ }.  
        \end{align} 
The above system of requirements can be split into three broad categories: the first type concerns the compatibility with forgetful maps of Floer data along the lower strata which were not previously constrained, the second type concerns the equivariance (under a free properly discontinuous action) of the Floer data on ${ }_k\overline{\mc{R}}_{d}^{S^1}$ as well as the relationship between the Floer datum chosen here and the ones chosen on $\overline{\check{\mc{R}}}_{d}^1$ and ${ }_k \overline{\hat{\mc{R}}}_d^1$. 

\begin{prop}
A Floer datum for the cyclic open-closed map exists.
\end{prop}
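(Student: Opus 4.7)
The plan is to proceed by induction on the pair $(k,d)$ ordered lexicographically (first on $k$, then on $d$), exactly mirroring the inductive arguments already used in \S \ref{subsec:fukaya}, \S \ref{sec:angledeccylinder}, and \S \ref{sec:nonunital}. The base case $k = 0$ is precisely the construction of a BV compatible Floer datum for the non-unital open-closed map given at the end of \S \ref{ocs1sec}: the data $\mathbf{D}_{\check{\oc}}$, $\mathbf{D}_{\hat{\oc}}$, $\mathbf{D}_{S^1}$ chosen there satisfy the $k=0$ versions of \eqref{checkcompat1}--\eqref{hatcompat4} tautologically (the forgetful-map pullback conditions \eqref{checkcompat1}, \eqref{hatcompat4} and the stratum \eqref{checkstratum3} are vacuous when $k=0$).

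For the inductive step, assume Floer data have been chosen on all $_{k'}\check{\mc{R}}^1_{d'+1}$, $_{k'}\mc{R}^{S^1}_{d'}$, $_{k'}\hat{\mc{R}}^1_{d'}$ for $(k',d') < (k,d)$, and construct data at level $(k,d)$ in three stages. \emph{Stage (i): choose data on $_k\mc{R}^{S^1}_d$.} The $\Z_d$-action $\kappa$ of \eqref{labelpermuteequivariant} is free and properly discontinuous (by the same argument as Lemma \ref{freediscontinuous}), so any $\kappa$-equivariant Floer datum may be specified by descending to the quotient orbifold; the boundary-stratum conditions coming from \eqref{s1stratum1}--\eqref{s1stratum4}, the equivariance condition \eqref{s1compat2}, the pointing condition \eqref{s1compat1}, and the forgetful-map pullback on $_k^{i,i+1}\overline{\mc{R}}^{S^1}_{d+1}$ all determine the datum on a closed subset of $_k\overline{\mc{R}}^{S^1}_d$, leaving a non-empty contractible space of extensions. \emph{Stage (ii): choose data on $_k\hat{\mc{R}}^1_d$.} Condition \eqref{hatcompat2} forces the datum on the main component of $\overline{\pi}_f(S_2)$ for $S_2 \in {}_k\hat{\mc{R}}^1_d$ to equal the datum already chosen on $_k\mc{R}^{1,free}_d \subset {}_k\mc{R}^{S^1}_d$ in Stage (i); the remaining freedom (on non-main components and along strata \eqref{hatstratum1}--\eqref{hatstratum5}) is constrained by \eqref{hatcompat1}, \eqref{hatcompat3}, \eqref{hatcompat4} together with the consistency conditions, leaving once more a contractible space of extensions. \emph{Stage (iii):} finally extend across $_k\overline{\check{\mc{R}}}^1_{d+1}$ using \eqref{checkstratum1}--\eqref{checkstratum4} and \eqref{checkcompat1}; in particular the stratum \eqref{checkstratum3} where $|p_k|=\tfrac12$ is matched to the already-chosen datum on $_{k-1}\mc{R}^{S^1}_{d+1}$.

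The hard part is not the extension but the \emph{mutual compatibility} of the various forced conditions, i.e.\ the verification that the inductively prescribed data on the closed locus consisting of all boundary and forgetful-pullback strata agrees on overlaps (so that a consistent smooth extension to the interior exists). All such overlaps reduce to commutativity of diagrams of forgetful maps with boundary inclusions: the maps $\check{\pi}_i$, $\hat{\pi}_i$, $\pi^{S^1}_i$ of \eqref{picheck}, \eqref{piS1}, \eqref{pihat} all commute in the appropriate sense with the gluing maps identifying product boundary strata, and the auxiliary-rescaling map $\pi_f$ of \eqref{equivariantforgetful} intertwines the boundary structure of $_k\hat{\mc{R}}^1_d$ with that of $_k\mc{R}^{1,free}_d$. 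These commutativities, together with the inductive hypothesis that the data at lower levels satisfy the analogous compatibilities, imply that each forced condition is internally consistent; the $\kappa$-equivariance of \eqref{s1compat2} is also compatible with the other conditions because \eqref{s1compat1} is itself $\kappa$-equivariant and the other conditions descend through the free action.

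Given consistency, existence at each stage follows from standard facts: spaces of (domain-dependent) Hamiltonians of the prescribed form on $M$ and of compatible almost complex structures are contractible, the condition of being adapted to a given collection of strip-like/cylindrical ends is preserved under convex combinations, and Lagrangian rescalings (in the Liouville setting of Remark \ref{wrappedFloerdatumOC}) can always be applied to match weights along boundary strata. Assembling the choices over all $(k,d)$ produces the desired Floer datum for the cyclic open-closed map.
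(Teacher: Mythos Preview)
Your argument is essentially correct and follows the same line as the paper: induct, use contractibility of the space of Floer data at each stage, and verify that the boundary and forgetful-map constraints are mutually consistent on overlaps. Two small points where you diverge from the paper are worth noting. First, the paper's inductive order within each level $k$ is \emph{check first}, then $S^1$, then hat: this is the natural order since condition \eqref{s1compat1} makes ${}_k\mc{R}^{S^1}_d$ depend on the check datum at the \emph{same} level $k$, whereas ${}_k\check{\mc{R}}^1_{d+1}$ only needs the $S^1$ datum at level $k-1$ via \eqref{checkstratum3}. Your interleaved order (doing $S^1_d$, then $\hat{\ }_d$, then $\check{\ }_{d+1}$ at stage $(k,d)$) also works because of the index shift, but is less transparent and requires a bit of care at the smallest $d$. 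Second, the hat datum is not merely constrained with a contractible space of extensions: since $\pi_f$ is a diffeomorphism on the interior of ${}_k\hat{\mc{R}}^1_d$ and the boundary strata are fixed by consistency with lower-dimensional choices, condition \eqref{hatcompat2} \emph{fully determines} the hat datum (as the paper states, ``a choice is fixed''), and the content is only that this determination is smooth and compatible across strata.
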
 
\begin{proof} 
    Since the choices of Floer data at each stage are contractible,
    this follows from the straightforward verification that, for a suitably
    chosen inductive order on strata, the conditions satisfied by the Floer
    data at various strata do not contradict each other. We use the following
    inductive order: first, say we've chosen a Floer datum for the
    $\ainf$ structure as in \S \ref{subsec:fukaya} along with a BV compatible
    Floer datum for the non-unital open-closed map following \S \ref{ocs1sec}. In particular, 
    we have chosen Floer data for the moduli spaces $\overline{\mc{R}}^{d,f_i}$ (per Appendix
    \ref{discsforgotten}), ${ }_0 \overline{\check{\mc{R}}}_d$, 
    the auxiliary moduli space ${ }_0 \overline{\mc{R}}_d^{S^1}$ and (using the conditions above) induced
    a particular choice of Floer datum on ${ }_0 \overline{\hat{\mc{R}}}_d$. Next, inductively assuming we
    have made all choices at level $k-1$ $(k>0)$, we first choose Floer data
    for ${ }_k \overline{\check{\mc{R}}}_d$ for each $d$, then ${
    }_k\overline{\mc{R}}_d^{S^1}$ for each $d$ (by pulling back a choice of
    Floer datum on the quotient by $\kappa$ in order to satisfy the
    equivariance condition), and finally note that a choice is fixed for ${
    }_k \overline{\hat{\mc{R}}}_d$ by the above constraints. 
\end{proof} 
Fixing a Floer datum for the cyclic open-closed map, we obtain, for any
$d$-tuple of Lagrangians $L_0, \ldots, L_{d-1}$, and asympotic
conditions 
\begin{equation}
    \begin{split} \vec{x} &= (x_{d},
        \ldots, x_1),\ x_i \in
        \chi(L_{i-1},L_{i \mathrm{\ mod\ }d}) \\
        y_{out} &\in \mc{O} \end{split}
    \end{equation} 
Gromov-Floer compactified moduli spaces 
\begin{align}
        \label{cycliccompactifiedspace1}{ }_k \overline{\check{\mc{R}}}_{d}^1(y_{out},
        \vec{x})\\
        \label{cycliccompactifiedspace2}{ }_k\overline{\mc{R}}_{d}^{S^1}(y_{out},
        \vec{x})\\
        \label{cycliccompactifiedspace3}{ }_k
        \overline{\hat{\mc{R}}}_d^1(y_{out},
        \vec{x}) 
\end{align} 
of maps into $M$ from an arbitrary element $S$ of the moduli spaces
\eqref{checkequivariant}, \eqref{S1equivariant}, and \eqref{hatequivariant}
respectively (or rather from $\pi_f(S)$ in the case of \eqref{hatequivariant}) 
satisfying Floer's equation using the Floer datum chosen for the
given $S$ as in \eqref{floerequationOC} with asymptotics and Lagrangian boundary conditions as in \eqref{floerequationocasymptotics} (again with the modifications as in Remarks \ref{liouvilleFloerdatumOC} or \ref{wrappedFloerdatumOC} for compact or wrapped Fukaya categories of Liouville manifolds).
The virtual dimension of each component of these moduli spaces coincides (mod 2 or exactly, depending on whether we are $\Z/2$ or $\Z$-graded) with
\begin{align} 
        \label{dimcycliccompactifiedspace1}
            \textrm{(for ${ }_k\overline{\check{\mc{R}}}_{d}^1(y_{out}, \vec{x})$) }&  
            \deg(y_{out}) - n + d -1 - \sum_{i=1}^d \deg(x_i) + 2k;\\
            \label{dimcycliccompactifiedspace2} \textrm{(for ${ }_k\overline{\mc{R}}_{d}^{S^1}(y_{out}, \vec{x})$) }& \deg(y_{out}) - n
        + d - \sum_{i=1}^d \deg(x_i) + 2k; \\
        \label{dimcycliccompactifiedspace3}\textrm{(for ${ }_k
        \overline{\hat{\mc{R}}}_d^1(y_{out}, \vec{x})$) }& \deg(y_{out}) - n + d
        - \sum_{i=1}^d \deg(x_i) + 2k.  
\end{align} 
By Assumption \ref{mainassumption}, for generic choices of Floer data, the Gromov-Floer 
compactifications of the components of virtual dimension $\leq 1$ of \eqref{cycliccompactifiedspace1} -
\eqref{cycliccompactifiedspace3} are compact manifolds-with-boundary of the expected dimension.
For rigid elements $u$ in the moduli spaces \eqref{cycliccompactifiedspace1} -
\eqref{cycliccompactifiedspace3}, (which occur for asymptotics $(y,
\vec{x})$ satisfying \eqref{dimcycliccompactifiedspace1} = 0,
\eqref{dimcycliccompactifiedspace2} = 0, or
\eqref{dimcycliccompactifiedspace3} = 0 respectively), the orientations
\eqref{checkorientation}, \eqref{s1orientation}, \eqref{hatorientation}
and \cite{Abouzaid:2010kx}*{Lemma C.4} induce isomorphisms of
orientation lines 
\begin{align} \label{cyclicorientationline1} 
    ({ }_k \check{\mc{R}}_{d}^1)_u: o_{x_d} \otimes \cdots \otimes o_{x_1}
            \ra o_y\\ 
    \label{cyclicorientationline2}({ }_k\mc{R}_{d}^{S^1})_u:
            o_{x_d} \otimes \cdots \otimes o_{x_1} \ra o_y\\
    \label{cyclicorientationline3}({ }_k \hat{\mc{R}}_d^1)_u: o_{x_d}
            \otimes \cdots \otimes o_{x_1} \ra o_y.  
\end{align} 
Summing the application of these isomorphisms over all rigid $u$ (or ``counting rigid elements'') defines the
$|o_{y_{out}}|_\K$ component of three families of operations $\check{\oc}^k$,
${\oc}^{S^1,k}$, $\hat{\oc}^k$ up to a sign twist specified below: define
\begin{align} 
    \check{\oc}^{k}([x_{d}], \ldots, [x_1]) &:=
    \sum_{u \in  { }_k \overline{\check{\mc{R}}}_d^{1}(y_{out};
    \vec{x}) \textrm{ rigid}}  (-1)^{\check{\star}_d} ({
        }_k\check{\mc{R}}_d^{1})_u([x_d], \ldots, [x_1]); \\
    \oc^{S^1,k}([x_{d}], \ldots, [x_1]) &:=
    \sum_{u \in  { }_k \overline{\mc{R}}_d^{S^1}(y_{out}; \vec{x}) \textrm{ rigid}}
            (-1)^{\star_d^{S^1}} ({ }_k\mc{R}_d^{S^1})_u([x_d],
            \ldots, [x_1]);\\ 
    \hat{\oc}^{k}([x_{d}], \ldots, [x_1]) &:= 
    \sum_{u \in  { }_k \overline{\hat{\mc{R}}}_d^{1}(y_{out}; \vec{x}) \textrm{ rigid}}
    (-1)^{\hat{\star}_d} ({ }_k\hat{\mc{R}}_d^{1})_u([x_d], \ldots,
    [x_1]).  
\end{align} 
where the signs are given by
\begin{align} \check{\star}_d &= \deg(x_d) + \sum_{i} i
    \cdot \deg(x_i).\\ \star_d^{S^1} = \clubsuit_d &=
    \sum_{i=1}^d (i+1) \cdot \deg(x_i) + \deg(x_d) + d-1 = \check{\star}_d + \maltese_d - 1\\
    \hat{\star}_d &= \sum_{i} i \cdot \deg(x_i).
\end{align}
A codimension 1 analysis of the moduli spaces \eqref{cycliccompactifiedspace1}
and \eqref{cycliccompactifiedspace3} reveals: 
\begin{prop} The following equations hold for each $k\geq 0$: 
\begin{align} 
    (-1)^n \sum_{i = 0}^k \delta_i \check{\oc}^{k-i} &= \oc^{S^1,k-1} + \check{\oc}^{k} b\\ 
   (-1)^n \sum_{i = 0}^k \delta_i \hat{\oc}^{k-i} &=  \hat{\oc}^{k} b' + \check{\oc}^k (1 - t).
\end{align} 
\end{prop}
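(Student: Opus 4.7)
The plan is to prove both equations via a codimension-1 boundary analysis of 1-dimensional components of the parametrized moduli spaces ${ }_k\overline{\check{\mc{R}}}_{d+1}^1(y_{out}, \vec{x})$ and ${ }_k\overline{\hat{\mc{R}}}_d^1(y_{out}, \vec{x})$, following the same pattern as Lemmas \ref{occhain}, \ref{ochatchainold} and Proposition \ref{BVcompactifiedequation1}. For each equation, the sum of signed contributions from all codimension-1 strata vanishes; I will identify each stratum with one of the operations on either side of the claimed equation, or show it vanishes.

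For the first equation, the codimension-1 boundary of ${ }_k\overline{\check{\mc{R}}}_{d+1}^1(y_{out}, \vec{x})$ is covered by the strata \eqref{checkstratum1}-\eqref{checkstratum4} together with semi-stable strip and cylinder breakings. The semistable strip breakings at input ends combined with the $\overline{\mc{R}}^s$ strata \eqref{checkstratum1} produce $\check{\oc}^{k} b$. The semistable cylinder breakings at the output end together with the strata \eqref{checkstratum2} (having $\overline{\mc{M}}_{k-s}$ factors) produce, summing over $s=0,\ldots,k$, the term $\sum_{i=0}^{k} \delta_i \check{\oc}^{k-i}$ (including the $d_{CF} \check{\oc}^k = \delta_0 \check{\oc}^k$ term). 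The stratum \eqref{checkstratum3} produces $\oc^{S^1,k-1}$ directly from the definition of that operation as the locus $|p_k| = 1/2$. Finally, the stratum \eqref{checkstratum4} contributes zero: by compatibility condition \eqref{checkcompat1}, the Floer datum on ${ }_k^{i,i+1}\overline{\check{\mc{R}}}_{d+1}^1$ is pulled back via the forgetful map $\check{\pi}_i$ which has one-dimensional fibers, so any solution comes in a positive-dimensional family (by varying the argument of $p_{i+1}$) and cannot be rigid. This is precisely the vanishing argument of Lemma \ref{weaks1actionSHlemma}.

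For the second equation, the codimension-1 boundary of ${ }_k\overline{\hat{\mc{R}}}_d^1(y_{out}, \vec{x})$ decomposes according to \eqref{hatstratum1}-\eqref{hatstratum5} plus semistable breakings. Strata \eqref{hatstratum1} and the semistable input-end breakings yield $\hat{\oc}^k b'$ (the bar differential, not the full Hochschild differential, because the auxiliary point $z_f$ forbids wrap-around). Strata \eqref{hatstratum3} and the semistable cylinder breakings produce $\sum_{i=0}^k \delta_i \hat{\oc}^{k-i}$. Stratum \eqref{hatstratum2} produces $\check{\oc}^k(1-t)$ as follows: by the analysis of $\mc{R}^{m,f_i}$ in Appendix \ref{discsforgotten} (used for the $k=0$ case in Lemma \ref{ochatchainold}), the relevant operations $\mu^{m,f_i}$ vanish except in the two degenerate cases $(m=2, i=1)$ and $(m=2, i=2)$, which yield identity and cyclic rotation respectively; these combine to produce exactly the $d_{\wedge\vee}$-type operation $\check{\oc}^k \circ (1-t)$. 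The strata \eqref{hatstratum4} and \eqref{hatstratum5} contribute zero by the same mechanism as \eqref{checkstratum4}: by compatibility conditions \eqref{hatcompat3} and \eqref{hatcompat4}, the Floer data factor through forgetful maps $\hat{\pi}_{boundary}$ and $\hat{\pi}_i$ with one-dimensional fibers, so rigid solutions do not arise.

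The main obstacle is the sign verification. The signs in each stratum come from a combination of (i) comparing the chosen orientations \eqref{checkorientation}-\eqref{hatorientation} with the product/boundary orientations on each stratum, (ii) the sign twists $\check{\star}_d$, $\hat{\star}_d$, $\clubsuit_d$ defining the operations, (iii) Koszul reordering signs for orientation lines, and (iv) the signs in $b$, $b'$, $t$, $\delta_i$, $\mu^s$. My strategy is to reduce to the sign computations already performed: the $k=0$ case of the first equation recovers Lemma \ref{occhain} combined with Proposition \ref{BVcompactifiedequation1}, while the $k=0$ case of the second equation recovers Lemma \ref{ochatchainold}. For general $k$, the extra factors $r_1 \cdots r_k\, dr_1\, d\theta_1 \wedge \cdots \wedge dr_k\, d\theta_k$ in the chosen orientation forms are of even total degree and commute past all input/output orientation lines, so they contribute trivially to Koszul reorderings; their interaction with the boundary orientations on strata \eqref{checkstratum2} and \eqref{checkstratum3} produces precisely the sign conventions of the $\delta_i$ from \S \ref{sec:angledeccylinder} and the sign twist $\clubsuit_d$ defining $\oc^{S^1,k-1}$. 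Modulo this bookkeeping (which follows the templates of Lemma \ref{weaks1actionSHlemma} and Proposition \ref{BVcompactifiedequation1}), the signs work out as claimed and the equations follow.
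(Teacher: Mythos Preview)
Your proposal is correct and follows essentially the same approach as the paper: both identify the codimension-1 boundary strata \eqref{checkstratum1}--\eqref{checkstratum4} and \eqref{hatstratum1}--\eqref{hatstratum5} together with semistable breakings, argue that the strata \eqref{checkstratum4}, \eqref{hatstratum4}, \eqref{hatstratum5} contribute zero because the Floer data factor through forgetful maps with one-dimensional fibers, and reduce the sign verification to the already-established cases via the observation that the extra auxiliary points contribute complex (hence even-dimensional) orientation factors. The only minor difference is that the paper phrases the hat-side sign reduction explicitly via the ``formal unit $e^+$'' device from Lemma~\ref{ochatchainold}, whereas you invoke that lemma directly; the content is the same.
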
 
\begin{proof} 
The boundary of the 1-dimensional components of \eqref{cycliccompactifiedspace1} are covered
by the (rigid components of) the following types of strata: 
\begin{itemize} 
    \item spaces of maps with domain lying on the codimension 1 boundary of the
            moduli space, i.e., in \eqref{checkstratum1}-\eqref{checkstratum4}

    \item semi-stable breakings, namely those of the form 
         \begin{align} 
             &{ }_k\overline{\check{\mc{R}}}_d^1(y_1; \vec{x}) \times
             \overline{\mc{M}}(y_{out}; y_1) \\ 
             &\overline{\mc{R}}^1(x; x_i) \times
             { }_k\overline{\check{\mc{R}}}_d^1(y_{out}; \tilde{\vec{x}})
        \end{align} 
where again $\tilde{\vec{x}}$ denotes the collection of inputs $\vec{x}$ with $x_i$ replaced with $x$.
\end{itemize}

All together, this implies, up to sign, that 
\begin{equation}
    \label{initialcheckequation} (-1)^n \sum_{i = 0}^k \delta_i \check{\oc}^{k-i} =
    \oc^{S^1,k-1} + \check{\oc}^{k} b + \sum_{i=1}^{k-1}\check{\oc}^{k,i,i+1},
\end{equation} 
where $\check{\oc}^{k,i,i+1}$ is an operation corresponding with
some sign twist to \eqref{checkstratum4}. \eqref{initialcheckequation} is of
course a shorthand for saying, for a tuple of $d$ cyclically composable
morphisms $x_d, \ldots, x_1$ (recalling the abuse of notation $x_i:=[x_i]$), that 
\begin{equation} 
    \begin{split} 
        (-1)^n \sum_{i = 0}^k \delta_i \check{\oc}^{k-i}_d(x_d, \ldots, x_1) &=
        \oc^{S^1,k-1}_d(x_d, \ldots, x_1) +   \sum_{i=1}^{k-1}
        \check{\oc}^{k,i,i+1}_d (x_d, \ldots, x_1)\\ 
        &+\sum_{i,s}(-1)^{\maltese_1^s} \check{\oc}^k_{d-i+1} (x_d, \ldots,
        x_{s+i+1}, \mu^i(x_{s+i}, \ldots, x_{s+1}), x_s, \ldots, x_1)\\
        &+\sum_{i,j} (-1)^{\sharp_j^i} \check{\oc}^k (\mu^{i + j + 1}(x_i,
        \ldots, x_1, x_d, \ldots, x_{d-j}), x_{d-j-1}, \ldots, x_{i+1}).  
    \end{split}
\end{equation}
 
 We first note that in fact the operation $\check{\oc}^{k,i,i+1} = \sum_d
 \check{\oc}_d^{k,i,i+1}$ is zero, because by condition \eqref{checkcompat1},
 the Floer datum chosen for elements $S$ in \eqref{checkstratum4} are constant
 along the one-dimensional fibers of $\check{\pi}_i$. Hence, elements of the
 moduli space with source in \eqref{checkstratum4} are never rigid (see Lemma
 \ref{weaks1actionSHlemma} for an analogous and more detailed explanation).

Thus, it suffices to verify that the signs coming from the codimension 1
boundary are exactly those appearing in \eqref{initialcheckequation}.  We can
safely ignore studying any signs for the vanishing operations such as
$\hat{\oc}^{k,i,i+1}$.  The remaining sign analysis is exactly as in
Proposition \ref{BVcompactifiedequation1}; more precisely note that the
forgetful map $\check{F}_k: { }_k \check{\mc{R}}_d^1 \ra { }_1
\check{\mc{R}}_d^1$ which forgets $p_1, \ldots, p_{k-1}$ has complex oriented
fibers, and in particular (since the marked points $p_i$ contribute complex
domain orientations and do not introduce any new orientation lines) the sign
computations sketched in Proposition \ref{BVcompactifiedequation1} carry over
for any strata whose domain is pulled back from a boundary stratum of ${ }_1
\check{\mc{R}}_d^1$ (in turn, as described in Proposition
\ref{BVcompactifiedequation1}, the sign computations for ${ }_1
\check{\mc{R}}_d^1$ largely reduce to those for ${ }_0 \check{\mc{R}}_d^1$.
This verifies \eqref{initialcheckequation}.

Similarly, for the hat moduli space, an analysis of the boundary of
1-dimensional moduli spaces of maps tells us, up to sign verification: 
 \begin{equation}\label{initialhatequation} 
     (-1)^n\sum_{i = 0}^k \delta_i \hat{\oc}^{k-i} =  \hat{\oc}^{k} b' +
     \check{\oc}^k (1 - t) + \hat{\oc}^{k,k,k+1} +
     \sum_{i=1}^{k-1}\hat{\oc}^{k,i,i+1}, 
 \end{equation} 
where $\hat{\oc}^{k,k,k+1}$ is an operation corresponding with
some sign twist to \eqref{hatstratum4} and $\hat{\oc}^{k,i,i+1}$ is an
operation corresponding with some sign twist to \eqref{hatstratum5}.  The
conditions \eqref{hatcompat3}-\eqref{hatcompat4} similarly imply that
$\hat{\oc}^{k,k,k+1}$ and $\hat{\oc}^{k,i,i+1}$ are zero, so it is not
necessary to even establish what the signs for these terms are.

To verify signs for \eqref{initialhatequation}, we apply the principle
discussed in the proof of Lemma \ref{ochatchainold}, in which by treating the
auxiliary boundary marked point $z_f$ as possessing a ``formal unit element
asymptotic constraint $e_+$,'' therefore viewing 
$\hat{\oc}^k(x_d \otimes \cdots \otimes x_1 ):= \hat{\oc}^k(x_d,  \ldots, x_1 )$ formally as ``$\hat{\oc}^k(e^+ \otimes x_d
\otimes \cdots \otimes x_1)$'' the signs for the equations
\eqref{initialhatequation} applied to strings $(x_d \otimes \cdots \otimes x_1)$ of length $d$ follow from the sign computations for $\check{\oc}$
applied to strings $(e^+ \otimes x_d \cdots \otimes \cdots
x_1)$ of length $d+1$  (we note that this analysis applies to the term $\hat{\oc}^{k,k,k+1}$ as
well, which is the hat version of $\oc^{S^1,k}$; however, the former operation
happens to be zero because extra symmetries imply the moduli space controlling
this operation is never rigid).
\end{proof}

Next, by decomposing the moduli space ${ }_k \mc{R}_d^{S^1}$ into sectors, we
can write the auxiliary operation $\oc^{S^1,j}$ in terms of $\hat{\oc}^j$ and
Connes' $B$ operator: 
\begin{prop} \label{ocks1Brelation} As chain-level operations,
    \begin{equation} \oc^{S^1,k} = \hat{\oc}^k \circ B^{nu}.  \end{equation}
    \end{prop}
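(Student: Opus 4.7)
The plan is to follow the strategy of Proposition \ref{ocS1Brelation} (the $k=0$ case) essentially verbatim, but tracking the extra auxiliary interior points $p_1, \ldots, p_k$ and the resulting enlargements of the orientation forms. First, I will introduce sector moduli spaces ${ }_k\hat{\mc{R}}^1_{d,\tau_i}$ for $i \in \Z/d\Z$, defined as the moduli space of discs with $d+1$ boundary punctures $z_1, \ldots, z_i, z_f, z_{i+1}, \ldots, z_d$ (cyclically ordered), an interior negative puncture $z_{out}$ with asymptotic marker pointing at $z_f$, and $k$ additional auxiliary interior marked points $p_1, \ldots, p_k$ satisfying the ordering and pointing condition \eqref{ordered}, \eqref{pointingcondition}. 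As in the $k=0$ case, cyclic relabeling gives an abstract identification ${ }_k\hat{\mc{R}}^1_{d,\tau_i} \cong { }_k\hat{\mc{R}}^1_d$, and the induced ``forgetful'' map $\pi_f^i$ is an oriented diffeomorphism onto the sector ${ }_k\mc{R}^{S^1_{i,i+1}}_d \subset { }_k\mc{R}_d^{S^1}$. I will equip ${ }_k\hat{\mc{R}}^1_{d,\tau_i}$ with an orientation analogous to \eqref{differentorientation}, namely $r_1 \cdots r_k dz_1 \wedge \cdots \wedge dz_{d-1} \wedge dz_f \wedge dr_1 \wedge d\theta_1 \wedge \cdots \wedge dr_k \wedge d\theta_k$ on the slice with $z_d$ and $z_{out}$ fixed.

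Next, I will prove the analogue of Lemma \ref{sectordecomposition}, namely $\oc^{S^1,k} = \sum_{i=1}^{d} \hat{\oc}^k_{d,\tau_i}$, where $\hat{\oc}^k_{d,\tau_i}$ denotes the operation counting rigid Floer solutions parametrized by ${ }_k\hat{\mc{R}}^1_{d,\tau_i}$ with the Floer datum pulled back from ${ }_k\hat{\mc{R}}^1_d$. The embedding $\coprod_i { }_k\hat{\mc{R}}^1_{d,\tau_i} \hookrightarrow { }_k\mc{R}^{S^1}_d$ covers the complement of a codimension-1 locus (where $p_{k+1}$ points directly at some boundary marked point). The equivariance condition \eqref{s1compat2} on the Floer data ensures that the Floer data on each sector agrees with the pullback under cyclic relabeling of the Floer data on ${ }_k\hat{\mc{R}}^1_d$ (via condition \eqref{hatcompat2} and the compatibility of the $\kappa$-action with $\pi_f$), exactly as in the $k=0$ case. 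The choice of sign twist $\star_d^{S^1} = \clubsuit_d$ is also the same as before, so the sector decomposition goes through word-for-word.

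The hard part is the analogue of Lemma \ref{sectorsignorientation}, which claims
\[
\hat{\oc}^k_{d,\tau_i}(x_d \otimes \cdots \otimes x_1) = \hat{\oc}^k \circ s^{nu}(t^i(x_d \otimes \cdots \otimes x_1)).
\]
As in the proof of that lemma, the argument reduces to two incremental sign identities: (i) $\hat{\oc}^k_{d,\tau_0} = \hat{\oc}^k \circ s^{nu}$ and (ii) $\hat{\oc}^k_{d,\tau_{i+1}} = \hat{\oc}^k_{d,\tau_i} \circ t$. For (i), the argument of Remark \ref{inducedorientationslice} applies unchanged: the orientation forms \eqref{hatorientation} on ${ }_k\hat{\mc{R}}^1_d$ and the analogous form on ${ }_k\hat{\mc{R}}^1_{d,\tau_0}$ agree, because the extra factors $r_1 \cdots r_k dr_1 \wedge d\theta_1 \wedge \cdots \wedge dr_k \wedge d\theta_k$ are common to both and commute with the boundary-point part of the form. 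The remaining sign difference between the sign twists $\clubsuit_d$ and $\hat{\star}_d$ is exactly $\maltese_1^d + \|x_d\|$, matching $s^{nu}$. For (ii), the orientation, sign-twist, and Koszul-reordering contributions behave identically to the $k=0$ case, since the auxiliary interior marked points $p_i$ contribute only complex (hence commuting) factors and no orientation lines. Putting (i) and (ii) together and invoking the identity $B^{nu} = s^{nu} \circ N = \sum_{i=0}^{d-1} s^{nu} \circ t^i$ from \eqref{eq:Bnudef}, I obtain the desired chain-level equality.

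The main obstacle I anticipate is not conceptual but bookkeeping: carefully verifying that the additional complex orientation factors $r_i dr_i \wedge d\theta_i$ really do behave as ``complex oriented fibers'' under all the relevant projections and labeling permutations, and that the forgetful/labeling maps respect the chosen representatives (in particular that fixing $z_d$ versus $z_f$ in the slice, as in Remark \ref{inducedorientationslice}, continues to give compatible orientations in the presence of the auxiliary points). Once this is checked, the sign analysis is a direct copy of the $k=0$ argument, and the proposition follows.
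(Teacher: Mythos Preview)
Your proposal is correct and follows essentially the same approach as the paper's proof: define sector moduli spaces ${ }_k\mc{R}^1_{d,\tau_i}$, pull back Floer data via cyclic relabeling, establish the sector decomposition analogue of Lemma \ref{sectordecomposition} using the $\kappa$-equivariance of the Floer data, and reduce the sign comparison to the $k=0$ case by observing that the auxiliary points $p_1,\ldots,p_k$ contribute only complex orientation factors and no orientation lines. One small slip: in your definition of ${ }_k\hat{\mc{R}}^1_{d,\tau_i}$ you say the asymptotic marker points at $z_f$, but for $k>0$ it should point in the direction $\theta_1$ (as in \eqref{pointingcondition}, which you do cite); this is only a wording inconsistency and does not affect the argument.
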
 
    \begin{proof} The proof directly emulates Proposition \ref{ocS1Brelation},
        and as such we will give fewer details. We begin by defining operations
        \begin{equation}
           \hat{\oc}^{k}_{d,\tau_i} 
        \end{equation}
        associated to various ``sectors'' of the $k+1$st marked point $p_{k+1}$
        of ${ }_k \mc{R}_d^{S^1}$, for $i \in \Z/d\Z$. Once more, to gain
        better control of the geometry of these sectors in the compactification
        (when the sector size
        can shrink to zero), we pass to an alternate model for the
        compactification: define 
        \begin{equation}
            { }_k \mc{R}_{d,\tau_i}^{1}
        \end{equation}
        to be the abstract moduli space of discs with $d+1$ boundary punctures,
        $z_1, \ldots, z_i, z_f, z_{i+1}, \ldots, z_d$ arranged in counterclockwise
        order, one interior negative puncture $z_{out}$ with asymptotic marker,
        and $k$ additional interior auxiliary marked points $p_1, \ldots, p_k$
        which are {\it strictly radially ordered} with norms
in $(0,\frac{1}{2})$
        for a representative fixing $z_0$ at
        1 and $z_{out}$ at 0:
        \begin{equation}
            0 < |p_1| < \ldots < |p_k| < \frac{1}{2}.
        \end{equation}
        Moreover, as before,
        \begin{equation}
                \textrm{the asymptotic marker on $z_{out}$ points in the direction
            $\theta_1$ (or towards $z_f$ if $k=0$).} 
        \end{equation}
        There is a bijection 
        \begin{equation}\label{cyclicallypermutesector}
            \tau_i: { }_k \mc{R}_{d,\tau_i}^{1} \ra { }_k\hat{\mc{R}}_{d}^1
        \end{equation} 
        given by cyclically permuting boundary labels, and in particular we
        also have an {\it auxiliary-rescaling map} as in
        \eqref{equivariantforgetful}
        \begin{equation}\label{equivariantforgetfulsector}
            { }_k \mc{R}_{d,\tau_i}^{1} \ra { }_k \mc{R}_{d}^{S^1_{i,i+1}},
        \end{equation}
        which, for a representative with $|z_{out}| = 0$, adds a point
        $p_{k+1}$ on the line between $z_{out}$ and $z_f$ with
        $|p_{k+1}|=\frac{1}{2}$ and deletes $z_f$. We choose orientations on 
        ${ }_k \mc{R}_{d,\tau_i}^{1}$ to be compatible with
        \eqref{equivariantforgetfulsector}; more concretely, for a slice fixing
        the positions of $z_{out}$ and $z_d$, consider the top form 
        \begin{equation}
            r_1 \cdots r_k dz_1 \wedge dz_2 \wedge \cdots \wedge dz_{d-1} \wedge dz_d \wedge dz_f \wedge dr_1 \wedge d\theta_1 \wedge \cdots \wedge dr_k \wedge d\theta_k
        \end{equation} 
        The compactification ${ }_k \overline{\mc{R}}_{d,\tau_i}^{1}$ is
        inherited from the identification \eqref{cyclicallypermutesector}; the
        salient point is that we treat bubbled off boundary strata containing
        the point $z_f$ as coming from $\mc{R}^{d,f_i}$, the moduli space of
        discs with $i$th marked point forgotten (where the $i$th marked point
        is $z_f$), constructed in Appendix \ref{discsforgotten}.

        We choose as a Floer datum for ${ }_k
        \overline{\mc{R}}_{d,\tau_i}^{1}$ the pulled back Floer datum from
        \eqref{cyclicallypermutesector}, it automatically then exists and is
        universal and consistent as desired. Moreover we have chosen
        orientations as in the case $k=0$ so that the auxiliary rescaling map
        \eqref{equivariantforgetfulsector} is an oriented diffeomorphism
        extending to a map between compactifications.

        Thus, given a Lagrangian labeling $\{L_0, \ldots, L_{d-1}\}$ and
        compatible asymptotics $\{x_1, \ldots, x_d; y_{out}\}$ we obtain a
        moduli space of maps satisfying Floer's equation with the chosen
        boundary and asymptotics
        \begin{equation}\label{sectormoduli2}
            { }_k \overline{\mc{R}}_{d,\tau_i}^{1}(y_{out}; \vec{x}) := { }_k \overline{\hat{\mc{R}}}_d^1(y_{out}; x_{i-1}, \ldots, x_1, x_d, \ldots, x_i),
        \end{equation}
        which is (for components of virtual dimension $\leq 1$) is a manifold of dimension equal to the virtual dimension of the right hand
        side, which is $\deg(y_{out}) - n + d- \sum_{j=1}^{d} \deg(x_j) + 2k$ (with $\Z$-gradings or mod 2 if working with $\Z/2$-gradings). The
        isomorphisms of orientation lines 
        \begin{equation}
            ({ }_k \mc{R}_{d,\tau_i}^{1})_u: o_{x_d} \otimes \cdots \otimes o_{x_1} \ra o_{y_{out}}
        \end{equation}
        induced by elements $u$ of the zero-dimensional components of
        \eqref{sectormoduli2} define the $|o_{y_{out}}|_\K$ component of the
        operation $\hat{\oc}^{k}_{d,\tau_i}$, up to the following sign twist:
        \begin{equation} 
            \begin{split}
                \hat{\oc}^{k}_{d,\tau_i}([x_{d}], \ldots, [x_1]) &:= 
                \sum_{u \in  { }_k\overline{\hat{\mc{R}}}_{d,\tau_i}^{1}(y_{out}; \vec{x}) \textrm{ rigid}}  (-1)^{\hat{\star}_d} ({ }_k {\mc{R}}_{d,\tau_i}^{1})_u([x_d], \ldots, [x_1]),\\
                \star_d^{S^1} &= \sum_{i=1}^d (i+1) \cdot \deg(x_i) + \deg(x_d) + d - 1.
            \end{split}
        \end{equation}
        Now, exactly as in Lemma \ref{sectordecomposition}, there is a
        chain-level equality of signed operations
        \begin{equation}\label{signedequalityk}
            \oc^{S^1,k}_d = \sum_{i=0}^{d-1} \hat{\oc}^{k}_{d,\tau_i}.
        \end{equation} 
        We recall the geometric statement underlying this; the
        point is by construction there
        is an oriented embedding
        \begin{equation}\label{disjointembeddingksector}
            \coprod_{i} { }_k{\mc{R}}^1_{d,\tau_i} \stackrel{\coprod_i \pi_f^{i}}{\lra} \coprod_i { }_k \mc{R}^{S^1_{i,i+1}}_d \hookrightarrow { }_k\mc{R}^{S^1}_d,
        \end{equation}
    compatible with Floer data, covering all but a codimension 1 locus in the
    target, and moreover all the sign twists defining the operations
    $\oc^{k}_{d,\tau_i}$ are chosen to be compatible with the sign twist in
    the operation $\oc^{S^1,k}$ (this uses the fact that the Floer data on
    ${ }_k\mc{R}^{S^1_{i,i+1}}$ agrees with the data on ${ }_k\hat{\mc{R}}_d^1$
    via the cyclic permutation map $\kappa^{-i}$ by \eqref{S1tauequivariance}).
    After perturbation zero-dimensional
    solutions to Floer's equation can be chosen to come from the complement of
    any codimension 1 locus in the source abstract moduli space, implying the
    equality \eqref{signedequalityk}.

Finally, all that remains is a sign analysis whose conclusion is that, 
\begin{equation}
    \hat{\oc}^{k}_{d,\tau_i} = \hat{\oc}^k_d \circ s^{nu} \circ t^i
\end{equation}
where $s^{nu}$ is the operation arising from changing a check term to a hat
term with a sign twist \eqref{snu}.  (The equality up to comparing signs is
immediate, as the operations are constructed with identical Floer data and
hence involve counts of identical moduli spaces).
The details of this sign comparison are exactly the
same as in Proposition \ref{sectorsignorientation}, including with signs, as
when orienting the moduli of maps, the additional marked points $p_1, \ldots
p_{k}$ only contribute complex orientations to the moduli spaces of domains
(and no additional orientation line terms).
\end{proof}

\begin{proof}[Proof of Proposition \ref{cyclicOCequations}]
    This is an immediate corollary of the previous two propositions.
\end{proof}

We now collect all of this information to finish the proof of our main result.

\begin{proof}[Proof of Theorem \ref{thm:mainresult1}]
    The pre-morphism $\widetilde{\oc} \in \r{Rhom}_{S^1}^n(\r{CH}_*^{nu}(\f), CF^*(M))$, written $u$-linearly as $\sum_i \oc^k u^k$, where $\oc^k = \check{\oc}^k
    \oplus \hat{\oc}^k$ are as constructed above, satisfies $\partial
    \widetilde{\oc} = 0$ by Prop. \ref{cyclicOCequations}, hence
    $\widetilde{\oc}$ is closed, or an $S^1$-complex homomorphism, also known
    as an $\ainf$ $C_{-*}(S^1)$ module homomorphism (see \S
    \ref{circleactionsubsection}). Note that $[\oc^0] =
    [\oc] = [\check{\oc}]$ where the first equality is by definition and the
    second is by Corollary \ref{homologylevelnonunitaloc}, hence
    $\widetilde{\oc}$ is an enhancement of $\check{\oc}$ (as defined in \S
    \ref{circleactionsubsection}).
\end{proof}

\begin{proof}[Proof of Corollary \ref{cor:cyclicOC}]
    This is an immediate consequence of Theorem \ref{thm:mainresult1} and the
    induced homotopy-invariance properties for equivariant homology groups
    discussed in \S \ref{section:s1action}, particularly Cor.
    \ref{S1homotopyinvariance} and Prop. \ref{functorialitysequences}.
\end{proof}

\subsection{Variants of the cyclic open-closed map} \label{sec:OCvariants}
\subsubsection{Using singular (pseudo-)cycles instead of Morse cycles}\label{pseudocycles}
Let $M$ be Liouville or compact and admissible (in which case by our convention $\bar{M} = M$ and
$\partial \bar{M} = \emptyset$),\footnote{Technically we should write $\r{QH}^*(M)$ in the latter case, but additively $\r{QH}^*(M) = H^*(M)$ and correspondingly no sphere bubbling occurs in the moduli spaces we define here, so there is no difference for the purposes of this discussion.} 
and let us consider the version of $\widetilde{\oc}$ with target the relative
cohomology $H^*(\bar{M}, \partial \bar{M})$ as in \S \ref{subsubsec:relh}.
Instead of using a $C^2$ small Hamiltonian to define the Floer complex
computing $H^{*+n}(\bar{M}, \partial \bar{M})$ (which we only did for simultaneous
compatibility with the symplectic cohomology case), we can pass to a geometric
cycle model for the group, and then build a version of the map
$\widetilde{\oc}$ with such a target,  which simplifies many of the
constructions in the previous section (in the sense that the  codimension 1
boundary strata of moduli spaces, and hence the equations satisfied by
$\widetilde{\oc}$, are strictly a subset of the terms appearing above). As
such, it will be sufficient to fix some notation for the relevant moduli
spaces, and state the relevant simplified results.

We denote by 
\begin{align} 
    \label{checkequivariant_cycle} { }_k \check{\mc{P}}_{d}^1\\ 
    \label{S1equivariant_cycle}   { }_k \mc{P}_{d}^{S^1}\\ 
    \label{hatequivariant_cycle} { }_k \hat{\mc{P}}_d^1;
\end{align} 
copies of the abstract moduli spaces
\eqref{checkequivariant}-\eqref{hatequivariant} where the interior puncture
$z_{out}$ is filled in and replaced by a marked point $\bar{z}_{out}$, {\em
without any asymptotic marker}. The compactifications of these moduli spaces
are exactly as before, except that the auxiliary points $p_1, \ldots, p_k$ are
now allowed to coincide with $\bar{z}_{out}$, without breaking off an
angle-decorated cylinder / element of $\mc{M}_r$ (in the language of \S
\ref{sec:angledeccylinder}). In other words, the real blow-up of
Deligne-Mumford compactifications at $z_{out}$ described in \S
\ref{modulispaces}, which was responsible for the boundary strata containing
$\mc{M}_r$ factors, {\em no longer occurs} (but all other degenerations do
occur). Correspondingly the codimension-1 boundaries of compactified moduli
spaces have all of the strata as before except for strata containing
$\mc{M}_r$'s.

Inductively choose smoothly varying families Floer data as before on these moduli spaces of
domains, satisfying all of the requirements and consistency conditions as before (except for any
consistency conditions involving $\mc{M}_r$ moduli spaces, which no longer
occur on the boundary).  For a basis $\beta_1, \ldots, \beta_s$ of smooth
(pseudo)-cycles in homology $H_*(M)$ whose Poincar\'{e} duals $[\beta_i^\vee]$ generate the
cohomology $H^*(\bar{M}, \partial \bar{M})$, one obtains moduli spaces
\begin{align} 
    \label{checkequivariant_cycle_maps} { }_k \check{\mc{P}}_{d}^1(\beta_i; \vec{x})\\ 
    \label{S1equivariant_cycle_maps}   { }_k \mc{P}_{d}^{S^1}(\beta_i,\vec{x}) \\ 
    \label{hatequivariant_cycle_maps} { }_k \hat{\mc{P}}_d^1(\beta_i, \vec{x}) ;
\end{align} 
of moduli spaces of maps into $M$ with source an arbitrary element of the
relevant domain moduli space, satisfying Floer's equation as before, with
Lagrangian boundary and asymptotics $\vec{x}$ as before, {\em with the
additional point constraint that $\bar{z}_{out}$ lie on the cycle $\beta_i$}. As
before, standard methods ensure that 0 and 1-dimensional moduli spaces are (for generic
choices of perturbation data and/or $\beta_i$) transversely cut out manifolds
of the ``right'' dimension and boundary, which is all that we need.

Then, define the coefficient of $[\beta_i^{\vee}] \in H^*(\bar{M}, \partial
\bar{M})$ in $\check{\oc}^k(x_d \otimes \cdots \otimes x_1)$ to be given by signed counts
(with the same sign twists as before) of the moduli spaces
\eqref{checkequivariant_cycle_maps}; similarly for $\hat{\oc}^k$ and
$\oc^{S^1,k}$ using the moduli spaces \eqref{hatequivariant_cycle_maps} and \eqref{S1equivariant_cycle_maps}.  
A simplification of the arguments already given (in which the $\delta_k$
operations no longer occur, but every other part of the argument carries
through) implies that:
\begin{prop}
    The pre-morphism $\widetilde{\oc} = \sum_{i=0}^{\infty} \oc^k u^k \in
    \r{Rhom}_{S^1}^n(\r{CH}_*^{nu}(\mc{F}, \mc{F}), H^*(\bar{M}, \partial
    \bar{M}))$ satisfies 
    \begin{equation}
        \widetilde{\oc} \circ b_{eq} = 0,
    \end{equation}
    where $b_{eq} = b^{nu} + u B^{nu}$. In other words, $\widetilde{\oc}$ is a
    homomorphism of $S^1$-complexes between $\r{CH}_*^{nu}(\mc{F}, \mc{F})$
    with its strict $S^1$ action and $H^*(\bar{M}, \partial \bar{M})$ with its
    trivial $S^1$ action.
\end{prop}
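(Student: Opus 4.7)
The plan is to reduce the claim to the cycle-model analog of Proposition~\ref{cyclicOCequations} and then invoke the sector-decomposition argument of Proposition~\ref{ocks1Brelation} verbatim. Because $H^*(\bar M, \partial \bar M)$ is a cohomology group with zero differential and trivial $S^1$-action, every structure map $\delta^N_j$ on the target vanishes, so the identity $\widetilde{\oc} \circ b_{eq} = 0$ reduces $u$-adically to the family of chain-level equations
\begin{equation*}
    \oc^k \circ b^{nu} + \oc^{k-1} \circ B^{nu} = 0 \qquad (k \geq 0),
\end{equation*}
with $\oc^{-1} := 0$. Splitting $\oc^k = \check{\oc}^k + \hat{\oc}^k$ and evaluating separately on check and hat inputs (noting that $B^{nu}$ annihilates hat chains), this is equivalent to the pair
\begin{align*}
    \check{\oc}^k \circ b + \hat{\oc}^{k-1} \circ B^{nu} &= 0,\\
    \check{\oc}^k \circ d_{\wedge\vee} + \hat{\oc}^k \circ b' &= 0,
\end{align*}
which is the content of Proposition~\ref{cyclicOCequations} after deleting the $\delta_i$-terms on the left-hand sides (using that $d_{\wedge\vee}$ agrees with $\pm(1-t)$ on hat chains via the sign rearrangement \eqref{lastsumhatoc}).

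To establish these identities I would run the codimension-one boundary analysis of the $1$-dimensional components of the cycle-model moduli spaces \eqref{checkequivariant_cycle_maps} and \eqref{hatequivariant_cycle_maps} in exact parallel with the proof of Proposition~\ref{cyclicOCequations}. The one geometric simplification is decisive: replacing the interior puncture at $z_{out}$ by a non-punctured marked point $\bar z_{out}$ constrained to meet a (pseudo)cycle suppresses the real blow-up at the interior point, so the boundary strata \eqref{checkstratum2} and \eqref{hatstratum3} --- those housing the angle-decorated cylinders $\mc{M}_r$ and contributing the $\delta_i \check{\oc}^{k-i}$, $\delta_i \hat{\oc}^{k-i}$ terms in Proposition~\ref{cyclicOCequations} --- become codimension at least $2$ and drop out. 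What survives, with the sign twists of \S\ref{sec:equivariance} unchanged, is exactly
\begin{equation*}
    \check{\oc}^k \circ b = \oc^{S^1,k-1}, \qquad \hat{\oc}^k \circ b' = \check{\oc}^k \circ (1-t),
\end{equation*}
the ``bad'' coincident-magnitude strata $\check{\oc}^{k,i,i+1}$ and $\hat{\oc}^{k,i,i+1}$ still vanishing by the forgetful-map arguments forced by conditions \eqref{checkcompat1}, \eqref{hatcompat3}, and \eqref{hatcompat4}. Finally, transcribing the sector decomposition of Proposition~\ref{ocks1Brelation} into the cycle model identifies $\oc^{S^1,k-1}$ with $\hat{\oc}^{k-1} \circ B^{nu}$; this step depends only on the auxiliary point $p_{k+1}$ at radius $\tfrac{1}{2}$, the $\kappa$-equivariance of Floer data, the cyclic-relabelling bijections $\tau_i$, and the orientation conventions \eqref{checkorientation}--\eqref{hatorientation}, all of which transfer without change to \eqref{S1equivariant_cycle} and \eqref{hatequivariant_cycle}.

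The main obstacle is not conceptual but organizational: checking for each sign twist, Floer-data consistency condition, and transversality/compactness statement of \S\ref{sec:equivariance} that it continues to make sense and hold once the asymptotic marker at $z_{out}$ is suppressed in favour of a (pseudo)cycle-incidence condition. The only substantively new analytic input is transversality of the incidence against the chosen $\beta_i$, for which one can perturb the $\beta_i$ themselves, and this is entirely standard.
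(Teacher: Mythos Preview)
Your proposal is correct and follows precisely the approach the paper takes. The paper itself gives only a one-sentence sketch (``A simplification of the arguments already given (in which the $\delta_k$ operations no longer occur, but every other part of the argument carries through) implies that\ldots''), and your writeup is an accurate expansion of that sketch: you correctly identify that filling in the puncture at $z_{out}$ removes the real blow-up responsible for the $\mc{M}_r$ strata (so the $\delta_i$ terms drop out, the locus $|p_1|=0$ now being honest codimension $2$), while the remaining boundary analysis --- the coincident-magnitude vanishing and the sector decomposition $\oc^{S^1,k-1} = \hat{\oc}^{k-1}\circ B^{nu}$ --- transfers verbatim.
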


As usual this model of $\widetilde{\oc}$ again induces maps $\widetilde{\oc}^{+/-/\infty}$ between homotopy orbit complexes, homotopy fixed point complexes etc.; note the relevant equivariant homology chain complexes are particularly simple for the latter $H^*(\bar{M}, \partial \bar{M})$, seeing as there is no differential and trivial circle action
(e.g., $H^*(\bar{M}, \partial \bar{M})_{hS^1} = (H^*(\bar{M}, \partial \bar{M}) ( ( u ) ) / u H^*(\bar{M}, \partial \bar{M}) [ [ u ] ], \delta_{eq} = 0)$).

\subsubsection{Compact Lagrangians in non-compact manifolds}\label{sec:compactopenclosed}

Now let us explicitly restrict to the case of $M$ a Liouville manifold, and denote
by $\mc{F} \subset \mc{W}$ the full-subcategory consisting of a finite
collection of compact exact Lagrangian branes contained in the compact region
$\overline{M}$. By Poincar\'{e} duality we may think of the map 
$\oc$ (and its cyclic analogue, $\widetilde{\oc}$) with target $H^*(\bar{M}, \partial \bar{M})$ as a
pairing $\r{CH}_*^{nu}(\mc{F},\mc{F}) \otimes C^*(M) \to \K[n]$. In this case,
there is a non-trivial refinement of this pairing to 
\begin{equation}
    \label{ocpairing} \oc_{cpct}: \r{CH}_{*}(\mc{F}) \otimes SC^*(M) \ra \K[-n].
\end{equation}
where $SC^*(M)$ is the {\em symplectic cohomology} cochain complex. 
\begin{rem}
    The refinement \eqref{ocpairing} relies on extra flexibility in Floer
    theory for compact Lagrangians compared to non-compact Lagrangians (compare
    Remarks \ref{liouvilleFloerdatum} and \ref{wrappedFloerdatum}), first
    alluded to in this form in \cite{SeidelEquivariant}. This extra flexibility allows us to
    define operations without
    outputs --- and in particular study a version of the open-closed map where
    the interior marked point and boundary marked points are all inputs--- for instance by
    Poincar\'{e} dually treating some boundary inputs as outputs with ``negative
    weight''.

    One way to implement such operations, using the type of Floer data
    discussed in Remark \ref{wrappedFloerdatum}, is by allowing the {\em
    sub-closed one-form $\alpha_S$} used in Floer theoretic perturbations to
    have complete freedom along boundary conditions corresponding to compact
    Lagrangians; in contrast along possibly non-compact Lagrangian boundary
    conditions, $\alpha_S$ is required to vanish in order to appeal to the
    integrated maximum principle.  In particular, if we allow $\alpha_S$ to be
    non-vanishing along boundary components, Stokes' theorem no longer implies
    that $\alpha_S$ being sub-closed implies that the total ``output'' weights
    must be greater than the total ``input'' weights.  
\end{rem}
\begin{rem}
    The existence of a map $SC^*(M) \to \r{CH}_*(\mc{F})^{\vee}[-n]$
    is well known. Namely, categories $\cc$ with a {\em weak proper Calabi-Yau
    structure} of dimension $n$ (such as the Fukaya category of compact
    Lagrangians; see e.g., \cite{Seidel:2008zr}*{(12j)},
\cite{Seidel:2010aa}*{Proof of Prop. 5.1, Step 1} and \cite{Sheridan:2016}*{\S
2.8})
    come equipped with isomorphisms between the dual of Hochschild chains and
    Hochschild co-chains $\r{CH}_*(\cc)^{\vee}[-n] \simeq \r{CH}^*(\cc)$, and
    the existence of a map $SC^*(M) \to \r{CH}^*(\mc{F})$ was observed
    in \cite{Seidel:2002ys}.  
\end{rem}
The geometric moduli spaces used to establish our main result apply verbatim in
this case (with the interior marked point changed to an input, and the ordering
of the auxiliary marked points $p_1, \ldots, p_k$ appearing in the cyclic
open-closed map reversed). In this case, the operations associated to such
moduli spaces imply:
\begin{prop}
    Consider $\r{CH}_*^{nu}(\f) \otimes SC^*(M)$ as an $S^1$-complex with its
    diagonal $S^1$ action (see Lemma \ref{diagonalaction} in \S
    \ref{circleactionsubsection}), and $\K = \underline{\K}^{triv} \in S^1\mod$ with its
    trivial $S^1$-complex structure. Then, the map from $\r{CH}_*(\f)
    \otimes SC^*(M)$ to $\K$ can be enhanced to a homomorphism of
    $S^1$-complexes
    \[
        \widetilde{\oc}_{cpct} \in \r{Rhom}_{S^1}^n(\r{CH}_*^{nu}(\f) \otimes SC^*(M), \K)
    \]
    e.g., $\widetilde{\oc}_{cpct}$ satisfies $\partial
\widetilde{\oc}_{cpct} = 0$. In other words, in the notation of \S \ref{sec:ulinear} there exists a map $\widetilde{\oc}_{cpct, eq} = \sum_{i=0}^\infty \oc_{cpct, i} u^i: \r{CH}_*^{nu}(\f) \otimes SC^*(M) \to \K[ [ u] ])$ of pure degree $n$, with $[\oc_{cpct, 0}] = [\oc_{cpct}]$ such that 
    \[
        \widetilde{\oc}_{cpct, eq} \circ ( (-1)^{\deg(y)} b_{eq}(\sigma) \otimes y + \sigma \otimes \delta^{SC}_{eq}(y)) = 0.
    \]
\end{prop}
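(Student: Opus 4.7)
The plan is to mimic the construction of $\widetilde{\oc}$ from \S\ref{sec:equivariance}, but with the interior puncture $z_{out}$ promoted to a positive puncture (an \emph{input}) instead of a negative one, and with the radial ordering of the auxiliary points $p_1,\ldots,p_k$ reversed so that $|p_1|>\cdots>|p_k|$ (so that ``nearest to $z_{out}$'' still corresponds to the first factor of the bar complex). Concretely, for each $k\ge 0$ and $d\ge 1$ I would introduce moduli spaces
\[
    {}_k\check{\mc{Q}}^1_{d+1},\quad {}_k\mc{Q}^{S^1}_d,\quad {}_k\hat{\mc{Q}}^1_d
\]
defined exactly as in \eqref{checkequivariant}--\eqref{hatequivariant} except that all punctures are positive. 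By the flexibility noted in Remark \ref{wrappedFloerdatumOC} (and expanded upon in the paragraph introducing \eqref{ocpairing}), because all boundary conditions are compact, one is allowed to use sub-closed one-forms $\alpha_S$ with no vanishing requirement at the boundary, and in particular operations with no output exist. Choose a universal and consistent Floer datum for the whole family, imposing the evident analogues of conditions \eqref{checkcompat1}--\eqref{hatcompat4} (pullback along forgetful maps at $|p_i|=|p_{i+1}|$ strata, $\kappa$-equivariance, and the matching between the hat and $S^1$ moduli on the main component).

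Counting rigid solutions with prescribed asymptotics $x_d,\ldots,x_1$ along the boundary punctures and $y\in\mc{O}$ at the interior one, with the sign twists inherited from \S\ref{sec:equivariance}, yields for each $k$ pairings
\[
    \check{\oc}_{cpct}^{\,k},\ \hat{\oc}_{cpct}^{\,k}\ :\ \r{CH}^{nu}_*(\f)\otimes SC^*(M)\lra \K,\qquad \oc_{cpct}^{\,k}:=\check{\oc}_{cpct}^{\,k}+\hat{\oc}_{cpct}^{\,k},
\]
and one packages them as $\widetilde{\oc}_{cpct,eq}=\sum_{k\ge 0}\oc_{cpct}^{\,k}u^k$. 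The $k=0$ component recovers $\oc_{cpct}$ by definition.

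The main step is then the codimension-$1$ boundary analysis of the $1$-dimensional components of the maps from ${}_k\check{\mc{Q}}^1_{d+1}$ and ${}_k\hat{\mc{Q}}^1_d$. Four families of strata appear: (a) an $\ainf$/cyclic breaking at the boundary, giving $b^{nu}$ (or $b'$, $d_{\wedge\vee}$, $1-t$) acting on the Hochschild factor; (b) breaking of an angle-decorated cylinder of \S\ref{sec:angledeccylinder} at the interior input, which by construction is controlled precisely by the operators $\delta_j^{SC}$ defining the $S^1$-complex structure on $SC^*(M)$; (c) the $|p_k|=\tfrac12$ strata, which via the sector-decomposition argument of Proposition \ref{ocks1Brelation}/Lemma \ref{sectordecomposition} identify with $\hat{\oc}_{cpct}^{\,k-1}\circ B^{nu}$ on the Hochschild factor; and (d) coincident-radii strata $|p_i|=|p_{i+1}|$, which vanish by the forgetful-pullback condition on Floer data as in the proof of Lemma \ref{weaks1actionSHlemma}. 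Combining these and packaging $u$-linearly, one obtains, for each $k$,
\[
    (-1)^n\sum_{i=0}^{k}\oc_{cpct}^{\,k-i}\circ\bigl(\mathrm{id}\otimes \delta_i^{SC}\bigr)+\sum_{i=0}^{k}\oc_{cpct}^{\,k-i}\circ\bigl(b^{nu}_i\otimes\mathrm{id}\bigr) = 0,
\]
where $b^{nu}_0=b^{nu}$ and $b^{nu}_1=B^{nu}$, which is exactly the closedness relation $\partial\widetilde{\oc}_{cpct}=0$ with respect to the diagonal $S^1$-structure on $\r{CH}^{nu}_*(\f)\otimes SC^*(M)$ described in Lemma \ref{diagonalaction}.

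The hard part will be the orientation and sign bookkeeping on the $|p_k|=\tfrac12$ strata: one has to check that the sign twists match those of the diagonal action, i.e.\ the $(-1)^{\deg(y)}$ prefactor in the equation's $b_{eq}(\sigma)\otimes y$ term emerges correctly. I expect this to follow, exactly as in Proposition \ref{sectorsignorientation} and Lemma \ref{sectordecomposition}, from the complex-orientedness of the auxiliary marked points and a direct computation of the boundary-orientation discrepancy in the forms \eqref{checkorientation}--\eqref{hatorientation}; the only new input is that the interior puncture contributes a Koszul sign of parity $\deg(y)$ when the cylinder breaks at the input rather than at the output, which accounts precisely for the $(-1)^{\deg(y)}$ twist in the diagonal $S^1$-action formula. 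The remaining details (existence of Floer data, transversality and Gromov compactness for components of virtual dimension $\le 1$) are entirely parallel to the arguments of \S\ref{sec:equivariance}, using the integrated maximum principle as in Remark \ref{wrappedFloerdatumOC} to rule out escape to infinity.
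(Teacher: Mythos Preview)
Your proposal is correct and matches the paper's own treatment essentially verbatim: the paper also introduces copies ${}_k\check{\mc{R}}_{d+1,cpct}^1$, ${}_k\mc{R}_{d,cpct}^{S^1}$, ${}_k\hat{\mc{R}}_{d,cpct}^1$ of the cyclic open-closed moduli spaces with $z_{out}$ made positive and the auxiliary-point ordering reversed, notes that the $\mc{M}_r$ cylinders now break ``above'' (at the input) so their contribution becomes $\delta^{SC}_i$ on the $SC^*(M)$ factor, and declares that the remaining analysis (sector decomposition for $B^{nu}$, vanishing of coincident-radius strata, sign bookkeeping) carries over identically from \S\ref{sec:equivariance}. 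One small slip: with your ordering $|p_1|>\cdots>|p_k|$ the outer-circle stratum is $|p_1|=\tfrac12$, not $|p_k|=\tfrac12$, but this is purely notational.
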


To clarify the relevant moduli spaces used, we define the spaces
\begin{align} 
    \label{checkequivariant_pairing} { }_k \check{\mc{R}}_{d,cpct}^1\\ 
    \label{S1equivariant_pairing}   { }_k \mc{R}_{d,cpct}^{S^1}\\ 
    \label{hatequivariant_pairing} { }_k \hat{\mc{R}}_{d,cpct}^1
\end{align} 
to be copies of the abstract moduli spaces
\eqref{checkequivariant}-\eqref{hatequivariant} where the interior puncture
$z_{out}$ is now a {\em positive} puncture (still equipped with an asymptotic
marker); and all of the other inputs and auxiliary points are as before, except we've reversed the order of the labelings $p_1, \ldots, p_k$ (for notational convenience), so the ordering constraints all now read as $0 < |p_k| < \cdots < |p_1| < \frac{1}{2}$.
    The
compactified moduli spaces have boundary strata agreeing with the boundary
strata of the compactified \eqref{checkequivariant}-\eqref{hatequivariant},
except now the $\mc{M}_r$ cylinders break ``above'' the ${ }_{k-r}
\mc{R}_{d,cpct}^1$ (equipped with $\check{ }$, $\hat{ }$, or $S^1$
decoration) discs instead of ``below.'' The reversal of the ordering of auxiliary marked points is designed to be compatible with the ordering of the auxiliary marked points on the $\mc{M}_r$ moduli spaces when it breaks ``above'' (as in $\mc{M}_r$ the label numbers of the auxiliary marked points increase from top to bottom).

Equipping these moduli spaces with perturbation data satisfying the same consistency conditions and other requirements as before, and counting solutions with
sign twists as before defines the terms of the pre-morphism
exactly as in the previous subsections (with identical analysis to show that,
for instance, the operation corresponding to  ${ }_k \mc{R}_{d,cpct}^{S^1}$ is
the operation corresponding to ${ }_{k-1} \hat{\mc{R}}_{d,cpct}^1$ composed
with Connes' B operator, the boundary strata in which $|p_i|$ and
$|p_{i+1}|$ are coincident contributes trivially, and so on).

\section{Calabi-Yau structures}\label{section:cystructures}
\subsection{The proper Calabi-Yau structure on the Fukaya category}\label{secproperCY}
Here we review the notion of a {\em proper Calabi-Yau structure}, following
Kontsevich-Soibelman \cite{Kontsevich:2009ab}, and construct proper
Calabi-Yau structures on Fukaya categories of compact Lagrangians (in a compact
admissible or Liouville manifold).  A proper Calabi-Yau structure induces 
chain-level topological field theoretic operations on the Hochschild chain complex
of the given category, controlled by the open moduli space of curves with
marked points equipped with asymptotic markers, at least one of which is an
input
\cite{Costello:2006vn, Kontsevich:2009ab}. Note that Costello's work
\cite{Costello:2006vn} constructing field-theoretic operations has the (a
priori stronger) requirement that the underlying $\ainf$ category be {\em
cyclic}, but in characteristic zero any proper Calabi-Yau structure determines
a unique quasi-isomorphism between the underlying $\ainf$ category and a cyclic
$\ainf$ category \cite[Thm.  10.7]{Kontsevich:2009ab}; see Remark
\ref{rem:cyclic} for more discussion.

We say an $\ainf$ category $\mc{A}$ is \emph{proper} (sometimes called
\emph{compact}) if its cohomological morphism spaces $H^*(\hom_{\mc{A}}(X,Y))$ have total finite rank over
$\K$, for each $X,Y$.  Recall that for any object $X \in \mc{A}$, there is an inclusion of chain
complexes $\hom(X,X) \ra \r{CH}_*(\mc{A})$ inducing a map $[i]:
H^*(\hom(X,X)) \ra \r{HH}_*(\mc{A})$.  
\begin{defn}\label{weakproperCY}
    Let $\mc{A}$ be a proper category.  A chain map $tr:
    \r{CH}_{*+n}(\mc{A}) \ra \K$ is called a \emph{weak proper
    Calabi-Yau structure}, or \emph{non-degenerate trace} of dimension $n$ if,
    for any two objects $X, Y \in \ob \mc{A}$, the composition
    \begin{equation}
        H^*(\hom_{\mc{A}} (X,Y)) \otimes H^{n-*}(\hom_{\mc{A}} (Y,X)) \stackrel{[\mu^2_{\mc{A}}]}{\ra} H^n(\hom_{\mc{A}} (Y,Y)) \stackrel{[i]}{\ra} \r{HH}_n(\mc{A}) \stackrel{[tr]}{\ra \K}
    \end{equation}
    is a perfect pairing (this non-degeneracy property evidently only depends on the homology class $[tr]$). A chain map from the non-unital Hochschild complex $tr: \r{CH}_{*+n}^{nu}(\mc{A}) \ra \K$ is called a weak proper Calabi-Yau structure if composition with the inclusion $\r{CH}_{*+n}(\mc{A}) \subset \r{CH}_{*+n}^{nu}(\mc{A})$ is a weak proper Calabi-Yau structure in the sense above.
\end{defn}
\begin{rem}\label{ccybimodule}
    In the symplectic literature, weak proper Calabi-Yau
    structures of dimension $n$ are sometimes defined as bimodule quasi-isomorphisms
    $\mc{A}_{\Delta} \stackrel{\sim}{\ra} \mc{A}^{\vee}[n]$, where
    $\mc{A}_{\Delta}$ denotes the {\it diagonal bimodule} and $\mc{A}^{\vee}$
    the {\it linear dual diagonal bimodule} --- see
    \cite{Seidel:2008zr}*{(12j)} and \S \ref{subsec:smoothCY} for brief
    conventions on $\ainf$ bimodules
    (see also \cite{Tradler:2008fk}).  To explain the relationship between this
    definition and the one above, which has sometimes been called a {\em weakly
    cyclic structure} or {\em $\infty$-inner product} \cite{Tradler:2008fk,
    Sheridan:2016}, note that for any compact $\ainf$ category
    $\mc{A}$, there are quasi-isomorphisms (with explicit chain-level models)
    \begin{equation} 
        (\r{CH}_*(\mc{A}))^{\vee} = \r{CH}^*(\mc{A}, \mc{A}^{\vee}) \stackrel{\sim}{\leftarrow} \hom_{\mc{A}\!-\!\mc{A}}(\mc{A}_{\Delta}, \mc{A}^{\vee})
    \end{equation}
    where $\hom_{\mc{A}\!-\!\mc{A}}$ denotes morphisms in the category of
    $\ainf$ bimodules (see e.g., \cite{Seidel:2008cr} or \cite{ganatra1_arxiv}). 
    Under this correspondence, non-degenerate morphisms from $\r{HH}_*(\mc{A})
    \ra \K$ as defined above correspond precisely (cohomologically) to weak
    Calabi-Yau structures, e.g., those bimodule morphisms from
    $\mc{A}_{\Delta}$ to $\mc{A}^{\vee}$ which are cohomology isomorphisms.
\end{rem}
Remember that the Hochschild chain complex of an $\ainf$ category $\mc{A}$ comes
equipped with a natural chain map to the (positive) cyclic homology chain complex, the {\em projection to homotopy orbits} \eqref{projectionhomotopyorbits}
\[
    pr: \r{CH}_*^{nu}(\mc{A}) \ra \r{CC}^+_*(\mc{A})
\]
modeled on the chain level by the map that sends $\alpha \mapsto \alpha \cdot
u^0$, for $\alpha \in \r{CH}^{nu}(\mc{A})$ (compare \eqref{projectionhomotopyorbits_explicit}).
\begin{defn}[c.f. Kontsevich-Soibelman \cite{Kontsevich:2009ab}]
A \emph{(strong) proper Calabi-Yau structure of degree $n$} is a chain map
\begin{equation}
    \tilde{tr}: \r{CC}^+_*(\mc{A})\ra \K[-n]
\end{equation}
from the (positive) cyclic homology chain complex of $\mc{A}$ to $\K$ of degree $-n$, such that the
induced map $tr = \tilde{tr} \circ pr: \r{CH}_*^{nu}(\mc{A}) \ra \K[-n]$ (or equivalently the composition $\check{tr}$ of $tr$ with the inclusion $\r{CH}_*(\mc{A}) \subset \r{CH}_*^{nu}(\mc{A})$)
is a weak proper Calabi-Yau structure.
\end{defn}
Via the model for cyclic chains given as $\r{CC}_*^+(\mc{A}) :=
(\r{CH}_*^{nu}(\mc{A}) (( u ) ) / u \r{CH}_*^{nu}(\mc{A}) [ [ u ] ], b +
u B^{nu})$, such an element $\tilde{tr}$ takes the form
\begin{equation}
    \tilde{tr} := \sum_{i=0}^{\infty} tr^k u^k
\end{equation}
where 
\begin{equation}
    tr^k := (\check{tr}^k \oplus \hat{tr}^k): \r{CH}_*^{nu}(\mc{A}) \ra \K[-n-2k].
\end{equation}
We now complete the proof of Theorem \ref{mainthm3} described and sketched in the Introduction:
first, define the putative proper Calabi-Yau structure as the composition:
\begin{equation}\label{trcomposition}
    \tilde{tr}: \r{CC}^+_*(\mc{F}) \stackrel{\widetilde{\oc}^+}{\ra} C^{*+n}(\bar{M}, \partial \bar{M}) \otimes_{\K} \K( ( u ) ) / u \K [ [ u ] ]  \ra \K
\end{equation}
where the last map (cohomologically) sends $PD(pt) \cdot u^0 \in H^{2n}(\bar{M}, \partial \bar{M})$ to $1$,
and other elements to zero (i.e., it projects to the $u^0$ factor then integrates over $[M]$). Instead of using a $C^2$ small Hamiltonian to
define the Floer complex computing $H^{*+n}(\bar{M}, \partial \bar{M})$ (which we
only did for simultaneous compatibility with the symplectic
cohomology case), we can pass to a geometric cycle model for $\widetilde{\oc}^+$
(and therefore $\tilde{tr}$) which, as described in \S \ref{pseudocycles} directly maps (on the chain level) to $H^{*+n}(\bar{M}, \partial \bar{M}) \otimes_{\K} \K ( ( u ) ) / u \K [ [ u ]]$.
With respect to this model, the map
$\tilde{tr}$ involves counts of the moduli spaces described there
where the interior marked point $\bar{z}_{out}$ is {\em unconstrained}, e.g.,
${ }_k \check{\mc{P}}_{d}^1([M]; \vec{x})$, ${ }_k \hat{\mc{P}}_{d}^1([M];
\vec{x})$, and ${ }_k \mc{P}_{d}^{S^1}([M]; \vec{x})$; see Figure \ref{fig:trk_checkhat}.

The following well-known Lemma verifies the non-degeneracy property of the map $\tilde{tr}$:
\begin{lem}[see e.g.,  \cite{Seidel:2008zr}*{(12j)}, \cite{Sheridan:2016}*{Lemma 2.4})]
    \label{poincareduality}
The corresponding morphism $[tr]:\r{HH}_{*+n}(\f) \ra \K$ is a
non-degenerate trace (or weak proper Calabi-Yau structure).  
\end{lem}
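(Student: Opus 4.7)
The plan is to identify the pairing
\[
    H^*(\hom(X,Y)) \otimes H^{n-*}(\hom(Y,X)) \stackrel{[\mu^2]}{\lra} H^n(\hom(Y,Y)) \stackrel{[i]}{\lra} \r{HH}_n(\f) \stackrel{[tr]}{\lra} \K
\]
with the classical Floer-theoretic Poincar\'e duality pairing on Lagrangian Floer cohomology, whose non-degeneracy is well-known; the argument essentially recapitulates \cite{Seidel:2008zr}*{(12j)} and \cite{Sheridan:2016}*{Lemma 2.4}, adapted to the cyclic open-closed setting.

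First I would pass to the singular (pseudo)cycle model of $\widetilde{\oc}$ from \S \ref{pseudocycles}, so that $tr$ applied to a Hochschild chain $c$ literally counts rigid Floer discs with boundary punctures labeled by $c$ and one interior marked point required to pass through a generic point $\ast \in \bar{M}$ (representing $PD(pt) \in H^{2n}(\bar{M},\p \bar{M})$). Applied to the length-one chain $[i](\mu^2(a,b)) \in \hom(Y,Y)$ for $a \in \hom(X,Y)$ and $b \in \hom(Y,X)$, a standard neck-stretching / gluing argument in which the $\mu^2$ boundary node is collapsed onto the disc identifies $tr(i(\mu^2(a,b)))$ with the count of rigid Floer discs with \emph{two} positive boundary punctures asymptotic to $a$ and $b$ (cyclically ordered, with alternating boundary conditions $X, Y$) and one interior marked point mapped to $\ast$.

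Second I would recognize this two-pointed count as the standard Floer-theoretic Poincar\'e pairing $HF^*(X,Y) \otimes HF^{n-*}(Y,X) \to \K$. To conclude non-degeneracy, perturb $X$ and $Y$ by Hamiltonian isotopy so that the relevant Floer chords are all constant at transverse intersection points. The generators of $CF^*(X,Y)$ and $CF^{n-*}(Y,X)$ are then in canonical bijection $p \leftrightarrow p^{\vee}$ (swapping the roles of the two Lagrangians, shifting degree by $n$), and the pairing is computed on the chain level by rigid constant triangles at each intersection point, yielding $\pm 1$ on the diagonal and $0$ off-diagonal. Hence the pairing is already perfect at the cochain level and a fortiori on cohomology.

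The main obstacle is the moduli-space identification in the first step: the degeneration relating the ``three-pointed with interior constraint'' moduli space to the ``two-pointed with interior constraint'' moduli space must be shown to produce a cobordism on the $0$-dimensional locus, with all codimension-$1$ boundary terms controlled. The $\mu^2$-collapsing degenerations are governed by the $\ainf$-associativity relations (which is what makes $[tr] \circ [i] \circ [\mu^2]$ well-defined in the first place), while disc and sphere bubbling are excluded (or counted trivially for dimension reasons) under the standing (asphericity or monotonicity) hypotheses on $M$ and the Lagrangians in $\f$; in the Liouville setting, curves cannot escape to infinity since all Lagrangians of $\f$ are compact, by the integrated maximum principle. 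No techniques are required beyond those already used in constructing $\widetilde{\oc}$.
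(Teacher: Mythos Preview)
Your overall strategy matches the paper's proof: glue the $\mu^2$-disc to the $tr$-disc to obtain a disc with two positive boundary punctures and one interior marked point, then identify the resulting count with the Floer-theoretic Poincar\'e duality pairing and verify non-degeneracy by deforming to constant solutions at transverse intersection points.

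There is, however, a genuine error in your description of the incidence condition at the interior marked point. You say $\bar{z}_{out}$ is ``required to pass through a generic point $\ast \in \bar{M}$'', but the correct condition is the \emph{empty} one (constraint $[M]$), exactly as the paper states just before the Lemma and in the proof sketch. In the pseudocycle model the class $\oc(c) \in H^{*+n}(\bar{M},\partial\bar{M})$ is the Poincar\'e dual of the evaluation pseudocycle $ev_{\bar{z}_{out}}[\mathcal{M}_c]$; for a degree-$n$ Hochschild chain this pseudocycle lies in $H_0(M)$, and extracting the coefficient of $PD(pt)$ is just the signed count of points in it --- the cardinality of the \emph{unconstrained} moduli space. (Said differently: to extract the coefficient of $PD(pt)$ one pairs with its intersection-dual cycle $[M]$, not with $[pt]$.) With your point constraint the virtual dimension drops by $2n$ and becomes negative, so the moduli space is generically empty and the resulting $tr$ would vanish identically, which of course cannot be a non-degenerate trace. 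Once the point constraint is replaced by the empty constraint, the remainder of your argument coincides with the paper's.
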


\begin{proof}[Sketch]
    This is an immediate consequence of Poincar\'{e} duality in Lagrangian
    Floer cohomology, see the references cited above.
    As a brief sketch, note that $\check{tr}^0 \circ \mu^2 = \check{tr} \circ \mu^2: \hom(X,Y) \otimes \hom(Y,X)
    \ra \K$ is chain homotopic (and hence equal in cohomology) to a chain map
    which counts holomorphic discs with an interior marked point satisfying an
    empty constraint, and two (positive) boundary asymptotics on $p$, $q$, with
    corresponding Lagrangian boundary on $x$ and $y$. Via a further homotopy of
    Floer data, one can arrange that the generators of $\hom(X,Y)$ and
    $\hom(Y,X)$ are in bijection (for instance if one is built out of time-1
    flowlines of $H$ and one out of time-1 flowlines of $-H$), and the only
    such rigid discs are constant discs between $p$ and the corresponding
    $p^{\vee}$.
\end{proof}

\begin{proof}[Proof of Theorem \ref{mainthm3}]
    The above discussion constructs $\tilde{tr}$ and Lemma
    \ref{poincareduality} verifies non-degeneracy.  \end{proof}

\begin{figure}[h]
    \caption{\label{fig:trk_checkhat} An image of representatives of moduli spaces ${ }_3 \check{\mc{P}}_{3}^1([M]; \vec{x})$ and ${ }_2 \hat{\mc{P}}_{4}^1([M]; \vec{x})$, which appear in the map $\tilde{tr}$.}
    \includegraphics[scale=0.9]{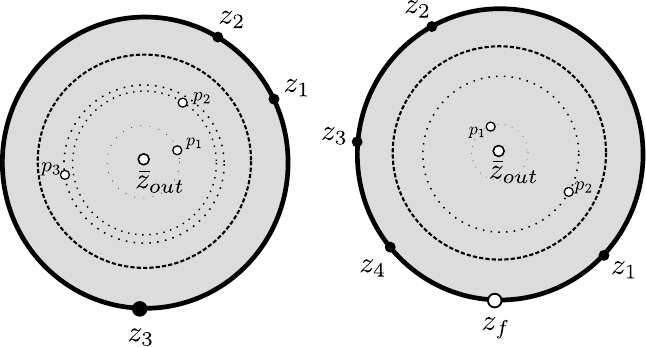}
\end{figure}

\subsection{The smooth Calabi-Yau structure on the Fukaya category}\label{subsec:smoothCY}
\def\eup{\mathcal{P}}
\def\euq{\mathcal{Q}}
\def\bimodhomc{\hom^*_{\cc\!-\!\cc}}
\def\bimodcategoryc{\cc\!-\!\mathrm{mod}\!-\!\cc}
\def\seq#1#2#3{\hom_{\cc}( {#2}_{#3-1}, {#2}_{#3}) \otimes \cdots \otimes \hom_{\cc}( {#2}_0, {#2}_1)}
\def\seqc#1#2{ \seq{\cc}{#1}{#2}}
\def\diagc{\cc_{\Delta}}
\def\biyon#1#2{\mc{Y}^l_{#1} \otimes_{\K} \mc{Y}^r_{#2}}
We give a brief overview of (a categorical version of) the notion of a {\em
(strong) smooth Calabi-Yau structure}, and construct such smooth Calabi-Yau structures on (wrapped or compact) Fukaya categories under the ``non-degeneracy'' hypotheses of \cite{ganatra1_arxiv}.  Smooth Calabi-Yau structures were proposed in the work-in-progress \cite{Kontsevich:uq} (see also \cite{Kontsevich:CY}; other expositions appear e.g., in \cite{GPS1:2015, brav_dyckerhoff_2019} --- in the latter work the terminology ``left'' is used instead of ``smooth'' and ``right'' instead of ``proper''). 
A smooth Calabi-Yau structure is expected (analogously to the proper case) by \cite{Kontsevich:uq} to induce chain-level topological field
theory operations on the Hochschild chain complex of the given category,
controlled by the open moduli space of curves with marked points equipped with asymptotic markers, at least one
of which is an output.\footnote{In contrast, note that in the proper case all operations should have at least one {\em input}.}

To state the relevant definitions, we make use of some of the theory of {\em
$\ainf$ bimodules over a category $\cc$}. We do so without much explanation,
instead referring readers to existing references \cite{Seidel:2008cr,
Tradler:2008fk, ganatra1_arxiv}.
An $\ainf$ bimodule $\eup$ over $\cc$ is a bilinear $\ainf$ functor from $\cc^{op} \times \cc$ to chain complexes, which is roughly the data, of, for every pair of objects
$\cc$, a chain complex $(\eup(X,Y), \mu^{0|1|0})$, along with `higher
multiplication maps' $\mu^{s|1|t}: \seqc{X}{s} \otimes \eup(X_0, Y_{t}) \otimes
\seqc{Y}{t} \ra \eup(X_s, Y_0)$ satisfying a generalization of the $\ainf$
equations. $\ainf$ bimodules over $\cc$ form a dg category $\bimodcategoryc$
with morphisms  denoted
$\bimodhomc( \eup, \euq)$ (for dg bimodules over a dg category, this chain
complex corresponds to a particular chain model for the `derived morphism
space' using the bar resolution).
The basic examples of bimodules we require are:
\begin{itemize}
    \item the {\em diagonal bimodule} $\diagc$ which associates to a pair of
        objects $(K,L)$ the chain complex $\diagc(K,L):= \hom_{\cc}(L,K)$.

    \item for any pair of objects $A, B$, there is a {\em Yoneda bimodule}
        $\biyon{A}{B}$ which associates to a pair of objects $(K,L)$ the chain
        complex $\biyon{A}{B}(K,L) := \hom_{\cc}(A, K) \otimes \hom_{\cc}(L,
        B)$.
\end{itemize}
Yoneda bimodules are the analogues of the free bimodule $A \otimes A^{op}$ in
the category of bimodules over an associative algebra $A$ (which are the same
as $A \otimes A^{op}$ modules). Accordingly, we say a bimodule $\eup$ is {\em
perfect} if, in the category $\bimodcategoryc$, it is split-generated by (i.e.,
isomorphic to a retract of a finite complex of) Yoneda bimodules.  We say that
a category $\cc$ is {\em (homologically) smooth} if $\diagc$ is a perfect
$\cc$-bimodule.

Recall for what follows that for any bimodule $\eup$ there is a {\em cap
product action} 
\begin{equation}
    \cap: \r{HH}^*(\cc, \eup) \otimes \r{HH}_*(\cc, \cc) \ra \r{HH}_*(\cc, \eup),
    \label{capproducthh}
\end{equation}
and hence for any class $[\sigma] \in \r{HH}_*(\cc, \cc)$ there is an induced map
\begin{equation}
    [\cap \sigma]: \r{HH}^*(\cc, \eup)  \ra \r{HH}_{*+\deg(\sigma)}(\cc, \eup).
    \label{capproducthhsigma}
\end{equation}
More generally, the cap products acts as $ \r{HH}^*(\cc, \eup) \otimes
\r{HH}_*(\cc, \euq) \ra \r{HH}_*(\cc, \eup \otimes_{\cc} \euq)$; here we are
considering $\euq = \diagc$, and then composing with the equivalence $\eup
\otimes_{\cc} \diagc \cong \eup$. See e.g., \cite{ganatra1_arxiv}*{\S 2.10} for
explicit chain level formulae in the variant case that $\eup = \diagc$, which
can be straightforwardly adapted to the general case and then specialized to
the case here.
\begin{defn}\label{weakscy}
    Let $\cc$ be a homologically smooth $\ainf$ category. A cycle $\sigma \in
    \r{CH}_{-n}(\cc, \cc)$ is said to be a 
    {\em weak smooth Calabi-Yau structure}, or a {\em non-degenerate co-trace} if, for any objects $K, L$, the
    operation of capping with $\sigma$ induces a homological isomorphism
    \begin{equation}\label{capproduct}
        [\cap \sigma]: \r{HH}^*(\cc, \biyon{K}{L}) \stackrel{\cong}{\ra} \r{HH}_{*-n}(\cc, \biyon{K}{L}) \simeq H^*(\hom_{\cc}(K,L)).
    \end{equation}
    (this non-degeneracy property only depends on the homology class $[\sigma]$).
    A cycle in the non-unital Hochschild complex $\sigma \in \r{CH}_{-n}^{nu}(\cc)$
    is said to be a weak smooth Calabi-Yau structure if again $[\sigma] \in
    H^*(\r{CH}_{-n}^{nu}(\cc)) \cong \r{HH}_{-n}(\cc)$ is non-degenerate in the
    sense of \eqref{capproduct}.
    \end{defn}
\begin{rem}
    The second isomorphism $\r{HH}_{*-n}(\cc, \biyon{K}{L}) \simeq
    H^*(\hom_{\cc}(K,L))$ always holds for cohomologically unital
    categories (such as the Fukaya category); the content is in the first.
\end{rem}
\begin{rem} 
    Continuing Remark \ref{ccybimodule}, there is an alternate perspective on
    Definition \ref{weakscy} using bimodules. Namely, for any bimodule $\eup$,
    there is a naturally associated {\em bimodule dual} $\eup^!$, defined for a
    pair of objects $(K,L)$ as the chain complex $\eup^!(K,L) :=
    \bimodhomc(\eup, \biyon{K}{L})$. The higher bimodule structure is defined
    in \cite{ganatra1_arxiv}*{Def. 2.40}; it is an $\ainf$ analogue of defining, for
    an $A$ bimodule $B$, $B^!:= \mathrm{RHom}_{A \otimes A^{op}}(B, A \otimes
    A^{op})$ where $\mathrm{RHom}$ is taken with respect to the outer bimodule
    structure on $A \otimes A^{op}$ and the bimodule structure on $B^!$ comes
    from the inner bimodule structure; see e.g., \cite{Ginzburg:2005aa}*{\S 20.5}.

    We abbreviate $\cc^!:= \cc_{\Delta}^!$, and call $\cc^!$ the {\em inverse
    dualizing bimodule}, following \cite{Kontsevich:2009ab} (observe $H^*(\cc^!(K,L))
    \cong \r{HH}^*(\cc, \biyon{K}{L})$).  For a homologically smooth
    category $\cc$ one notes that there is a
    quasi-isomorphism $\r{CH}_{*-n}(\cc) \simeq \bimodhomc(\cc_{\Delta}^![n],\cc_{\Delta})$
    (see \cite{Kontsevich:2009ab}*{Rmk. 8.11} for the case of $\ainf$
    algebras), where the equivalence associates to any element, the bimodule
    morphism whose cohomology level map  
    is the cap product operation \eqref{capproduct}. Non-degenerate
    cotraces in $\r{CH}_{-n}(\cc)$ then correspond precisely then to bimodule quasi-isomorphisms $\cc^![n]
    \stackrel{\sim}{\ra} \cc_{\Delta}$. Further discussion of these structures
    in the $\ainf$ categorical setting will appear as part of forthcoming work
    with Cohen \cite{CohenGanatra:2015}.
\end{rem}
Let $\iota: \r{CC}^-_*(\cc) \ra \r{CH}_*^{nu}(\cc)$ denote the inclusion of homotopy fixed points chain map from \eqref{inclusionhomotopyfixedpoints};
concretely as described in \eqref{inclusionhomotopyfixedpoints_explicit} this is the chain map sending $\sum_{i=0}^{\infty} \alpha_i u^i
\mapsto \alpha_0$.  
\begin{defn}
    Let $\cc$ be a homologically smooth $\ainf$ category. A {\em (strong)
    smooth Calabi-Yau structure} is a cycle $\tilde{\sigma} \in
    \r{CC}^-_{-n}(\cc)$ such that the corresponding element
    $\iota(\tilde{\sigma}) \in \r{CH}_{-n}^{nu}(\cc)$ is a weak smooth Calabi-Yau
    structure.
\end{defn}
Using these definitions and the cyclic open-closed map, we restate and prove
Theorem \ref{thm:smoothCY}. We adopt the notation of wrapped Fukaya categories
in the below result, using $\w$ and $SC^*(M)$ in place of $\mc{F}$ and
$CF^*(M)$ (with the understanding that for a compact symplectic manifold, these
are the same).
\begin{thm}[Theorem \ref{thm:smoothCY}
    above]
    Suppose a Liouville (or compact admissible symplectic) manifold is {\em
    non-degenerate} in the sense of \cite{ganatra1_arxiv}, meaning that the map
    $[\oc]: \r{HH}_{*-n}(\w) \ra SH^*(M)$ hits 1.  Then, the Fukaya category
    $\w$ possesses a (cohomologically) canonical geometrically defined {\em
    smooth Calabi-Yau structure}.  
\end{thm}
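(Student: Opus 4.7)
My plan is to use the cyclic open-closed map to lift the weak smooth Calabi-Yau structure on $\w$ that exists by \cite{ganatra1_arxiv} (when $\w$ is non-degenerate) to a strong one. The first step is to recall or invoke the following three inputs: (a) by the hypothesis and \cite{ganatra1_arxiv, GPS1:2015}, the non-degeneracy hypothesis ``$[\oc]$ hits $1$'' implies that $[\oc]: \r{HH}_{*-n}(\w) \to SH^*(M)$ is in fact an isomorphism; (b) by Corollary \ref{cor:cyclicOC}, the cyclic enhancement $[\widetilde{\oc}^-]: \r{HC}^-_{*-n}(\w) \to H^*(SC^*(M)^{hS^1})$ is therefore also an isomorphism; and (c) the main theorem of \cite{ganatra1_arxiv} identifies $[\oc]^{-1}(1) \in \r{HH}_{-n}(\w)$ as a weak smooth Calabi-Yau structure (equivalently, a non-degenerate cotrace in the sense of Definition \ref{weakscy}).

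Next, I would \emph{define} $\tilde{\sigma} \in \r{HC}^-_{-n}(\w)$ to be the unique preimage under $[\widetilde{\oc}^-]$ of the canonical class $\tilde{1} \in H^*(SC^*(M)^{hS^1})$ constructed in \eqref{tilde1}; existence and uniqueness of $\tilde{\sigma}$ are immediate from (b). It remains to verify that $\tilde{\sigma}$ is a strong smooth Calabi-Yau structure, meaning that its image $\iota(\tilde{\sigma}) \in \r{HH}_{-n}(\w)$ under the inclusion of homotopy fixed points is a weak smooth Calabi-Yau structure.

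For this verification, I would use the naturality of the inclusion-of-homotopy-fixed-points map (Corollary \ref{S1homotopyinvariance} and Proposition \ref{functorialitysequences} applied to the morphism of $S^1$-complexes $\widetilde{\oc}$), which yields the commutative square
\[
\xymatrix{ \r{HC}^-_{-n}(\w) \ar[r]^{[\widetilde{\oc}^-]} \ar[d]_{[\iota]} & H^*(SC^*(M)^{hS^1}) \ar[d]^{[\iota]} \\
\r{HH}_{-n}(\w) \ar[r]^{[\oc]} & SH^*(M).}
\]
Chasing $\tilde{\sigma}$ around this diagram and using that $[\iota](\tilde{1}) = 1$ by construction of $\tilde{1}$ (see the discussion after \eqref{constantloopsmap}), I obtain $[\oc]([\iota(\tilde{\sigma})]) = 1$, and hence $[\iota(\tilde{\sigma})] = [\oc]^{-1}(1)$. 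Invoking (c) then identifies this class with the weak smooth Calabi-Yau structure of \cite{ganatra1_arxiv}, finishing the proof. The same argument with appropriate modifications (replacing $SC^*(M)$ by the relevant Hamiltonian Floer complex with its trivial $C_{-*}(S^1)$ action, as in \S \ref{pseudocycles}) handles the compact case.

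The technically substantive step is input (c), i.e., the characterization of $[\oc]^{-1}(1)$ as a non-degenerate cotrace; this is not reproved here but is the main theorem of \cite{ganatra1_arxiv} relating the ``hitting $1$'' formulation of non-degeneracy to homological smoothness of $\w$ together with a canonical weak smooth Calabi-Yau structure. Given that input, the content of the present theorem is purely the $S^1$-equivariant enhancement, which is a formal corollary of Theorem \ref{thm:mainresult1}: the strong Calabi-Yau structure is constructed simply by lifting the weak one through the (now invertible) cyclic open-closed map, with the commutative square above ensuring compatibility.
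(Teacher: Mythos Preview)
Your proposal is correct and follows essentially the same argument as the paper's proof: both invoke \cite{ganatra1_arxiv} for the weak smooth Calabi-Yau structure $[\oc]^{-1}(1)$, use Corollary \ref{cor:cyclicOC} to invert $[\widetilde{\oc}^-]$, define $\tilde{\sigma} := [\widetilde{\oc}^-]^{-1}(\tilde{1})$, and then chase the commutative square relating $\iota$ and the open-closed maps to conclude $[\iota(\tilde{\sigma})] = [\oc]^{-1}(1)$. The paper additionally sketches (for the reader's benefit) how \cite{ganatra1_arxiv} establishes the non-degeneracy of the cotrace via the geometric bimodule map $\mathcal{CY}$, but this is background you correctly cite rather than reprove.
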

\begin{proof}
    In \cite{ganatra1_arxiv} it was proven that, assuming non-degeneracy of $M$, the map
    $[\oc]: \r{HH}_{*-n}(\w) \ra SH^*(M)$ is an isomorphism,  $\w$ is
    homologically smooth, and moreover that the pre-image $[\sigma]$ of 1 gives
    a {\em weak smooth Calabi-Yau structure} in the sense described above (see
    \cite{Gautogen_arxiv, GPS1:2015, GPS2:2015} for a proof of some of
    these facts specifically tailored to the case of compact Lagrangians in
    compact symplectic manifolds).
    Let us briefly recall how
    the non-degeneracy condition \eqref{capproduct} is proven (which is left
    slightly implicit in \cite{ganatra1_arxiv}): First,
    a geometric morphism of bimodules $\mathcal{CY}: \w_{\Delta} \to \w^![n]$
    is constructed and shown in \cite{ganatra1_arxiv}*{Thm. 1.3} to be a
    quasi-isomorphism under the given non-degeneracy hypotheses. Then, it is
    shown that capping
    with $[\sigma]$ is a
    one-sided inverse to the homological map $[\mathcal{CY}]$, and thus an
    isomorphism also, by the following argument: the following commutative
    (up to an overall sign of $(-1)^{n(n+1)/2}$) diagram is established (which
        can be thought of as coming from the compatibility of $\oc$ with module
        structures for Hochschild (co)homology with
    coefficients in $\biyon{K}{L}$, and which can be extracted from
    the holomorphic curve theory appearing in
\cite{ganatra1_arxiv}*{Thm. 13.1}):
    \begin{equation}
        \xymatrix@=1.5cm{ \r{HH}_{*-n}(\w, \w) \otimes H^*(\hom_{\w}(K,L)) \ar[r]^{(id, [\mathcal{CY}])\ \ \ \ } \ar[d]^{([\oc], id)}& \r{HH}_{*-n}(\w, \w) \otimes HH^{*+n}(\w, \biyon{K}{L}) \ar[d]^{\cap}\\
        SH^*(M) \otimes H^*(\hom_{\w}(K,L)) \ar[r]^{[\mu^2(\co_0(-), -)]\ \ \ \ \ \ \ \ } \ar[r]& H^*(\hom_{\w}(K,L)) = \r{HH}_*(\w, \biyon{K}{L})}
    \end{equation}
        (here $[\co_0]$ is the length-zero part of the closed open map for the object $L$, mapping $SH^*(M)$ to $H^*(\hom_{\mc{W}}(L,L))$).
    Plugging $[\sigma]$ into $\r{HH}_*(\w,\w)$ and noting
    $[\oc]([\sigma]) = 1$ and $[\mu^2(\co_0(1),-)] = [\mu^2]([e_L], -)$ is the
    identity map establishes as desired that $[\sigma \cap (\mathcal{CY}(y))] =
    [y]$.

    To lift the weak smooth Calabi-Yau structure to a (strong) smooth Calabi-Yau structure, first we note that, because $[\oc]$ is an isomorphism, Corollary \ref{cor:cyclicOC} implies that there is a
    commutative diagram of isomorphisms:
    \begin{equation}\label{inclusionhomotopyfixedpointsOC}
        \xymatrix{
            \r{HC}^-_{*-n}(\w) \ar[r]_{[\iota]} \ar[d]_{[\widetilde{\oc}^-]} & \r{HH}_{*-n}(\w) \ar[d]^{[\oc]}\\
        H^*(SC^*(M)^{hS^1}) \ar[r]_{[\iota]} & SH^*(M)},
        \end{equation}
        where the horizontal maps $\iota$ are the inclusion of
        homotopy fixed points maps $\iota: P^{hS^1} \ra P$ defined for any
        $S^1$-complex $P$, sending $\sum_{i=0}^{\infty} \alpha_i u^i \mapsto
        \alpha_0$. 

        In \S \ref{sec:interior}, and specifically \eqref{tilde1}, it was shown
        that there is a canonical geometrically defined element $\tilde{1} \in
        H^*(SC^*(M)^{hS^1})$ lifting the unit $1 \in SH^*(M)$--- essentially
        this is because the map $1$ is in the image of the map $H^*(M) \to
        SH^*(M)$, which on the chain level (as this map comes from ``the
        inclusion of constant loops into the free loop space'' and ``constant
        loops are acted on by $S^1$ trivially'') can be canonically lifted to a map
        $C^*(M) \to C^*(M)^{hS^1} = C^*(M)[ [ u] ] \to SC^*(M)^{hS^1}$.

        Since $[\widetilde{\oc}^-]$ is an isomorphism, it follows that there is
        a unique (cohomological) element $[\widetilde{\sigma}] \in
        \r{HC}^-_{*-n}(\w)$ hitting
        $\tilde{1}$ via $[\widetilde{\oc}]$. By \eqref{inclusionhomotopyfixedpointsOC},
        $[\iota]([\widetilde{\sigma}])] = [\sigma]$, establishing that (any cycle
        representing) $[\widetilde{\sigma}]$ is a smooth Calabi-Yau structure. 
    \end{proof}

\appendix
\section{Moduli spaces and operations}
\subsection{A real blow-up of Deligne-Mumford space}\label{modulispaces}

We review, in a special case, the compactifications of moduli spaces of
surfaces where some interior marked points are equipped with asymptotic
markers, which are a real blow-up of Deligne-Mumford moduli space as
constructed in \cite{Kimura:1995fk}. In particular, we show how boundary strata of the abstract
compactifications in the sense of \cite{Kimura:1995fk} can be identified with
the specific models of the moduli spaces we use in Section
\ref{section:openclosed1} (the appearance of the compactifications \cite{Kimura:1995fk} in Floer theory are not new, see e.g., \cite{Seidel:2010uq}).

To begin, let 
\begin{equation} \mc{M}_{2,0} \end{equation}
denote the space of spheres with $2$ marked points $z_1, z_2$ removed and
asymptotic markers $\tau_1$, $\tau_2$ around the $z_1$ and $z_2$, modulo
automorphism. 
Fixing the position of $z_1$ and $z_2$ and one of $\tau_1$ or $\tau_2$ gives a
diffeomorphism \[ \mc{M}_{2,0} \cong S^1. \] On an arbitrary representative in
$\mc{M}_{2,0}$, we can think of the map to $S^1$ as coming from the {\it
difference in angles} between $\tau_1$ and $\tau_2$ (after, say, parallel
transporting one tangent space to the other along a geodesic path).

It is convenient to parametrize this difference by a point on the sphere
itself, in the following manner (though this will break symmetry between $z_1$
and $z_2$). Let
\begin{equation} \label{m21}
\mc{M}_{2,1} 
\end{equation}
be the space of spheres with $2$ marked points $z_1, z_2$ removed, an extra
marked point $p$ and asymptotic markers $\tau_1$, $\tau_2$ around the $z_1$
and $z_2$, modulo automorphism such that, for any representative with position
of $z_1$, $z_2$, and $p$ fixed, $\tau_2$ is pointing towards $p$. The remaining
freedom in $\tau_1$ once more gives a diffeomorphism $\mc{M}_{2,1} \cong S^1$. 

We can take a different representative for elements of $\mc{M}_{2,1}$: up to
biholomorphism any element of \eqref{m21} is equal to a cylinder sending $z_1$
to $+\infty$, $z_2$ to $-\infty$) with fixed asymptotic direction around
$+\infty$ and an extra marked point $p$ at fixed height freely varying around $S^1$, 
such that the asymptotic marker at $-\infty$ coincides with the $S^1$
coordinate of $p$.  Thus, we obtain an identification 
\begin{equation} \label{cylindermodel}
    \mc{M}_{2,1} \cong \mc{M}_1
\end{equation}
where $\mc{M}_1$ is the space in Definition \ref{rpointedmodulispace} (with
$p_1$ corresponding to $p$ here).

Denote by 
\begin{equation}\label{equivmoduli}
    { }_k\mc{R}_{d}^1
\end{equation}
the moduli space of discs $(S, z_1, \ldots, z_d, z_{out}, \tau_{z_{out}}, p_1, \ldots, p_k)$ with
$d$ boundary marked points $z_1, \ldots, z_d$ arranged in counterclockwise order, an
interior marked point with asymptotic marker $(z_{out},\tau_{z_{out}})$, and interior marked points
with no asymptotic markers $p_1, \ldots, p_k$ satisfying two constraints to be described below, modulo automorphism. Up to automorphism, every equivalence class of the unconstrained moduli space of such $(S, z_1, \ldots, z_d, z_{out}, \tau_{z_{out}}, p_1, \ldots, p_k)$ 
admits a unique unit disc representative with $z_d$ fixed
at $1$ and $z_{out}$ at 0; call this the {\em $(z_d, z_{out})$ standard representative} (or simply the standard representative). The positions
of the asymptotic marker, remaining marked points, and interior marked points
identify this unconstrained moduli space
with an open subset of $S^1 \times \R^{2k} \times
\R^d$. With respect to this identification, the space \eqref{equivmoduli} consists of those discs satisfying the following (open) ``ordering constraint'' on
the positions of the interior marked points:
\begin{equation}\label{orderingappa}
    \textrm{On the standard representative, } 0 < |p_1| < |p_2| < \cdots <  |p_k| < \frac{1}{2}
\end{equation}
along with a (codimension 1) condition on the asymptotic marker:
\begin{equation}
    \textrm{On the standard representative, $\tau_{z_{out}}$ points at $p_1$}.
\end{equation}
The condition \eqref{orderingappa}, which cuts out a manifold with corners of
the larger space in which the $p_i$ are unconstrained, is technically
convenient, as it reduces the types of bubbles that can occur with $z_{out}$).
The compactification of interest, denoted
\begin{equation}\label{ksvcompact}
    { }_k\overline{\mc{R}}_{d}^1
\end{equation}
differs from the Deligne-Mumford compactification in a couple respects: firstly,
we allow points $p_i$ and $p_{i+1}$ to be coincident without bubbling off
(alternatively, we can Deligne-Mumford compactify and collapse the relevant
strata).

More interestingly, \eqref{ksvcompact} is a real blow-up of the usual
Deligne-Mumford compactification along any strata in which $z_{out}$ and $p_i$ points
bubble off (as in \cite{Kimura:1995fk}). We will proceed to describe the codimension-1
boundary strata of \eqref{ksvcompact} along with (after identification with the
moduli spaces we introduce in this paper) the boundary chart gluing maps.
Let $\Sigma = S_0 \cup_{z_{int}^+ = z_{int}^-} S_1$ denote a nodal surface, where 
\begin{itemize}
    \item $S_0$ is a sphere containing interior marked points $(z_{out}, \tau_{z_{out}})$,
        $p_1, \ldots, p_j$, and another marked point $z_{int}^+$, and
\item $S_1$ is a disc with
    $d$ boundary marked points $z_1, \ldots, z_d$, and interior marked points $z_{int}^-$,
$p_{j+1}$, \dots, $p_{k}$. 
\end{itemize}
To occur as a possible degenerate limit of \eqref{equivmoduli}, the relevant
points $p_i$ on $S_0$ and $S_1$ must satisfy an ordering condition:
\begin{align}
    \label{orderingappa1}&\textrm{For any biholomorphic $S_0'$ to $S_0$ with $z_{out}$ and $z_{int}^+$ at opposite poles, }\\
    \nonumber &\textrm{$0 < |p_1| < \cdots < |p_j| < |z_{int}^+|$, where $|p|$ denotes the geodesic distance from $z_{out}$ to $p$ on $S_0'$}.\\
    \label{orderingappa2}&\textrm{For the $(z_d, z_{int}^-)$ standard representative of $S_1$, $0 < |p_{j+1}| < \cdots < |p_k| < \frac{1}{2}$}.
\end{align}
Also, 
\begin{equation}
    \label{pointingappa3}\textrm{For $S_0'$ as in \eqref{orderingappa1}, the asymptotic marker $\tau_{z_{out}}$ should point (geodesically) towards $p_1$}.
\end{equation}
The relevant codimension-1 stratum of \eqref{ksvcompact} consists of all (automorphism classes of) such
broken configurations $S_0 \cup_{z_{int}^+ = z_{int}^-} S_1$ as above
equipped additionally with a {\em gluing angle} at the node, which is a
real positive line $\tau_{z_{int}^+,z_{int}^-}$ in $T_{z_{int}^+} S_0 \otimes
T_{z_{int}^-} S_1$, or equivalently, a pair of asymptotic markers $(\tau_{z_{int}^+}, \tau_{z_{int}^-})$ around each of
$z_{int}^+$ and $z_{int}^-$, modulo the diagonal $S^1$ rotation action. Note 
that the set of gluing angles (which is allowed to vary) is $S^1$, making this stratum codimenion-1
(the corresponding stratum in Deligne-Mumford space does not have gluing angles, and hence
has real codimension 2). The gluing map takes, for a fixed pair of cylindrical ends around $z_{int}^+$ and $z_{int}^-$ compatible with the pair of asymptotic markers in the sense of \eqref{compatiblewithmarker}, the usual gluing with respect to the chosen cylindrical ends. Note first that for a given gluing parameter, if the cylindrical ends are chosen to simply rotate as $(\tau_{z_{int}^+}, \tau_{z_{int}^-})$ vary, the result of gluing after rotating $\tau_{z_{int}^+}$ by $\theta_1$ and $\tau_{z_{int}^-}$ by $\theta_2$ differs from the initial gluing by a rotation of the bottom component by $\theta_2-\theta_1$.  In particular, the glued surface only indeed depends on the gluing angle associated to $(\tau_{z_{int}^+}, \tau_{z_{int}^-})$ i.e., is unchanged by simultaneously rotating $(\tau_{z_{int}^+}, \tau_{z_{int}^-})$.

We can recast this stratum by taking a slice of the quotient by the diagonal
$S^1$ action appearing in the definition of gluing angle:
First, note that 
$z_{int}^-$ on $S_1$ possesses a canonical asymptotic marker $(\tau_{z_{int}}^-)_{canon}$ which (on the standard representative) points towards
$p_{j+1}$ (our convention is that $p_{s+1}= z_d$ so $\tau_{z_{int}}^-$ points at $z_d$ if $j=s$). Choosing the representative $(\tau_{z_{int}^+}, \tau_{z_{int}^-})$ of each gluing angle for which $\tau_{z_{int}^-}$ is the canonical asymptotic marker $(\tau_{z_{int}}^-)_{canon}$, we see that the stratum described above can be identified with the space of broken configurations $S_0 \cup_{z_{int}^+ = z_{int}^-} S_1$ (up to automorphism) of the form: 
\begin{itemize}
    \item $S_1$ is as above (i.e., satisfies \eqref{orderingappa2}) but additionally equipped with $(\tau_{z_{int}}^-)_{canon}$, i.e., $S_1 \in { }_{k-j}\mc{R}^d_1$; and 

    \item $S_0$ is equipped with interior marked points with asymptotic markers $(z_{out}, \tau_{z_{out}})$, $(z_{int}^+, \tau_{z_{int}^+})$ and additional marked points $p_1, \ldots, p_j$ satisfying \eqref{orderingappa1} and \eqref{pointingappa3}.
\end{itemize}
In a manner as in \eqref{cylindermodel}, the space of such $S_0$ up to
biholomorphism is precisely $\mc{M}_j$ as in Definition \ref{rpointedmodulispace} (i.e., given any $S_0$, there is a 1-dimensional space of biholomorphisms to a cylinder sending $z_{int}$ and $z_{out}$ to $\infty$ and $-\infty$ while fixing the angle of $\tau_{z_{int}^+}$ to $1$; any two such biholomorphisms differ by translation).

Hence, we've identified this stratum with
\begin{equation}
     { }_{k-j}\mc{R}^{d}_1 \times \mc{M}_{j} 
\end{equation}
which will be useful in defining the relevant pseudoholomorphic curve counts. From this perspective, the boundary chart gluing maps (defined with respect to the cylindrical ends \eqref{posendangles} and \eqref{negendangles} on $\mc{M}_j$ and with respect to a smoothly varying choice of cylindrical end over elements of ${ }_{k-j}\mc{R}^{d}_1$ compatible with $(\tau_{z_{int}}^-)_{canon}$) just as in \eqref{gluingangles}, rotate the (standard representative of the) angle-decorated cylinder $S_0$ to match the angle of its top asymptotic marker with the angle of $(\tau_{z_{int}}^-)_{canon}$ (which coincides with the argument of $p_{j+1}$ on the standard representative). In other words, if we denote by $\theta_i$ the angle of $p_i$ in $S_1$ for $j+1 \leq i \leq k$ (with respect to any standard representative of $S_1$, with the usual convention that $\theta_{k+1}$ is the argument of $z_d$ on the standard representative, so in particular $\theta_{j+1}$ is well-defined even if $j=k$) and $\bar{\theta}_s$ the angle of $p_s$ in $S_0$ for $1 \leq s \leq j$, the gluing of $S_0$ and $S_1$ for small gluing parameter has (on its standard representative) marked points $p_1, \ldots, p_k$ with the following angles:
\begin{equation}\label{gluinganglesopenclosed}
    (\mathrm{arg}(p_1), \ldots, \mathrm{arg}(p_k))  
    = \left( \bar{\theta}_1 + \theta_{j+1}, \bar{\theta}_2 + \theta_{j+1}, \ldots, \bar{\theta}_{j} + \theta_{j+1}, \theta_{j+1}, \theta_{j+2}, \ldots, \theta_{k} \right).
\end{equation}

\subsection{Operations with a forgotten marked point}\label{discsforgotten}
We introduce auxiliary degenerate operations that will arise as the
codimension 1 boundary of the open-closed map and equivariant structure. This
subsection is a very special case of the general discussion in \cite{ganatra1_arxiv}.

Let $d \geq 2$ and $i \in \{1, \ldots, d\}$. The {\em moduli space of discs
with $d$ marked points with $i$th boundary point forgotten}
\begin{equation}
    \mc{R}^{d,f_i}
\end{equation}
is exactly the moduli space of discs $\mc{R}^d$, with $i$th boundary marked
point labeled as auxiliary. 

The Deligne-Mumford compactification
\begin{equation}\label{dmforgotten}
    \overline{\mc{R}}^{d,f_i}
\end{equation}
is exactly the usual Deligne-Mumford compactification, along with the data of
an {\it auxiliary label} at the relevant boundary marked point.

For $d > 2$, the {\it $i$-forgetful map}
\begin{equation}\label{iforget}
    \mc{F}_{d,i}: \mc{R}^{d,f_i} \ra \mc{R}^{d-1}.
\end{equation}
associates to a surface $S$ the surface obtained by putting the $i$th point
back in and forgetting it.  
This map admits an extension to the Deligne-Mumford
compactification 
\begin{equation}
    \overline{\mc{F}}_{d,i}: \overline{\mc{R}}^{d,f_i} \ra \overline{\mc{R}}^{d-1}
\end{equation} 
as follows: eliminate any non-main components with only one non-auxiliary
marked point $p$, and label the positive marked point below this component by
$p$. We say that any component not eliminated is {\it f-stable} and any
component eliminated is {\it f-semistable}.

The above map is only well-defined for $d > 2$.  In the semi-stable case $d =
2$, $\mc{R}^{2,f_i}$ is a point so one can define an ad hoc map 
\begin{equation}
    \mc{F}^{ss}_i: \mc{R}^{2,f_i} \ra pt.
\end{equation}
which associates to a surface $S$ the (unstable) strip $\Sigma_1 =
(-\infty,\infty) \times [0,1]$ as follows: take the unique representative of $S$
which, after its three marked points are removed, is biholomorphic to the strip
$\Sigma_1$ with an additional puncture  
$(0, 0)$. Then, forget/put back in the point $(0,0)$.

\begin{defn}
    A {\em forgotten Floer datum} for a stable disc with $i$th point auxiliary
    $S \in \overline{\mc{R}}^{d,f_i}$ consists of, for every component $T$ of $S$,
    \begin{itemize}
        \item a Floer datum for $T$, if $T$ does not contain the auxiliary point,
        \item a Floer datum for $\mc{F}_j(T)$, if $T$ is $f$-stable and
            contains the auxiliary point as its $j$th input.
        \item 
            A Floer datum on $\mc{F}^{ss}_i(T)$ which is {\em translation invariant}, if $T$ is {\it
            f-semistable}.
    \end{itemize}
    (by {\em translation invariant}, we mean the following: note that
    $\Sigma_1$ has a canonical $\R$-action given by linear translation in the
    $s$ coordinate. We require $H$, $J$, and the time-shifting map/weights to
    be invariant under this $\R$ action, and in particular should only depend
    on $t \in [0, 1]$ at most).
\end{defn}
In particular, this Floer datum should only depend on the point
$\overline{\mc{F}}_{d,i}(S)$.
\begin{prop}
    Let $i \in \{1,\ldots, d\}$ with $d > 1$.  Then the operation associated to
    $\overline{\mc{R}}^{d,f_i}$ is zero if $d > 2$ and the identity operation
    $I(\cdot)$ (up to a sign) when $d=2$.
\end{prop}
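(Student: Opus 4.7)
The plan is to dispatch the two cases separately, with the key input in each case being condition \eqref{forgottencondition} on the forgotten Floer datum.

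For $d > 2$: I would observe that by \eqref{forgottencondition} the Floer datum on $S \in \mc{R}^{d,f_i}$ is pulled back via the forgetful map $\mc{F}_{d,i}: \mc{R}^{d,f_i} \to \mc{R}^{d-1}$, whose fibers (parametrizing the position of the auxiliary $i$th boundary marked point along the corresponding arc of $\partial S$) are one-dimensional. Consequently, for any map $u: S \to M$ solving Floer's equation with the chosen datum, the same underlying map solves Floer's equation on every $S' \in \mc{F}_{d,i}^{-1}(\mc{F}_{d,i}(S))$, producing a nontrivial one-parameter family of solutions in the parametrized moduli space. No element can be rigid, so after incorporating the asymptotic/boundary conditions and the standard transversality setup the signed count defining the operation vanishes. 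This is a direct analogue of the vanishing argument used in Lemma \ref{naivelemma} and the proof of Lemma \ref{weaks1actionSHlemma}.

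For $d = 2$: The moduli space $\mc{R}^{2,f_i}$ is a single point, whose underlying (semistable) domain is the strip $\Sigma_1 = \R \times [0,1]$, and the forgotten Floer datum on $\Sigma_1$ is required to be translation invariant. Since translation invariance and compatibility with strip-like ends force the data to equal $(H_t, J_t)$ on all of $\Sigma_1$, the parametrized moduli of maps associated to $\mc{R}^{2,f_i}$ is precisely the \emph{unquotiented} moduli of Floer strips $\widetilde{\mc{R}}^1(x_0; x_1)$ for $(H_t, J_t)$ between chords in $\chi(L_0, L_1)$. Rigid elements are the translation-invariant Floer strips, which by non-degeneracy of $(H_t, J_t)$ are exactly the constant strips at each chord. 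The corresponding isomorphism of orientation lines is the identity on $|o_x|_\K$, so summing over chords gives the identity map on $\hom_\f(L_0, L_1)$ up to an overall universal sign.

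The only real obstacle is the modest sign bookkeeping in the $d=2$ case, which amounts to tracing the orientation conventions of \cite{Seidel:2008zr}*{(12b),(12f)} through the semistable moduli $\mc{R}^{2,f_i}$; no new geometric input beyond the two observations above is required. One side-remark: the one-parameter vanishing in the $d>2$ case, together with the identity output in the $d=2$ case, is exactly what justifies the ``strict unit''-like behavior of the formal symbol $e^+$ used in the proof of Lemma \ref{ochatchainold}, which is the downstream application I would want to point to in order to confirm the proposition is stated in the form needed elsewhere.
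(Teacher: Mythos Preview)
Your proposal is correct and follows essentially the same approach as the paper's own proof sketch: for $d>2$ you use the one-dimensional fibers of the forgetful map to preclude rigidity, and for $d=2$ you use translation invariance of the Floer datum on the strip to reduce to constant solutions giving the identity. Your treatment of the $d=2$ case is slightly more explicit than the paper's (you identify the relevant moduli space with $\widetilde{\mc{R}}^1(x_0;x_1)$ and note that translation invariance plus end-compatibility pins down the datum as $(H_t,J_t)$), but the underlying argument is identical.
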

\begin{proof}[Sketch]

    Suppose first that $d>2$, and let $u$ be any solution to Floer's equation
    over the space $\mc{R}^{d,f_i}$ with domain $S$. 
    Since the Floer data on $S$
    only depends on $\mc{F}_{d,i}(S)$, we see that maps from $S'$ with $S' \in
    \mc{F}_{d,i}^{-1}(\mc{F}_{d,i}(S))$ also give solutions to Floer's equation with
    the same asymptotics. Moreover, the fibers of the map $\mc{F}_{d,i}$ are
    one-dimensional, implying that $u$ cannot be rigid, and thus the associated
    operation is zero.

    Now suppose that $d = 2$.  Then the forgetful map associates to the single
    point $[S] \in \mc{R}^{2,f_i}$ the unstable strip with its translation
    invariant Floer datum. Since non-constant solutions can never be rigid (as,
    by translating, one can obtain other non-constant solutions), it follows
    that the only solutions are constant ones, and  
    that the resulting operation is therefore the identity.
\end{proof}

\bibliography{math_bib}
\bibliographystyle{alpha}

\end{document}